\newcommand{\R}{\mathbb{R}}
\newcommand{\C}{\mathbb{C}}
\renewcommand{\H}{\mathbb{H}}
\newcommand{\id}{\mathrm{id}}
\newcommand{\Gr}{\mathrm{Gr}}
\newcommand{\Span}{\mathrm{Span}}
\newcommand{\PGL}{\mathrm{PGL}}
\newcommand{\PO}{\mathrm{PO}}
\newcommand{\GL}{\mathrm{GL}}
\newcommand{\PSL}{\mathrm{PSL}}
\newcommand{\Pb}{\mathbb{P}}
\newcommand{\Bc}{\mathcal{B}}
\newcommand{\Cc}{\mathcal{C}}
\newcommand{\Dc}{\mathcal{D}}
\newcommand{\Fc}{\mathcal{F}}
\newcommand{\Tmc}{\mathcal{T}}
\newcommand{\Out}{\mathrm{Out}}
\newcommand{\Hom}{\mathrm{Hom}}
\newcommand{\tr}{\mathrm{tr}}
\numberwithin{equation}{section}
\theoremstyle{plain}
\newtheorem{theorem}{Theorem}[section]
\newtheorem{corollary}[theorem]{Corollary}
\newtheorem{lemma}[theorem]{Lemma}
\newtheorem{proposition}[theorem]{Proposition}
\newtheorem{observation}[theorem]{Observation}
\newtheorem*{claim*}{Claim}
\newtheorem*{theorem*}{Theorem}
\newtheorem*{proposition*}{Proposition}
\theoremstyle{definition}
\newtheorem{definition}[theorem]{Definition}
\newtheorem{example}[theorem]{Example}
\theoremstyle{remark}
\newtheorem{remark}[theorem]{Remark}
\title{Weakly positive and directed Anosov representations}
\author{Sungwoon Kim\footnote{SK was partially supported by the NRF grant NRF-2018R1D1A1B07043321.}, Ser Peow Tan\footnote{ST was partially supported by the NUS-MOE grant R-146-000-289-114.}, and Tengren Zhang\footnote{TZ was partially supported by the NUS-MOE grant R-146-000-270-133.}
}
\date{}
\begin{document}

\maketitle

\begin{abstract}
Given a finitely generated group $\Gamma$, a directed graph $\Lambda$, and a map $R:\Lambda\to\Gamma$, we introduce the notion of an $(R,\Lambda)$-directed Anosov representation. This is a weakening of the notion of Anosov representations. Our main theorem gives a procedure to construct $(R,\Lambda)$-directed Anosov representations using Fock-Goncharov positivity. As an application of our main theorem, we construct large families of primitive stable representations from $F_2$ to $\PGL(V)$, including non-discrete and non-faithful examples.
\end{abstract}

\tableofcontents

\section{Introduction}

The study of representations of the fundamental group $\pi_1(S)$ of a closed, orientable surface $S$ into non-compact semisimple Lie groups of higher rank has seen explosive growth in the last few decades thanks to pioneering work of Labourie \cite{Labourie} and Fock and Goncharov \cite{Fock-Goncharov}. An important motivation for this is the quest to understand generalizations due to Hitchin \cite{Hitchin} of Teichm\"uller Theory, which today has been given the name Higher Teichm\"uller Theory. Teichm\"uller theory can be thought of as the study of the Teichm\"uller component of ${\rm Hom}(\pi_1(S),\PGL (2, \mathbb R))/\PGL (2, \mathbb R)$, which is the component consisting of discrete faithful representations. The key result by Labourie \cite{Labourie} and Fock and Goncharov \cite{Fock-Goncharov} is that there is a distinguished component of ${\rm Hom}(\pi_1(S), \PGL(n,\mathbb R))/\PGL(n,\mathbb{R})$ which consists entirely of discrete, faithful representations. Strikingly, this component, now commonly named the Hitchin component, mirrors many properties enjoyed by the Teichm\"uller component. 

In his proof of this key result, Labourie introduced the notion of an Anosov representation. These are representations from a hyperbolic group to a semisimple Lie group, which one can think of as a generalization of convex cocompact representations to higher rank Lie groups. He also used this notion to show that the action of the mapping class group of $S$ on ${\rm Hom}(\pi_1(S),\PGL (n, \mathbb R))/\PGL (n, \mathbb R)$ by post composition restricts to a properly discontinuous action on the Hitchin component \cite{Labourie08}. 

On the other hand, if $F_d$ is a free group of rank $d\ge 2$, one can consider representations of $F_d$ into $\PGL(n,\mathbb{R})$. In this case, there is also a natural action of the outer automorphism group $\Out (F_d)$ on ${\rm Hom}(F_d,\PGL(n,\mathbb{R}))/\PGL(n,\mathbb{R})$. Minsky \cite{Minsky} defined a special class of representations from $F_d$ to $\PSL (2, \mathbb C)$, called primitive stable representations, and he showed that the $\Out (F_d)$-action on ${\rm Hom}(F_d,\PSL(2,\mathbb C))/\PSL(2,\mathbb C)$ restricts to a properly discontinuous action on the set of (conjugacy classes of) primitive stable representations, which is open in ${\rm Hom}(F_d,\PSL(2,\mathbb C))/\PSL(2,\mathbb C)$. Furthemore, he showed that while Schottky representations are primitive stable, there are primitive stable representations which are not discrete and faithful. Primitive stable representations were generalized to more general Lie groups $G$ by Guichard-Gueritaud-Kassel-Weinhard \cite{GGKW17}, and also studied by Kim-Kim \cite{KK}. In this more general setting, one can show that primitive stable representations are open in ${\rm Hom}(F_d,G)/G$ using the local-to-global principle for Morse quasi-geodesics developed by Kapovich-Leeb-Porti \cites{KLP14}. Also, it follows from a straightforward generalization of Minsky's argument that the $\Out (F_d)$-action on ${\rm Hom}(F_d,G)/G$ will still restrict to a properly discontinuous action on the set of (conjugacy classes of) primitive stable representations. 

In general, given a representation $\rho: F_d\to \PGL(n,\mathbb{R})$, it can be difficult to verify if $\rho$ is primitive stable. When $d=2$ and $G=\PGL(2,\mathbb R)$, Goldman-McShane-Stantchev-Tan \cite{GMST} proved a characterization of primitive stable representations as holonomies of (possibly singular) hyperbolic structures on the four surfaces whose fundamental group is $F_2$. However, very little is known in the higher rank case. A main goal of this paper is to establish some finite set of verifiable conditions which allows one to certify that a representation of $\rho:F_2\to\PGL(n,\mathbb{R})$ is primitive stable. 

We do this by introducing two notions. The first is the notion of a \emph{$(R,\Lambda)$-weakly positive representation} from a finitely generated group $\Gamma$ to $\PGL(n,\mathbb{R})$. This is a finite set of conditions, determined by a finite directed graph $\Lambda$ and an identification $R$ between the vertices of $\Lambda$ and a finite generating set of $\Gamma$, that one can verify for any representation from $\Gamma$ to $\PGL(n,\mathbb{R})$, see Definition \ref{def: weakly positive intro}. The second is the notion of a \emph{$(R,\Lambda)$-directed Anosov} representation from $\Gamma$ to $\PGL(n,\mathbb{R})$. Informally, this is a representation that is  (Borel)-Anosov along certain geodesic rays in $\Gamma$ that are specified by $R$ and $\Lambda$, see Definition \ref{def: directed Anosov intro}. The main theorem of this paper then states that under some mild hypothesis on $\Lambda$, if a representation is $(R,\Lambda)$-weakly positive, then it is $(R,\Lambda)$-directed Anosov, see Theorem \ref{thm: main intro} for a precise statement. Using this theorem, we prove an easily verified condition that certifies if a representation $\rho:F_2\to\PGL(n,\mathbb{R})$ is primitive stable, see Theorem \ref{thm: intro} and Theorem \ref{thm: intro last}. With this, we can construct new examples of primitive stable representations. 

 We describe in greater detail our set-up and results in the rest of this introduction.\\


\subsection{Weakly positive and directed Anosov representations}\label{sec: intro main}

To motivate the notion of directed Anosov representations, let us first recall the definition of Anosov representations. Let $\Gamma$ be a finitely generated group with finite generating set $S$, and equip $\Gamma$ with the word metric $d_S$ associated to $S\cup S^{-1}$. We say that a sequence $(\eta_i)_{i=0}^\infty$ in $\Gamma$ is \emph{rooted} if $\eta_0=\id$. Let $V$ be a $n$-dimensional real vector space, and fix a point $o$ in the $\PGL(V)$-Riemannian symmetric space $X$, i.e. a scalar class of inner products on $V$. This choice of $o$ determines a Cartan projection 
\[\mu:\PGL(V)\to\mathfrak a:=\left\{(x_1,\dots,x_n)\in\R^n:\sum_{i=1}^nx_i=0\right\}.\] 
Let $\Delta\subset \mathfrak a^*$ denote the set of simple roots of $\PGL(V)$. A representation $\rho:\Gamma\to\PGL(V)$ is \emph{(Borel) Anosov} if for all $k,k'>0$, there are constants $\kappa,\kappa'>0$ such that for all rooted $(k,k')$-quasi geodesic rays $(\eta_i)_{i=0}^\infty$ in $\Gamma$,
\begin{equation}\label{eqn: linear growth intro}
\alpha\circ\mu(\rho(\eta_{i}))\geq \kappa i-\kappa'\end{equation}
for all integers $i\geq 0$ and all $\alpha\in\Delta$.

Directed Anosov representations are a generalization of Anosov representations, where \eqref{eqn: linear growth intro} is not required to hold for all rooted geodesic rays, but only along certain sequences that are determined by a finite directed graph. More precisely, let $\Lambda=(A,B)$ be a finite directed graph, i.e. $A$ is a finite set, called the set of \emph{vertices}, and $B$ is a set of ordered pairs of vertices, called the set of \emph{directed edges}, see Figure \ref{fig: directed graph}. A \emph{directed ray in $\Lambda$} is a sequence $(p_i)_{i=1}^\infty$ in $A$ such that $(p_i,p_{i+1})\in B$ for all integers $i>0$. Let $R:A\to\Gamma$ be an injective map. A sequence $(\eta_i)_{i=0}^\infty$ in $\Gamma$ is \emph{$(R,\Lambda)$-directed} if it is rooted, and there is a directed ray $(p_i)_{i=1}^\infty$ in $\Lambda$ such that 
\[\eta_i=R(p_1)R(p_{i-1})\dots R(p_i)\]
for all integers $i>0$.

\begin{figure}[h]
    \centering
    \includegraphics[width=0.25\textwidth]{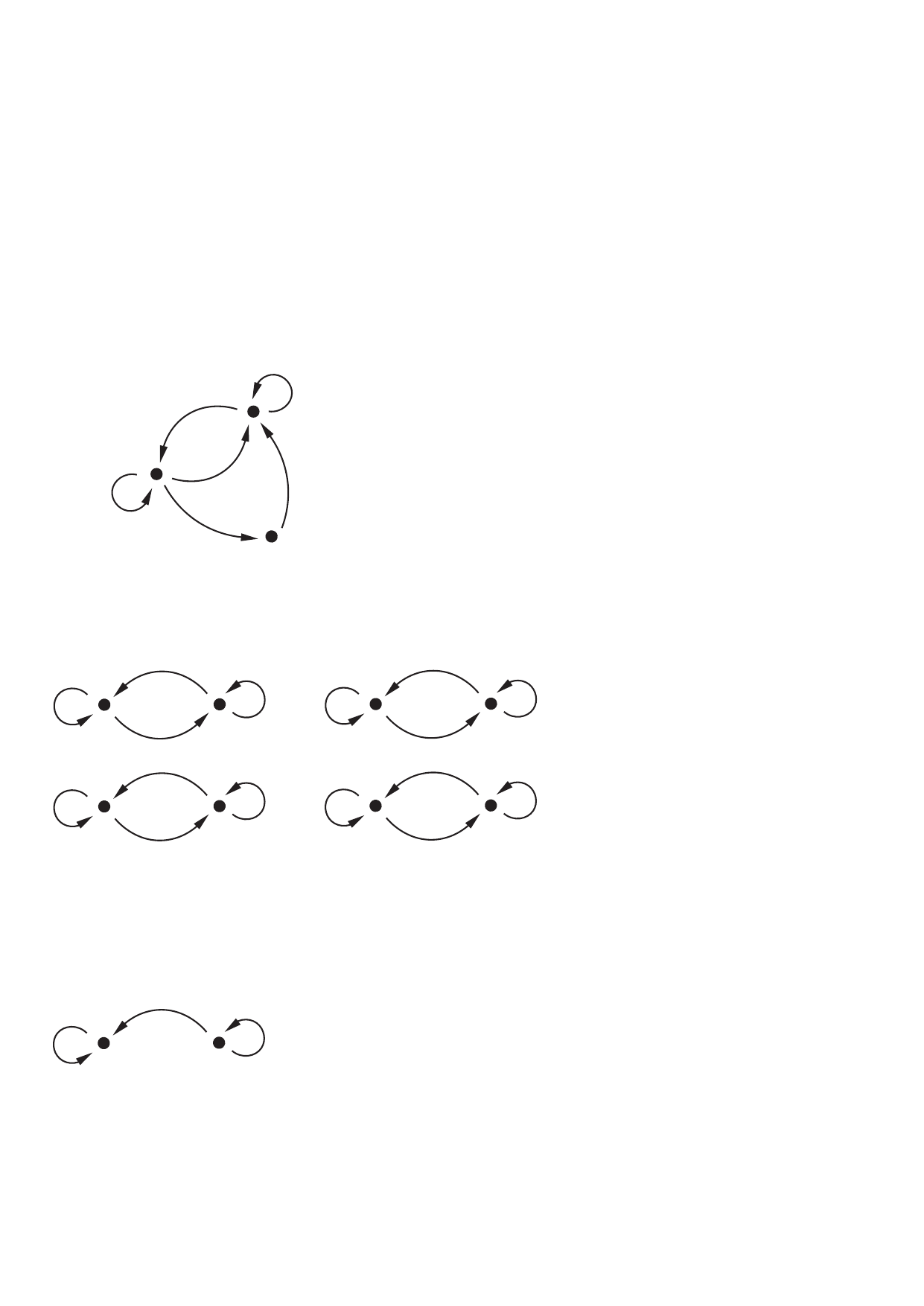}
    \caption{Directed graph.}
    \label{fig: directed graph}
\end{figure}

\begin{definition}\label{def: directed Anosov intro}
A representation $\rho :\Gamma \to\PGL(V)$ is \emph{$(R,\Lambda)$-directed (Borel) Anosov} if for all constants $k,k'>0$, there exists constants $\kappa, \kappa'>0$ such that for all $(R,\Lambda)$-directed, $(k,k')$-quasi-geodesics $(\eta_i)_{i=0}^\infty$ in $\Gamma$, \eqref{eqn: linear growth intro} holds for all integers $i\geq 0$ and all $\alpha\in\Delta$.
\end{definition} 

Even though our formulation is slightly different, the idea of encoding the data of quasi-geodesic rays in $\Gamma$ using a finite directed graph is from Bochi-Potrie-Sambarino \cite{BPS}. In their work, this appears under the guise of geodesic automatons, which they use to study nesting properties of the images (under the limit map of an Anosov representation) of the boundary of cone types in hyperbolic groups.

Clearly, every Anosov representation is $(R,\Lambda)$-directed Anosov for any $(R,\Lambda)$. However, for certain choices of $(R,\Lambda)$, there are examples of $(R,\Lambda)$-directed Anosov representations that are not discrete and faithful, and so are necessarily not Anosov representations. This additional flexibility is useful as a framework to study certain classes of representations that include non-discrete representations, such as primitive stable representations. 

The main theorem of this paper gives a procedure to construct $(R,\Lambda)$-directed Anosov representations using Fock-Goncharov positivity (see Section \ref{sec: positivity} for the definition of positive). To state this theorem, we define the notion of an $(R,\Lambda)$-weakly positive representation. 

Let $\Fc(V)$ denote the space of complete flags in $V$. Given a positive triple of flags $(F,H,K)$ in $\Fc(V)$, let 
\begin{equation}\label{eqn: open sets}\mathfrak U(F,H,K):=\{G\in\Fc(V):(F,G,H,K)\text{ is positive}\}.\end{equation}
Then for any map $f:A\to\PGL(V)$, a \emph{compatible system of forward domains for $(f,\Lambda)$} is an assignment to each $a\in A$ a positive triple of flags $(F_a,H_a,K_a)$ such that the following hold for all $(a_1,a_2)\in B$:
\begin{itemize}
\item $f(a_1)\cdot\mathfrak{U}(F_{a_2},H_{a_2},K_{a_2})\subset\mathfrak{U}(F_{a_1},H_{a_1},K_{a_1})$, and
\item $(F_{a_1},f(a_1)\cdot F_{a_2},f(a_1)\cdot H_{a_2},H_{a_1})$ is positive up to switching $F_{a_2}$ and $H_{a_2}$.
\end{itemize}
We say that $f$ is \emph{$\Lambda$-admissible} if there is a compatible system of forward domains for $(f,\Lambda)$.

\begin{definition}\label{def: weakly positive intro}
A representation $\rho:\Gamma\to\PGL(V)$ is \emph{$(R,\Lambda)$-weakly positive} if $\rho\circ R$ is $\Lambda$-admissible.
\end{definition}

Our main theorem relates the notions of weak positivity and directed Anosovness. We say that $\Lambda$ is \emph{path-symmetric} if for all $a,b\in\Lambda$, there is a directed path from $a$ to $b$ if and only if there is a directed path from $b$ to $a$. (See Figure \ref{fig: not path-symmetric} for a directed graph that is not path-symmetric.)

\begin{figure}[h]
    \centering
    \includegraphics[width=0.20\textwidth]{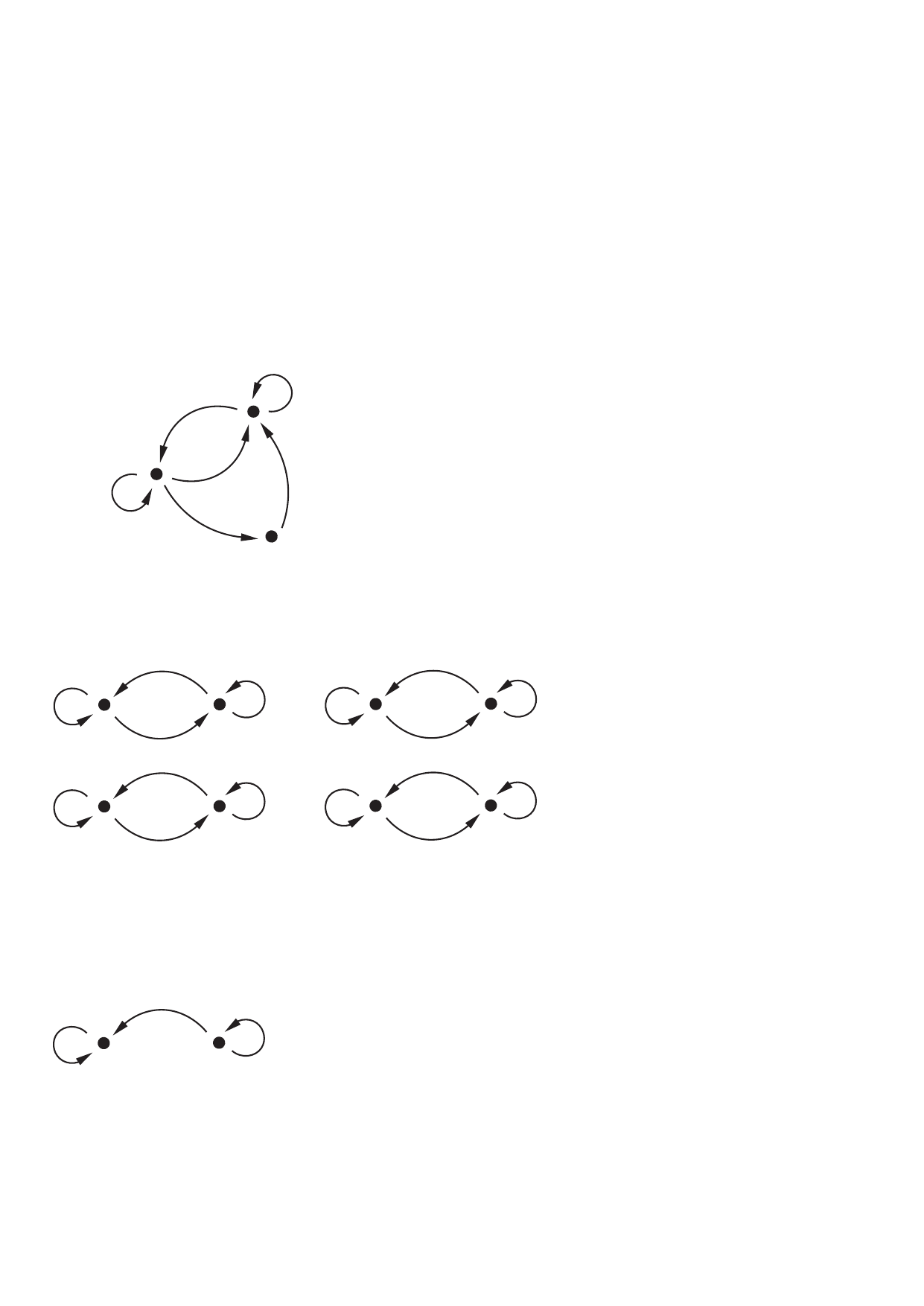}
    \tiny
\put (-21,3){$a$}
\put (-63,2){$b$}
    \caption{Directed graph that is not path-symmetric.}
    \label{fig: not path-symmetric}
\end{figure}

\begin{theorem}[Theorem \ref{thm: weakly positive is directed Anosov}]\label{thm: main intro}
Suppose that $\Lambda$ is path-symmetric. If $\rho:\Gamma\to\PGL(V)$ is $(R,\Lambda)$-weakly positive, then $\rho$ is $(R,\Lambda)$-directed Anosov.
\end{theorem}

Unlike the $(R,\Lambda)$-directed Anosov condition, whether or not a representation is $(R,\Lambda)$-weakly positive can be verified by checking finitely many explicit inequalities. Theorem \ref{thm: main intro} thus gives us an explicit way to construct $(R,\Lambda)$-directed Anosov representations. In this sense, one can think of Theorem \ref{thm: main intro} as a generalization of results by Burelle-Treib \cite{BT}, who constructed Schottky representations using positivity. See Section \ref{sec: adm} for more details.
 
 \subsection{Application to primitive stable representations}
 
Let $F_d$ denote the free group on $d$ generators. As an application of Theorem \ref{thm: main intro}, we construct new and explicit examples of \emph{primitive stable representations} $\rho:F_2\to\PGL(V)$, which we now define.  An element $\gamma_1\in F_d$ is \emph{primitive} if there are elements $\gamma_2,\dots,\gamma_d\in F_d$ such that $\{\gamma_1,\dots,\gamma_d\}$ is a generating set for $F_d$. If we equip $F_d$ with a word metric, then an \emph{axis} of a non-identity element $\gamma\in F_d$ is a geodesic in $F_d$ that is invariant under $\gamma$. A \emph{primitive geodesic} is an axis of a primitive element in $F_d$, and a \emph{primitive geodesic ray} is a geodesic ray that lies in a primitive geodesic.
 
\begin{definition}\label{def: primitive stable intro}
A representation $\rho:F_d\to\PGL(V)$ is \emph{(Borel) primitive stable} if for some (equivalently, any) word metric on $F_d$, there are constants $\kappa, \kappa'>0$ such that \eqref{eqn: linear growth intro} holds for all rooted, primitive geodesic rays $(\eta_i)_{i=0}^\infty$ in $F_d$, all integers $i\geq 0$, and all $k=1,\dots,n-1$.
\end{definition}

Our main application of Theorem \ref{thm: main intro} is to find easily verifiable conditions to construct primitive stable representations. For that purpose, we will consider path-symmetric, directed graphs of the following form: Let $\mathsf{K}(v,w)$ be the complete finite directed graph with vertices $v,w$, that is, $A=\{v,w\}$ and $B=A\times A$. For a positive integer $i$, let
$$\Lambda_i=(A_i,B_i):=\mathsf{K}(v_1,w_1)\cup \mathsf{K}(v_1', w_1')\cup \cdots \cup \mathsf{K}(v_i,w_i)\cup \mathsf{K}(v_i',w_i').$$
For a pair of elements $(\gamma_1,\gamma_2)$ of $F_2$, we say that a map $R:A_2\to F_2$ is \emph{defined by $(\gamma_1,\gamma_2)$} if
$$(R(v_1),R(w_1))=(\gamma_1, \gamma_2),\,\, (R(v_2),R(w_2))=(\gamma_1^{-1}, \gamma_2), $$
and $R(v_i')=R(v_i)^{-1}$, $R(w_i')=R(w_i)^{-1}$ for $i=1,2$. In addition, if we let $\gamma_3:=\gamma_2^{-1}\gamma_1^{-1}$, then we say that a map $R:A_3\to F_2$ is \emph{defined by} $(\gamma_1,\gamma_2)$ if $$(R(v_1),R(w_1))=(\gamma_1^{-1}, \gamma_2),\,\, (R(v_2),R(w_2))=(\gamma_2^{-1}, \gamma_3),\, \,(R(v_3),R(w_3))=(\gamma_3^{-1}, \gamma_1),$$
and $R(v_i')=R(v_i)^{-1}$, $R(w_i')=R(w_i)^{-1}$ for all $i=1,2,3$.

\begin{proposition}[Proposition \ref{prop: forward primitive}]\label{thm: intro'}
Let $(\gamma_1,\gamma_2)$ be a pair of generators of $F_2$, and for $i=2,3$, let $R_i:A_i\to F_2$ be defined by $(\gamma_1,\gamma_2)$. If $\rho:F_2\to\PGL(V)$ is $(R_2,\Lambda_2)$-directed Anosov or $(R_3,\Lambda_3)$-directed Anosov, then it is primitive stable. In particular, if $\rho:F_2\to\PGL(V)$ is $(R_2,\Lambda_2)$-weakly positive or $(R_3,\Lambda_3)$-weakly positive, then it is primitive stable.
\end{proposition}

We prove in the appendix that when $V=\R^2$, the converse of Proposition \ref{thm: intro'} holds.

Proposition \ref{thm: intro'} has several consequences. First, we use it to prove an easily verified condition under which a representation is guaranteed to be primitive stable. Henceforth, for any loxodromic $g\in\PGL(V)$, $g_-$ and $g_+$ will denote its repelling and attracting fixed point in the space of (complete) flags in $V$. Recall also that a loxodromic element $g\in\PGL(V)$ is \emph{positive loxodromic} if all of its eigenvalues have the same sign.

\begin{theorem}[Theorem \ref{thm: general n}]\label{thm: intro}
Let $b\in\PGL(V)$ be positive loxodromic, and let $a\in\PGL(V)$ be loxodromic. If $(b_-,a\cdot b_-,a_+,a\cdot b_+,b_+,a_-)$ is positive up to switching $a\cdot b_-$ and $a\cdot b_+$, then the representation $\rho:F_2\to\PGL(V)$ defined by $\rho(\gamma_1)=a$ and $\rho(\gamma_2)=b$ is primitive stable.
\end{theorem}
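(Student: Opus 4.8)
The plan is to deduce the theorem from Proposition~\ref{thm: intro'} by exhibiting weak positivity. Writing $R=\{\gamma_1,\gamma_2\}$ and $R'=\{\gamma_1^{-1},\gamma_2\}$, so that $\rho(R)=\{a,b\}$ and $\rho(R')=\{a^{-1},b\}$, I would show that $\rho$ is $(R,R')$-weakly positive; that is, that both $\{a,b\}$ and $\{a^{-1},b\}$ are admissible subsets of $\PGL(V)$. The theorem then follows immediately. Note that $a$ being loxodromic is used only to make sense of $a_-,a_+$, while positive loxodromicity of $b$ will carry most of the construction.

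To show $\{a,b\}$ is admissible I must produce a positive quadruple $(F',F,H,H')$ meeting the four positivity conditions coming from $g=a$ and $g=b$. The idea is to take $F',F$ at flags very close to $b_-$ and $H,H'$ at flags very close to $b_+$, on the appropriate sides, for instance with $F=b\cdot F'$ and $H'=b^{-1}\cdot H$. Since $b$ is positive loxodromic it acts on the flags transverse to $(b_-,b_+)$ as an order-preserving homeomorphism translating towards $b_+$, and this alone makes the two conditions for $g=b$ hold automatically, for any sufficiently small choice of neighborhoods. For $\{a^{-1},b\}$ I would use a second quadruple $(\widetilde F',\widetilde F,\widetilde H,\widetilde H')$ with the roles of $b_-$ and $b_+$ interchanged ($\widetilde F',\widetilde F$ near $b_+$, $\widetilde H,\widetilde H'$ near $b_-$), because $a^{-1}$ points the opposite way to $a$; once more the $g=b$ conditions are automatic.

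It remains to verify the conditions involving $a$, respectively $a^{-1}$, and here the hypothesis on the sextuple $(b_-,a\cdot b_-,a_+,a\cdot b_+,b_+,a_-)$ does the work. Since $a\cdot F',a\cdot F$ are close to $a\cdot b_-$ and $a\cdot H,a\cdot H'$ are close to $a\cdot b_+$, and positivity is an open condition, the assumed positivity of the sextuple (up to switching $a\cdot b_-$ and $a\cdot b_+$) translates at once into the ``forward'' condition for $g=a$. For the ``backward'' condition I need the cyclic positions of $a^{-1}\cdot b_-$ and $a^{-1}\cdot b_+$: the point is that the positivity of the hypothesis sextuple already forces $a$ to move $b_-$ and $b_+$ towards $a_+$ in the cyclic order (and to act in an orientation-preserving way near its axis), hence $a^{-1}$ to move them towards $a_-$, placing $a^{-1}\cdot b_-$ between $a_-$ and $b_-$ and $a^{-1}\cdot b_+$ between $b_+$ and $a_-$ — exactly the positions needed for the second admissibility condition for $g=a$, and, after the interchange of $b_\pm$, for both conditions for $g=a^{-1}$. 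One then checks that the ``up to switching'' clauses line up: when the sextuple is positive as written, no switch is needed; when it is positive only after the switch, $a$ instead reverses orientation near its axis and the switches of $a^{\pm1}\cdot(\cdot)$ appear on the nose.

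The main obstacle is precisely this last dynamical bookkeeping. Positive loxodromicity of $b$ is the easy input; the delicate step is extracting from the single positivity hypothesis the precise cyclic locations of $a^{\pm1}\cdot b_\pm$ (and of $b^{\pm1}$ applied to the quadruple flags) relative to two different quadruples, in particular the backward conditions for the set $\{a^{-1},b\}$, for which one cannot simply transport the hypothesis through $a^{-1}$ — a general loxodromic element need not preserve positivity — and must instead use that the hypothesis itself pins down the local dynamics of $a$.
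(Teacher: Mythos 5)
Your high-level strategy is the paper's: reduce to Proposition \ref{prop: forward primitive} by showing $\{a,b\}$ and $\{a^{-1},b\}$ are admissible, with separator flags chosen close to $b_-$ and $b_+$ and the hypothesis on the sextuple supplying the conditions for $g=a^{\pm1}$ via openness of positivity. But there is a genuine gap at the step you dismiss as automatic. For $n\geq 3$ it is \emph{not} true that the $g=b$ conditions hold for an arbitrary flag $F$ sufficiently close to $b_-$ with $(b_-,F,b_+)$ positive: writing $F=u\cdot b_-$ with $u\in U_{>0}(\mathcal B)$ for an eigenbasis $\mathcal B$ of $b$, positivity of $(b_-,F,b\cdot F,b_+)$ requires $u^{-1}(bub^{-1})$ to be totally positive, and this fails for suitable $u$ (already for $n=3$: with $u$ having entries $a,b,c$ and $ac=2b$, the $(1,3)$-minor of $u^{-1}(bub^{-1})$ is $b(\tfrac{\lambda_1}{\lambda_3}-2\tfrac{\lambda_2}{\lambda_3}+1)$, which is negative e.g. for eigenvalue ratios $4:3:1$; scaling $u$ toward the identity does not change the sign, so nearness to $b_-$ does not help). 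There is no cyclic order on $\Fc(V)$ making $b$ an ``order-preserving translation'' in rank $\geq 2$. This is exactly why the paper must first produce a special flag $K$ near $b_-$ with $(b_-,b^{-1}\cdot K,K,b\cdot K,b_+)$ positive — Lemma \ref{lem: minor facts}(1), proved by embedding $b$ in a Hitchin representation — and then push it toward $b_-$ by powers of $b^{-1}$ (Lemma \ref{lem: K existence}). Your proposal is missing this construction, which is the real content of the theorem.

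Conversely, the step you flag as the ``main obstacle'' is a non-issue: positivity of a tuple of flags is invariant under the full $\PGL(V)$-action (Observation \ref{obs: basic flag}(2)), so one \emph{can} transport the hypothesis through $a^{-1}$; applying $a^{-1}$ to the sextuple and concatenating with it yields the positive $8$-tuple $(a^{-1}\cdot b_-,b_-,a\cdot b_-,a_+,a\cdot b_+,b_+,a^{-1}\cdot b_+,a_-)$, which is precisely how the paper obtains the backward conditions and the admissibility of $\{a^{-1},b\}$. Finally, note that the two ``switched'' cases are not symmetric: in the case where $(b_-,a\cdot b_+,a_+,a\cdot b_-,b_+,a_-)$ is positive, the forward and backward flags get intertwined (compare conditions (I)--(IV) of Lemma \ref{lem: general n}(2) with (i)--(iv) of part (1)), so the bookkeeping genuinely differs between the two cases rather than following ``on the nose'' as you suggest.
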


Of course, every Anosov representation from $F_2$ to $\PGL(V)$ is primitive stable, and these can be constructed using Ping-pong lemma type arguments. Theorem \ref{thm: intro} on the other hand, allows us to exhibit explicit families of primitive stable representations from $F_2$ to $\PGL(V)$ that are not Anosov, and whose images do not lie in $\iota(\PGL_2(\R))$. Examples include non-positive representations, see Section \ref{sec: example1}, as well as non-discrete and non-faithful representations Section \ref{sec: example3}.

Another feature of Theorem \ref{thm: intro} is that unlike the Ping-pong lemma, it guarantees primitive stability of a representation $\rho:F_2\to\PGL(V)$ without requiring the ratio of adjacent eigenvalues of $\rho(\gamma_1)$ and $\rho(\gamma_2)$ to be sufficiently different. As a consequence, we can construct, given a generating pair $\{\gamma_1,\gamma_2\}$ of $F_2$, an explicit family of primitive stable representations $\rho_t:F_2\to\PGL(V)$ that converge to the trivial representation, with the property that $\rho_t(\gamma_1)_\pm$ and $\rho_t(\gamma_2)_\pm$ do not vary with $t$, see Section \ref{sec: example2}. We can also ensure that the image of $\rho_t$ also does not lie in $\iota(\PGL_2(\R))$, where $\iota:\PGL_2(\R)\to\PGL(V)$ is an irreducible representation.

Finally, in the case when $V=\R^3$ and both $a$ and $b$ are positive loxodromic, we have a simpler version of Theorem \ref{thm: intro}.

\begin{theorem}[Theorem \ref{thm: positive quadruple}]\label{thm: intro last}
If $a,b\in\PGL_3(\R)$ are positive loxodromic elements such that $(b_-,a_+,b_+,a_-)$ is positive, then the representation $\rho:F_2\to\PGL(V)$ defined by $\rho(\gamma_1)=a$ and $\rho(\gamma_2)=b$ is primitive stable.
\end{theorem}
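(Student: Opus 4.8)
The plan is to deduce this from Theorem \ref{thm: intro} (equivalently Theorem \ref{thm: general n}), so the task reduces to verifying the positivity hypothesis of that theorem from the single positivity condition on the quadruple $(b_-,a_+,b_+,a_-)$ in $\RP^2$, using that both $a$ and $b$ are positive loxodromic in $\PGL_3(\R)$. Set $\rho(\gamma_1)=a$ and $\rho(\gamma_2)=b$; we must show the sextuple $(b_-,a\cdot b_-,a_+,a\cdot b_+,b_+,a_-)$ is positive, up to switching $a\cdot b_-$ and $a\cdot b_+$. Note that in $\RP^2$ a complete flag is a pointed line, so this is a configuration of six flags in the plane; positivity of a cyclic tuple of flags here is Fock--Goncharov positivity for $\PGL_3(\R)$, which in the plane admits a concrete description in terms of the cyclic order of the points together with a positivity condition on triple ratios.

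The key steps, in order, would be as follows. First, I would record the standard fact that a positive loxodromic element of $\PGL_3(\R)$ preserves the orientation and the cyclic order on the boundary circle $\RP^1$ of its invariant convex set (its action is like that of a positive hyperbolic isometry after a projective change of coordinates), and in particular $b$ preserves the cyclic order of any tuple of flags lying on the boundary of its invariant triangle, while $a$ does likewise on its own invariant set. Second, from positivity of $(b_-,a_+,b_+,a_-)$ I would extract the cyclic order of these four points and then locate $a\cdot b_-$ and $a\cdot b_+$: since $a$ is loxodromic with attracting flag $a_+$ and repelling flag $a_-$, the points $a\cdot b_-$ and $a\cdot b_+$ are pushed toward $a_+$ and away from $a_-$, and the hypothesis pins down on which side of $a_+$ they land, giving the cyclic order $(b_-,a\cdot b_-,a_+,a\cdot b_+,b_+,a_-)$ (or the switched version). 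Third, I would verify the triple-ratio positivity conditions: for each of the two triples of non-consecutive flags in the sextuple one must check the relevant triple ratio is positive. Here I would use that $b$ being positive loxodromic forces the triple $(b_-, b_+, \cdot)$ to have the right sign, that $a$ does the same for triples involving $a_-, a_+$, and that the ``switching'' clause in the hypothesis is exactly designed to absorb the one sign ambiguity coming from whether $a$ sends the $b_-$-side or the $b_+$-side of the configuration past $a_+$. Finally, having produced a positive (up to the allowed switch) sextuple, Theorem \ref{thm: intro} immediately gives primitive stability of $\rho$.

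The main obstacle I expect is the bookkeeping in the second and third steps: turning the qualitative dynamical statement ``$a$ contracts toward $a_+$'' into a precise claim about the cyclic position of $a\cdot b_-$ and $a\cdot b_+$ relative to $a_+$, and then matching signs of triple ratios against the positivity conventions so that the up-to-switching clause lines up correctly. A clean way to handle this is to pass to an explicit model: conjugate so that $a$ is diagonal with positive, distinct, decreasing eigenvalues, so that $a_+$, $a_-$ are coordinate flags and the $a$-action on the relevant arc is an explicit monotone contraction; then the positivity of $(b_-,a_+,b_+,a_-)$ becomes an explicit inequality on coordinates of $b_\pm$, and the positions of $a\cdot b_\pm$ and the two triple ratios can be computed directly. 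Because everything is a configuration of six points (with tangent lines) in $\RP^2$, these are finite, checkable computations rather than anything structural, so the argument should go through once the conventions are fixed; the only real care needed is to confirm that positive loxodromicity of $b$ is genuinely used (it controls the flag at $b_\pm$, not just the point) and that no extra hypothesis on $a$ beyond loxodromicity is secretly required.
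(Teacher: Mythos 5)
Your plan is to reduce Theorem \ref{thm: intro last} to Theorem \ref{thm: intro} by showing that positivity of the quadruple $(b_-,a_+,b_+,a_-)$, together with positive loxodromicity of $a$ and $b$, forces the sextuple $(b_-,a\cdot b_-,a_+,a\cdot b_+,b_+,a_-)$ to be positive up to switching $a\cdot b_-$ and $a\cdot b_+$. That implication is the entire content of the theorem, and you dispose of it as ``bookkeeping'' and ``finite, checkable computations'' without carrying any of it out. The difficulty is not the cyclic order of the six underlying points: positivity of a tuple of flags in $\Fc(\R^3)$ also constrains the lines, and the lines $a\cdot b_\pm^{(2)}$ depend on how $a$ acts on the pencil of lines through $b_\pm^{(1)}$ --- something the quadruple hypothesis does not control. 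Concretely, the $a$-orbit of a point $p$ in the invariant triangle moves toward $a_+^{(1)}$ along the osculating curve $\xi_{a,p}$, and Lemma \ref{lem: basic existence}(1) gives positivity of $(a_-,F,a\cdot F,a_+)$ only when $F$ is the \emph{osculating} flag at $p$; for the actual flag $b_-$, whose line $b_-^{(2)}$ is dictated by $b$ and need not be anywhere near the osculating line of the $a$-flow, the quadruple $(a_-,b_-,a\cdot b_-,a_+)$ can fail to be positive. This is exactly why the paper does not take your route: it instead proves Lemma \ref{lem: separator lemma}, a strengthening of Lemma \ref{lem: K existence}(1) in which the separator flag $K$ is produced by a three-case analysis on where the $b$-osculating curve through $a_+^{(1)}$ meets $a_+^{(2)}$, with $K$ found near $a_+$ rather than near $b_-$ in two of the cases, and then re-runs the proof of Theorem \ref{thm: general n} with that lemma in place of Lemma \ref{lem: K existence}(1). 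If your sextuple-positivity claim were true, none of that machinery would be needed.

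Two smaller points. First, your closing remark that ``no extra hypothesis on $a$ beyond loxodromicity is secretly required'' is backwards: Theorem \ref{thm: intro last} explicitly assumes $a$ is \emph{positive} loxodromic (unlike Theorem \ref{thm: intro}), and the paper's proof uses this essentially, via the invariant simplex of $a$ and the osculating maps $\xi_{a,p}$. Second, your description of the positivity test (``for each of the two triples of non-consecutive flags... check the relevant triple ratio'') is not the criterion of Theorem \ref{thm: Fock-Goncharov}, which for a hexagon requires the triangle invariants of all four triangles of a triangulation together with the edge invariants of the three interior edges. To salvage your approach you would either have to prove the sextuple positivity outright (which the structure of the paper suggests is false in general) or, as the paper does, abandon the reduction to Theorem \ref{thm: intro} and produce the separator directly.
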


The problem for higher rank free groups is significantly harder, even when $G=\PGL(2,\mathbb{R})$. In fact, the existence of non-discrete primitive stable representations from $F_3$ to $\PGL(2,\mathbb{R})$ is a long-standing question of Minsky \cite{Minsky}. While we believe some of the techniques and results here will be useful for studying primitive stable representations of higher rank free groups, we are at this point unable to push our results further to this general case.

\subsection{Proof of Theorem \ref{thm: main intro}}

The proof of Theorem \ref{thm: main intro} has two broad steps. The first step is the following theorem.

\begin{theorem}[Theorem \ref{thm: main 2}]\label{thm: sym intro}
Let $\rho:\Gamma\to\PGL(V)$ be a representation with the following properties:
\begin{itemize}
\item There is some $C>0$ such that for every $(R,\Lambda)$-directed sequence $(\eta_i)_{i=0}^\infty$ in $\Gamma$, the sequence $(\rho(\eta_i)\cdot o)_{i=0}^\infty$ in $X$ is $C$-bounded from a maximal flat in $X$,
\item $\displaystyle\lim_{i\to\infty}\alpha\circ\mu(\rho(\gamma_i))=\infty$ for every $\alpha\in\Delta$ if for each $i$, there is a positive integer $l_i$ such that $\lim_{i\to\infty}l_i=\infty$ and a 
$(R,\Lambda)$-directed sequence $(\eta_{i,l})_{l=0}^\infty$ such that $\gamma_i=\eta_{i,l_i}$.
\end{itemize}
Then $\rho$ is $(R,\Lambda)$-directed Anosov. 
\end{theorem}

Informally, this theorem states that if the $o$-orbit in $X$ of $(R,\Lambda)$-directed sequence in $\Gamma$ stay uniformly close to flats, then the growth of $\alpha\circ\mu(\rho(\gamma_i))$ along sequences $(\gamma_i)_{i=0}^\infty$ in $\Gamma$ of increasing products of elements in $R(A)$, can be upgraded to the linear growth of $\alpha\circ\mu(\rho(\eta_i))$ along $(R,\Lambda)$-directed sequences $(\eta_i)_{i=0}^\infty$.

The second step is the following theorem about positive tuples of flags and the $k$-th Labourie cross ratios $B_k$ (see Definition \ref{def: cross ratio}).

\begin{theorem}[Theorem \ref{thm: general k}]\label{thm: proj intro}
Let $(F_i)_{i=1}^\infty$ and $(H_i)_{i=1}^\infty$ be sequences of flags in $\Fc(V)$ and $K\in\Fc(V)$ such that for all integers $l\geq 2$, $(F_1,\dots,F_l,H_l,\dots,H_1,K)$ is a positive tuple of flags. If there is some $D>1$ such that $B_k(H_i,F_i,F_{i+1},H_{i+1})\leq D$ for all integers $i>0$ and all $k=1,\dots,n-1$, then $\displaystyle\lim_{i\to\infty}\overline{\mathfrak U(F_i,H_i,K)}$ is a point. 
\end{theorem}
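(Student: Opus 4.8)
The plan is to show that $\bigl(\overline{\mathfrak U(F_i,H_i,K)}\bigr)_{i\ge 1}$ is a nested decreasing sequence of nonempty compact subsets of $\Fc(V)$ whose ``size'' shrinks geometrically, so that its intersection is a single flag.

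First I would record the nesting $\overline{\mathfrak U(F_{i+1},H_{i+1},K)}\subseteq\overline{\mathfrak U(F_i,H_i,K)}$. This should be purely formal: by hypothesis $(F_1,\dots,F_l,H_l,\dots,H_1,K)$ is positive for every $l\ge2$, and every sub-tuple of a positive tuple (in the induced cyclic order) is again positive, so a flag lying in positive position with respect to $F_{i+1},H_{i+1},K$ also lies in positive position with respect to $F_i,H_i,K$. Since $\Fc(V)$ is compact, each $\overline{\mathfrak U(F_i,H_i,K)}$ is compact, so $\Omega:=\bigcap_{i\ge1}\overline{\mathfrak U(F_i,H_i,K)}$ is nonempty and compact and is the limit of the nested sequence. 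It remains to show $\Omega$ is a point. Since the map $\Fc(V)\to\prod_{k=1}^{n-1}\Gr_k(V)$ sending a flag $L$ to the tuple of its $k$-dimensional subspaces $L^{(k)}$ is injective, it suffices to prove that for each $k$ the projection $\{L^{(k)}:L\in\Omega\}\subseteq\Gr_k(V)$ is a single point.

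Fix $k$. Using the $\PGL(V)$-action I would pass to coordinates adapted to the transverse pair $(F_i,H_i)$ (and to $K$) in which the set of admissible $L^{(k)}$ for $L\in\mathfrak U(F_i,H_i,K)$ is a bounded region in an affine chart of $\Gr_k(V)$, with $F_i^{(k)}$ and $H_i^{(k)}$ occupying the two extreme positions. The key point is that the Labourie cross ratio $B_k(H_i,F_i,F_{i+1},H_{i+1})$ records precisely how the pair $(F_{i+1}^{(k)},H_{i+1}^{(k)})$ sits inside $(F_i^{(k)},H_i^{(k)})$: projecting the relevant data to a projective line — e.g.\ via the Pl\"ucker embedding of $\Gr_k(V)$, or by intersecting and summing with the subspaces determined by $F_i,H_i,K$, so that one is left with four points on $\RP^1$ whose classical cross ratio is $B_k$ — the estimate reduces to the one-variable statement: if $0<s<t<1$ and $\tfrac{t(1-s)}{s(1-t)}\le D$ then $t-s\le\lambda_D$, where $\lambda_D:=\tfrac{\sqrt D-1}{\sqrt D+1}<1$ (a routine optimization, with equality at $s=\tfrac1{1+\sqrt D}$). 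Here $t-s$ is the normalized width of the step-$(i{+}1)$ slice inside the step-$i$ slice, so composing the normalizations shows that, measured in a fixed chart of $\Gr_k(V)$, the diameter of $\{L^{(k)}:L\in\mathfrak U(F_i,H_i,K)\}$ is at most $\lambda_D^{\,i-1}$ times that of $\{L^{(k)}:L\in\mathfrak U(F_1,H_1,K)\}$, hence tends to $0$. Thus $\{L^{(k)}:L\in\Omega\}$ is a point for every $k$, and $\Omega$ is a point.

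The step I expect to be the main obstacle is the geometric core of the preceding paragraph: converting the cross-ratio bound $B_k\le D$ into a genuine \emph{uniform} contraction of the positivity cells. Concretely one must (i) exhibit coordinates — depending on $(F_i,H_i)$ and $K$ — in which $\mathfrak U(F_i,H_i,K)$ is (an iterated bundle of) bounded ``positive'' regions, one for each Grassmannian, so that its diameter is controlled by the widths of those regions; (ii) identify $B_k(H_i,F_i,F_{i+1},H_{i+1})$ with, or pinch it between fixed functions of, the one-dimensional cross ratio of the induced four-point configuration controlling the $k$-th region; and (iii) keep all constants independent of $i$ and $k$. I expect (ii) — the precise bookkeeping relating the higher-rank Labourie cross ratio to a cross ratio on $\RP^1$ — to account for most of the work, and to rely on the structural properties of positive tuples of flags and of the cross ratios $B_k$ established earlier in the paper.
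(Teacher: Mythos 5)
Your overall architecture is right, and for the first coordinate it is close to the paper's: the nesting of the sets is the content of Proposition \ref{prop: positivity closure}, your one-variable estimate is exactly the paper's Observation \ref{obs: RP1} (your sharp constant $\frac{\sqrt D-1}{\sqrt D+1}$ is correct for the normalized configuration), and the $k=1$ contraction you sketch is essentially the paper's Proposition \ref{prop: k=1'}, which intersects the line $P_j$ through $F_j^{(1)}$ and $H_j^{(1)}$ with the hyperplanes $F_i^{(n-1)}$ and $H_i^{(n-1)}$ and contracts along that line. One caveat already at $k=1$: the four collinear points you obtain do \emph{not} have classical cross ratio equal to $B_1(H_i,F_i,F_{i+1},H_{i+1})$; one only gets $C_1\left(H_i^{(n-1)},F_i^{(n-1)},x_1,x_2\right)\leq B_1(H_i,F_i,F_{i+1},H_{i+1})$ for points $x_1,x_2$ in the smaller simplex (Lemma \ref{lem: arrgghh'}(3)), and that inequality is not bookkeeping --- it uses $B_1>1$ on positive quadruples (Proposition \ref{prop: positive flags and positive cross ratios}), which rests on the Frenet-curve machinery.

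The genuine gap is the step you yourself flag as (ii): for $k\geq 2$ there is no four-point configuration on a projective line whose classical cross ratio is $B_k(H_i,F_i,F_{i+1},H_{i+1})$ and which controls the $k$-th Grassmannian slice of $\mathfrak U(F_i,H_i,K)$. Under the Pl\"ucker embedding the relevant points of $\Gr_k(V)$ do not lie on a single line in $\Pb\bigl(\bigwedge^kV\bigr)$, and the slice $\{L^{(k)}:L\in\mathfrak U(F_i,H_i,K)\}$ is not a bounded interval whose width is measured by $B_k$, so the one-dimensional contraction lemma has nothing to act on. The paper avoids a direct contraction in $\Gr_k(V)$ entirely: by Proposition \ref{prop: shrink} it suffices to show $\lim_{i}F_i=\lim_i H_i$, and if $k$ is the smallest index with $F_\infty^{(k)}\neq H_\infty^{(k)}$ one passes to the quotient $V/W$ with $W=F_\infty^{(k-1)}$. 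Three nontrivial inputs make this work: positivity descends to such quotients (Proposition \ref{prop: positivity projection}); convergence descends (Proposition \ref{prop: converge}); and $B_1$ of the quotient flags equals $B_k$ of suitably mixed flags $F_{i+1,\infty},H_{i+1,\infty}$ (Observation \ref{obs: cross ratio projection}), which is at most $B_k(H_i,F_i,F_{i+1},H_{i+1})\leq D$ by the monotonicity statement Proposition \ref{prop: smaller tent}, again proved with Frenet curves. So what is missing from your proposal is not constant-chasing but the reduction of the rank-$k$ cross ratio bound to the rank-$1$ case; without that (or a genuinely new direct contraction argument in $\Gr_k(V)$), the proof does not close.
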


If $\rho:\Gamma\to\PGL(V)$ is $(R,\Lambda)$-weakly positive and 
\[\{\mathfrak{U}_a=\mathfrak{U}(F_a,H_a,K_a):a\in A\}\]
is a compatible system of forward domains for $(\rho\circ R,\Lambda)$, one can use Theorem \ref{thm: proj intro} to prove that for all $(R,\Lambda)$-directed sequences $(\eta_i)_{i=0}^\infty$ in $\Gamma$, there is some $\bar{a}\in A$ such that $(\overline{\rho(\eta_i)\cdot \mathfrak U_{\bar{a}}})_{i=1}^\infty$ is a nested sequence of compact sets whose intersection is a point. From this and the assumption that $\Lambda$ is path-symmetric, one deduces that the hypothesis of Theorem \ref{thm: sym intro} holds, thus proving Theorem \ref{thm: main intro}. See Section \ref{sec:admissible} for more details.

\subsection{Organization of paper}
The rest of the paper is organized as follows. The proofs of Theorem \ref{thm: sym intro} and Theorem \ref{thm: proj intro} are given in Section \ref{sec: directed Anosov} and Section \ref{sec: positive} respectively. Then in Section \ref{sec:admissible}, we use Theorem \ref{thm: sym intro} and Theorem \ref{thm: proj intro} to finish the proof of Theorem \ref{thm: main intro}. In Section \ref{sec: 6}, we apply Theorem \ref{thm: main intro} to primitive stable representations, and give explicit constructions of primitive stable representations. Finally, we give a proof of the converse of Proposition \ref{thm: intro'} in the appendix.\\

{\bf Acknowledgements:} We are indebted to the referee of the first version of this paper, who suggested that we generalize our results to the current version by using directed graphs to combinatorially describe directed sequences in the group. This allowed us to use the same proofs to arrive at cleaner and stronger results. 
\section{Directed Anosov representations}\label{sec: directed Anosov}

In this section, we prove Theorem \ref{thm: sym intro}, which is restated below as Theorem~\ref{thm: main 2}. Given a representation $\rho:\Gamma\to\PGL(V)$, this theorem specifies conditions on the induced $\Gamma$-action on the $\PGL(V)$-Riemannian symmetric space $X$, under which one can deduce that $\rho$ is directed Anosov.

In Section \ref{sec: Riemannian}, we recall some required background from the theory of Riemannian symmetric spaces, before restating Theorem \ref{thm: main 2} in Section \ref{sec: directed Anosov sub}. The remainder of the section is the proof of Theorem \ref{thm: main 2}.

\subsection{The $\PGL(V)$-Riemannian symmetric space}\label{sec: Riemannian}

First, we recall some basic results about the $\PGL(V)$-Riemannian symmetric space. For a more thorough  and general exposition of this topic, we refer the reader to Chapter 2 of Eberlein \cite{Eberlein} and Chapter VI.3 of Helgason \cite{Helgason}.

\subsubsection{Roots and Weyl chambers}\label{sec: roots}

For any integers $i,j=1,\dots,n$ such that $i\neq j$, the \emph{$(i,j)$-th root} of $\PGL(V)$ is the linear map $\alpha_{i,j}:\mathbb{R}^n\to\mathbb{R}$ given by $\alpha_{i.j}:(x_1,\dots,x_n)\mapsto x_i-x_j$. Collectively, the set
\[\Phi:=\{\alpha_{i,j}:i,j=1,\dots,n\text{ and }i\neq j\}\]
is called the \emph{set of roots of $\PGL(V)$}. A root $\alpha_{i,j}$ is \emph{positive} if $i<j$ and \emph{negative} if $i>j$. Any root of the form $\alpha_{k,k+1}$ for some $k=1,\dots,n-1$ is \emph{simple}. We often denote the \emph{$k$-th simple root} $\alpha_{k,k+1}$ simply by $\alpha_k$, and denote the set of simple roots of $\PGL(V)$ by $\Delta$. Note that every positive (resp. negative) root can be written uniquely as a linear combination of the simple roots where all the coefficients are non-negative (resp. non-positive) integers. 

For $k=1,\dots,n-1$, let $r_{\alpha_k}$ be the reflection about the kernel of $\alpha_k$. 
The \emph{Weyl group} of $\PGL(V)$ is then the subgroup of $\GL(n,\R)$ that is generated by $\{r_\alpha:\alpha\in\Delta\}$. Observe that the $W$-action on $\R^n$ leaves the subspace
\[\mathfrak a:=\left\{(x_1,\dots,x_n)\in\R^n:\sum_{i=1}^nx_i=0\right\}\]
invariant, and the set
\[\mathfrak a^+:=\{x\in\mathfrak a:\alpha(x)\geq 0\text{ for all }\alpha\in\Delta\}\]
serves as a fundamental domain for the $W$-action on $\mathfrak a$. We refer to $\mathfrak a^+$ as the \emph{fundamental Weyl chamber}, and any subset of $\mathfrak a$ of the form $\omega\cdot\mathfrak a^+$ for some $\omega\in W$ as a \emph{Weyl chamber} of $\mathfrak a$.

The \emph{longest element} $\omega_0$ in the Weyl group $W$ is the unique element that sends the fundamental Weyl chamber $\mathfrak a^+$ to the Weyl chamber $-\mathfrak a^+$.
Observe then that $-\omega_0:\mathbb{R}^n\to\mathbb{R}^n$ is an involution that leaves the fundamental Weyl chamber invariant, so its induced action on $(\mathbb{R}^n)^*$ preserves the set of simple roots $\Delta$. We refer to this action on $\Delta$ as the \emph{opposition involution}, and denote it by $\iota:\Delta\to\Delta$. 

\subsubsection{Flats in the $\PGL(V)$-Riemannian symmetric space}\label{sec: flats}

Let $X$ denote the $\PGL(V)$-Riemannian symmetric space, i.e. $X$ is the unique (up to scaling) Riemannian symmetric space whose isometry group is $\PGL(V)$. As a $\PGL(V)$-space, $X$ is isomorphic to $\widetilde{X}/\sim$, where $\widetilde{X}$ is the set of inner products on $V$, and two inner products are equivalent under $\sim$ if they are multiples of each other. 

In a Riemannian metric space, a \emph{flat} is a totally geodesic subspace whose sectional curvatures are all zero. In the case of $X$, every flat is isometric to $\mathbb{R}^k$ for some $k=1,\dots,n-1$, and the maximal flats are of dimension $n-1$. These maximal flats can be described as the orbits of certain subgroups of $\PGL(V)$ in the following way. 

A subgroup of $\PGL(V)$ is \emph{diagonalizable} if every element in that subgroup is diagonalizable over $\mathbb{R}$. Let $A\subset\PGL(V)$ be a maximal, diagonalizable, connected, abelian subgroup of $\PGL(V)$, and let $o\in X$ be a point. We say that an (ordered) basis $\Bc:=(e_1,\dots,e_n)$ of $V$ is an \emph{appropriate basis for $(A,o)$} if it has the following properties:
\begin{itemize}
\item every $a\in A$ is represented in the basis $\Bc$ by a diagonal matrix. 
\item $\Bc$ is an orthonormal basis for some inner product in the scalar class of inner products corresponding to $o\in X$.
\end{itemize}
Any appropriate basis for $(A,o)$ is unique (if it exists) up to permuting the vectors in the basis, replacing each vector in the basis with its negative, and scaling all the vectors in the basis by the same positive number.

If an appropriate basis for $(A,o)$ exists, then $\mathbf F_A:=A\cdot o\subset X$ is a maximal flat. Furthermore, for every maximal flat $\mathbf F\subset X$, there is a maximal, diagonalizable, connected, abelian subgroup $A\subset \PGL(V)$ such that $\mathbf F=\mathbf F_A$. As such, when convenient, we also refer to an appropriate basis for $(A,o)$ as an \emph{appropriate basis for $(\mathbf F_A,o)$}. This basis $\Bc$ defines a parameterization 
\[\phi_{\Bc}:\mathfrak a\to \mathbf F_A\] 
by $(x_1,\dots,x_n)\mapsto \mathrm{diag}(e^{x_1},\dots,e^{x_n})\cdot o$. We refer to the image under $\phi_\Bc$ of any Weyl chamber of $\mathfrak a$ as a \emph{Weyl chamber} of $(\mathbf F_A,o)$. Note that the Weyl chambers of $(\mathbf F_A,o)$ do not depend on the choice of appropriate basis for $(\mathbf F_A,o)$. 

\subsubsection{The Weyl chamber valued distance}\label{sec: Weyl chamber valued distance}
The $\PGL(V)$-action on $X$ by isometries induces a transitive $\PGL(V)$-action on the space of pointed maximal flats, i.e. pairs $(\mathbf F,o)$ such that $\mathbf F\subset X$ is a maximal flat and $o$ is a point in $\mathbf F$. It turns out that the stabilizer in $\PGL(V)$ of $(\mathbf F,o)$ is a finite group $W'$. Furthermore, the image of the obvious representation $W'\to\mathrm{Isom}(\mathbf F)$ is isomorphic to the Weyl group $W$. In fact, by choosing an appropriate basis $\Bc$ for $(\mathbf F,o)$ as we did above, the parameterization $\phi_{\Bc}:\mathfrak a\to\mathbf F$ intertwines the Weyl group action on $\mathfrak a$ and $\mathbf F$. 

Now, for any pair of points $(p_1,p_2)$ in $X$, choose a maximal flat $\mathbf F$ containing $p_1$ and $p_2$, and choose an appropriate basis $\Bc$ for $(\mathbf F,p_1)$. By permuting the vectors in the chosen basis,  we can ensure that $p_2$ lies in the Weyl chamber $\phi_{\Bc}(\mathfrak a^+)$, where $\mathfrak a^+$ is the fundamental Weyl chamber. Then define $d_\mathfrak a^+(p_1,p_2):=\phi_{\Bc}^{-1}(p_2)\in\mathfrak a^+$. One can verify that $d_\mathfrak a^+(p_1,p_2)$ does not depend on any of the choices made, and is entirely determined by the (ordered) pair of points $(p_1,p_2)$. This thus defines a map 
\[d_\mathfrak a^+:X\times X\to\mathfrak a^+\] 
called the \emph{Weyl chamber valued distance}. 

It follows from the definition of the opposition involution $\iota:\Delta\to\Delta$ that for any $p_1,p_2\in X$ and any $\alpha\in\Delta$, we have 
\begin{equation}\label{eq: opp}
\alpha(d_\mathfrak a^+(p_1,p_2))=\iota(\alpha)(d_\mathfrak a^+(p_2,p_1)).
\end{equation}
More generally, if $p_1,p_2,p_3\in X$ lie in a maximal flat ${\bf F}$, and $p_2$ and $p_3$ lie in the same Weyl chamber of $({\bf F},p_1)$, then
\begin{equation}\label{eqn: triangle}
d_\mathfrak a^+(p_1,p_2)-d_\mathfrak a^+(p_1,p_3)=\omega\cdot d_\mathfrak a^+(p_2,p_3)
\end{equation}
for some $\omega\in W$. 

Let $d_X:X\times X\to\mathbb{R}$ denote the metric on $X$ induced by the Riemannian metric. Then for all $p_1,p_2\in X$, 
\[d_X(p_1,p_2)=\|d_\mathfrak a^+(p_1,p_2)\|,\] 
where $\|\cdot\|:\mathbb{R}^n\to\mathbb{R}$ is the standard norm. Furthermore, it follows from Kapovich-Leeb-Millson \cite[Theorem 1.1]{KLM} that $d_\mathfrak a^+$ is $1$-Lipschitz in each entry, so
 \begin{align}\label{eqn:ddti}
\| d_\mathfrak a^+(p_1,p_2)-d_\mathfrak a^+(p_1',p_2')\|&\leq \| d_\mathfrak a^+(p_1,p_2)-d_\mathfrak a^+(p_1',p_2)\|+\|d_\mathfrak a^+(p_1',p_2)-d_\mathfrak a^+(p_1',p_2')\|\nonumber\\
& \leq d_X(p_1,p_1')+d_X(p_2,p_2').
\end{align}
for all $p_1,p_2,p_1',p_2'\in X$. 

\subsubsection{Jordan and Cartan projections}

For any linear map $\bar{g}\in\GL(V)$, let $\lambda_1(\bar{g})\geq\dots\geq\lambda_n(\bar{g})$ be the absolute values of the eigenvalues of $\bar{g}$. The \emph{Jordan projection of $\GL(V)$} is the map 
\[\lambda:\GL(V)\to\{v\in\mathbb{R}^n:\alpha(v)\geq 0\text{ for all }\alpha\in\Delta\}\] 
defined by $\lambda:\bar{g}\mapsto(\log\lambda_1(\bar{g}),\dots,\log\lambda_n(\bar{g}))$. Using this, define the \emph{Jordan projection of $\PGL(V)$} to be the map 
\[\lambda:\PGL(V)\to\mathfrak a^+\]
given by $\lambda:g\mapsto\lambda(\bar{g})$, where $\bar{g}\in\GL(V)$ is a linear representative of $g\in\PGL(V)$ such that $|\det(\bar{g})|=1$. It is straightforward to verify that $\lambda$ is well-defined.

To define the Cartan projection, choose an inner product on $V$. For any $\bar{g}\in\GL(V)$, let $\mu_1(\bar{g})\geq\dots\geq\mu_n(\bar{g})$ denote the singular values of $\bar{g}$. The \emph{Cartan projection of $\GL(V)$} is then the map 
\[\mu:\GL(V)\to\{v\in\mathbb{R}^n:\alpha(v)\geq 0\text{ for all }\alpha\in\Delta\}\] 
given by $\mu(\bar{g})=(\log\mu_1(\bar{g}),\dots,\log\mu_n(\bar{g}))$, where $\mu_1(\bar{g})\geq\dots\geq\mu_n(\bar{g})$ are the singular values of $\bar{g}$ in the chosen inner product. With this, we define the \emph{Cartan projection of $\PGL(V)$} to be the map 
\[\mu:\PGL(V)\to\mathfrak a^+\]
given by $\mu:g\mapsto\mu(\bar{g})$, where $\bar{g}\in\GL(V)$ is a linear representative of $g\in\PGL(V)$ such that $|\det(\bar{g})|=1$. As before, one verifies that $\mu$ is well-defined. Also, note that replacing the chosen inner product by a scalar multiple of itself leaves the singular values of any $\bar{g}\in\GL(V)$ unchanged. Thus, the choice of a point $o\in X$ determines a Cartan projection $\mu:\PGL(V)\to\mathfrak a^+$. 


Using $d_\mathfrak a^+$, we can give interpretations of the Cartan projection and Jordan projection in terms of the geometry of $X$. One can verify that for any isometry $g\in\PGL(V)$, 
\[d_\mathfrak a^+(o,g\cdot o)=\mu(g),\]
where $o\in X$ is the point that determines the Cartan projection $\mu$. In particular, if $\|\cdot\|$ is the standard norm on $\mathbb R^n$, then 
\[d_X(o,g\cdot o)=\|\mu(g)\|.\] 
On the other hand, if $g\in\PGL(V)$ is loxodromic, then $g$ lies in a unique maximal, diagonalizable, abelian subgroup of $\PGL(V)$. Denote the identity component of this subgroup by $A_g$, and note that $\mathbf F_g:=\mathbf F_{A_g}$ is the unique maximal flat that is invariant under the action of $g$ on $X$. 
One can then verify that if $x$ is a point that lies in $\mathbf F_g$, then 
\[d_\mathfrak a^+(x,g\cdot x)=\lambda(g).\]
Since $X$ is non-positively curved, the closest point projection $\pi_{\mathbf F_g}:X\to\mathbf F_g$ is $1$-Lipschitz, so
\[\inf_{x\in X}d_X(x,g\cdot x)=\|\lambda(g)\|.\]

\subsection{Directed Anosovness}\label{sec: directed Anosov sub}
Let $\Lambda=(A,B)$ be a finite directed graph with $A$ as its set of vertices and $B$ as its set of directed edges. Let $R:A\to\Gamma$ be an injective map. Fix a point $o\in X$ with which we define a Cartan projection $\mu:\PGL(V)\to\mathfrak a^+$. 

\begin{definition}\label{def: directed Anosov}
Let $\theta\subset\Delta$ be a non-empty subset. A representation $\rho :\Gamma \to\PGL(V)$ is \emph{$(\theta,R,\Lambda)$-directed Anosov} if for all $k,k'>0$, there exists constants $\kappa, \kappa'>0$ such that 
\begin{equation}\label{eqn: linear growth}
\alpha\circ\mu(\rho(\eta_i))\geq \kappa i-\kappa'\end{equation}
for all $(R,\Lambda)$-directed, $(k,k')$-quasigeodesic rays $(\eta_i)_{i=0}^\infty$ in $\Gamma$, all $\alpha\in\theta$, and all integers $i\geq 0$. 
\end{definition}

Observe that whether or not a representation is $(\theta,R,\Lambda)$-directed Anosov does not depend on the choice of $o$. 

The notion of a $(\theta,R,\Lambda)$-directed Anosov representation is a generalization of the more well-known notion of a $\theta$-\emph{Anosov representation} (also called a $P$-Anosov representation, where $P$ is a parabolic subgroup associated to $\theta$). These are representations $\rho:\Gamma\to\PGL(V)$ where the inequality \eqref{eqn: linear growth} holds for all $\alpha\in\theta$, all rooted geodesic rays in $\Gamma$ (for some choice of word metric on $\Gamma$), and all integers $i\geq 0$. Indeed, every $\theta$-Anosov representation is $(\theta,R,\Lambda)$-directed Anosov for any $(R,\Lambda)$. Also, if $\Lambda$ is complete and the inclusion of the semigroup generated by $R(A)$ into $\Gamma$ is a quasi-isometric embedding with respect to the word lengths, then a $(\theta,R,\Lambda)$-directed Anosov representation from $\Gamma$ to $\PGL(V)$ restricts to a $\theta$-Anosov representation of the semigroup generated by $R(A)$ in the sense of Kassel-Potrie \cite{KP}.

We previously observed that if $d_X$ denotes the metric on the $\PGL(V)$-Riemannian symmetric space $X$ induced by the Riemannian metric, then 
\[d_X(o,g\cdot o)=\|d_\mathfrak a^+(o,g\cdot o)\|=\sqrt{\sum_{\alpha\in\Delta}\alpha\circ\mu(g)^2}.\]
From this, it is a straightforward computation to prove the following observation.

\begin{observation}
If $\rho:\Gamma\to\PGL(V)$ is $(\theta,R,\Lambda)$-directed Anosov for any non-empty subset $\theta\subset\Delta$, then the orbit map $\Gamma\to X$ given by $\gamma\mapsto\rho(\gamma)\cdot o$ sends $(R,\Lambda)$-directed sequences in $\Gamma$ to uniform quasi-geodesic rays in $X$. Furthermore, the converse holds when $n=2$.
\end{observation}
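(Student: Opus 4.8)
The plan is to unpack the two implications separately, using the identity $d_X(o,g\cdot o)=\|d_\Delta(o,g\cdot o)\|=\sqrt{\sum_{\alpha\in\theta_0}\alpha\circ\mu(g)^2}$ recalled just above, together with the fact that $d_\Delta$ is $1$-Lipschitz in each entry (inequality \eqref{eqn:ddti}) and that each simple root $\alpha$ is a fixed linear functional on $\mathbb{R}^n$, hence $L$-Lipschitz with respect to $\|\cdot\|$ for some constant $L=L(n)$.

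First, suppose $\rho$ is $(\theta,R)$-directed Anosov, with constants $\kappa,\kappa'>0$, and let $(\eta_i)_{i=0}^\infty$ be a rooted, $R$-directed (or $R^{-1}$-directed) geodesic ray. Fix $0\le i\le j$. Since $\eta_i^{-1}\eta_j$ is again a product of at most $j-i$ elements of $R\cup R^{-1}$ and $d_R$ is left-invariant, the orbit map is automatically $1$-Lipschitz: $d_X(\rho(\eta_i)\cdot o,\rho(\eta_j)\cdot o)=\|\mu(\rho(\eta_i^{-1}\eta_j))\|\le C_0\,d_R(\eta_i,\eta_j)=C_0(j-i)$, where $C_0=\max_{r\in R\cup R^{-1}}\|\mu(\rho(r))\|$ by subadditivity of $\mu$. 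For the lower bound, pick some fixed $\alpha\in\theta$. Applying \eqref{eqn:ddti} to the four points $o,\ \rho(\eta_i)\cdot o,\ o,\ \rho(\eta_j)\cdot o$ (after translating by $\rho(\eta_i)^{-1}$, which is an isometry) gives
\[
\|\mu(\rho(\eta_j))-\mu(\rho(\eta_i))\|\le d_X(\rho(\eta_i)\cdot o,\rho(\eta_j)\cdot o),
\]
so by the Lipschitz bound on $\alpha$,
\[
d_X(\rho(\eta_i)\cdot o,\rho(\eta_j)\cdot o)\ \ge\ \tfrac1L\,\bigl(\alpha\circ\mu(\rho(\eta_j))-\alpha\circ\mu(\rho(\eta_i))\bigr)\ \ge\ \tfrac1L\bigl(\kappa(j-i)-2\kappa'\bigr),
\]
using \eqref{eqn: linear growth} at index $j$ and a trivial $\alpha\circ\mu\ge -\kappa'$ (or just $\ge 0$) bound at index $i$. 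Combining the two estimates shows $i\mapsto\rho(\eta_i)\cdot o$ is a $(\lambda,c)$-quasi-geodesic ray with constants depending only on $\kappa,\kappa',C_0,L$, hence uniform over all such rays.

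For the converse when $n=2$, note that $\theta_0=\{\alpha_1\}$ is a single simple root, so $d_X(o,g\cdot o)=\|\mu(g)\|=|\alpha_1\circ\mu(g)|/\sqrt 2$ (the Cartan projection of $\PGL_2(\mathbb{R})$ lies on the single ray $\Delta$, so $\mu(g)=\tfrac12(\alpha_1\circ\mu(g))(1,-1)$ and $\alpha_1\circ\mu(g)\ge 0$). Thus if the orbit map sends every rooted, $R$- or $R^{-1}$-directed geodesic ray $(\eta_i)$ to a uniform $(\lambda,c)$-quasi-geodesic ray, then for all $i\ge0$,
\[
\alpha_1\circ\mu(\rho(\eta_i))=\sqrt2\,d_X(o,\rho(\eta_i)\cdot o)=\sqrt2\,d_X(\rho(\eta_0)\cdot o,\rho(\eta_i)\cdot o)\ \ge\ \sqrt2\bigl(\tfrac1\lambda\, i-c\bigr),
\]
which is exactly \eqref{eqn: linear growth} with $\kappa=\sqrt2/\lambda$ and $\kappa'=\sqrt2\,c$. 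Since $\theta=\theta_0$ is forced when $n=2$, this gives $(\theta,R)$-directed Anosovness.

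The only genuinely substantive point is the lower bound in the forward direction: one must convert the linear growth of a single root-coordinate of $\mu(\rho(\eta_i))$ into a linear lower bound on the distance $d_X(\rho(\eta_i)\cdot o,\rho(\eta_j)\cdot o)$ between two points on the orbit of the ray, and for this the key input is the triangle-inequality estimate \eqref{eqn:ddti} for $d_\Delta$, which lets one compare $\mu(\rho(\eta_j))$ and $\mu(\rho(\eta_i))$ via the distance between the corresponding orbit points. Everything else is bookkeeping with Lipschitz constants; in the $n=2$ case the argument collapses because the Weyl chamber is one-dimensional, so $d_X$ and $\alpha_1\circ\mu$ agree up to the universal scalar $\sqrt2$.
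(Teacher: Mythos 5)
The converse for $n=2$ and the upper (Lipschitz) bound in the forward direction are fine, but the lower bound in the forward direction — which you yourself identify as the only substantive point — contains a genuine error. You assert $\alpha\circ\mu(\rho(\eta_j))-\alpha\circ\mu(\rho(\eta_i))\ge\kappa(j-i)-2\kappa'$ ``using \eqref{eqn: linear growth} at index $j$ and a trivial $\alpha\circ\mu\ge -\kappa'$ (or $\ge 0$) bound at index $i$.'' To bound a difference $A-B$ from below you need $A$ bounded below and $B$ bounded \emph{above}; a lower bound on the subtrahend gives nothing. The directed-Anosov hypothesis only supplies lower bounds $\alpha\circ\mu(\rho(\eta_i))\ge\kappa i-\kappa'$, and the only upper bound available is $\alpha\circ\mu(\rho(\eta_i))\le\|\alpha\|\,C_0\,i$ from the Lipschitz estimate, which yields $\kappa j-\|\alpha\|C_0 i-\kappa'$ — not of the form $\kappa''(j-i)-c$ (take $i=j-1$ with $\|\alpha\|C_0>\kappa$ and it tends to $-\infty$). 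In fact, upgrading linear growth of $\alpha\circ\mu(w_i)$ from the root to linear growth of the increments $\alpha\circ\mu(w_{i+j})-\alpha\circ\mu(w_i)$ is precisely the content of Proposition \ref{prop: other definition}, which requires the additional hypothesis that the orbit stays uniformly close to a maximal flat; it is not a formal consequence of \eqref{eqn: linear growth}, so your route cannot be repaired by better bookkeeping.

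The intended ``straightforward computation'' goes around this entirely. If $(\eta_i)_{i=0}^\infty$ is a rooted, $R$-directed geodesic ray, then for each fixed $i$ the shifted sequence $(\eta_i^{-1}\eta_{i+k})_{k=0}^\infty$ is again a rooted, $R$-directed geodesic ray (by left-invariance of $d_R$ and $\eta_{i+k}^{-1}\eta_{i+k+1}\in R$), so the hypothesis applies to it and gives $\alpha\circ\mu(\rho(\eta_i^{-1}\eta_j))\ge\kappa(j-i)-\kappa'$ for a fixed $\alpha\in\theta$. Then $d_X(\rho(\eta_i)\cdot o,\rho(\eta_j)\cdot o)=\|\mu(\rho(\eta_i^{-1}\eta_j))\|\ge\|\alpha\|^{-1}\,\alpha\circ\mu(\rho(\eta_i^{-1}\eta_j))\ge\|\alpha\|^{-1}(\kappa(j-i)-\kappa')$, which together with your upper bound gives a uniform quasi-geodesic without ever comparing $\mu(\rho(\eta_j))$ to $\mu(\rho(\eta_i))$. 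With that substitution (and the same shift applied in the $R^{-1}$-directed case) your proof is complete; the $n=2$ converse as you wrote it stands.
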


For the rest of this paper, we focus on the case when $\theta=\Delta$. As such, we will drop $\Delta$ from our terminology, and refer to $(\Delta,R,\Lambda)$-directed Anosov representations simply as \emph{$(R,\Lambda)$-directed Anosov representations}. This recovers Definition \ref{def: directed Anosov intro}.

The following is the main theorem of this section.

\begin{theorem}\label{thm: main 2}
Let $\rho:\Gamma\to\PGL(V)$ be a representation with the following properties:
\begin{itemize}
\item There is some $C>0$ such that for every $(R,\Lambda)$-directed sequence $(\eta_i)_{i=0}^\infty$ in $\Gamma$, the sequence $(\rho(\eta_i)\cdot o)_{i=0}^\infty$ in $X$ is $C$-bounded from a maximal flat in $X$.
\item $\displaystyle\lim_{i\to\infty}\alpha\circ\mu(\rho(\gamma_i))=\infty$ for every $\alpha\in\Delta$  if for each $i$, there is a positive integer $l_i$ such that $\lim_{i\to\infty}l_i=\infty$ and a 
$(R,\Lambda)$-directed sequence $(\eta_{i,l})_{l=0}^\infty$ such that $\gamma_i=\eta_{i,l_i}$.
\end{itemize}
Then the inequality \eqref{eqn: linear growth} holds for all $(R,\Lambda)$-directed sequences $(\eta_i)_{i=0}^\infty$ in $\Gamma$. In particular, $\rho$ is $(R,\Lambda)$-directed Anosov. 
\end{theorem}

Theorem \ref{thm: main 2} is a special case of a more general theorem about sequences in $\PGL(V)$. To describe this, we need several definitions.

\begin{definition}\label{def: regulated}
Let $\mathcal{W}$ be a collection of sequences in $\PGL(V)$.
\begin{enumerate}
\item $\mathcal{W}$ is \emph{uniformly well-behaved} if there is a constant $C>0$ such that every sequence in $(w_i)_{i=0}^\infty$ in $\mathcal{W}$ has the property that $(w_i\cdot o)_{i=0}^\infty$ is $C$-bounded from a maximal flat in $X$, and $d_X(w_i\cdot o,w_{i+1}\cdot o)\leq C$ for all integers $i\geq 0$.
\item $\mathcal{W}$ is \emph{regulated} if for every $D>0$, there is an integer $N(D)>0$ such that 
\[\alpha\circ\mu(w_i^{-1}w_{i+j})\geq D\]
for all sequences $(w_i)_{i=0}^\infty$ in $\mathcal{W}$, all integers $i\geq 0$ and $j\geq N(D)$, and all $\alpha\in\Delta$.
\end{enumerate}
\end{definition}


\begin{proposition}\label{prop: other definition}
If $\mathcal{W}$ is a regulated and uniformly well-behaved collection of rooted sequences in $\PGL(V)$, then there exists constants $\kappa,\kappa'>0$ such that
\[\alpha\circ\mu(w_{i+j})-\alpha\circ\mu(w_i)\geq \kappa j-\kappa'.\]
for all sequences $(w_i)_{i=0}^\infty$ in $\mathcal{W}$, all integers $i,j\geq0$ and all $\alpha\in\Delta$.
\end{proposition}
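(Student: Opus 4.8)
The plan is to show that the Cartan projection grows linearly along sequences in $\mathcal{W}$ by combining the ``regulated'' condition with a geometric rigidity argument coming from the ``uniformly well-behaved'' condition. The core tension is this: the regulated condition tells us that after $N(D)$ steps the relative displacement $\alpha\circ\mu(w_i^{-1}w_{i+j})$ is at least $D$, i.e. the increment in the \emph{symmetric-space distance between consecutive-block endpoints} is large; but $\alpha\circ\mu$ is not additive in general, so we cannot simply telescope $\alpha\circ\mu(w_{i+j}) = \alpha\circ\mu(w_i) + \alpha\circ\mu(w_i^{-1}w_{i+j})$. What saves us is that all the points $w_i\cdot o$ lie within distance $C$ of a single maximal flat $\mathbf{F}$, and on a flat the Weyl-chamber-valued distance \emph{is} essentially additive along a Weyl-chamber-monotone path --- this is exactly equation \eqref{eqn: triangle}.

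Here is the sequence of steps. First I would replace the orbit points $w_i\cdot o$ by their closest-point projections $p_i := \pi_{\mathbf F}(w_i\cdot o)$ onto the maximal flat $\mathbf F$ given by uniform well-behavedness; since $\pi_{\mathbf F}$ is $1$-Lipschitz and $d_X(w_i\cdot o, \mathbf F)\le C$, the estimate \eqref{eqn:ddti} shows $\|d_\Delta(p_i,p_j) - d_\Delta(w_i\cdot o, w_j\cdot o)\| \le 2C$ for all $i,j$, and likewise $\|d_\Delta(o,p_i) - \mu(w_i)\| \le 2C$ (using $p_0$ close to $o = w_0\cdot o$). So it suffices to prove linear growth of $\alpha(d_\Delta(p_0,p_i))$ along the flat. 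Second, I would set $m := N(1)$ (or $N(D_0)$ for a suitable fixed $D_0$) and look at the subsequence $p_0, p_m, p_{2m}, p_{3m},\dots$. By the regulated condition (transported to the $p$'s with an error of $4C$), for every block we have $\alpha\circ\mu(w_{km}^{-1}w_{(k+1)m}) \ge D_0$, hence $\alpha(d_\Delta(p_{km},p_{(k+1)m})) \ge D_0 - 4C$; choosing $D_0$ large enough (i.e. $D_0 > 4C$, using that $N$ is defined for all $D$) this lower bound is a positive constant $c_0 := D_0 - 4C > 0$, uniformly over all blocks and all $\alpha\in\theta_0$.

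Third --- and this is the step I expect to be the main obstacle --- I need to convert these per-block lower bounds into a lower bound on $\alpha(d_\Delta(p_0, p_{km}))$ that is linear in $k$. Naively \eqref{eqn: triangle} only applies when $p_{jm}$ and $p_{(j+1)m}$ lie in the \emph{same} Weyl chamber based at $p_0$, which need not hold; consecutive blocks could ``turn a corner'' of a Weyl chamber wall. The fix is a pigeonhole/unfolding argument: there are only finitely many Weyl chambers based at $p_0$ (namely $|W|$ of them), so along the sequence $(p_{km})_k$ in $\mathbf F \cong \mathbb F^{n-1}$, after passing to the chamber containing infinitely many of the points --- or more carefully, using that the path eventually stays monotone because each increment has all simple-root coordinates bounded below by a fixed positive constant relative to $p_0$ --- one shows the points eventually all lie in one chamber $\omega\cdot\Delta$ based at $p_0$ (for $k \ge$ some $k_0$). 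Concretely: the key sublemma is that if $q, q', q''$ are three points on a flat with $d_\Delta(q,q')$ and $d_\Delta(q', q'')$ both having all simple-root values $\ge c_0$ (in appropriate chamber bases), then $\alpha(d_\Delta(q,q'')) \ge \alpha(d_\Delta(q,q')) + c_0 - (\text{bounded correction})$; iterating this along the $m$-step subsequence gives $\alpha(d_\Delta(p_0,p_{km})) \ge c_1 k - c_1'$ for constants $c_1, c_1' > 0$. I would prove this sublemma by passing to the affine-Euclidean picture on $\mathbb F^{n-1}$: the condition ``all simple roots $\ge c_0$'' confines the difference vectors to a fixed salient cone, and sums of vectors in a salient cone have $\alpha$-coordinate at least the sum of the individual $\alpha$-coordinates once one accounts for the Weyl-group folding via the longest element $\omega_0$ (this is where \eqref{eq: opp} and the structure of $\Delta$ enter).

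Finally, I would fill in the intermediate indices: for general $j$ with $0 \le j$, write $j = km + r$ with $0 \le r < m$; since $d_X(p_i\cdot o, p_{i+1}\cdot o) \le C$ implies $\|d_\Delta(p_0,p_{km+r}) - d_\Delta(p_0, p_{km})\| \le rC \le mC$, the linear lower bound for the subsequence upgrades to a linear lower bound $\alpha(d_\Delta(p_0,p_j)) \ge \kappa_0 j - \kappa_0'$ for all $j$, after adjusting constants. Transporting back through the $\le 2C$ comparison between $d_\Delta(p_0, p_j)$ and $\mu(w_j)$, and noting $\alpha\circ\mu(w_i) \ge 0$ always (so the desired inequality $\alpha\circ\mu(w_{i+j}) - \alpha\circ\mu(w_i) \ge \kappa j - \kappa'$ follows from $\alpha\circ\mu(w_{i+j}) \ge \kappa(i+j) - \kappa'' \ge \kappa j - \kappa''$ together with an upper bound $\alpha\circ\mu(w_i) \le \|\mu(w_i)\| = d_X(o, w_i\cdot o) \le Ci$ — wait, we instead directly use that both $i$-indexed endpoints lie on $\mathbf F$ and apply the flat additivity to the block from $w_i\cdot o$ to $w_{i+j}\cdot o$ rather than from $o$), we obtain the stated uniform constants $\kappa, \kappa' > 0$. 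The cleanest route for the ``$-\alpha\circ\mu(w_i)$'' form is in fact to run the entire argument above with base point $p_i$ in place of $p_0$: the regulated condition is stated with an arbitrary starting index $i$, so nothing changes, and $\alpha(d_\Delta(p_i, p_{i+j})) \ge \kappa j - \kappa'$ directly gives, via \eqref{eqn: triangle} applied on $\mathbf F$ to the triple $(o\text{-ish}, p_i, p_{i+j})$ or simply via the $1$-Lipschitz comparison, the asserted bound on $\alpha\circ\mu(w_{i+j}) - \alpha\circ\mu(w_i)$ up to enlarging $\kappa'$ by a multiple of $C$.
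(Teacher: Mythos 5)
Your overall strategy (project the orbit to the maximal flat, exploit the Weyl-chamber structure via \eqref{eqn: triangle}, telescope over blocks of length $N(D)$, and clean up remainders with the $1$-Lipschitz bounds) is the same as the paper's, and the bookkeeping with the $2C$/$4C$ errors and the division $j=km+r$ is fine. But the central step fails: your ``key sublemma'' is false. Take $n=3$, $q=0$, $q'=(2t,0,-2t)$ and $q''=(2t,t,-3t)$, all in the fundamental chamber $\Delta$. Then $d_\Delta(q',q'')=(t,0,-t)$ has both simple-root values equal to $t\geq c_0$, yet $\alpha_1(d_\Delta(q,q''))-\alpha_1(d_\Delta(q,q'))=t-2t=-t$, which is negative and unbounded below as $t$ grows. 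The point is that \eqref{eqn: triangle} only gives $d_\Delta(q,q'')-d_\Delta(q,q')=\omega\cdot d_\Delta(q',q'')$ for some $\omega\in W$ that varies from block to block, and $\omega^{-1}\alpha$ may be a \emph{negative} root; after folding, the increment vectors do not lie in a salient cone on which $\alpha$ is nonnegative, so iterating your sublemma is unsound. What the chamber structure actually yields is only a lower bound on the \emph{absolute value}, $|\alpha(d_\Delta(q,q''))-\alpha(d_\Delta(q,q'))|\geq\alpha'(d_\Delta(q',q''))$ for \emph{some} simple root $\alpha'$ (because every root is an integer combination of simple roots with coefficients all of one sign); this is the paper's Lemma \ref{lem: div1}. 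Pinning down the sign requires a separate argument (Lemma \ref{lem: mnmmnL}): since $\mu$ takes values in $\Delta$ one always has $\alpha\circ\mu(w_{i+j})-\alpha\circ\mu(w_i)\geq-\alpha\circ\mu(w_i)$, while for $j$ large the absolute-value bound forces $|\alpha\circ\mu(w_{i+j})-\alpha\circ\mu(w_i)|\geq\alpha\circ\mu(w_i)$, so the difference is eventually nonnegative; if it were negative at some earlier $j_0$, a crossing index would exist where the difference jumps from negative to nonnegative by at most $\sqrt{2}C$, contradicting the uniform lower bound on its absolute value. Your proposal contains no substitute for this step.

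A secondary gap is your justification that the points $p_{km}$ all lie in one Weyl chamber based at $p_0$. Pigeonhole on the finitely many chambers only yields a subsequence (with unbounded gaps, and with a starting index depending on the particular sequence, which destroys the uniformity of $\kappa,\kappa'$ over $\mathcal{W}$), and ``each increment has all simple-root coordinates bounded below'' does not prevent the path from turning a corner, since the Weyl element giving the direction of each increment is unconstrained. The correct argument, which is the inductive claim in Lemma \ref{lem: div1}, uses the \emph{rooted} hypothesis together with regularity at base index $0$: for $i\geq N:=N(5\sqrt{2}C)$ every projected point $\widehat{w_i\cdot o}$ lies far enough from every chamber wall based at $\widehat{o}$ that the ball of radius $3C$ around it stays in its chamber, and consecutive projected points are within $3C$ of each other, so by induction none of them ever leaves the chamber containing $\widehat{w_N\cdot o}$.
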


\begin{remark}
Definition \ref{def: regulated} makes sense even when we replace $\PGL(V)$ with any semisimple Lie group of non-compact type. Proposition \ref{prop: other definition} also holds in this more general setting. Even though we write our proof only for $\PGL(V)$, our proof generalizes verbatim.
\end{remark}

\subsection{Proof of Proposition \ref{prop: other definition}}
We prove Proposition \ref{prop: other definition} via the following sequence of lemmas, each of which is an estimate required in the proof. 

\begin{lemma}\label{lem: div1}
Let $\mathcal{W}$ be a regulated and uniformly well-behaved family of rooted sequences in $\PGL(V)$. Then there exist a constant $A>0$ and an integer $N>0$ such that for every sequence $(w_i)_{i=0}^\infty\in\mathcal{W}$ and for each $\alpha\in\Delta$, there is some $\alpha'\in\Delta$ such that 
\[\left|\alpha\circ\mu(w_{i+j})-\alpha\circ\mu(w_i) \right| \geq \alpha'\circ\mu(w_i^{-1}w_{i+j})-A\]
for all integers $i\geq N$ and $j\geq 0$.
\end{lemma}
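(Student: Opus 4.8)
\emph{Reduction to a maximal flat.} Fix a sequence $(w_i)_{i=0}^\infty\in\mathcal W$ and put $p_i:=w_i\cdot o$; note $p_0=o$ since $\mathcal W$ is a family of rooted sequences. By the hypotheses there is a maximal flat $\mathbf F$ with $d_X(p_i,\mathbf F)\le C$ and $d_X(p_i,p_{i+1})\le C$ for all $i$. Let $q_i:=\pi_{\mathbf F}(p_i)$; since $\pi_{\mathbf F}$ is $1$-Lipschitz, $d_X(o,q_0)\le C$ and $d_X(q_i,q_{i+1})\le 3C$, and by \eqref{eqn:ddti} (together with $d_\Delta(o,g\cdot o)=\mu(g)$) each of $\alpha\circ\mu(w_i)$, $\alpha\circ\mu(w_{i+j})$, $\alpha\circ\mu(w_i^{-1}w_{i+j})$ differs from, respectively, $\alpha(d_\Delta(q_0,q_i))$, $\alpha(d_\Delta(q_0,q_{i+j}))$, $\alpha(d_\Delta(q_i,q_{i+j}))$ by at most a constant $c_1$ depending only on $C$ and $n$. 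Fixing an appropriate basis for $(\mathbf F,q_0)$ gives an isometry $\phi\colon\mathbb F^{n-1}\to\mathbf F$ with $\phi(0)=q_0$; writing $v_i:=\phi^{-1}(q_i)$ we have $v_0=0$, $\|v_{i+1}-v_i\|\le 3C$, and, by the description of appropriate bases in \S\ref{sec: flats}, $d_\Delta(q_a,q_b)$ is the representative in $\Delta$ of the $W$-orbit of $v_b-v_a$ for all $a,b$. So it suffices to prove the inequality with the three points replaced by $q_0,q_i,q_{i+j}$.

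\emph{The key step.} The plan is to show: there is $D>0$ depending only on $C$ and $n$ such that, with $N:=N(D)$ from the regulated hypothesis, for every $(w_i)_{i=0}^\infty\in\mathcal W$ there is an open Weyl chamber $\tau\cdot\Delta$ of $\mathbb F^{n-1}$ (with $\tau\in W$ allowed to depend on the sequence) satisfying $v_l\in\tau\cdot\Delta$ for all $l\ge N$ and $v_{i+j}-v_i\in\tau\cdot\Delta$ for all $i,j\ge N$. Granting this: for $i\ge N$, $j\ge 0$ with $i+j\ge N$ we get $d_\Delta(q_0,q_i)=\tau^{-1}v_i$, $d_\Delta(q_0,q_{i+j})=\tau^{-1}v_{i+j}$, and for $i,j\ge N$ also $d_\Delta(q_i,q_{i+j})=\tau^{-1}(v_{i+j}-v_i)$, whence
\[
d_\Delta(q_0,q_{i+j})-d_\Delta(q_0,q_i)=d_\Delta(q_i,q_{i+j}).
\]
Applying $\alpha$ and the comparisons of the first paragraph gives $|\alpha\circ\mu(w_{i+j})-\alpha\circ\mu(w_i)-\alpha\circ\mu(w_i^{-1}w_{i+j})|\le 3c_1$ for $i,j\ge N$, so the left side of the Lemma (an absolute value) is $\ge\alpha\circ\mu(w_i^{-1}w_{i+j})-3c_1$ with $\alpha'=\alpha$. (Even without the second assertion $v_{i+j}-v_i\in\tau\cdot\Delta$, one could use \eqref{eqn: triangle} to write $d_\Delta(q_0,q_{i+j})-d_\Delta(q_0,q_i)=\omega\cdot d_\Delta(q_{i+j},q_i)$ for some $\omega\in W$, then write whichever of $\pm(\alpha\circ\omega)$ is a positive root as a non-negative integer combination of the simple roots, keep one summand, and invoke \eqref{eq: opp}; this already gives the Lemma, with a simple root $\alpha'$ possibly depending on $i,j$ — the second assertion is what lets one take $\alpha'=\alpha$.)

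\emph{Proof of the key step.} For $l\ge N$, applying the regulated hypothesis to the pair $(0,l)$ (and $w_0=\id$) gives $\alpha\circ\mu(w_l)\ge D$ for all $\alpha\in\theta_0$, hence $\alpha(d_\Delta(q_0,q_l))\ge D-c_1$ for all $\alpha\in\theta_0$; a point of $\Delta$ on which every simple root is $\ge t$ lies at Euclidean distance $\ge c_0 t$ from every wall of every Weyl chamber, for a constant $c_0>0$ depending only on $n$, so the $W$-translate $v_l$ is $\ge c_0(D-c_1)$ deep inside the open chamber containing it. Choosing $D$ so that $c_0(D-c_1)>3C$ and using $\|v_{l+1}-v_l\|\le 3C$, that chamber does not change as $l$ increases past $N$; this is $\tau\cdot\Delta$, and $v_l\in\tau\cdot\Delta$ for all $l\ge N$. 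The same argument with base index $i\ge N$ shows that the chamber containing $v_{i+j}-v_i$ is independent of $j\ge N$. Finally, applying the regulated hypothesis to the pair $(0,i+j)$ with a large parameter $D'$ makes $v_{i+j}\in\tau\cdot\Delta$ arbitrarily deep as $j\to\infty$ while $v_i$ stays fixed, so $v_{i+j}-v_i\in\tau\cdot\Delta$ for $j$ large, forcing that chamber to be $\tau\cdot\Delta$ for all $j\ge N$.

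\emph{Remaining case and main obstacle.} With $\alpha'=\alpha$ and $N$ as above, the key step handles all $i,j\ge N$. For $i\ge N$ and $0\le j<N$ the inequality is trivial: $\alpha\circ\mu(w_i^{-1}w_{i+j})\le\|\alpha\|\,d_X(p_i,p_{i+j})\le\|\alpha\|\,CN$ by the triangle inequality and $d_X(p_m,p_{m+1})\le C$, while the left side is $\ge 0$. So $A:=\max\bigl(3c_1,\,CN\max_{\alpha\in\theta_0}\|\alpha\|\bigr)$ and $N=N(D)$ work uniformly over $\mathcal W$. The step I expect to be the main obstacle is the key step: a priori the projected orbit $(q_i)$ could wander around $\mathbf F$ (in dimension $\ge 2$, even spiral off to infinity), and it is precisely the combination of bounded steps (uniform well-behavedness), rootedness, and the regulated hypothesis — uniform escape deep into a single Weyl chamber — that pins $(v_l)$ down to one chamber for all large $l$ and, by the last paragraph above, makes the relevant Weyl group element disappear so that $\alpha'$ can be taken equal to $\alpha$.
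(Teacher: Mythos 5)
Your proof is correct and follows essentially the same route as the paper's: project the orbit onto the flat, use regulatedness plus bounded steps to trap the projected points in a single Weyl chamber for all $i\geq N$, and then compare the $d_\Delta$'s along the flat. The only refinement is that you additionally pin the increments $v_{i+j}-v_i$ into that same chamber so as to take $\alpha'=\alpha$ (handling $0\leq j<N$ by the trivial bound), whereas the paper stops at the triangle identity \eqref{eqn: triangle} and extracts a simple root $\alpha'$ from the root $\beta=\alpha\circ\omega$ via the sign-coherent decomposition into simple roots --- exactly the fallback you describe in your parenthetical.
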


\begin{proof}
Let $C>0$ be the constant that $\mathcal W$ is uniformly well-behaved with respect to, see Definition \ref{def: regulated}. Since $\mathcal{W}$ is regulated and every sequence $(w_i)_{i=0}^\infty$ in $\mathcal{W}$ is rooted, there is an integer $N>0$ such that for all $\alpha\in\Delta$ and all $i\geq N$,
\begin{align}\label{eqn: N inequality}
\alpha(d_\mathfrak a^+(o, w_i\cdot o))= \alpha\circ \mu(w_i)=\alpha\circ \mu(w_0^{-1}w_i)\geq 5\sqrt{2}C.
\end{align}

Pick a sequence $(w_i)_{i=0}^\infty\in\mathcal{W}$. Let ${\bf F}$ be a maximal flat in $X$ such that $d_X(w_i\cdot o,{\bf F})\leq C$ for all integers $i\geq 0$, let $\pi_{\mathbf F}:X\to\mathbf F$ be the closest point projection onto $\mathbf F$, and denote $\widehat{x}:=\pi_{\mathbf F}(x)$ for all $x\in X$. Then let $\mathfrak a^+_0$ be the Weyl chamber of $({\bf F},\widehat{o})$ that contains $\widehat{w_N\cdot o}$. We first prove the claim that $\widehat{w_i\cdot o}$ lies in $\mathfrak a^+_0$ for all $i\geq N$. 

Since $d_X(w_i\cdot o,\widehat{w_i\cdot o})\leq C$ for all integers $i\geq 0$, observe that for all integers $i,j\geq 0$, 
\begin{equation}\label{eqn: i,j}
\|d_\mathfrak a^+(w_j\cdot o,w_i\cdot o)-d_\mathfrak a^+(\widehat{w_j\cdot o},\widehat{w_i\cdot o})\|\leq d_X(w_j\cdot o,\widehat{w_j\cdot o})+d_X(w_i\cdot o,\widehat{w_i\cdot o})\leq 2C,
\end{equation}
where the first inequality is (\ref{eqn:ddti}). Hence, if we denote the supremum norm of a linear map $\alpha:\mathbb{R}^n\to\mathbb{R}$ by $\|\alpha\|$, then for all $\alpha\in\Delta$, we have
\begin{equation}\label{eqn: norm inequal}
|\alpha(d_\mathfrak a^+(w_j\cdot o,w_i\cdot o)-d_\mathfrak a^+(\widehat{w_j\cdot o},\widehat{w_i\cdot o}))|\leq \|\alpha\|2C=2\sqrt{2}C.
\end{equation}
Since $(w_i)_{i=0}^\infty$ is rooted, \eqref{eqn: N inequality} and (\ref{eqn: norm inequal}) imply that
\begin{align}\label{eqn:wilb}
\alpha(d_\mathfrak a^+(\widehat{ o}, \widehat{w_i\cdot o})) \geq \alpha(d_\mathfrak a^+(o, w_i\cdot o))-2\sqrt{2}C\geq 3\sqrt{2}C.
\end{align}
for all $i\geq N$ and all $\alpha\in\Delta$.

We prove the claim by induction on $i\geq N$. The base case when $i=N$ holds by the definition of $\mathfrak a^+_0$. For the inductive step, suppose that $\widehat{w_i\cdot o}$ lies in $\mathfrak a^+_0$. Then \eqref{eqn:wilb} implies that the distance between $\widehat{w_i\cdot o}$ and any face of $\mathfrak a^+_0$ is at least $3\sqrt{2}C$. In particular, the ball in ${\bf F}$ of radius $3C$ centered at $\widehat{w_i\cdot o}$ is contained in $\mathfrak a^+_0$. Then observe that
\begin{align*}
d_X(\widehat{w_{i}\cdot o},\widehat{w_{i+1}\cdot o}) \leq d_X(\widehat{w_{i}\cdot o},w_{i}\cdot o)+d_X(w_{i}\cdot o,w_{i+1}\cdot o)+d_X(\widehat{w_{i+1}\cdot o},w_{i+1}\cdot o)\leq 3C,
\end{align*}
so $\widehat{w_{i+1}\cdot o}$ lies in $\mathfrak a^+_0$ as well. This proves the claim.

Next, let $v_{i,j}:=d_\mathfrak a^+(o,w_{i+j}\cdot o)-d_\mathfrak a^+(o,w_i\cdot o)$ and $\widehat{v}_{i,j}:=d_\mathfrak a^+(\widehat{o},\widehat{w_{i+j}\cdot o})-d_\mathfrak a^+(\widehat{o},\widehat{w_i\cdot o})$ for all integers $i,j\geq 0$. By the claim, $\widehat{w_i\cdot o}$ and $\widehat{w_{i+j}\cdot o}$ lie in $\mathfrak a^+_0$ for all $i\geq N$ and $j\geq 0$. Hence, we may apply (\ref{eqn: triangle}) to deduce that 
\begin{equation*}
\widehat{v}_{i,j}=\omega\cdot d_\mathfrak a^+(\widehat{w_i\cdot o},\widehat{w_{i+j}\cdot o})
\end{equation*}
for some $\omega$ in the Weyl group of $\PGL(V)$. Recall that the Weyl group action on $(\R^n)^*$ leaves the set of roots $\Phi$ of $\PGL(V)$ invariant. Thus, for every $\alpha\in\Delta$, there is a root $\beta \in \Phi$ such that 
\[\alpha(\widehat{v}_{i,j}) = \beta(d_\mathfrak a^+(\widehat{w_i\cdot o},\widehat{w_{i+j}\cdot o})).\] Since $\displaystyle\beta=\sum_{\epsilon\in\Delta} c_{\beta,\epsilon} \epsilon$, where $c_{\beta,\epsilon}$ are either all non-negative integers (when $\beta$ is a positive root) or all non-positive integers (when $\beta$ is a negative root), we have that
\begin{align}\label{eqn:app2}
|\alpha(\widehat{v}_{i,j})|&= | \beta(d_\mathfrak a^+(\widehat{w_i\cdot o},\widehat{w_{i+j}\cdot o}) )| \nonumber\\
 &= \sum_{\epsilon\in\Delta} |c_{\beta,\epsilon}| \epsilon(d_\mathfrak a^+(\widehat{w_i\cdot o},\widehat{w_{i+j}\cdot o})) \\
& \geq \alpha'(d_\mathfrak a^+(\widehat{w_i\cdot o},\widehat{w_{i+j}\cdot o})), \nonumber 
\end{align}
where $\alpha'\in\Delta$ is a simple root with the property that $c_{\beta,\alpha'}\neq 0$. The second equality holds because $d_\mathfrak a^+(\widehat{w_i\cdot o},\widehat{w_{i+j}\cdot o})$ lies in $\mathfrak a^+$, so $\epsilon(d_\mathfrak a^+(\widehat{w_i\cdot o},\widehat{w_{i+j}\cdot o}))\geq 0$ for all $\epsilon\in\Delta$. 

By (\ref{eqn: i,j}),
\begin{align*}
\|v_{i,j}-\widehat{v}_{i,j}\|&\leq\|d_\mathfrak a^+(o,w_{i+j}\cdot o)-d_\mathfrak a^+(\widehat{o},\widehat{w_{i+j}\cdot o})\|+\|d_\mathfrak a^+(o,w_i\cdot o)-d_\mathfrak a^+(\widehat{o},\widehat{w_i\cdot o})\|\leq 4C.
\end{align*}
It follows that for all $\alpha\in\Delta$, and all integers $i,j\geq 0$, we have
\begin{align}\label{eqn:gnmgnapp}
|\alpha(v_{i,j}-\widehat{v}_{i,j})| \leq \|\alpha\|4C=4\sqrt{2}C.
\end{align}
Combining (\ref{eqn: norm inequal}), (\ref{eqn:app2}), and (\ref{eqn:gnmgnapp}) gives
\[\left|\alpha(v_{i,j}) \right|  \geq \left|  \alpha( \widehat{v}_{i,j})\right| -4\sqrt{2}C \geq \alpha'( d_\mathfrak a^+(\widehat{w_i\cdot o},\widehat{w_{i+j}\cdot o}))-4\sqrt{2}C \geq \alpha'(d_\mathfrak a^+(w_i\cdot o,w_{i+j}\cdot o))-6\sqrt{2}C.\]
This implies that 
\[\left|\alpha\circ\mu(w_{i+j})-\alpha\circ\mu(w_i) \right|=|\alpha(v_{i,j})| \geq \alpha'(\mu(w_i^{-1}w_{i+j}))-6\sqrt{2}C\]
for all integers $i\geq N$ and $j\geq 0$. Set $A=6\sqrt{2}C$.
\end{proof}

\begin{lemma}\label{lem: mnmmnL}
Let $\mathcal{W}$ be a regulated and uniformly well-behaved family of rooted sequences in $\PGL(V)$. 
Let $N$ be the constant of Lemma \ref{lem: div1}.
For any constant $L>0$, there is an integer $M=M(L)>0$ such that 
\[\alpha\circ\mu(w_{i+j})-\alpha\circ\mu(w_i) \geq L\]
for any sequence $(w_i)_{i=0}^\infty$ in $\mathcal{W}$, any integers $i\geq N$ and $j\geq M$, and any $\alpha\in\Delta$.
\end{lemma}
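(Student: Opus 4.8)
The plan is to combine Lemma \ref{lem: div1} with the fact that $\mathcal{W}$ is regulated, and to resolve the absolute value appearing in Lemma \ref{lem: div1} by a monotonicity argument that forces the correct sign. Fix $L>0$, and let $A>0$ and $N>0$ be the constants produced by Lemma \ref{lem: div1}. Since $\mathcal{W}$ is regulated, there is an integer $N_0:=N(L+A)$ such that $\alpha'\circ\mu(w_i^{-1}w_{i+j})\geq L+A$ for all sequences $(w_i)$ in $\mathcal{W}$, all $i\geq 0$, all $j\geq N_0$, and all $\alpha'\in\theta_0$. Combining this with Lemma \ref{lem: div1}, for every $\alpha\in\theta_0$ there is $\alpha'\in\theta_0$ with
\[
\bigl|\alpha\circ\mu(w_{i+j})-\alpha\circ\mu(w_i)\bigr|\geq \alpha'\circ\mu(w_i^{-1}w_{i+j})-A\geq L
\]
for all $i\geq N$ and $j\geq N_0$. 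So the quantity $\alpha\circ\mu(w_{i+j})-\alpha\circ\mu(w_i)$ has absolute value at least $L$; the remaining task is to show it is positive (for $j$ at least some possibly larger threshold $M$), not negative.

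The main obstacle is exactly this sign issue: a priori the orbit $(w_i\cdot o)$ could, after reaching distance $\geq 5\sqrt2 C$ from $o$ in the sense of \eqref{eqn: N inequality}, drift so that some $\alpha\circ\mu(w_{i+j})$ is much \emph{smaller} than $\alpha\circ\mu(w_i)$. To rule this out I would exploit that, by the claim inside the proof of Lemma \ref{lem: div1}, the projections $\widehat{w_i\cdot o}$ all lie in the single Weyl chamber $\Delta_0$ of $({\bf F},\widehat o)$ for $i\geq N$, and that consecutive projections move by at most $3C$. Thus along $i\mapsto \widehat{w_i\cdot o}$ we are tracking a path inside a fixed Euclidean Weyl chamber that, by \eqref{eqn:wilb}, stays at distance $\geq 3\sqrt2 C$ from every wall once $i\geq N$. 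On such a path the function $\alpha(d_\Delta(\widehat o,\widehat{w_i\cdot o}))=\alpha(\widehat{w_i\cdot o}-\widehat o)$ (a genuine linear functional, since everything sits in one chamber) changes by at most $\|\alpha\|\cdot 3C=3\sqrt2 C$ per step. Given the lower bound $|\alpha(\widehat v_{i,j})|\geq L-(\text{bounded error})$ for $j$ large, and continuity/discreteness of the step sizes, the sign of $\alpha(\widehat v_{i,j})$ cannot flip between two values of $j$ differing by $1$ once the magnitude exceeds $3\sqrt2 C$; hence the sign of $\alpha(\widehat v_{i,j})$ is eventually constant in $j$, and because $\alpha(\widehat v_{i,j})\to\infty$ in absolute value while $\alpha(\widehat v_{i,0})=0$, that eventual sign must be $+$.

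Concretely, I would argue: pick $M_0$ large enough (using that $\mathcal{W}$ is regulated and Lemma \ref{lem: div1}) so that $|\alpha(\widehat v_{i,j})|> 3\sqrt2 C$ for all $i\geq N$ and $j\geq M_0$; then for such $j$ the increments $\alpha(\widehat v_{i,j+1})-\alpha(\widehat v_{i,j})=\alpha(\widehat{w_{i+j+1}\cdot o}-\widehat{w_{i+j}\cdot o})$ have absolute value $\leq 3\sqrt2 C<|\alpha(\widehat v_{i,j})|$, so $\alpha(\widehat v_{i,j})$ and $\alpha(\widehat v_{i,j+1})$ have the same sign; since $\alpha(\widehat v_{i,j})\to\infty$ as $j\to\infty$ (by regulatedness applied through Lemma \ref{lem: div1}), that common sign is $+$, giving $\alpha(\widehat v_{i,j})\geq 3\sqrt2 C$ for all $j\geq M_0$. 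Finally transfer back to $v_{i,j}$ via the estimate $\|v_{i,j}-\widehat v_{i,j}\|\leq 4C$ from the proof of Lemma \ref{lem: div1}, i.e. $|\alpha(v_{i,j})-\alpha(\widehat v_{i,j})|\leq 4\sqrt2 C$, and choose $M=M(L)$ large enough via Lemma \ref{lem: div1} and regulatedness that $\alpha(\widehat v_{i,j})\geq L+4\sqrt2 C$ for all $i\geq N$, $j\geq M$; combined with the established positivity this yields $\alpha\circ\mu(w_{i+j})-\alpha\circ\mu(w_i)=\alpha(v_{i,j})\geq L$ for all $i\geq N$ and $j\geq M$, as desired.
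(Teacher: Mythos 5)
Your overall strategy is sound and close in spirit to the paper's: first use Lemma \ref{lem: div1} together with regulatedness to force $|\alpha\circ\mu(w_{i+j})-\alpha\circ\mu(w_i)|\geq L$ for $j$ large (your direct derivation of this is fine, and arguably cleaner than the paper's proof by contradiction), and then rule out the negative sign by observing that the quantity moves in steps of bounded size and therefore cannot change sign once its magnitude exceeds the step size. The paper implements the second half slightly differently — it works directly with $\alpha\circ\mu(w_{i+j})-\alpha\circ\mu(w_i)$ and derives a contradiction from a hypothetical transition from a negative to a nonnegative value — but that difference is cosmetic.

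There is, however, one genuine gap: the justification that the eventual constant sign of $\alpha(\widehat{v}_{i,j})$ is $+$ rather than $-$. You assert this "since $\alpha(\widehat{v}_{i,j})\to\infty$ as $j\to\infty$ (by regulatedness applied through Lemma \ref{lem: div1})", but regulatedness concerns $\alpha'\circ\mu(w_i^{-1}w_{i+j})$, and Lemma \ref{lem: div1} transfers this only to a lower bound on the \emph{absolute value} $|\alpha\circ\mu(w_{i+j})-\alpha\circ\mu(w_i)|$; the whole reason the lemma is stated with an absolute value is that the sign is not controlled. A sequence starting at $0$, with increments bounded by $3\sqrt{2}C$, eventually of constant sign and with absolute value tending to infinity, could perfectly well tend to $-\infty$, so neither of your two stated reasons ($\alpha(\widehat{v}_{i,0})=0$, or divergence of $|\alpha(\widehat{v}_{i,j})|$) excludes that possibility. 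The missing ingredient is that Cartan projections lie in the closed fundamental Weyl chamber, so $\alpha\circ\mu(w_{i+j})\geq 0$ and hence $\alpha(v_{i,j})\geq-\alpha\circ\mu(w_i)$ (equivalently $\alpha(\widehat{v}_{i,j})\geq-\alpha(d_\Delta(\widehat{o},\widehat{w_i\cdot o}))$), a lower bound independent of $j$; this is exactly inequality \eqref{eqn: upper} in the paper's proof. Combined with your constant-sign observation and the divergence of the absolute value, this bound does force the eventual sign to be $+$. With that one line added, your argument goes through.
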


\begin{proof}
Let $C>0$ be the constant that $\mathcal W$ is uniformly well-behaved with respect to, see Definition \ref{def: regulated}. First, we prove that there is an integer $M'=M'(L)>0$ such that 
\[|\alpha\circ\mu(w_{i+j})-\alpha\circ\mu(w_i)|\geq L\]
for any sequence $(w_i)_{i=0}^\infty$ in $\mathcal{W}$, any integers $i\geq N$ and $j\geq M'$, and any $\alpha\in\Delta$. 

Suppose for contradiction that this is not the case. Then for any integer $l>0$, there is a sequence $(w_{l,i})_{i=0}^\infty$ in $\mathcal{W}$, and integers $i_l\geq N$ and $j_l\geq l$, such that 
\[\left|\alpha\circ\mu(w_{l,i_l+j_l})-\alpha\circ\mu(w_{l,i_l}) \right|< L\]
for some fixed $\alpha\in\Delta$. By Lemma \ref{lem: div1}, there is some $\alpha'\in\Delta$ such that for all $l> 0$,
\[\left|\alpha\circ\mu(w_{l,i_l+j_l})-\alpha\circ\mu(w_{l,i_l}) \right|\geq \alpha'(\mu(w_{l,i_l}^{-1}w_{l,i_l+j_l}))-A,\]
which implies that
\[\alpha'(\mu(w_{l,i_l}^{-1}w_{l,i_l+j_l}))< L+A.\]
Since $\displaystyle\lim_{l\to\infty}j_l=\infty$, this contradicts the assumption that $\mathcal{W}$ is regulated.

By specializing to the case when $L=\sqrt{2}C+1$, the previous paragraph implies that there is an integer $M''>0$ such that
\begin{equation}\label{eqn: contradiction}
|\alpha\circ\mu(w_{i+j})-\alpha\circ\mu(w_i) |\geq \sqrt{2}C+1
\end{equation}
for any sequence $(w_i)_{i=0}^\infty$ in $\mathcal{W}$, any integers $i\geq N$ and $j\geq M''$, and any $\alpha\in\Delta$.
It now suffices to prove that
\[\alpha\circ\mu(w_{i+j})-\alpha\circ\mu(w_i)\geq 0\] 
for any sequence $(w_i)_{i=1}^\infty$ in $\mathcal{W}$, any integers $i\geq N$ and $j\geq M''$, and any $\alpha\in\Delta$; indeed, we simply set $M:=\max\{M',M''\}$. 

Suppose for contradiction that this is not the case. Then there exists a sequence $(w_i)_{i=0}^\infty$ in $\mathcal{W}$, integers $i_0\geq N$ and $j_0\geq M''$, and some $\alpha\in\Delta$ such that 
\begin{equation}\label{eqn: >0}
\alpha\circ\mu(w_{i_0+j_0})-\alpha\circ\mu(w_{i_0})<0.\
\end{equation}
By the first paragraph of this proof, there is an integer $K>0$ such that
\begin{align}\label{eqn: lower}
\left|\alpha\circ\mu(w_{i_0+j})-\alpha\circ\mu(w_{i_0}) \right| \geq \alpha\circ\mu(w_{i_0}).
\end{align}
for any integer $j\geq K$. On the other hand, since $\mu(w_{i_0+j})$ also lies in $\mathfrak a^+$, it follows that
\begin{align}\label{eqn: upper}
\alpha\circ\mu(w_{i_0+j})-\alpha\circ\mu(w_{i_0})\geq- \alpha\circ\mu(w_{i_0})
\end{align}
for any integers $j\geq 0$. The inequalities (\ref{eqn: lower}) and (\ref{eqn: upper}) together imply that
\begin{equation}\label{eqn: <0}
\alpha\circ\mu(w_{i_0+j})-\alpha\circ\mu(w_{i_0}) \geq \alpha\circ\mu(w_{i_0})\geq 0
\end{equation}
for any integers $j\geq K$. 

From (\ref{eqn: >0}) and (\ref{eqn: <0}), one then deduces that there exists some integer $j\geq j_0$ such that $\alpha\circ\mu(w_{i_0+j}) -\alpha\circ\mu(w_{i_0})<0$ but $\alpha\circ\mu(w_{i_0+j+1})-\alpha\circ\mu(w_{i_0})\geq 0$. Hence, 
\[0\leq \alpha\circ \mu(w_{i_0+j+1})-\alpha\circ\mu(w_{i_0}) <\alpha\circ \mu(w_{i_0+j+1})-  \alpha\circ\mu(w_{i_0+j}),\] 
which implies that
\[\alpha\circ \mu(w_{i_0+j+1})-\alpha\circ\mu(w_{i_0}) \leq  \|\alpha\| \| \mu(w_{i_0+j+1})-\mu(w_{i_0+j})\|.\]
Since 
\begin{align*}
\| \mu(w_{i_0+j+1}) - \mu (w_{i_0+j}) \| &=\| d_\mathfrak a^+(o, w_{i_0+j+1}\cdot o)- d_\mathfrak a^+(o,w_{i_0+j} \cdot o) \|\\
& \leq d_X(o,o)+d_X(w_{i_0+j+1}\cdot o, w_{i_0+j} \cdot o) \leq C,
\end{align*}
we see that
\[0\leq \alpha\circ \mu(w_{i_0+j+1})-\alpha\circ\mu(w_{i_0}) \leq \sqrt{2}C,\]
which contradicts (\ref{eqn: contradiction}) because $i_o\geq N$ and $j\geq j_0\geq M''$.
\end{proof}

\begin{proof}[Proof of Proposition \ref{prop: other definition}]
Let $N$ be the constant of Lemma \ref{lem: div1}, and let $C>0$ be the constant that $\mathcal W$ is uniformly well-behaved with respect to, see Definition \ref{def: regulated}. By Lemma \ref{lem: mnmmnL}, there is an integer $M>0$ such that 
\begin{equation}\label{eqn: main final 1}
\alpha\circ\mu(w_{i+M})-\alpha\circ\mu(w_i)\geq 1
\end{equation}
for all integers $i\geq N$ and all $\alpha\in\Delta$. Also, for all integers $i,j\geq 0$, we have
\begin{align*}
\| \mu(w_{i+j})-\mu(w_i) \|&\leq\sum_{l=0}^{j-1}\| \mu(w_{i+l+1})-\mu(w_{i+l}) \| =\sum_{l=0}^{j-1}\| d_\mathfrak a^+(o, w_{i+l+1}\cdot o)- d_\mathfrak a^+(o,w_{i+l} \cdot o) \|\\
& \leq \sum_{l=0}^{j-1}(d_X(o,o)+d_X(w_{i+l+1}\cdot o, w_{i+l} \cdot o))\leq jC.
\end{align*}
This implies that
\begin{align}\label{eqn: main final 2}
|\alpha\circ \mu(w_{i+j})-\alpha\circ\mu(w_i)|  \leq \sqrt{2}jC
\end{align}
for all $\alpha\in\Delta$. 

To finish the proof, we will prove that
\begin{align*}
\alpha\circ \mu  (w_{i+j})-\alpha\circ\mu(w_i)\geq \frac{j}{M}-\frac{M+N}{M}- \sqrt{2}C(M+N). 
\end{align*}
The proposition follows by setting $\kappa:=\frac{1}{M}$ and $\kappa':=\frac{M+N}{M}+ \sqrt{2}C(M+N)$. We proceed in three cases; when $i\geq N$, when $i< N$ and $i+j\geq N$, and when $i+j<N$.

When $i\geq N$, let $r\geq 0$ be the largest integer such that $Mr\leq j$. Then $j-Mr\leq M$, so \eqref{eqn: main final 1} and \eqref{eqn: main final 2} imply that
\begin{align*}
\alpha\circ\mu(w_{i+j})  -\alpha\circ\mu(w_i)&= \alpha\left(\mu(w_{i+j})-\mu(w_{i+Mr})+\sum_{p=1}^{r} (\mu(w_{i+Mp})-\mu(w_{i+M(p-1)}))\right) \\
&\geq r - \sqrt{2}(j-Mr)C\geq\frac{j}{M}-1 - \sqrt{2}MC\\
&\geq \frac{j}{M}-\frac{M+N}{M}- \sqrt{2}C(M+N).
\end{align*}
When $0\leq i<N$ and $i+j\geq N$, let $r$ be the largest integer such that $Mr\leq i+j-N$. Then $i+j-N-Mr\leq M$, so \eqref{eqn: main final 1} and \eqref{eqn: main final 2} imply that
\begin{align*}
\alpha\circ \mu & (w_{i+j})-\alpha\circ\mu(w_i)\\
&=\alpha\left(\mu(w_{i+j})-\mu(w_{N+Mr})+\sum_{p=1}^{r} (\mu(w_{N+Mp})-\mu(w_{N+M(p-1)}))+\mu(w_N)-\mu(w_i)\right) \\
&\geq\,\, r - \sqrt{2}(i+j-N-Mr)C-\sqrt{2}(N-i)C \geq \frac{i+j-M-N}{M}-\sqrt{2}C(M+N)\\
&\geq \frac{j}{M}-\frac{M+N}{M}-\sqrt{2}C(M+N). 
\end{align*}
Finally, when $i+j\leq N$, then $j\leq N$, so \eqref{eqn: main final 2} implies
\begin{align*}
\alpha\circ \mu (w_{i+j})-\alpha\circ\mu(w_i)&\geq-\sqrt{2}jC\geq\frac{j-N}{M}-\sqrt{2}jC\\
&\geq\frac{j}{M}-\frac{N}{M}-\sqrt{2}NC\\
&\geq \frac{j}{M}-\frac{M+N}{M}-\sqrt{2}C(M+N).\qedhere
\end{align*}
\end{proof}

\subsection{Proof of Theorem \ref{thm: main 2}}

Using Proposition \ref{prop: other definition}, we prove Theorem \ref{thm: main 2}.

\begin{proof}[Proof of Theorem \ref{thm: main 2}]
First, we show that
\[\mathcal{W}_{R,\Lambda}:=\{(\rho(\eta_i))_{i=0}^\infty:(\eta_i)_{i=0}^\infty\text{ is a }(R,\Lambda)\text{-directed sequence in }\Gamma\}\]
is a regulated and uniformly well-behaved collection of sequences in $\PGL(V)$. 

By the first hypothesis, there is some $C>0$ such that for every $(R,\Lambda)$-directed sequence $(\eta_i)_{i=0}^\infty$ in $\Gamma$, the sequence $(\rho(\eta_i)\cdot o)_{i=0}^\infty$ in $X$ is $C$-bounded from a maximal flat in $X$. By increasing $C$ if necessary, we may assume that 
\[C\geq\max\{d_X(o,\rho(\gamma)\cdot o):\gamma\in R(A)\}.\] 
Then
\[d_X(\rho(\eta_i)\cdot o,\rho(\eta_{i+1})\cdot o)=d_X(o,\rho(\eta_i^{-1}\eta_{i+1})\cdot o)\leq C\]
for any integer $i\geq 0$ and any sequence $(\eta_i)_{i=0}^\infty$ in $\mathcal{W}_{R,\Lambda}$. Thus, $\mathcal{W}_{R,\Lambda}$ is uniformly well-behaved.

Next, we show that $\mathcal{W}_{R,\Lambda}$ is regulated. Suppose for contradiction that it is not. Then there is some $D>0$ with the property that for every integer $i\geq 0$, there is a sequence $(\rho(\eta_{i,l}))_{l=0}^\infty$ in $\mathcal{W}_{R,\Lambda}$ and integers $l_i\geq 0$ and $k_i\geq i$ such that 
\[\alpha\circ\mu(\rho(\eta_{i,l_i}^{-1}\eta_{i,l_i+k_i}))< D\]
for some fixed $\alpha\in\Delta$. At the same time, for each $i\geq 0$, $\eta_{i,l_i}^{-1}\eta_{i,l_i+k_i}$ lies along the $(R,\Lambda)$-directed sequence $(\eta_{i,l_i}^{-1}\eta_{i,l_i+k})_{k=0}^\infty$, which contradicts the second hypothesis of the theorem.

Since $\mathcal{W}_{R,\Lambda}$ is regulated and uniformly well-behaved, Proposition \ref{prop: other definition} implies that there are constants $\kappa,\kappa'>0$ such that 
\[\alpha\circ\mu(\rho(\eta_{i+j}))- \alpha\circ\mu(\rho(\eta_i))\geq \kappa j-\kappa'.\]
for all $(R,\Lambda)$-directed sequences in $\Gamma$, all integers $i,j\geq 0$ and all $\alpha\in\Delta$. Specializing to the case when $i=0$ gives the theorem.
\end{proof}

\section{Positive flags in $\Fc(V)$.} \label{sec: positive}

In this section, we recall the notion of positivity for tuples of flags in $\mathcal{F}(V)$. This allows us to define the open sets of the form \eqref{eqn: open sets}, which we use later to define weakly positive representations. The main goal of this section is to prove Theorem \ref{thm: proj intro}, which is restated below as Theorem \ref{thm: general k}.

In Section \ref{sec: projective geometry} and Section \ref{sec: positivity}, we recall some basic facts from projective geometry and Fock-Goncharov positivity, before restating Theorem \ref{thm: general k} in Section \ref{sec: open sets}. The remainder of the section is the proof of Theorem \ref{thm: general k}.

\subsection{Projective geometry}\label{sec: projective geometry}
Let $\Pb(V)$ denote the \emph{projectivization of $V$}, i.e. $\Pb(V):=(V\setminus\{0\})/\R^\times$. If $W\subset V$ is a non-zero subspace, then $\Pb(W)$ is naturally a subset of $\Pb(V)$. We refer to all such subsets as \emph{projective subspaces} of $V$. In the case when $\Pb(W)$ is $1$-dimensional (resp. $(n-2)$-dimensional), $\Pb(W)$ is a \emph{projective line} (resp. \emph{projective hyperplane}) in $\Pb(V)$. 

Let $(p_1,\dots,p_n)$ be an $n$-tuple of points in $\Pb(V)$ that do not lie in a projective hyperplane. For all $k=1,\dots,n$, there is a unique projective hyperplane $H_k\in\Gr_{n-1}(V)$ that contains $\{p_1,\dots,p_{k-1},p_{k+1},\dots,p_n\}$. Then $\displaystyle p_k=\bigcap_{j\neq k}H_j$, and
\[\displaystyle\Pb(V)\setminus\left(\bigcup_{k=1}^nH_k\right)\] 
is a disconnected open set with $2^{n-1}$ connected components, each of which is called a \emph{simplex with vertices $p_1,\dots,p_n$.} 

For all $k=1,\dots,n-1$, let $\Gr_k(V)$ denote the space of $k$-dimensional linear subspaces of $V$. Then $\Gr_k(V)$ is canonically identified with the set of $(k-1)$-dimensional projective subspaces of $\Pb(V)$, so we may think of a point in $\Gr_k(V)$ both as a linear subspace of $V$ and as a projective subspace of $\Pb(V)$. 

A (complete) \emph{flag} in $V$ is a nested sequence of subspaces in $V$, one of each dimension. Let $\Fc(V)$ denote the space of flags in $V$. For all $k=1,\dots,n-1$, there is an obvious projection $\Theta_k:\Fc(V)\to\Gr_k(V)$. If $F$ is a flag in $\Fc(V)$, we denote $F^{(k)}:=\Theta_k(F)$ when $k=1,\dots,n-1$, $F^{(0)}:=\{0\}$, and $F^{(n)}:=V$. A finite collection of flags $\{F_1,\dots,F_l\}$ in $\Fc(V)$ is in \emph{general position} if $F_1^{(m_1)}+\dots+F_l^{(m_l)}$ is a direct sum for all integers $m_1,\dots,m_l\geq 0$ such that $\displaystyle\sum_{j=1}^lm_j\leq n$. If a pair of flags $\{F_1,F_2\}$ is in general position, we also say that they are \emph{transverse}. 

In this article, we will mainly be interested in simplices whose vertices come from a pair of transverse flags. For convenience, we use the following terminology.

\begin{definition}
For any transverse pair of flags $\{F_1,F_2\}$, let $p_k:=F_1^{(k)}\cap F_2^{(n-k+1)}$ for all $k=1,\dots,n$. We say that a simplex is \emph{associated to $\{F_1,F_2\}$} if its vertices are $p_1,\dots,p_n$. 
\end{definition}

If $g\in\PGL(V)$ is a loxodromic element whose eigenvalues are all of the same sign, then we say that $g$ is \emph{positive loxodromic}. The following observation is immediate.

\begin{observation}\label{obs: lox}
An element $g\in\PGL(V)$ is loxodromic if and only if $g$ has a unique attracting and repelling fixed point in $\Fc(V)$, which we denote by $g_+$ and $g_-$ respectively. In this case, $\{g_+,g_-\}$ is a transverse pair of flags. Furthermore, $g$ is positive loxodromic if and only if $g$ leaves some (equiv. every) simplex associated to $\{g_+,g_-\}$ invariant.
\end{observation}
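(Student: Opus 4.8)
The plan is to prove Observation~\ref{obs: lox} by unwinding the definitions of loxodromic and positive loxodromic elements in terms of a diagonalizing basis, and then relating the invariant simplices to eigenvalue signs. Let $g \in \PGL(V)$ and let $\bar g \in \GL(V)$ be a representative. Recall that $g$ is loxodromic means $\bar g$ is diagonalizable over $\R$ with all eigenvalue absolute values distinct, i.e. $\lambda_1(\bar g) > \lambda_2(\bar g) > \dots > \lambda_n(\bar g) > 0$. The first step is to establish the equivalence with having a unique attracting and repelling fixed point in $\Fc(V)$: if $\bar g$ has distinct-modulus real eigenvalues with eigenvectors $f_1, \dots, f_n$ (ordered by decreasing modulus), then $g_+$ is the flag with $g_+^{(k)} = \Span(f_1, \dots, f_k)$ and $g_-$ is the flag with $g_-^{(k)} = \Span(f_n, \dots, f_{n-k+1})$; one checks by the standard power-iteration / Grassmannian-dynamics argument that $g^m \cdot F \to g_+$ for $F$ transverse to $g_-$, and that these are the only fixed points with the attracting/repelling dichotomy. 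Conversely, if $g$ has a unique attracting and a unique repelling fixed flag, a short argument (e.g. via the action on $\Gr_1$ and $\Gr_{n-1}$, or via Jordan form) forces $\bar g$ to be $\R$-diagonalizable with distinct moduli. The transversality of $\{g_+, g_-\}$ is immediate since their defining subspaces come from complementary subsets of the eigenbasis: $g_+^{(k)} + g_-^{(n-k)} = \Span(f_1,\dots,f_k) \oplus \Span(f_n, \dots, f_{k+1}) = V$.

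For the second part, fix the eigenbasis $(f_1, \dots, f_n)$ as above with $\bar g f_i = \mu_i f_i$, $|\mu_1| > \dots > |\mu_n|$. The vertices of a simplex associated to $\{g_+, g_-\}$ are $p_k = g_+^{(k)} \cap g_-^{(n-k+1)} = \Span(f_k)$, so the lines through the $p_k$ are exactly the coordinate axes of the eigenbasis, and $\bar g$ fixes each $p_k$. The $2^{n-1}$ simplices associated to $\{g_+, g_-\}$ are the connected components of $\Pb(V)$ minus the coordinate hyperplanes $H_k = \Span(f_j : j \neq k)$; concretely, writing a point as $[x_1 : \dots : x_n]$, these components are cut out by the signs of $x_1, \dots, x_n$ (up to overall sign), i.e. each simplex corresponds to a choice of signs $(\epsilon_1, \dots, \epsilon_n) \in \{\pm 1\}^n / \{\pm 1\}$. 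Now $\bar g \cdot [x_1 : \dots : x_n] = [\mu_1 x_1 : \dots : \mu_n x_n]$, so $\bar g$ preserves the sign pattern of the coordinates precisely when all $\mu_i$ have the same sign, and in that case $\bar g$ fixes \emph{every} simplex associated to $\{g_+,g_-\}$; if the $\mu_i$ do not all share a sign, then $\bar g$ maps some simplex to a different one (it permutes them according to the sign vector $(\mathrm{sgn}\,\mu_1, \dots, \mathrm{sgn}\,\mu_n)$, which is nontrivial in $\{\pm1\}^n/\{\pm1\}$). This gives the ``some (equiv.\ every)'' clause: since $\bar g$ acts on the finite set of associated simplices by a translation in $\{\pm1\}^n/\{\pm1\}$, fixing one is the same as fixing all.

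The only real subtlety — and the step I expect to need the most care — is the converse direction of the first equivalence: deducing $\R$-diagonalizability with distinct moduli from the mere existence of a unique attracting and a unique repelling fixed flag. One clean way is: uniqueness of the attracting fixed point of the induced map on $\Gr_1(V) = \Pb(V)$ together with it being attracting forces $\mu_1$ to be a simple real eigenvalue strictly dominating all others in modulus (Perron–Frobenius-style, but here just linear algebra on the Jordan form), and similarly applied to $\bar g^{-1}$ and to the induced maps on each $\Gr_k$; iterating over $k$ pins down the full flag structure and forces each $|\mu_k|$ to be simple and strictly between $|\mu_{k-1}|$ and $|\mu_{k+1}|$. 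Since this is a standard fact about proximal/loxodromic elements, I would likely cite it or give only a brief indication rather than a full argument. Everything else is bookkeeping with the eigenbasis.
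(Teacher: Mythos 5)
Your argument is correct; note that the paper offers no proof at all (it declares the observation ``immediate''), so your write-up simply supplies the standard verification the authors had in mind: power iteration in the eigenbasis for the attracting/repelling dichotomy, the standard proximality characterization for the converse, and the action on sign vectors in $\{\pm1\}^n/\{\pm1\}$ for the simplex statement. The one step you flag yourself --- deducing loxodromicity from the existence of an attracting and a repelling fixed flag --- is indeed the only place requiring an argument beyond bookkeeping, and citing the standard fact that an attracting fixed point in $\Gr_k(V)$ forces $\lambda_k>\lambda_{k+1}$ (via the Pl\"ucker embedding and proximality of $\wedge^k\bar g$) is an acceptable way to close it.
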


\subsubsection{Cross ratio}\label{sec: cross ratio}

For any integer $k$ such that $1\leq k\leq n-1$, define 
\[\mathfrak{Q}_k(V):=\{(U_1,U_2,W_1,W_2)\in\Gr_{n-k}(V)^2\times\Gr_k(V)^2:U_i+W_j=V \text{ for all }i,j=1,2\}.\] 

\begin{definition}
The $k$-th \emph{cross ratio} is the function $C_k:\mathfrak{Q}_k(V)\to\R\setminus\{0\}$ defined as follows. For any $(U_1,U_2,W_1,W_2)\in\mathfrak{Q}_k(V)$, choose a basis $\{u_{i,1},\dots,u_{i,n-k}\}$ of $U_i$, and a basis $\{w_{j,1},\dots,w_{j,k}\}$ of $W_j$. Also, choose a linear identification $\displaystyle\Omega:\bigwedge^nV\to\R$. Then
\[C_k(U_1,U_2,W_1,W_2):=\frac{\Omega(u_{2,1},\dots,u_{2,n-k},w_{2,1},\dots,w_{2,k})\Omega(u_{1,1},\dots,u_{1,n-k},w_{1,1},\dots,w_{1,k})}{\Omega(u_{2,1},\dots,u_{2,n-k},w_{1,1},\dots,w_{1,k})\Omega(u_{1,1},\dots,u_{1,n-k},w_{2,1},\dots,w_{2,k})}.\]
\end{definition}

It is straightforward to verify that $C_k$ is well-defined, and does not depend on any of the choices made. Furthermore, $C_k$ is continuous, and the following identities hold:
\begin{itemize}
\item $C_k(U_1,U_2,W_1,W_2)=C_k(U_2,U_1,W_2,W_1)$,
\item $C_k(U_1,U_2,W_1,W_2)=C_{n-k}(W_1,W_2,U_1,U_2)$, and
\item $C_k(U_1,U_2,W_1,W_2)\cdot C_k(U_1,U_2,W_2,W_3)=C_k(U_1,U_2,W_1,W_3)$.
\end{itemize}

The following observations follow easily from the definition of the cross ratio.

\begin{observation}\label{obs: projective coordinates}
Let $U_1,\dots,U_n\in\Gr_{n-1}(V)$ such that $\displaystyle\bigcap_{k=1}^nU_k=\{0\}$, and let $W_1,W_2\in\Gr_1(V)$ such that $U_k\cap W_j=\{0\}$ for all $k=1,\dots,n$ and all $j=1,2$. Then $W_1=W_2$ if and only if $C_1(U_k,U_{k+1},W_1,W_2)=1$ for all $k=1,\dots,n-1$. 
\end{observation}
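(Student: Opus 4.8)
\textbf{Proof proposal for Observation~\ref{obs: projective coordinates}.}

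The plan is to reduce the statement to a computation in a well-chosen basis and then apply the cocycle identity for $C_1$. First I would observe that one direction is trivial: if $W_1 = W_2$, then for every $k$ the four subspaces $(U_k, U_{k+1}, W_1, W_2)$ have $W_1 = W_2$, and by the third bullet identity (or directly from the definition, since the two numerator wedge terms equal the two denominator wedge terms after rearrangement) we get $C_1(U_k, U_{k+1}, W_1, W_2) = 1$. So the content is in the converse.

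For the converse, suppose $C_1(U_k, U_{k+1}, W_1, W_2) = 1$ for all $k = 1,\dots,n-1$. Since $\bigcap_{k=1}^n U_k = \{0\}$ and each $U_k$ has codimension one, I would first fix a basis. For each $k$, the intersection of all $U_j$ with $j \neq k$ is one-dimensional (its dimension is at least $1$ by counting codimensions, and it cannot be larger since adding $U_k$ would then not cut it down to $\{0\}$); pick a nonzero vector $e_k$ spanning $\bigcap_{j\neq k} U_j$. Then $(e_1,\dots,e_n)$ is a basis of $V$, and $U_k = \Span\{e_j : j \neq k\}$. Write $W_1 = \R w_1$ and $W_2 = \R w_2$ with $w_1 = \sum_k a_k e_k$ and $w_2 = \sum_k b_k e_k$; the transversality hypotheses $U_k \cap W_j = \{0\}$ say precisely that all $a_k \neq 0$ and all $b_k \neq 0$.

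Now I would compute $C_1(U_k, U_{k+1}, W_1, W_2)$ in this basis using the standard volume form $\Omega = e_1^* \wedge \cdots \wedge e_n^*$. A basis of $U_k$ is $\{e_j : j \neq k\}$ and of $U_{k+1}$ is $\{e_j : j \neq k+1\}$. Expanding the wedge products, only the $e_k$-component (resp.\ $e_{k+1}$-component) of $w_i$ survives in $\Omega(e_1,\dots,\widehat{e_k},\dots,e_n, w_i)$, so that quantity is $\pm a_k$ or $\pm b_k$ (resp.\ $\pm a_{k+1}$ or $\pm b_{k+1}$), with the signs depending only on $k$ and $n$, not on $i$. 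Assembling the ratio, all signs cancel and $C_1(U_k, U_{k+1}, W_1, W_2) = \dfrac{b_k b_{k+1}^{-1}}{a_k a_{k+1}^{-1}} \cdot (a_{k+1} b_{k+1}^{-1} \cdot \text{correction})$ — more precisely the fourfold ratio collapses to $\dfrac{b_k a_{k+1}}{b_{k+1} a_k}$ after checking the pairing of $U_k$ with $W_j$ involves the $k$-th coordinate. Setting each such ratio equal to $1$ gives $b_k / a_k = b_{k+1} / a_{k+1}$ for all $k = 1,\dots,n-1$, hence $b_k / a_k$ is a constant $c$ independent of $k$, so $w_2 = c\, w_1$ and $W_1 = W_2$.

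The main obstacle, and the only place requiring care, is the bookkeeping of which coordinate of $w_i$ is selected by each wedge product and tracking the sign of the resulting permutation, so as to be sure the signs genuinely cancel in the fourfold ratio and that the clean relation $b_k/a_k = b_{k+1}/a_{k+1}$ falls out rather than something sign-twisted. Everything else is formal. I would present the sign argument by noting that the sign attached to $\Omega(\dots, \widehat{e_k}, \dots, w_i)$ depends only on $k$ (the position where $e_k$ is removed and where the $e_k$-component of $w_i$ is reinserted), is therefore the same whether $i = 1$ or $i = 2$, and so cancels in the numerator-over-denominator ratio $\frac{(\cdot\, w_2)(\cdot\, w_1)}{(\cdot\, w_2)(\cdot\, w_1)}$ defining $C_1$.
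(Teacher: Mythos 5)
Your proof is correct and is exactly the direct computation the paper has in mind (the paper states this as an observation "following easily from the definition" and gives no proof). The only quibble is that in the dual basis the cross ratio actually comes out as $\frac{b_{k+1}a_k}{a_{k+1}b_k}$, the reciprocal of the expression you wrote, but since the hypothesis is that it equals $1$ this is immaterial, and your sign-cancellation argument and the conclusion $b_k/a_k = b_{k+1}/a_{k+1}$ for all $k$ are sound.
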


\begin{observation}\label{obs: cross ratio projection}
Let $(U_1,U_2,W_1,W_2)\in\mathfrak{Q}_k(V)$, and let $W\subset W_1\cap W_2$ be a subspace such that $l:=\dim(W)<k$. If $\pi:V\to V/W$ is the obvious quotient map, then 
\[C_{k-l}(\pi(U_1),\pi(U_2),\pi(W_1),\pi(W_2))=C_k(U_1,U_2,W_1,W_2),\]
where $C_{k-l}$ on the left and $C_k$ on the right are cross ratios on $\mathfrak{Q}_{k-l}(V/W)$ and $\mathfrak{Q}_k(V)$ respectively.
\end{observation}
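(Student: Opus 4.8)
The plan is to reduce the identity to the definition of the cross ratio by choosing bases and volume forms on $V$ that are adapted to the subspace $W$. First I would check that $(\pi(U_1),\pi(U_2),\pi(W_1),\pi(W_2))$ actually lies in $\mathfrak Q_{k-l}(V/W)$: since $(U_1,U_2,W_1,W_2)\in\mathfrak Q_k(V)$, a dimension count gives $U_i\cap W_j=\{0\}$ for all $i,j$, hence $U_i\cap W=\{0\}$ because $W\subseteq W_j$; thus $\pi$ restricts to an isomorphism onto $\pi(U_i)$ with $\dim\pi(U_i)=n-k$, while $\dim\pi(W_j)=k-l$, and $\pi(U_i)+\pi(W_j)=\pi(V)=V/W$. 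Since $(n-k)+(k-l)=n-l=\dim(V/W)$, the sum is direct, so the projected tuple is admissible.

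Next I would set up compatible data. Fix a basis $\{w_1,\dots,w_l\}$ of $W$, extend it to a basis $\{w_1,\dots,w_l,w_{j,l+1},\dots,w_{j,k}\}$ of $W_j$ for $j=1,2$, and choose a basis $\{u_{i,1},\dots,u_{i,n-k}\}$ of $U_i$ for $i=1,2$. Their $\pi$-images give bases of $\pi(W_j)$ and $\pi(U_i)$. Then, starting from a chosen linear identification $\bar\Omega\colon\bigwedge^{n-l}(V/W)\to\R$, define a linear identification $\Omega\colon\bigwedge^n V\to\R$ by declaring
\[
\Omega(w_1,\dots,w_l,v_1,\dots,v_{n-l}):=\bar\Omega(\pi(v_1),\dots,\pi(v_{n-l}))
\]
for all $v_1,\dots,v_{n-l}\in V$. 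This is well defined because the right-hand side is alternating and multilinear in the $v_m$'s and vanishes whenever some $v_m\in W$, so it determines a unique alternating $n$-form on $V$; this form is nonzero since $\bar\Omega\neq 0$.

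Now I would substitute into the defining formula for $C_k(U_1,U_2,W_1,W_2)$, using in each of the four factors the $k$-vector basis $w_1,\dots,w_l,w_{j,l+1},\dots,w_{j,k}$ of $W_j$. In each factor, moving the block $w_1,\dots,w_l$ past the block $u_{i,1},\dots,u_{i,n-k}$ to the front introduces the sign $(-1)^{l(n-k)}$, which is identical in all four factors and therefore cancels in the ratio. By the definition of $\Omega$, each factor then equals $\bar\Omega$ evaluated on the $\pi$-images of the remaining vectors, and the resulting expression is exactly the defining formula for $C_{k-l}(\pi(U_1),\pi(U_2),\pi(W_1),\pi(W_2))$ computed with the bases and identification $\bar\Omega$ above. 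Since both cross ratios are independent of all choices, the identity follows. The only real point of care is the well-definedness of $\Omega$ and the bookkeeping of the single reordering sign; there is no genuine obstacle beyond that routine check.
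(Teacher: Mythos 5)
Your proof is correct, and the paper actually gives no argument here — it states this as an observation that "follows easily from the definition of the cross ratio" — so your direct verification (adapted bases, the induced volume form $\Omega$ obtained by contracting $\bar\Omega\circ\bigwedge^{n-l}\pi$ against $w_1\wedge\dots\wedge w_l$, and the cancelling sign $(-1)^{l(n-k)}$) is exactly the computation the authors intend the reader to supply.
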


In the case when $n:=\dim(V)=2$, the cross ratio $C_1$ can also be described as follows. Let $(p_1,p_2,q_1,q_2)$ be a quadruple of points in $\mathbb{P}(V)$ such that $p_i\neq q_j$ for all $i,j=1,2$. Choose an affine chart $\mathbb{A}$ of $\mathbb{P}(V)$ that contains $p_1$, $p_2$, $q_1$, and $q_2$, and choose an affine identification $\mathbb{R}\simeq\mathbb{A}$. Then
\begin{equation}\label{eqn: cross ratio classical}
C_1(p_1,p_2,q_1,q_2)=\frac{(q_2-p_2)(p_1-q_1)}{(q_1-p_2)(p_1-q_2)}.
\end{equation}

A straightforward computation gives the following observation.

\begin{observation}\label{obs: RP1}
Suppose that $n=2$. Let $a,p_1,p_2,q_1,q_2$ be a quintuple of points along the projective line $\mathbb{P}(V)$, such that $a<p_2<q_1<q_2<p_1<a$. Let $\mathbb{A}:=\mathbb{P}(V)\setminus\{a\}$, and choose an affine identification $\mathbb{A}\simeq\mathbb{R}$. If $C_1(p_1,p_2,q_1,q_2)\leq D$ for some $D>1$, then 
\[\left|\frac{q_1-q_2}{p_1-p_2}\right|\leq \frac{\sqrt{D}}{1+\sqrt{D}}.\]
\end{observation}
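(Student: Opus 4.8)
The plan is to reduce everything to a one-variable computation on $\R \simeq \A$ using the classical cross ratio formula \eqref{eqn: cross ratio classical}, exploiting the hypothesis that $a$ is the point removed to form the affine chart, so that the cyclic ordering $a < p_2 < q_1 < q_2 < p_1 < a$ on $\Pb(V)$ becomes an honest linear ordering $p_2 < q_1 < q_2 < p_1$ on $\R$. First I would normalize: since both the quantity $C_1(p_1,p_2,q_1,q_2)$ and the ratio $\left|\frac{q_1-q_2}{p_1-p_2}\right|$ are invariant under orientation-preserving affine reparametrizations of $\R$, I may assume $p_2 = 0$ and $p_1 = 1$, so that $0 < q_1 < q_2 < 1$. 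Writing $q_1 = x$ and $q_2 = y$ with $0 < x < y < 1$, formula \eqref{eqn: cross ratio classical} gives
\[
C_1(p_1,p_2,q_1,q_2) = \frac{(q_2 - p_2)(p_1 - q_1)}{(q_1 - p_2)(p_1 - q_2)} = \frac{y(1-x)}{x(1-y)},
\]
and the target ratio is simply $\left|\frac{q_1 - q_2}{p_1 - p_2}\right| = y - x$.

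Next I would solve the optimization problem: maximize $y - x$ subject to $0 < x < y < 1$ and $\frac{y(1-x)}{x(1-y)} \le D$. The constraint binds at the maximum, so set $\frac{y(1-x)}{x(1-y)} = D$. Introducing the standard substitution $s = \frac{x}{1-x}$ and $t = \frac{y}{1-y}$ (both positive, and $x < y \iff s < t$), the constraint becomes $t = Ds$, and one computes $x = \frac{s}{1+s}$, $y = \frac{Ds}{1+Ds}$, so
\[
y - x = \frac{Ds}{1+Ds} - \frac{s}{1+s} = \frac{(D-1)s}{(1+s)(1+Ds)}.
\]
Maximizing $f(s) = \frac{(D-1)s}{(1+s)(1+Ds)}$ over $s > 0$: the derivative vanishes when $1 + Ds + s + Ds^2 - s(D + 1 + 2Ds) = 0$, i.e. $1 - Ds^2 = 0$, so $s = \frac{1}{\sqrt D}$. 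Substituting back, $(1+s)(1+Ds) = \left(1 + \frac{1}{\sqrt D}\right)\left(1 + \sqrt D\right) = \frac{(1+\sqrt D)^2}{\sqrt D}$, hence
\[
f\!\left(\tfrac{1}{\sqrt D}\right) = \frac{(D-1)\cdot \frac{1}{\sqrt D}}{\frac{(1+\sqrt D)^2}{\sqrt D}} = \frac{D-1}{(1+\sqrt D)^2} = \frac{(\sqrt D - 1)(\sqrt D + 1)}{(\sqrt D + 1)^2} = \frac{\sqrt D - 1}{\sqrt D + 1}.
\]

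This gives $y - x \le \frac{\sqrt D - 1}{\sqrt D + 1}$, which is slightly sharper than (and in particular implies) the stated bound $\frac{\sqrt D}{1 + \sqrt D}$, since $\frac{\sqrt D - 1}{\sqrt D + 1} < \frac{\sqrt D}{\sqrt D + 1}$. I do not expect any genuine obstacle here; the only point requiring a little care is checking that the cyclic ordering hypothesis on $\Pb(V)$ really does translate into the linear ordering $p_2 < q_1 < q_2 < p_1$ after removing $a$ and choosing the affine identification compatibly with orientation, and that the cross ratio and the ratio $\left|\frac{q_1-q_2}{p_1-p_2}\right|$ are both unchanged under the allowed normalizations — these are routine but should be stated explicitly. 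If one prefers to avoid calculus, the maximization of $f(s)$ can instead be done by the AM–GM inequality applied to $(1+s)(1+Ds) \ge (1 + \sqrt{D s^2})^2$ wait — more cleanly, $\frac{(1+s)(1+Ds)}{s} = \frac{1}{s} + D s + (1 + D) \ge 2\sqrt D + 1 + D = (1+\sqrt D)^2$, with equality at $s = 1/\sqrt D$, giving the bound directly.
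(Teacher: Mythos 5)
Your proof is correct, and since the paper dismisses this observation with ``a straightforward computation,'' your write-up is exactly the computation the authors had in mind: reduce to the affine normalization $p_2=0$, $p_1=1$, $0<q_1<q_2<1$, and optimize $q_2-q_1$ subject to the cross-ratio bound. Your substitution $s=\frac{x}{1-x}$, $t=\frac{y}{1-y}$ turns the cross ratio into $t/s$ and the AM--GM step cleanly yields the optimum at $s=1/\sqrt{D}$; note that you actually obtain the sharper constant $\frac{\sqrt{D}-1}{\sqrt{D}+1}$, which implies the stated bound $\frac{\sqrt{D}}{1+\sqrt{D}}$ (the latter is all that is needed for the geometric series argument in Proposition \ref{prop: k=1'}).
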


The cross ratio will mainly be applied to flags in the following two ways. 

\begin{definition} \label{def: cross ratio}
Let $(F_1,F_2,F_3,F_4)$ be a quadruple of flags in $\Fc(V)$.
\begin{enumerate}
\item If $\{F_1,F_2,F_3,F_4\}$ is pairwise transverse, then for all $k=1,\dots,n-1$, define the \emph{$k$-th Labourie cross ratio}
\[B_k(F_1,F_2,F_3,F_4):=C_k(F_1^{(n-k)},F_2^{(n-k)},F_3^{(k)},F_4^{(k)})\]
\item If $\{F_1,F_2,F_3,F_4\}$ is in general position, then for all $k=1,\dots,n-1$, define the \emph{$k$-th edge invariant}
\begin{equation*}
S_k(F_1,F_2,F_3,F_4):=C_1(F_1^{(k)}+F_3^{(n-k-1)},F_1^{(k-1)}+F_3^{(n-k)},F_2^{(1)},F_4^{(1)}).
\end{equation*}
\end{enumerate}
\end{definition}

\subsubsection{Triple ratio}\label{sec: triple ratio}

For $i=1,2,3$, let $U_i$ be a hyperplane in $V$, and let $U_i'\in\Gr_{n-2}(V)$ be a $(n-2)$-dimensional subspace of $U_i$. We say that the sextuple $(U_1,U_2,U_3,U_1',U_2',U_3')$ is \emph{well-positioned} if
\begin{itemize}
\item $W:=U_1\cap U_2\cap U_3$ is a $(n-3)$-dimensional subspace of $V$,
\item $W\subset U_i'$ for all $i=1,2,3$, and
\item $U_i'$ does not lie in $U_j$ for all $i\neq j$.
\end{itemize}
Let $\mathfrak{W}(V)$ denote the set of well-positioned sextuples in $\Gr_{n-1}(V)^3\times\Gr_{n-2}(V)^3$.

\begin{definition}
The \emph{triple ratio} is the function $T:\mathfrak{W}(V)\to\R\setminus\{0\}$ defined as follows. For any ${\bf U}:=(U_1,U_2,U_3,U_1',U_2',U_3')\in\mathfrak{W}(V)$, choose a basis $w_1,\dots,w_{n-3}$ for $W:=U_1\cap U_2\cap U_3$. Also, for $i=1,2,3$, let $u_i'$ be a vector in $U_i'$ that is not in $W$, and let $u_i$ be a vector in $U_i$ that is not in $U_i'$. Finally choose a linear identification $\displaystyle\Omega:\bigwedge^n V\to\R$. Then define
\[T({\bf U}):=\frac{\Omega(w_1,\dots,w_{n-3},u_1',u_1,u_2')\Omega(w_1,\dots,w_{n-3},u_2',u_2,u_3')\Omega(w_1,\dots,w_{n-3},u_3',u_3,u_1')}{\Omega(w_1,\dots,w_{n-3},u_2',u_2,u_1')\Omega(w_1,\dots,w_{n-3},u_3',u_3,u_2')\Omega(w_1,\dots,w_{n-3},u_1',u_1,u_3')}.\]
\end{definition}

As in the case of the cross ratio, one can verify that $T$ is well-defined, and does not depend on any of the choices made. It is clear that $T$ is continuous, and the following identities hold:
\begin{itemize}
\item $T(U_1,U_2,U_3,U_1',U_2',U_3')=T(U_2,U_3,U_1,U_2',U_3',U_1')$, and
\item $T(U_1,U_2,U_3,U_1',U_2',U_3')=T(U_3,U_2,U_1,U_3',U_2',U_1')^{-1}$.
\end{itemize}

We can also apply the triple ratios to flags in the following way. If $\mathbf j:=(j_1,j_2,j_3)$ is a triple of positive integers that sum to $n$, and $\{F_1,F_2,F_3\}$ is a triple of flags in $\Fc(V)$ that are in general position, then we define the \emph{triangle invariant}
\begin{equation}\label{eqn: triangle invariant}
T_{\mathbf j}(F_1,F_2,F_3):=T(U_1,U_2,U_3,U_1',U_2',U_3'),
\end{equation}
where $U_i':=F_{i-1}^{(j_{i-1}-1)}+F_i^{(j_i)}+F_{i+1}^{(j_{i+1}-1)}$ and $U_i:=F_{i-1}^{(j_{i-1}-1)}+F_i^{(j_i+1)}+F_{i+1}^{(j_{i+1}-1)}$ for $i=1,2,3$. Here, arithmetic involving $i$ is done modulo $3$.

\subsection{Positivity of tuples of flags}\label{sec: positivity}
Next, we recall the notion of positive tuple of flags introduced in Fock-Goncharov \cite{Fock-Goncharov}. 

\subsubsection{Total positivity}\label{sec: total positivity}
First, we need the notion of total positivity for a unipotent element in $\PGL(V)$. 

\begin{definition}
A unipotent element $u\in\PGL(V)$ is \emph{totally positive} with respect to a basis $\mathcal{B}:=(e_1,\dots,e_n)$ of $V$ if in this basis, $u$ is represented by an upper-triangular matrix $M_u$ with ones on the diagonal, and all the minors of $M_u$ are positive except for those that are forced to be zero by virtue of $M_u$ being upper triangular. We denote by $U_{>0}(\mathcal{B})$ the set of such elements, and let $U_{\geq 0}(\mathcal{B}):=\overline{U_{>0}(\mathcal{B})}$.
\end{definition}

\begin{remark}
Upper triangular, real valued matrices $M$ with ones along the diagonal, and where all minors are positive except for those that are forced to be zero by virtue of $M$ being upper triangular, are called \emph{totally positive upper triangular matrices}. These were introduced by Lusztig \cite{Lusztig}. For our purposes, we will often be changing the basis with which our unipotent elements in $\PGL(V)$ are upper triangular with respect to. This choice of a basis is equivalent to the choice of a \emph{pinning} as described in Lusztig \cite{Lusztig}. In this paper, it is more convenient to work with bases as opposed to pinnings, so we use the notion of totally positive with respect to a chosen basis.
\end{remark}

Let $(f_1,f_2)$ be any basis of $\mathbb R^2$. Then $(f_1^{n-1-i}\cdot f_2^i)_{i=0}^{n-1}$ is a basis of the $(n-1)$-th symmetric tensor of $\R^2$, denoted $\mathrm{Sym}^{n-1}(\R^2)$. By choosing a basis $\mathcal{B}:=(e_1,\dots,e_n)$ for $V$, we may linearly identify $V$ with $\mathrm{Sym}^{n-1}(\R^2)$ by identifying $e_i$ with $f_1^{n-1-i}\cdot f_2^i$. Observe that the $\GL_2(\R)$-action on $\R^2$ induces a linear $\GL_2(\R)$-action on $\mathrm{Sym}^{n-1}(\R^2)$ given by
\[\bar{g}\cdot(v_1\cdot ...\cdot v_{n-1}):=(\bar{g}\cdot v_1)\cdot\dots\cdot(\bar{g}\cdot v_{n-1}).\]
Thus, we have a linear representation 
\[i_{\mathcal{B}}:\GL_2(\R)\to\GL(\mathrm{Sym}^{n-1}(\R^2))\simeq\GL(V).\] 
Projectivizing this gives a representation $\iota=\iota_{\mathcal{B}}:\PGL_2(\R)\to\PGL(V)$.

\begin{remark}
It is a standard result from the representation theory that up to post-composition with an automorphism of $\PGL(V)$, $\iota$ restricted to $\PSL(2,\R)$ is the unique irreducible representation from $\PSL_2(\R)$ to $\PGL(V)$.
\end{remark}

The following is a well-known proposition, which gives the simplest examples of elements in $U_{>0}(\mathcal{B})$. See \cite[Proposition 5.7]{Fock-Goncharov} for a proof.

\begin{proposition}\label{prop: Veronese}
Let $\mathcal{B}$ be a basis of $V$. Then 
\[\iota_{\mathcal{B}}(U_{>0}(f_1,f_2))\subset U_{>0}(\mathcal{B}).\]
\end{proposition}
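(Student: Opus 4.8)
The plan is to compute the matrix of $i_{\mathcal B}(u)$ for a generic positive unipotent $u\in U_{>0}(f_1,f_2)$ and verify directly that all of its non-trivially-zero minors are positive. Since $U_{>0}(f_1,f_2)$ in $\PGL_2(\R)$ is the set of classes of matrices $\begin{pmatrix} 1 & t \\ 0 & 1\end{pmatrix}$ with $t>0$, it suffices to understand the single one-parameter family $u_t = i_{\mathcal B}\!\left(\begin{pmatrix} 1 & t \\ 0 & 1\end{pmatrix}\right)$, $t>0$, and show $u_t\in U_{>0}(\mathcal B)$ for every $t>0$. Under the identification $e_i \leftrightarrow f_1^{\,n-1-i} f_2^{\,i}$ (using $1$-based or $0$-based indexing consistently), the generator acts by $f_1\mapsto f_1$, $f_2\mapsto t f_1 + f_2$, so on the symmetric power $u_t$ sends $f_1^{\,a}f_2^{\,b}$ to $f_1^{\,a}(tf_1+f_2)^{\,b} = \sum_{j} \binom{b}{j} t^{\,j} f_1^{\,a+j} f_2^{\,b-j}$. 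Reading off coefficients, the matrix $M_{u_t}$ in the basis $\mathcal B$ is upper triangular with ones on the diagonal and binomial-times-power entries above; concretely its $(p,q)$ entry (for $p\le q$) is a positive multiple of $t^{\,q-p}$.

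The key step is then the minor computation. A standard fact about this representation is that, after the substitution above, $M_{u_t}$ is (up to conjugation by a fixed positive diagonal matrix, which does not affect positivity of minors) a specialization of a totally positive upper-triangular matrix; one clean way to see this is to write $\begin{pmatrix}1&t\\0&1\end{pmatrix}$ itself as a positive element and use functoriality of the symmetric-power map together with the Cauchy–Binet / compound-matrix formula: the $k\times k$ minors of $\mathrm{Sym}^{n-1}(g)$ are, by classical invariant theory (the Cauchy–Binet expansion applied to the $(n-1)$-fold tensor power followed by symmetrization), sums with positive coefficients of products of minors of $g$ itself. Since every relevant minor of $\begin{pmatrix}1&t\\0&1\end{pmatrix}$ is non-negative and the minors not forced to vanish are strictly positive (the $1\times1$ minors $1$ and $t$, and the $2\times 2$ determinant $1$), each solid minor of $M_{u_t}$ that is not forced to be zero is a positive sum of positive terms, hence positive. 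This gives $u_t \in U_{>0}(\mathcal B)$, and since $i_{\mathcal B}$ is a group homomorphism and products of elements of $U_{>0}(\mathcal B)$ lie in $U_{>0}(\mathcal B)$ (totally positive upper triangular matrices are closed under multiplication, by Lusztig), we conclude $i_{\mathcal B}(U_{>0}(f_1,f_2))\subset U_{>0}(\mathcal B)$.

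I expect the main obstacle to be bookkeeping rather than conceptual: one must make precise the claim that solid (contiguous) minors of the compound/symmetric-power matrix expand as manifestly positive combinations of minors of the original $2\times 2$ matrix, and identify exactly which minors of $M_{u_t}$ are "forced to be zero by upper-triangularity" so that the remaining ones are checked to be strictly positive. An alternative, and perhaps cleaner, route that avoids the general Cauchy–Binet argument is to invoke the known fact (essentially the Lindström–Gessel–Viennot lemma, or the explicit formula for the irreducible $\mathrm{SL}_2$ representation) that $M_{u_t}$ in these coordinates is, up to a fixed positive diagonal rescaling, the matrix $\big(\binom{j-1}{i-1} t^{\,j-i}\big)_{i\le j}$, whose solid minors are classically known to be positive for $t>0$ — e.g. they are themselves (up to positive factors) single monomials in $t$ with positive coefficient, computable via the Vandermonde/binomial determinant identities. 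Either way, the reference \cite[Proposition 5.7]{Fock-Goncharov} supplies the details, so in the paper it suffices to indicate this computation and cite it.
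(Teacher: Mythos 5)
Your argument is essentially correct, but it is worth noting that the paper does not prove this proposition at all: it simply cites \cite[Proposition 5.7]{Fock-Goncharov}. So your proposal supplies an actual argument where the paper defers to a reference, and the self-contained route you sketch at the end is the right one. Since $U_{>0}(f_1,f_2)$ is exactly the one-parameter family of classes of $\left(\begin{smallmatrix}1&t\\0&1\end{smallmatrix}\right)$ with $t>0$, the whole statement reduces to the single matrix $M_{u_t}$, whose $(i,j)$ entry is $\binom{j-1}{i-1}t^{j-i}$; conjugating by the positive diagonal matrix $\mathrm{diag}(t^{-1},\dots,t^{-n})$ rescales every minor by a positive factor, so the claim reduces to total positivity of the upper triangular Pascal matrix, and the Lindstr\"om--Gessel--Viennot interpretation (or the binomial determinant identities) shows that the minor on rows $i_1<\dots<i_k$ and columns $j_1<\dots<j_k$ is a positive integer precisely when $i_s\le j_s$ for all $s$, which is exactly the set of minors not forced to vanish. (Your final appeal to closure of $U_{>0}(\mathcal B)$ under products is then superfluous.)

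Two cautions. First, your primary route via ``Cauchy--Binet applied to the symmetric power'' is not a formal argument: Cauchy--Binet controls minors of products and of exterior powers (compound matrices), and the assertion that minors of $\mathrm{Sym}^{n-1}(g)$ are manifestly positive combinations of products of minors of $g$ is itself a nontrivial theorem about induced powers of totally nonnegative matrices, not a bookkeeping exercise; you should lean on the explicit Pascal-matrix computation instead. Second, you twice restrict attention to \emph{solid} (contiguous) minors, but Definition of $U_{>0}(\mathcal B)$ in the paper requires positivity of \emph{all} minors not forced to be zero by upper-triangularity; either check all of them (the LGV evaluation does this with no extra work) or explicitly invoke a Fekete-type criterion reducing total positivity to solid minors.
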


We now describe a natural partial order on $U_{\geq 0}(\mathcal{B})$. For any $k=1,\dots,n$, let $i_1,\dots,i_k,j_1,\dots,j_k$ be positive integers such that 
\[1\leq i_1<\dots<i_k\leq n\,\,\,\text{ and }\,\,\,1\leq j_1<\dots<j_k\leq n.\]
Then define the $(i_1,\dots,i_k),(j_1,\dots,j_k)$-\emph{minor map} to be the map
\[\varepsilon^{i_1,\dots,i_k}_{j_1,\dots,j_k}:U_{\geq 0}(\Bc)\to\R\]
that assigns to every $u$ in $U_{\geq 0}(\mathcal{B})$ the minor of $M_u$ corresponding to the $i_1,...,i_k$ rows and the $j_1,...,j_k$ columns. Since $M_u$ is an upper triangular matrix with $1$'s along the diagonal, observe that
\begin{itemize}
\item if $j_s<i_s$ for some $s=1,\dots,k$, then the image of $\varepsilon^{i_1,\dots,i_k}_{j_1,\dots,j_k}$ is $0$, and
\item if $i_s=j_s$ for all $s=1,\dots,k$, then the image of $\varepsilon^{i_1,\dots,i_k}_{j_1,\dots,j_k}$ is $1$.
\end{itemize}
As such, we say that the $(i_1,\dots,i_k),(j_1,\dots,j_k)$-\emph{minor map} is
\begin{itemize}
\item \emph{trivial} if $j_s<i_s$ for some $s=1,\dots,k$, or $i_s=j_s$ for all $s=1,\dots,k$.
\item \emph{non-trivial} if $j_s\geq i_s$ for all $s=1,\dots,k$, and $j_s>i_s$ for some $s=1,\dots,k$.
\end{itemize}

\begin{definition}
If $u,v\in U_{\geq 0}(\mathcal{B})$, we say that $u$ \emph{precedes} $v$, denoted $u\prec v$, if $\varepsilon(u)<\varepsilon(v)$ for every non-trivial minor map $\varepsilon$. 
\end{definition}

The following observation is an easy consequence of the definition of $\prec$ and the continuity of the minors.

\begin{observation}\label{obs: prec}
Let $\mathcal{B}$ be a basis of $V$ and let $(u_i)_{i=1}^\infty$ be a sequence in $U_{\geq 0}(\mathcal{B})$.
\begin{enumerate}
\item If there is some $v\in U_{>0}(\mathcal{B})$ such that $u_i\prec v$ for all integers $i>0$, then there is a subsequence of $(u_i)_{i=1}^\infty$ that converges in $U_{\geq 0}(\mathcal{B})$. Furthermore, if there is some $v,w\in U_{>0}(\mathcal{B})$ such that $w\prec u_i\prec v$ for all integers $i>0$, then there is a subsequence of $(u_i)_{i=1}^\infty$ that converges in $U_{>0}(\mathcal{B})$.
\item If there is some $v\in U_{>0}(\mathcal{B})$ such that $u_i\prec u_{i+1}\prec v$ for all integers $i>0$, then $(u_i)_{i=1}^\infty$ converges in $U_{>0}(\mathcal{B})$.
\end{enumerate}
\end{observation}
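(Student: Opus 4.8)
The plan is to work with the matrix $M_i := M_{u_i}$ of $u_i$ in the basis $\mathcal{B}$, which is upper triangular with $1$'s on the diagonal, and to control it entry by entry using the minor maps. The key observation is that for $1\le j<k\le n$, the $(j,k)$-entry of $M_i$ is precisely $\varepsilon^{j}_{k}(u_i)$, the value of the minor map associated to the single row $j$ and single column $k$; since $k>j$, this minor map is non-trivial. As $u_i\in U_{\ge 0}(\mathcal{B})$, every minor map is $\ge 0$ on $u_i$ (it is so on $U_{>0}(\mathcal{B})$, hence on its closure), and the hypothesis $u_i\prec v$ gives $0\le \varepsilon^{j}_{k}(u_i)<\varepsilon^{j}_{k}(v)$. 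Consequently every $M_i$ lies in the fixed compact set of $n\times n$ real matrices with $1$'s on the diagonal, $0$'s strictly below it, and $(j,k)$-entry in $[0,\varepsilon^{j}_{k}(v)]$ for $j<k$.

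For part (1), I would invoke Bolzano--Weierstrass to extract a subsequence $M_{i_l}\to M$ entrywise. The limit $M$ is again upper triangular with $1$'s on the diagonal, hence represents a unipotent $u\in\PGL(V)$, and since passing from such a matrix to its class in $\PGL(V)$ is continuous we get $u_{i_l}\to u$. Because $U_{\ge 0}(\mathcal{B})=\overline{U_{>0}(\mathcal{B})}$ is closed and contains every $u_{i_l}$, the limit $u$ lies in $U_{\ge 0}(\mathcal{B})$; this is the desired convergent subsequence. For part (2), the extra assumption $u_i\prec u_{i+1}$ makes the real sequence $(\varepsilon(u_i))_i$ strictly increasing for every non-trivial minor map $\varepsilon$, and it is bounded above by $\varepsilon(v)$, so it converges. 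Applying this to the single-entry minor maps shows that the full sequence $(M_i)_i$---not just a subsequence---converges entrywise to some matrix $M$, and exactly as in part (1) the associated element $u$ is $\lim_i u_i$ and lies in $U_{\ge 0}(\mathcal{B})$.

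The only step requiring genuine care, and the sole place the strictness of $\prec$ is used, is upgrading $u$ to an element of $U_{>0}(\mathcal{B})$ in part (2); equivalently, showing every non-trivial minor of $M$ is strictly positive. For this I would note that each minor map is a polynomial in the matrix entries, hence continuous, so $\varepsilon(u)=\lim_i\varepsilon(u_i)$ for every non-trivial minor map $\varepsilon$; from $\varepsilon(u_1)\ge 0$ and $u_1\prec u_2$ we get $\varepsilon(u_2)>0$ (in fact $u_i\in U_{>0}(\mathcal{B})$ for all $i\ge 2$), and monotonicity then gives $\varepsilon(u)\ge\varepsilon(u_2)>0$. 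Since a unipotent upper triangular matrix automatically has its forced principal minors equal to $1$, positivity of all non-trivial minors is exactly membership in $U_{>0}(\mathcal{B})$, so $u\in U_{>0}(\mathcal{B})$. No deeper difficulty arises; the main thing to get right is the bookkeeping of which minor maps are trivial versus non-trivial.
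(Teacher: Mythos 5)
Your proof is correct and is precisely the argument the paper has in mind when it calls the observation "an easy consequence of the definition of $\prec$ and the continuity of the minors": bounding the above-diagonal entries by the $1\times 1$ non-trivial minors of $v$ to get compactness for (1), and using monotone bounded convergence of each $\varepsilon(u_i)$ plus continuity and strictness of $\prec$ to get convergence in $U_{>0}(\mathcal{B})$ for (2). The paper gives no written proof, so your write-up simply supplies the omitted details, and it does so accurately.
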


For all $k=1,\dots,n-1$, there is a natural $\GL(V)$-action on $\displaystyle\bigwedge^kV$ defined by \
\[g\cdot (v_1\wedge\dots\wedge v_k)=(g\cdot v_1)\wedge\dots\wedge(g\cdot v_k).\] Furthermore, the basis $\mathcal{B}=(e_1,\dots,e_n)$ induces a basis $\{e_{i_1}\wedge\dots\wedge e_{i_k}:1\leq i_1<\dots<i_k\leq n\}$ of $\displaystyle\bigwedge^kV$. A straightforward calculation proves that for all basis elements $e_{j_1}\wedge\dots\wedge e_{j_k}$ and for all $u\in U_{>0}(\mathcal{B})$, we have
\[u\cdot (e_{j_1}\wedge\dots\wedge e_{j_k})=\sum_{1\leq i_1<\dots<i_k\leq n}\varepsilon^{i_1,\dots,i_k}_{j_1,\dots,j_k}(u)e_{i_1}\wedge\dots\wedge e_{i_k},\]
where $\varepsilon^{i_1,\dots,i_k}_{j_1,\dots,j_k}:U_{>0}(\mathcal{B})\to\R$ is the $(i_1,\dots,i_k),(j_1,\dots,j_k)$-minor map. It follows that
\[\varepsilon^{i_1,\dots,i_k}_{j_1,\dots,j_k}(uv)=\sum_{1\leq l_1<\dots<l_k\leq n}\varepsilon^{i_1,\dots,i_k}_{l_1,\dots,l_k}(u)\varepsilon^{l_1,\dots,l_k}_{j_1,\dots,j_k}(v).\]
From this, we deduce the following observation.

\begin{observation}\label{obs: increasing minors}
Let $\mathcal{B}$ be a basis of $V$. If $u\in U_{>0}(\mathcal{B})$ and $v\in U_{\geq 0}(\mathcal{B})$, then $uv,vu\in U_{>0}(\mathcal{B})$, and $u,v\prec uv,vu$.
 \end{observation}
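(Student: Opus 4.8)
if $u\in U_{>0}(\mathcal{B})$ and $v\in U_{\geq 0}(\mathcal{B})$, then $uv,vu\in U_{>0}(\mathcal{B})$, and $u,v\prec uv,vu$.

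The plan is to work entirely with the minor maps $\varepsilon^{i_1,\dots,i_k}_{j_1,\dots,j_k}$ and the composition formula
\[
\varepsilon^{i_1,\dots,i_k}_{j_1,\dots,j_k}(uv)=\sum_{1\leq l_1<\dots<l_k\leq n}\varepsilon^{i_1,\dots,i_k}_{l_1,\dots,l_k}(u)\,\varepsilon^{l_1,\dots,l_k}_{j_1,\dots,j_k}(v)
\]
recorded just above the statement. First I would show $uv\in U_{>0}(\mathcal{B})$: it suffices to check that every non-trivial minor of $uv$ is strictly positive, since trivial minors are automatically correct (the product of two upper-triangular unipotents is again upper-triangular unipotent). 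In the sum above, every term is $\geq 0$ because $u\in U_{>0}(\mathcal{B})$ and $v\in U_{\geq 0}(\mathcal{B})$ have all minors nonnegative; and the diagonal term $l_s=j_s$ for all $s$ contributes $\varepsilon^{i_1,\dots,i_k}_{j_1,\dots,j_k}(u)\cdot 1$, which is strictly positive when the map is non-trivial since $u\in U_{>0}(\mathcal{B})$. Hence the whole sum is strictly positive, so $uv\in U_{>0}(\mathcal{B})$. The argument for $vu$ is symmetric (isolate the term $l_s=i_s$ for all $s$, which contributes $1\cdot\varepsilon^{i_1,\dots,i_k}_{j_1,\dots,j_k}(u)>0$ when nontrivial), or alternatively one applies the already-proved case to transposes.

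Next I would establish the order relations. For $u\prec uv$, I must show $\varepsilon(u)<\varepsilon(uv)$ for every non-trivial minor map $\varepsilon=\varepsilon^{i_1,\dots,i_k}_{j_1,\dots,j_k}$. Using the composition formula, split off the diagonal term $l_s=j_s$: that term equals $\varepsilon^{i_1,\dots,i_k}_{j_1,\dots,j_k}(u)$, and every remaining term $\varepsilon^{i_1,\dots,i_k}_{l_1,\dots,l_k}(u)\varepsilon^{l_1,\dots,l_k}_{j_1,\dots,j_k}(v)$ is $\geq 0$. To get strict inequality I need at least one remaining term to be strictly positive. Here the natural choice is the term with $(l_1,\dots,l_k)=(i_1,\dots,i_k)$: it gives $1\cdot\varepsilon^{i_1,\dots,i_k}_{j_1,\dots,j_k}(v)$, but this could vanish if $v$'s corresponding minor is zero (since $v\in U_{\geq 0}$, not $U_{>0}$). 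So the delicate point is to exhibit \emph{some} index tuple $(l_1,\dots,l_k)$ with $i_s\leq l_s\leq j_s$ for all $s$, giving a strictly positive contribution. The cleanest route: since the minor map is non-trivial, pick $(l_1,\dots,l_k)$ with $l_s = \max(i_s, \text{appropriate value})$ such that $\varepsilon^{i_1,\dots,i_k}_{l_1,\dots,l_k}(u)>0$ (nontrivial or trivial-nonzero minor of $u$, which is $>0$ since $u\in U_{>0}$) and $\varepsilon^{l_1,\dots,l_k}_{j_1,\dots,j_k}(v)>0$. Taking $(l_1,\dots,l_k) = (i_1,\dots,i_k)$ handles it when $\varepsilon^{i_1,\dots,i_k}_{j_1,\dots,j_k}(v) \neq 0$; when that minor of $v$ vanishes, one instead observes that $v\in U_{\geq 0}(\mathcal{B})$ means $v$ is a limit of totally positive elements, but more directly: since $uv \in U_{>0}(\mathcal B)$ was just shown, the left side $\varepsilon^{i_1,\dots,i_k}_{j_1,\dots,j_k}(uv)$ is strictly positive, and it is a sum of nonnegative terms one of which is $\varepsilon^{i_1,\dots,i_k}_{j_1,\dots,j_k}(u)$, so either there is another positive term (done) or the total equals exactly $\varepsilon^{i_1,\dots,i_k}_{j_1,\dots,j_k}(u)$. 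I expect this last dichotomy to be the genuine obstacle, and I would resolve it by a more careful combinatorial argument showing that whenever $v\neq\id$ there is always a nontrivial minor forcing a strictly larger value — or, more robustly, by the standard fact (provable from the composition formula by a careful choice of $(l_1,\dots,l_k)$ interpolating between $(i_1,\dots,i_k)$ and $(j_1,\dots,j_k)$) that total positivity of $u$ guarantees a strictly positive off-diagonal contribution. The relations $v\prec uv$, $u\prec vu$, $v\prec vu$ follow by the same bookkeeping with the roles of the row and column multi-indices and of $u,v$ interchanged appropriately.

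In summary, the whole Observation reduces to two facts about the bilinear composition formula for minors: (1) isolating the diagonal term shows the product stays in $U_{>0}(\mathcal{B})$, and (2) isolating an off-diagonal term corresponding to one of the two factors equaling its identity-part shows the product's minors strictly dominate each factor's minors. The one subtlety requiring care is ensuring strictness of the domination when $v$ lies only in the closure $U_{\geq 0}(\mathcal{B})$; I anticipate spending most of the effort there, and the resolution is the observation that a nontrivial minor of $uv$ is a sum of nonnegative terms including the strictly positive term $\varepsilon^{i_1,\dots,i_k}_{j_1,\dots,j_k}(u)$, combined with a combinatorial argument that at least one further term is strictly positive whenever $v \neq \mathrm{id}$.
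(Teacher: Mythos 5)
Your argument for the membership claim $uv,vu\in U_{>0}(\mathcal{B})$ is correct and is exactly the deduction the paper intends: in $\varepsilon^{I}_{J}(uv)=\sum_{L}\varepsilon^{I}_{L}(u)\,\varepsilon^{L}_{J}(v)$ every term is nonnegative, and the term $L=J$ contributes $\varepsilon^{I}_{J}(u)>0$ for every non-trivial minor map (symmetrically, $L=I$ for $vu$). Moreover, isolating the two identity terms $L=I$ and $L=J$ gives $\varepsilon^{I}_{J}(uv)\geq \varepsilon^{I}_{J}(u)+\varepsilon^{I}_{J}(v)$, which already proves $v\prec uv$ and $v\prec vu$ unconditionally, and proves $u\prec uv$ and $u\prec vu$ whenever $v\in U_{>0}(\mathcal{B})$.

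The gap is in the remaining case, which you correctly isolate but cannot close as proposed: for $v\in U_{\geq 0}(\mathcal{B})\setminus U_{>0}(\mathcal{B})$ with $v\neq\id$, the relation $u\prec uv$ can genuinely fail, so no combinatorial argument will manufacture the missing strictly positive off-diagonal term. Take $n=3$ and let $v$ be the unipotent upper-triangular matrix with $(1,2)$-entry equal to $1$ and all other above-diagonal entries equal to $0$; it is the limit of the totally positive matrices with entries $(a,b,c)=(1,\epsilon/2,\epsilon)$ as $\epsilon\to 0$, hence lies in $U_{\geq 0}(\mathcal{B})$. For any $u\in U_{>0}(\mathcal{B})$ with entries $(a,b,c)$, the $(1,3)$-entry of $uv$ is again $b$, so $\varepsilon^{1}_{3}(uv)=\varepsilon^{1}_{3}(u)$ and $u\not\prec uv$ (likewise $u\not\prec uv$ when $v=\id$, which also lies in $U_{\geq 0}(\mathcal{B})$). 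So the Observation is overstated as written: the conclusions $u\prec uv,vu$ require $v\in U_{>0}(\mathcal{B})$. This does not damage the paper, because the only place the $\prec$ conclusion is invoked (the proof of Proposition \ref{prop: positivity closure}) has both factors in $U_{>0}(\mathcal{B})$, where the two-term bound above settles everything; but the "careful combinatorial argument" you defer to does not exist, and the correct resolution is to weaken the statement rather than to strengthen the proof.
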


\subsubsection{Positive tuples of flags} 

\begin{definition}\label{def: positive flags}
Let $l\geq 3$, and let $(F_1,\dots,F_l)$ be an $l$-tuple of flags in $\Fc(V)$ such that $F_1$ and $F_l$ are transverse. We say that $(F_1,\dots,F_l)$ is \emph{positive} if there is
\begin{itemize}
\item an ordered basis $\mathcal{B}:=(e_1,\dots,e_n)$ of $V$ such that $e_k\in F_1^{(n-k+1)}\cap F_l^{(k)}$ for all $k=1,\dots,n$, 
\item a (necessarily unique) $(l-2)$-tuple of elements $(u_1,\dots,u_{l-2})$ in $U_{>0}(\mathcal{B})$, 
\end{itemize}
such that $(F_1,\dots,F_l)=(F_1,u_1\cdot F_1,u_1u_2\cdot F_1,\dots,u_1u_2\cdots u_{l-2}\cdot F_1,F_l)$.
\end{definition} 

Observe that when $n=2$, $\Fc(V)=\mathbb{P}(V)$ has two natural cyclic orders, which are reverses of each other. Then a tuple of flags $(F_1,\dots,F_k)$ in $\Fc(V)$ is positive if and only $F_1<\dots<F_k<F_1$ in either of the cyclic orders on $\mathbb{P}(V)$. For general $n$, it is well-known that every positive tuple of flags is in general position; a proof can be found in \cite[Appendix A]{SunWienhardZhang}.

The following is an example of a positive tuples of flags in $\Fc(V)$ for general $V$.

\begin{example}\label{eq: Veronese}
Recall that in Section \ref{sec: total positivity}, we defined, using the basis $\mathcal{B}$, a homomorphism $\iota=\iota_{\mathcal B}:\PGL_2(\R)\to\PGL(V)$. Let $F_\pm\in\Fc(V)$ be the flags defined by $F_+^{(i)}=\Span_\R(e_1,\dots,e_i)$ and $F_-^{(i)}=\Span_\R(e_{n-i+1},\dots,e_n)$ for all $i=1,\dots,n-1$, and let $\nu:\mathbb P(\R^2)\to\mathbb P(V)$ be the map given by 
\begin{align*}
\nu:\begin{bmatrix}1\\0\end{bmatrix}\mapsto&\,\,F_+,\\
\nu:\begin{bmatrix}x\\1\end{bmatrix}\mapsto&\,\,\iota\left(\begin{bmatrix}1&x\\0&1\end{bmatrix}\right)\cdot F_-.
\end{align*}
It is straightforward to check that $\nu$ is $\iota$-equivariant. Thus it follows from Proposition~\ref{prop: Veronese} that if $x_1<\dots<x_k<x_1$ is a $k$-tuple of points in the cyclic order on $\mathbb P(\R^2)$  or its reverse, then $(\nu(x_1),\dots,\nu(x_k))$ is a positive tuple of flags. 
\end{example}

Using Observation \ref{obs: prec}, we may deduce the following.

\begin{observation}\label{obs: positive converge}
Let $(F_i)_{i=1}^\infty$ be a sequence of flags in $\Fc(V)$. 
\begin{enumerate}
\item If $(F_i)_{i=1}^\infty$ converges to some flag $F_\infty\in\Fc(V)$ and there is some $F,H,K,G\in\Fc(V)$ such that $(F,H,F_i,K,G)$ is a positive tuple of flags for all integers $i\geq 1$, then $(F,F_\infty,G)$ is positive.
\item If there is some $F\in\Fc(V)$ such that $(F_1,\dots,F_i,F)$ is positive for all integers $i>1$, then $(F_i)_{i=1}^\infty$ converges to some flag $F_\infty$ in $\Fc(V)$. 
\end{enumerate}
\end{observation}



The following is a well-known theorem due to Fock-Goncharov \cite{Fock-Goncharov}. It gives a coordinate-free description of a positive tuple of flags in terms of the edge and triangle invariants defined in Section \ref{sec: cross ratio} and Section \ref{sec: triple ratio} respectively. 

\begin{theorem}\cite[Theorem 9.1(a)]{Fock-Goncharov}\label{thm: Fock-Goncharov}
Let $(F_1,\dots,F_l)$ be an $l$-tuple of flags. Let $M$ be a convex planar polygon with $l$ vertices, $v_1<\dots<v_l<v_1$ in this cyclic order along the boundary of $M$, and let $F_{v_l}:=F_l$. Choose a triangulation $\Tmc$ of $M$, where the vertices of each triangle of $\Tmc$ is a vertex of $M$. For each triangle $S$ of $\Tmc$, let $(v_{S,1},v_{S,2},v_{S,3})$ be the vertices of $S$. Also, for each interior edge $e$ of $\Tmc$, let $v_{e,1}$ and $v_{e,2}$ be the endpoints of $e$, and let $u_{e,1}$ and $u_{e,2}$ be the vertices of $M$ such that both $(u_{e,1},v_{e,1},v_{e,2})$ and $(u_{e,2},v_{e,1},v_{e,2})$ are vertices of some triangle of $\Tmc$. (See Figure \ref{fig: FG}.) Then $(F_1,\dots,F_l)$ is positive if and only if both of the following statements hold:
\begin{enumerate}
\item For all triples of positive integers $\mathbf j:=(j_1, j_2, j_3)$ that sum to $n$, and all triangles $S$ of $\Tmc$,
\[T_\mathbf j(F_{v_{S,1}},F_{v_{S,2}},F_{v_{S,3}})>0.\]
\item For all $k=1,\dots,n-1$ and all edges $e$ of $\Tmc$,
\[S_k(F_{v_{e,1}},F_{u_{e,1}},F_{v_{e,2}},F_{u_{e,2}})<0.\]
\end{enumerate}
\end{theorem}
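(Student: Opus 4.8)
The plan is to reduce the statement to two elementary configurations --- a single triangle, and a \emph{diamond} consisting of two triangles sharing an edge --- and then to bootstrap to an arbitrary triangulated polygon by inducting on the number of triangles of $\Tmc$. The key to the induction is that the totally positive semigroup $U_{\geq 0}(\mathcal{B})$ is closed under multiplication, with all minors only increasing under such products (Observation \ref{obs: increasing minors}), together with the fact (a consequence of Observation \ref{obs: positive converge} and Definition \ref{def: positive flags}) that positivity of a long tuple of flags in general position can be detected on consecutive sub-tuples once the two outermost flags are known to be transverse. Throughout we may assume $(F_1,\dots,F_l)$ is in general position, since otherwise both sides of the claimed equivalence fail.

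First I would treat the triangle case: for a triple $(F_1,F_2,F_3)$ in general position, positivity holds if and only if $T_{\mathbf{j}}(F_1,F_2,F_3)>0$ for every triple $\mathbf{j}$ of positive integers summing to $n$. Fixing the basis $\mathcal{B}=(e_1,\dots,e_n)$ adapted to the transverse pair $(F_1,F_3)$ as in Definition \ref{def: positive flags}, positivity means $F_2=u\cdot F_1$ for a unique transverse unipotent $u$. Expanding $u$ in the basis $\mathcal{B}$ and substituting into the definition \eqref{eqn: triangle invariant} of $T_{\mathbf{j}}$, the wedge products evaluate to sums of products of minors of $M_u$; a bookkeeping argument using the index orderings collapses these to a single monomial, so $T_{\mathbf{j}}(F_1,F_2,F_3)$ is a Laurent monomial with positive coefficient in the nontrivial minors of $M_u$, and positivity of $u$ forces all triple ratios positive. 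For the converse I would invoke the bijectivity of the triple-ratio map $u\mapsto(T_{\mathbf{j}}(F_1,u\cdot F_1,F_3))_{\mathbf{j}}$ from $U_{>0}(\mathcal{B})$ onto $\R_{>0}^{\binom{n-1}{2}}$: writing a generic transverse $u$ as a product $x_{i_1}(t_1)\cdots x_{i_N}(t_N)$ of Chevalley generators along a reduced word for the longest Weyl element, the change of coordinates from the $t_m$'s to the triple ratios is a monomial transformation with positive coefficients, so all $t_m>0$ (equivalently $u\in U_{>0}(\mathcal{B})$) if and only if all triple ratios are positive.

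Next I would treat the diamond case: two triangles $S$ and $S'$ sharing an edge $e$ with endpoints $v_{e,1},v_{e,2}$ and opposite vertices $u_{e,1}\in S$, $u_{e,2}\in S'$, where $\{F_{v_{e,1}},F_{v_{e,2}}\}$ and $\{F_{u_{e,1}},F_{u_{e,2}}\}$ are linked in cyclic order. Writing this $4$-tuple in the form $(G,w_1\cdot G,w_1w_2\cdot G,G')$ in the basis adapted to its transverse outer pair, the factor $w_1$ encodes positivity of $S$, the factor $w_2$ encodes positivity of $S'$, and the interface between them is governed precisely by the edge invariants $S_k(F_{v_{e,1}},F_{u_{e,1}},F_{v_{e,2}},F_{u_{e,2}})$ for $k=1,\dots,n-1$. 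Using the cross-ratio identities and Observation \ref{obs: cross ratio projection} to reduce each $S_k$ to a classical cross ratio of four collinear points, and then using \eqref{eqn: cross ratio classical}, Observation \ref{obs: RP1}, and the $n=2$ description of positivity (cyclically ordered points on $\mathbb{P}(V)$ have negative cross ratio), the condition $S_k<0$ for all $k$ becomes equivalent to the interface unipotent being totally positive; Observation \ref{obs: increasing minors} then yields $w_1w_2\in U_{>0}(\mathcal{B})$, i.e. positivity of the diamond, and the argument reverses.

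Finally, any triangulation $\Tmc$ of $M$ is obtained from a single triangle by repeatedly gluing on one new triangle along a boundary edge, and at each step the enlarged configuration is positive if and only if the partial one was, the new triangle's triple ratios are positive, and the new edge invariant is negative --- exactly the diamond case applied to the two triangles of $\Tmc$ adjacent to the gluing edge, with Observation \ref{obs: positive converge} reducing positivity of the enlarged tuple to positivity on consecutive sub-tuples. Iterating over all triangles and interior edges of $\Tmc$ gives the equivalence, and independence of the outcome from the gluing order and from the chosen triangulation follows from the compatibility of $T_{\mathbf{j}}$ and $S_k$ under edge flips. The hard part will be the converse of the triangle case: showing that positivity of all $\binom{n-1}{2}$ triple ratios forces the reconstructed unipotent into $U_{>0}(\mathcal{B})$, which requires the full strength of Lusztig's theory of total positivity, with the monomiality of the passage from Chevalley parameters to triple ratios being the delicate combinatorial input. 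A secondary subtlety is tracking the signs in the gluing step --- why the edge invariant must be negative rather than positive --- which ultimately reflects that four cyclically ordered points on $\mathbb{P}(V)$ have negative classical cross ratio.
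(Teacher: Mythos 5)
First, note that the paper does not prove Theorem \ref{thm: Fock-Goncharov} at all: it is imported verbatim from Fock--Goncharov \cite{Fock-Goncharov} (their Theorem 9.1(a)) and used as a black box, so the only meaningful benchmark is Fock--Goncharov's own argument. Your outline in fact follows the architecture of their proof (parametrize a single triangle by triple ratios, glue two triangles along an edge using the edge invariants, then induct over a triangulation and invoke flip-invariance), but at each decisive step it asserts the key point rather than proving it, so as written it is a roadmap rather than a proof.

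Concretely: (i) the claimed ``bijectivity of the triple-ratio map $u\mapsto(T_{\mathbf j}(F_1,u\cdot F_1,F_3))_{\mathbf j}$ from $U_{>0}(\mathcal B)$ onto $\R_{>0}^{\binom{n-1}{2}}$'' is false for dimension reasons: $U_{>0}(\mathcal B)$ has dimension $\binom{n}{2}$ while there are only $\binom{n-1}{2}$ triple ratios, which are invariant under conjugation by the diagonal subgroup preserving the adapted basis; the correct statement lives on configurations modulo that torus, and the converse of the triangle case additionally needs that the triple ratios form a complete invariant of \emph{generic} (not a priori positive) triples, so that positivity of the ratios forces $\PGL(V)$-equivalence to a positive configuration. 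That completeness, and the subtraction-free relation to Lusztig's factorization parameters that you invoke, are exactly the hard inputs of \cite{Fock-Goncharov} and \cite{Lusztig}; they do not follow from Observations \ref{obs: prec}--\ref{obs: increasing minors}. (ii) In the diamond step, the sentence ``$S_k<0$ for all $k$ becomes equivalent to the interface unipotent being totally positive'' is precisely the statement to be proved ($l=4$ of the theorem). Observation \ref{obs: cross ratio projection} reduces each single $S_k$ to a rank-one cross ratio, but it does not show that the $n-1$ edge invariants together with the two triangles' triple ratios reconstruct the factorization $F_3=w_1w_2\cdot F_1$, nor that they force $w_2\in U_{>0}(\mathcal B)$; Observation \ref{obs: increasing minors} only helps once each factor is already known to be (semi)positive. (iii) The induction over an arbitrary triangulation, and independence of the chosen triangulation, rest on positivity of the invariants being preserved under a flip of a diagonal, i.e.\ on the subtraction-free mutation formulas; you assert this ``compatibility under edge flips'' in one clause, but it is one of the main nontrivial ingredients of Fock--Goncharov's proof. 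So the proposal is a sensible reconstruction of the original strategy with the genuinely difficult lemmas deferred or misstated, whereas the paper itself, appropriately, simply cites the result.
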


\begin{figure}[h]
    \centering
    \includegraphics[width=0.5\textwidth]{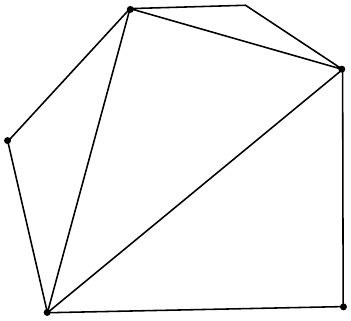}
    \small
\put (-65,55){$S$}
\put (-149,100){$e$}
\tiny
\put (-10,3){$v_{S,1}$}
\put (-214,2){$v_{S,2}=v_{e,1}$}
\put (-2,145){$v_{S,3}=u_{e,2}$}
\put (-140,186){$v_{e,2}$}
\put (-216,102){$u_{e,1}$}
    \caption{Triangulation $\mathcal T$ in Theorem \ref{thm: Fock-Goncharov}.}
    \label{fig: FG}
\end{figure}

By the symmetries of the triple ratio described in Section \ref{sec: triple ratio}, the positivity of $T_\mathbf j(F_{v_{S,1}},F_{v_{S,2}},F_{v_{S,3}})$ does not depend on how we label the vertices of $S$. As an immediate consequence of Theorem \ref{thm: Fock-Goncharov}, we have the following observation.

\begin{observation}\label{obs: basic flag}
The following are equivalent
\begin{enumerate}
\item $(F_1,F_2,\dots,F_l)$ is a positive $l$-tuple of flags.
\item $(g\cdot F_1,g\cdot F_2,\dots,g\cdot F_l)$ is a positive $l$-tuple of flags for some $g\in\PGL(V)$.
\item $(F_l,\dots,F_2,F_1)$ is a positive $l$-tuple of flags.
\item $(F_2,F_3,\dots,F_l,F_1)$ is a positive $l$-tuple of flags.
\item $(F_1,\dots,F_{l-1})$ is a positive $(l-1)$-tuple of flags and $(F_1,F_i,F_{l-1},F_l)$ is a positive quadruple of flags for some, or equivalently, all $i=2,\dots,l-2$.
\item $(F_1,g\cdot F_2,F_3,\dots,F_l)$ is a positive $l$-tuple of flags for all positive loxodromic $g\in\PGL(V)$ that fixes $F_1$ and $F_3$.
\end{enumerate}
\end{observation}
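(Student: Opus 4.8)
The plan is to deduce all six equivalences from the coordinate characterization of positivity in Theorem~\ref{thm: Fock-Goncharov}, together with the invariance and symmetry identities for the cross ratio $C_k$ and the triple ratio $T$ recorded in Sections~\ref{sec: cross ratio}--\ref{sec: triple ratio}. I would first record two standard preliminaries, both immediate from Theorem~\ref{thm: Fock-Goncharov} by choosing a triangulation of the polygon $M$ that restricts to a triangulation of the relevant sub-polygon: (a) a positive tuple of flags is in general position, in particular pairwise transverse; and (b) if $(F_1,\dots,F_l)$ is positive and $1\le i_1<\dots<i_m\le l$, then $(F_{i_1},\dots,F_{i_m})$ is positive. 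Fact (b) disposes of the ``forward'' half of (5) and of the ``for all $i$'' clause there. As for (1)$\Leftrightarrow$(2), I would argue straight from Definition~\ref{def: positive flags}: if a basis $\mathcal B$ and elements $u_1,\dots,u_{l-2}\in U_{>0}(\mathcal B)$ witness positivity of $(F_1,\dots,F_l)$, then the basis $g\mathcal B$ and the elements $gu_1g^{-1},\dots,gu_{l-2}g^{-1}$ witness positivity of $(gF_1,\dots,gF_l)$, since $gu_sg^{-1}$ is represented by the very same totally positive upper triangular matrix in the basis $g\mathcal B$ and so lies in $U_{>0}(g\mathcal B)$; the converse applies $g^{-1}$.

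For (1)$\Leftrightarrow$(3) and (1)$\Leftrightarrow$(4): reversing, respectively cyclically rotating, the tuple corresponds to reversing, respectively cyclically rotating, the vertex labelling of the polygon $M$ in Theorem~\ref{thm: Fock-Goncharov}; in both cases a triangulation of $M$ stays a triangulation, and the identities $T(U_1,U_2,U_3,U_1',U_2',U_3')=T(U_3,U_2,U_1,U_3',U_2',U_1')^{-1}$, $C_k(U_1,U_2,W_1,W_2)=C_k(U_2,U_1,W_2,W_1)$, and $C_k=C_{n-k}$ with the slots transposed show that the collection of triangle inequalities $\{T_{\mathbf j}>0\}$ and edge inequalities $\{S_k<0\}$ for the new triangulation is exactly the one for the old, with the indices $\mathbf j$ and $k$ permuted among themselves and some inequalities replaced by ones on reciprocals (a real number is positive iff its reciprocal is). For (4) one also invokes fact (a), so that the rotated tuple still satisfies the transversality requirement of Definition~\ref{def: positive flags}.

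The statement (1)$\Leftrightarrow$(5) is the assertion that positive configurations glue along a common edge. The forward direction is fact (b). For the converse, assuming $(F_1,\dots,F_{l-1})$ is positive, I would choose a triangulation of the $(l-1)$-gon on $v_1,\dots,v_{l-1}$ whose triangle along the boundary edge $v_{l-1}v_1$ is $v_1v_iv_{l-1}$ (possible for every $i\in\{2,\dots,l-2\}$), and extend it to a triangulation of the $l$-gon on $v_1,\dots,v_l$ by adjoining the triangle $v_1v_{l-1}v_l$. The Fock--Goncharov conditions for this triangulation of the $l$-gon are precisely those of the chosen triangulation of the $(l-1)$-gon, together with the triangle inequalities for $v_1v_{l-1}v_l$ and the edge inequalities for the new diagonal $v_1v_{l-1}$, whose two wing vertices are exactly $v_i$ and $v_l$; but these last two families are among the Fock--Goncharov conditions for the positive quadruple $(F_1,F_i,F_{l-1},F_l)$ triangulated by its diagonal $v_1v_{l-1}$. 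Hence $(F_1,\dots,F_l)$ is positive, and ``some $i$'' suffices.

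Finally (1)$\Leftrightarrow$(6). For $l=3$ this is immediate from Definition~\ref{def: positive flags} together with the fact that conjugation by a positive diagonal matrix preserves $U_{>0}$ of a basis adapted to $(F_1,F_3)$: after rescaling in $\PGL(V)$, $g$ is such a diagonal matrix, $F_2=uF_1$ with $u\in U_{>0}$, and $gF_2=(gug^{-1})F_1$ with $gug^{-1}\in U_{>0}$. For $l\ge 4$, I would use (4) and (5) to reduce: $(F_1,\dots,F_l)$ is positive iff $(F_1,F_2,F_3,F_l)$ and $(F_1,F_3,F_4,\dots,F_l)$ both are, and likewise with $F_2$ replaced by $gF_2$; since the second tuple does not involve $F_2$, it remains only to show that $(F_1,F_2,F_3,F_l)$ positive implies $(F_1,gF_2,F_3,F_l)$ positive (the converse then follows since $g^{-1}$ has the same properties). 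Triangulating this quadruple by the diagonal $v_1v_3$, its Fock--Goncharov conditions read $T_{\mathbf j}(F_1,F_2,F_3)>0$, $T_{\mathbf j}(F_1,F_3,F_l)>0$, $S_k(F_1,F_2,F_3,F_l)<0$. As $g$ fixes $F_1$ and $F_3$ it fixes every $F_1^{(p)}$ and $F_3^{(p)}$; hence by $\PGL(V)$-invariance of $T$, $T_{\mathbf j}(F_1,gF_2,F_3)=T_{\mathbf j}(F_1,F_2,F_3)>0$, while $T_{\mathbf j}(F_1,F_3,F_l)$ is untouched. For the edge invariant, the two hyperplanes in $S_k$ are $g$-fixed, so by $\PGL(V)$-invariance and the multiplicativity identity for $C_1$,
\[S_k(F_1,gF_2,F_3,F_l)=S_k(F_1,F_2,F_3,F_l)\cdot C_1\!\bigl(F_1^{(k)}+F_3^{(n-k-1)},\,F_1^{(k-1)}+F_3^{(n-k)},\,F_l^{(1)},\,g^{-1}F_l^{(1)}\bigr).\]
Working in a basis adapted to the transverse pair $(F_1,F_3)$, in which $g$ is diagonal with positive entries, these two hyperplanes are coordinate hyperplanes; projecting modulo their $(n-2)$-dimensional intersection (Observation~\ref{obs: cross ratio projection}) and using the classical cross-ratio formula \eqref{eqn: cross ratio classical}, the cross ratio on the right evaluates to a ratio of two eigenvalues of $g$, hence is positive. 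So $S_k(F_1,gF_2,F_3,F_l)$ keeps its negative sign, and $(F_1,gF_2,F_3,F_l)$ is positive. I expect the main obstacle to be exactly this last step: one must be scrupulous about the sign and index conventions of $S_k$ and $T_{\mathbf j}$ coming from Figure~\ref{fig: FG}, so that each invariant is only ever compared to another with a permuted index or opposite orientation, and one must check that the tuples involved stay in general position (handled by a routine connectedness argument along a path from $\id$ to $g$ inside the identity component of the common stabilizer of $F_1$ and $F_3$); the parts (1)--(5) are bookkeeping with the Fock--Goncharov conditions and the cross-ratio identities.
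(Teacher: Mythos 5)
The paper gives no proof of this observation, merely asserting that it is an immediate consequence of Theorem \ref{thm: Fock-Goncharov}; your write-up is a correct fleshing-out of exactly that route (the Fock--Goncharov triangle/edge criterion plus the symmetry identities for $T$ and $C_k$, with (2) and the $l=3$ case of (6) handled directly from Definition \ref{def: positive flags} via conjugation in $U_{>0}(\mathcal B)$). The only point deserving care — that $(F_1,gF_2,F_3,F_l)$ remains in general position in part (6) — is adequately handled by your connectedness argument along a path in the positive diagonal subgroup, which is essentially Proposition \ref{prop: easy positive}.
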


\subsection{Collapsing sequence of domains} \label{sec: open sets}

Recall that in \eqref{eqn: open sets}, we defined for any positive triple of flags $(F,H,K)$, the open set $\mathfrak U(F,H,K)$. Recall also that for all $k=1,\dots,n-1$, $B_k$ denotes the $k$-th Labourie cross ratio as defined in Definition \ref{def: cross ratio}. With this, we can state the main theorem of this section.

\begin{theorem}\label{thm: general k}
Let $(F_i)_{i=1}^\infty$ and $(H_i)_{i=1}^\infty$ be sequences of flags in $\Fc(V)$ and $K\in\Fc(V)$ such that $(F_1,\dots,F_l,H_l,\dots,H_1,K)$ is a positive tuple of flags for all positive integers $l$. If there is some $D>1$ such that $B_k(H_i,F_i,F_{i+1},H_{i+1})\leq D$ for all integers $i>0$ and all $k=1,\dots,n-1$, then $\displaystyle\lim_{i\to\infty}\overline{\mathfrak U(F_i,H_i,K)}$ is a point. 
\end{theorem}

In the case when $n=2$, $\mathfrak{U}(F,H,K)$ is the open subinterval of $\Pb(V)$ with endpoints $F$ and $H$ that does not contain $K$, and the proof of Theorem \ref{thm: general k} is a straightforward cross ratio computation. However, for general $V$, the proof of Theorem \ref{thm: general k} is more involved; it requires understanding properties of $B_k$ evaluated along positive tuples of flags, and understanding how positivity is preserved under taking certain quotients. The remainder of this section is the proof of Theorem \ref{thm: general k}.

\subsection{Preliminary properties of $\mathfrak U(F,H,K)$.}
As a first step, we prove some basic properties of $\mathfrak{U}(F,H,K)$ given in the following proposition. Observe from the definition that $\mathfrak{U}(F,H,K)\subset\Fc(V)$ is connected and open, $G\in\mathfrak{U}(F,H,K)$ if and only if $K\in\mathfrak{U}(F,H,G)$.\\

\begin{proposition} \label{prop: easy positive}
\begin{enumerate}
\item The subset 
\[\mathfrak{U}(F,H,K)\subset\{G\in\Fc(V):(F,G,H)\text{ is in general position}\}\] 
is a connected component. In particular, $\mathfrak{U}(F,H,G)=\mathfrak{U}(F,H,G')$ for all $G,G'\in\mathfrak{U}(F,H,K)$.
\item If $\mathfrak{U}(F_2,H_2,K_2)\subset\mathfrak{U}(F_1,H_1,K_1)$ and $(F_1,F_2,H_2,H_1)$ is positive, then $(F_1,F_2,H_2,H_1,K_1)$ is positive and $\mathfrak{U}(F_2,H_2,K_1)=\mathfrak{U}(F_2,H_2,K_2)$. In particular, if we choose $G_i\in\mathfrak U(F_i,H_i,K_i)$ for $i=1,2$, then $\mathfrak{U}(F_1,H_1,G_1)\subset\mathfrak{U}(F_2,H_2,G_2)$.
\item If $\mathfrak{U}(F_2,H_2,K_2)\subset\mathfrak{U}(F_1,H_1,K_1)$ and $(F_1,F_2,H_2,H_1)$ is positive, then $\overline{\mathfrak U(F_2,H_2,K_2)}\subset\mathfrak U(F_1,H_1,K_1)$.
\end{enumerate}
\end{proposition}


\begin{proof}
Proof of (1). Since $\mathfrak{U}(F,H,K)$ is open and connected, to prove the first statement, it is sufficient to prove that 
\[\mathfrak{U}(F,H,K)\subset\{G\in\Fc(V):(F,G,H)\text{ is in general position}\}\] 
is closed. Consider a sequence of flags $(G_i)_{i=1}^\infty$ in $\mathfrak{U}(F,H,K)$ that converges to some $G\in\Fc(V)$, such that $(F,G,H,K)$ is in general position. We need to show that $G\in\mathfrak{U}(F,H,K)$.

For all integers $i>0$, Theorem \ref{thm: Fock-Goncharov} implies that $T_\mathbf j(F,G_i,K)>0$ and $T_\mathbf j(G_i,H,K)>0$ for all triples $\mathbf j$ of positive integers that sum to $n$, and $S_k(G_i,F,K,H)<0$ for all $k=1,\dots,n-1$. At the same time, since $(F,G,H,K)$ is in general position, the quantities $T_\mathbf j(F,G,K)$ and $T_\mathbf j(G,H,K)$ for all triples $\mathbf j$ of positive integers that sum to $n$, and $S_k(G,H,K,F)$ for all $k=1,\dots,n-1$, are well-defined and non-zero. Thus, by the continuity of the triangle and edge invariants, $T_\mathbf j(F,G,K)>0$ and $T_\mathbf j(G,H,K)>0$ for all triples $\mathbf j$ of positive integers that sum to $n$, and $S_k(G,F,K,H)<0$ for all $k=1,\dots,n-1$. Apply Theorem \ref{thm: Fock-Goncharov} again to see that $(F,G,H,K)$ is positive. This proves the first statement.

 If $G,G'\in\mathfrak{U}(F,H,K)$, then $K\in \mathfrak{U}(F,H,G)\cap \mathfrak{U}(F,H,G')$, so the fact that $\mathfrak{U}(F,H,G)$ and $\mathfrak{U}(F,H,G')$ are connected components of implies that $\mathfrak{U}(F,H,G)=\mathfrak{U}(F,H,G')$.

 Proof of (2). Since $\mathfrak{U}(F_2,H_2,K_2)\subset\mathfrak{U}(F_1,H_1,K_1)$, we have $F_2,H_2\in\overline{\mathfrak{U}(F_1,H_1,K_1)}$. At the same time, $(F_1,F_2,H_2,H_1)$ is positive, so $(F_1,F_2,H_1)$ is in general position, and (1) implies $F_2\in\mathfrak{U}(F_1,H_1,K_1)$. Then by Observation \ref{obs: basic flag}(5), $(F_1,F_2,H_2,H_1,K_1)$ is positive. 
 
 Suppose for contradiction that $\mathfrak{U}(F_2,H_2,K_1)\neq \mathfrak{U}(F_2,H_2,K_2)$. It follows from Observation \ref{obs: basic flag}(6) that for $i=1,2$, 
 \[\tau_i:=\{G^{(1)}:G\in\mathfrak{U}(F_2,H_2,K_i)\}\]
 is a simplex associated to $(F_2,H_2)$, and $\tau_1\neq\tau_2$. At the same time, since $\mathfrak U(F_2,H_2,K_i)\subset\mathfrak U(F_1,H_1,K_1)$ for $i=1,2$, so both $\tau_1$ and $\tau_2$ lie in a simplex $\Delta$ associated to $(F_1,H_1)$. This is impossible because $\tau_1\cup\tau_2$ contain an entire projective line in its closure, while $\Delta$ does not.  
 
 To prove the last claim, observe that $G_2\in \mathfrak U(F_2,H_2,K_2)=\mathfrak U(F_2,H_2,K_1)$, so since $(F_1,F_2,H_2,H_1,K_1)$ is positive, Observation \ref{obs: basic flag}(5) implies that $(F_1,F_2,G_2,H_2,H_1,K_1)$ is positive. It now follows that $\mathfrak U(F_1,H_1,G_2)\subset\mathfrak U(F_2,H_2,G_2)$ and $G_2\in\mathfrak U(F_1,H_1,K_1)$. Since $G_1\in\mathfrak U(F_1,H_1,K_1)$, (1) implies that $\mathfrak U(F_1,H_1,G_1)\subset\mathfrak U(F_2,H_2,G_2)$.

 Proof of (3). By (2), we may assume that $K_1=K_2=:K$. Let $G$ be a flag in $\overline{\mathfrak U(F_2,H_2,K)}$, and let $(G_i)_{i=1}^\infty$ be a sequence of flags in $\mathfrak U(F_2,H_2,K)$ that converges to $G$. Since $(F_1,F_2,H_2,H_1,K)$ is positive, there is a basis $\Bc:=(e_1,\dots,e_n)$ of $V$ with the property that
\begin{itemize}
\item $e_j\in K^{(j)}\cap F_1^{(n-j+1)}$ for all $j=1,\dots,n$, 
\item there exists $w_1,w_2,u_i',u_i,v_2\in U_{>0}(\Bc)$ such that $F_2=v_2\cdot F_1$, $G_i=v_2u_i\cdot F_1$, $H_2=v_2w_2\cdot F_1$, $H_1=v_2w_2w_1\cdot F_1$, and $w_2=u_iu_i'$. 
\end{itemize}
By Observation \ref{obs: increasing minors}, $\id\prec u_i,u_i'\prec w_2$, so Observation \ref{obs: prec}(1) implies that by taking subsequences, we may assume that the sequences $(u_i)_{i=1}^\infty$, $(u_i')_{i=1}^\infty$ converge to some $u_\infty,u_\infty'\in U_{\geq 0}(\Bc)$. Then by Observation \ref{obs: increasing minors}, $v_2u_\infty,u_\infty'w_1\in U_{> 0}(\Bc)$. Furthermore,
\[G=\lim_{i\to\infty}G_i=\lim_{i\to\infty}v_2u_i\cdot F_1=v_2u_\infty \cdot F_1\]
and
\[H_1=v_2u_iu_i'w_1\cdot F_1=v_2 u_\infty u_\infty'w_1 \cdot F_1,\]
so $(F_1,G,H_1,K)$ is positive. 
\end{proof}

In light of Proposition \ref{prop: easy positive}(1), we may define opposites for open sets of the form $\mathfrak U=\mathfrak{U}(F,H,K)$.

\begin{definition}
For every positive open set $\mathfrak U=\mathfrak{U}(F,H,K)$, its \emph{opposite} $\mathfrak U^{\rm opp}:=\mathfrak{U}(F,H,G)$ for some (any) $G\in\mathfrak U(F,H,K)$.
\end{definition}

Note that $(\mathfrak U^{\rm opp})^{\rm opp}=\mathfrak U$, and Proposition \ref{prop: easy positive}(2) implies that if $\mathfrak U_1=\mathfrak U(F_1,H_1,K_1)$, $\mathfrak U_2=\mathfrak U(F_2,H_2,K_2)$, and $(F_1,F_2,H_2,H_1)$ is positive, then $\mathfrak U_1\subset\mathfrak U_2$ if and only if  $\mathfrak U_2^{\rm opp}\subset\mathfrak U_1^{\rm opp}$.

By Observation \ref{obs: positive converge}(2), we see that if $(F_i)_{i=1}^\infty$ and $(H_i)_{i=1}^\infty$ are sequences of flags in $\Fc(V)$ and $K\in\Fc(V)$ such that $(F_1,\dots,F_l,H_l,\dots,H_1)$ is positive for all integers $l\geq 2$, then $\displaystyle\lim_{i\to\infty}F_i$ and $\displaystyle\lim_{i\to\infty}H_i$ exist. The next proposition tells us that to prove the conclusion of Theorem \ref{thm: general k}, it is sufficient to prove that $\displaystyle\lim_{i\to\infty}F_i=\lim_{i\to\infty}H_i$. 

\begin{proposition}\label{prop: shrink}
Let $(F_i)_{i=1}^\infty$ and $(H_i)_{i=1}^\infty$ be sequences of flags in $\Fc(V)$ and $K\in\Fc(V)$ such that $(F_1,\dots,F_l,H_l,\dots,H_1,K)$ is a positive tuple of flags for all positive integers $l$. If $\displaystyle\lim_{i\to\infty}F_i=\lim_{i\to\infty}H_i=:G$, then $\displaystyle\lim_{i\to\infty}\overline{\mathfrak U(F_i,H_i,K)}=\{G\}$.
\end{proposition}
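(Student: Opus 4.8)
The plan is to show that $\overline{\mathfrak U(F_i,H_i,K)}$ shrinks to $\{G\}$ by sandwiching each set between nested neighborhoods whose intersection is $\{G\}$. First I would invoke the nesting furnished by Proposition \ref{prop: positivity closure}: since $(F_1,F_i,H_i,H_1,K)$ is positive for every $i\geq2$, Observation \ref{obs: basic flag}(5) (applied to the subtuples $(F_1,F_i,F_j,H_j,H_1,K)$ for $i<j$, which are positive because they are subtuples of a positive tuple — here I would first note that $(F_1,F_i,F_j,H_j,H_1,K)$ is positive, using that $(F_1,F_i,H_i,H_1,K)$ and $(F_1,F_j,H_j,H_1,K)$ are positive together with transversality and Theorem \ref{thm: Fock-Goncharov}) gives that $(F_i,F_j,H_j,H_i,K)$ is positive whenever $i<j$. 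Proposition \ref{prop: positivity closure} then yields the chain of inclusions
\[\overline{\mathfrak U(F_j,H_j,K)}\subset\mathfrak U(F_i,H_i,K)\subset\overline{\mathfrak U(F_i,H_i,K)}\]
for all $i<j$, so $\left(\overline{\mathfrak U(F_i,H_i,K)}\right)_{i=1}^\infty$ is a decreasing sequence of nonempty compact sets, hence converges (in the Hausdorff sense) to the nonempty compact set $\displaystyle Z:=\bigcap_{i=1}^\infty\overline{\mathfrak U(F_i,H_i,K)}$. It remains to show $Z=\{G\}$.

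Next I would show $G\in Z$. For each $i$, pick any flag $G_i\in\mathfrak U(F_{i+1},H_{i+1},K)\subset\overline{\mathfrak U(F_i,H_i,K)}$; by the nesting just established, $G_j\in\overline{\mathfrak U(F_i,H_i,K)}$ for all $j\geq i$, and by compactness (after passing to a subsequence) $(G_j)$ converges to a limit lying in $Z$. To identify this limit with $G$, I would use that $G_j\in\mathfrak U(F_j,H_j,K)$ means $(F_j,G_j,H_j,K)$ is positive, hence $G_j$ lies in a simplex-type region pinched between $F_j$ and $H_j$; since $F_j\to G$ and $H_j\to G$, a compactness/continuity argument (the positivity conditions of Theorem \ref{thm: Fock-Goncharov} for $(F_j,G_j,H_j,K)$ pass to the limit and force the limiting quadruple to be degenerate unless the limit equals $G$) forces $\lim G_j=G$. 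Thus $G\in Z$, so $\{G\}\subset Z$.

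For the reverse inclusion I would argue by contradiction: suppose $G'\in Z$ with $G'\neq G$. Then $G'\in\overline{\mathfrak U(F_i,H_i,K)}$ for every $i$. I would choose a coordinate $1\le k\le n-1$ at which $G'$ and $G$ differ, i.e. $(G')^{(k)}\neq G^{(k)}$, and track the cross ratio $C_1$ (or the relevant minor/coordinate) separating $(G')^{(k)}$ from $(F_i)^{(k)}$ and $(H_i)^{(k)}$. Concretely, writing everything in an appropriate basis as in Proposition \ref{prop: positivity closure}, membership of $G'$ in $\overline{\mathfrak U(F_i,H_i,K)}$ means $G'=v\cdot u_\infty^{(i)}\cdot F_1$ for some $u_\infty^{(i)}\in U_{\geq0}$ pinned between the unipotents realizing $F_i$ and $H_i$; as $i\to\infty$ both of those unipotents (relative to a fixed base flag and $K$) converge to the common unipotent realizing $G$, so $u_\infty^{(i)}$ is forced to converge to it as well, giving $G'=G$, a contradiction. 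Equivalently and more cleanly: $Z\subset\bigcap_i\overline{\mathfrak U(F_i,H_i,K)}$ and the diameter of $\overline{\mathfrak U(F_i,H_i,K)}$ (in any fixed metric on $\Fc(V)$) tends to $0$, because this closure is contained in the closed region bounded by $F_i$ and $H_i$ and these two flags coalesce.

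The main obstacle is making precise the assertion that $\overline{\mathfrak U(F_i,H_i,K)}$ is "pinched between $F_i$ and $H_i$" and that its diameter therefore goes to zero — the one-dimensional intuition (an interval with shrinking endpoints) must be replaced by a genuine argument in $\Fc(V)$. I expect the cleanest route is: work in a basis adapted to $(F_1,K)$, express $F_i=v_i\cdot F_1$, $H_i=v_iw_i\cdot F_1$ with $v_i,w_i\in U_{>0}$, note that every $G'\in\overline{\mathfrak U(F_i,H_i,K)}$ has the form $v_i u\cdot F_1$ with $\mathrm{id}\preceq u\preceq w_i$ (the closure statement from the proof of Proposition \ref{prop: positivity closure}), and observe that $F_i\to G$ and $H_i\to G$ forces $w_i\to\mathrm{id}$ in the sense that all non-trivial minors of $w_i$ tend to $0$; by the squeeze $\mathrm{id}\preceq u\preceq w_i$ the same holds for $u$, so $v_i u\cdot F_1\to G$ uniformly over all such $G'$. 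Combining $\{G\}\subset Z$ with $Z\subset\{G\}$ gives $Z=\{G\}$, i.e. $\displaystyle\lim_{i\to\infty}\overline{\mathfrak U(F_i,H_i,K)}=\{G\}$, as claimed.
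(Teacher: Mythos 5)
Your closing paragraph is, in substance, exactly the paper's proof: in a basis $\Bc$ adapted to $(F_1,K)$ one writes $F_i=u_i\cdot F_1$ and $H_i=u_iu_i'\cdot F_1$ with $u_i,u_i'\in U_{>0}(\Bc)$, notes (via the proof of Proposition \ref{prop: positivity closure}) that any $G_i\in\overline{\mathfrak U(F_i,H_i,K)}$ equals $w_i\cdot F_1$ with every minor of $w_i$ squeezed between the corresponding minors of $u_i$ and of $u_iu_i'$, and concludes $w_i\to w$, where $w$ is the unipotent with $w\cdot F_1=G$. (The existence of $w$ needs $G$ transverse to $K$; this follows from $\id\prec u_i\prec u_iu_i'u_i''$ and Observation \ref{obs: prec}, and is worth a sentence.) That part is correct and is the whole argument.

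The scaffolding in your first two paragraphs, however, rests on a claim the hypothesis does not give you: you assert that $(F_1,F_i,F_j,H_j,H_1,K)$ is positive for $i<j$, hence that $\overline{\mathfrak U(F_j,H_j,K)}\subset\mathfrak U(F_i,H_i,K)$. The proposition only assumes each $(F_1,F_i,H_i,H_1,K)$ is positive separately; nothing constrains the relative position of $(F_i,H_i)$ and $(F_j,H_j)$ for $1<i<j$. Already for $n=2$ one can have the circular order $F_1<F_3<H_3<F_2<H_2<H_1<K<F_1$: both quintuples $(F_1,F_2,H_2,H_1,K)$ and $(F_1,F_3,H_3,H_1,K)$ are positive, yet $(F_1,F_2,F_3,H_3,H_1,K)$ is not, and the intervals $\mathfrak U(F_2,H_2,K)$ and $\mathfrak U(F_3,H_3,K)$ are disjoint rather than nested. (The nesting does hold in the application, Theorem \ref{thm: general k}, where the full interleaved tuple is assumed positive — but not under the hypothesis of this proposition.) This does not sink your proof, because your final paragraph shows directly that an arbitrary choice $G_i\in\overline{\mathfrak U(F_i,H_i,K)}$ converges to $G$, which is precisely what $\lim_{i\to\infty}\overline{\mathfrak U(F_i,H_i,K)}=\{G\}$ means, with no need for nesting or for the auxiliary set $Z$. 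Delete the first two paragraphs and promote the last one to the proof.
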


\begin{proof}
Let $G_i\in\overline{\mathfrak U(F_i,H_i,K)}$ for all integers $i>0$. We will show that $\displaystyle\lim_{i\to\infty}G_i=G$. Let $\Bc:=(e_1,\dots,e_n)$ be the basis of $V$ with the property that
\begin{itemize}
\item $e_j\in K^{(j)}\cap F_1^{(n-j+1)}$ for all $j=1,\dots,n$, 
\item for all integers $i\geq 2$, there are unipotent elements $u_i,u_i',u_i''\in U_{>0}(\Bc)$ such that $F_i=u_i\cdot F_1$, $H_i=u_iu_i'\cdot F_1$, $H_1=u_iu_i'u_i''\cdot F_1$.
\end{itemize}
Note that $u:=u_iu_i'u_i''$ does not depend on $i$. By Proposition \ref{prop: easy positive}(3), $G_i=w_i\cdot F_1$, where $u_{i-1}\prec w_i\prec u_{i-1}u_{i-1}'$ for all integers $i>1$. Also, Proposition \ref{prop: easy positive}(3) implies that $(F_1,G,H_1,K)$ is positive, so $G$ is transverse to $K$. Hence, there is a unique unipotent element $w\in\PGL(V)$ that fixes $K$ such that $w\cdot F_1=G$. Then $\displaystyle\lim_{i\to\infty}u_i=w=\lim_{i\to\infty}u_iu_i'$, which implies that $\displaystyle\lim_{i\to\infty}w_i=w$, so $\displaystyle\lim_{i\to\infty}G_i=\lim_{i\to\infty}w_i\cdot F_1=w\cdot F_1=G$.
\end{proof}

\subsection{Positive flags and the Labourie cross ratio} \label{sec: lab}
To prove properties of $B_k$ evaluated on positive tuples of flags, it is often convenient to use the notion of a Frenet curve, which we now recall. Let $S^1$ denote the topological circle. For any map $\xi:S^1\to\Fc(V)$ and for any integer $k=1,\dots,n-1$, let $\xi^{(k)}:S^1\to\Gr_k(V)$ be the map defined by $\xi^{(k)}(x)=\xi(x)^{(k)}$.

\begin{definition}
A map $\xi:S^1\to\Fc(V)$ is \emph{Frenet} if  the following hold:
\begin{itemize}
\item Let $k>0$ be an integer. If $(x_1,\dots,x_k)$ is a pairwise distinct $k$-tuple of points in $S^1$, then the $k$-tuple of flags $\{\xi(x_1),\dots,\xi(x_k)\}$ is in general position.
\item Let $(n_1,\dots,n_k)$ be a $k$-tuple of positive integers such that $n_1+\dots+n_k=m\leq n$. If $x\in S^1$, and $((x_{i,1},\dots,x_{i,k}))_{i=1}^\infty$ is a sequence of $k$-tuples of pairwise distinct points in $S^1$ such that $\displaystyle\lim_{i\to\infty}x_{i,l}=x$ for all $l=1,\dots,k$, then
\[\lim_{i\to\infty}\xi^{(n_1)}(x_{i,1})+\dots+\xi^{(n_k)}(x_{i,k})=\xi^{(m)}(x).\]
\end{itemize}
\end{definition}

Choose one of the two natural cyclic orderings on $S^1$. The following theorem gives a relationship between Frenet curves and positive tuples of flags. 
It follows easily from the main theorems in Labourie \cite[Theorem 1.4]{Labourie}, Guichard \cite[Theorem 1]{Guichard}, and Bonahon-Dreyer \cite[Theorem 17]{BD}.


\begin{theorem}\label{thm: positive and Frenet}
Let $(F_1,\dots,F_k)$ be a positive $k$-tuple of flags. Then there is 
\begin{itemize}
\item a Frenet curve $\xi:S^1\to\Fc(V)$, and 
\item a $k$-tuple of points $x_1<\dots<x_k<x_1$ that lie in $S^1$ in this cyclic order,
\end{itemize}
such that $\xi(x_i)=F_i$ for all $i=1,\dots,k$.
\end{theorem}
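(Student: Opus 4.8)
The plan is to realize $(F_1,\dots,F_k)$, up to the action of $\PGL(V)$, as a finite subconfiguration of the limit curve of a Hitchin representation of a closed surface group, and then to quote the theorems of Labourie \cite{Labourie} and Guichard \cite{Guichard}, which guarantee that such limit curves are Frenet. First I would fix, as in Theorem \ref{thm: Fock-Goncharov}, a convex $k$-gon $M$ with cyclically ordered vertices $v_1<\dots<v_k<v_1$ and a triangulation $\mathcal T$ of $M$ with vertices among the $v_j$. By Theorem \ref{thm: Fock-Goncharov}, the positivity of $(F_1,\dots,F_k)$ is encoded by finitely many real numbers: the triangle invariants $T_{\mathbf j}$ on the triangles of $\mathcal T$ (all positive) and the edge invariants $S_m$ on the interior edges of $\mathcal T$ (all negative). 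I will call this collection the \emph{coordinates} of $(F_1,\dots,F_k)$ relative to $(M,\mathcal T)$; by Fock--Goncharov \cite[Theorem 9.1]{Fock-Goncharov} together with Definition \ref{def: positive flags}, a positive $k$-tuple is determined, up to the $\PGL(V)$-action, by these coordinates.

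\textbf{Realizing the coordinates by a Hitchin representation.} Next I would choose a closed orientable hyperbolic surface $\Sigma$ of large genus, together with a maximal geodesic lamination $\lambda$ with finitely many leaves, so that its complement is a union of ideal triangles; set $\Gamma:=\pi_1(\Sigma)$, acting on $\H^2$ with $\partial_\infty\H^2=S^1$. For $\Sigma$ of sufficiently large genus relative to $k$, one can arrange that some union of complementary ideal triangles of the lifted lamination $\widetilde\lambda$ forms an ideal polygon combinatorially isomorphic to $(M,\mathcal T)$: the dual tree of $(M,\mathcal T)$ has vertices of valence at most three and embeds into the dual graph of such a lamination. Using the Bonahon--Dreyer parametrization of the Hitchin component \cite[Theorem 17]{BD}, whose triangle and shearing invariants along $\lambda$ mutually determine, triangle by triangle and interior edge by interior edge, the invariants $T_{\mathbf j}$ and $S_m$ of the limit curve evaluated at the corresponding ideal vertices, I would produce a Hitchin representation $\rho:\Gamma\to\PGL(V)$ whose triangle and edge invariants on the triangles and interior edges of the sub-polygon $(M,\mathcal T)$ agree with the coordinates of $(F_1,\dots,F_k)$, choosing arbitrary admissible (positive) values on the remaining leaves of $\lambda$. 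The Bonahon--Dreyer closure relations do not obstruct this prescription, because the sub-polygon can be taken disjoint from the constrained leaves once $\Sigma$ is large enough.

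\textbf{Conclusion.} By Labourie \cite[Theorem 1.4]{Labourie} and Guichard \cite[Theorem 1]{Guichard}, the Hitchin representation $\rho$ admits a $\rho$-equivariant limit map $\xi_\rho:S^1\to\Fc(V)$ that is Frenet. Let $x_1<\dots<x_k<x_1$ be the ideal vertices of the sub-polygon, listed in the cyclic order of $\partial_\infty\H^2$, which coincides with the cyclic order of $v_1,\dots,v_k$ on $M$. The $k$-tuple $(\xi_\rho(x_1),\dots,\xi_\rho(x_k))$ has transverse first and last entries (as $\xi_\rho$ is Frenet), and its triangle and edge invariants with respect to $(M,\mathcal T)$ are precisely the coordinates of $(F_1,\dots,F_k)$; hence by Theorem \ref{thm: Fock-Goncharov} it is a positive $k$-tuple. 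By the uniqueness statement recalled above, there exists $g\in\PGL(V)$ with $g\cdot\xi_\rho(x_i)=F_i$ for all $i=1,\dots,k$. The map $\xi:=g\circ\xi_\rho:S^1\to\Fc(V)$ is then again Frenet, since the general-position and osculation conditions in the definition of a Frenet curve are $\PGL(V)$-equivariant, and it satisfies $\xi(x_i)=F_i$ for all $i$, which is exactly the assertion.

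\textbf{Main obstacle.} The delicate point is the middle step: embedding the triangulated $k$-gon into a maximal lamination of a closed surface and verifying that the Bonahon--Dreyer closure relations leave enough freedom to prescribe the desired invariants on the sub-polygon. If one only needs a Frenet curve defined on the limit set rather than on the full topological circle, one may instead apply the Fock--Goncharov theory of positive representations of a punctured surface group relative to an ideal triangulation directly, which trivializes the combinatorics; in either case the dynamical input, namely that the equivariant curve is Frenet, is supplied verbatim by \cite{Labourie,Guichard}.
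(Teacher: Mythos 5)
Your proposal is correct and follows exactly the route the paper itself indicates: the paper gives no detailed argument but derives Theorem \ref{thm: positive and Frenet} precisely from Fock--Goncharov coordinates, the Bonahon--Dreyer parametrization \cite[Theorem 17]{BD} to realize those coordinates by a Hitchin representation, and Labourie \cite[Theorem 1.4]{Labourie} and Guichard \cite[Theorem 1]{Guichard} for the Frenet property of its limit curve. The only point deserving slightly more care than your sketch gives is the verification that the Bonahon--Dreyer closure relations (which constrain invariants attached to closed leaves) can be satisfied while the invariants on the embedded triangulated polygon are prescribed freely; since the sides of complementary ideal triangles are non-compact leaves and the closed-leaf shearing parameters remain adjustable, this works as you claim.
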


More informally, one can think of Frenet curves as an extension of positive tuples to a map of the circle into $\Fc(V)$ with ``strong continuity properties". Our next goal is to prove, as applications of Theorem \ref{thm: positive and Frenet}, the pair of inequalities stated below as Proposition \ref{prop: positive flags and positive cross ratios} and Proposition \ref{prop: smaller tent}. 

\begin{proposition}\label{prop: positive flags and positive cross ratios}
If $(F_1,F_2,F_3,F_4)$ is a positive quadruple of flags in $\Fc(V)$, then 
\[B_k(F_1,F_2,F_3,F_4)>1\] 
for any integer $k=1,\dots, n-1$. In particular, if $(F_1,F_2,F_3,F_4,F_5)$ is a positive quintuple of flags in $\Fc(V)$, then
\[B_k(F_1,F_2,F_3,F_4)<B_k(F_1,F_2,F_3,F_5)\,\text{ and }\,B_k(F_1,F_2,F_4,F_5)<B_k(F_1,F_2,F_3,F_5)\]
for all $k=1,\dots,n-1$.
\end{proposition}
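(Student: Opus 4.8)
The plan is to reduce everything to the $n=2$ case via Theorem~\ref{thm: positive and Frenet} and Observation~\ref{obs: cross ratio projection}. First I would fix the positive quadruple $(F_1,F_2,F_3,F_4)$ and invoke Theorem~\ref{thm: positive and Frenet} to obtain a Frenet curve $\xi:S^1\to\Fc(V)$ and points $x_1<x_2<x_3<x_4<x_1$ in the chosen cyclic order with $\xi(x_i)=F_i$. Recall $B_k(F_1,F_2,F_3,F_4)=C_k(F_1^{(n-k)},F_2^{(n-k)},F_3^{(k)},F_4^{(k)})$. The idea is to quotient out the ``common part'' so as to land in a $2$-dimensional space where $C_1$ becomes the classical cross ratio of four points on a projective line given by \eqref{eqn: cross ratio classical}, and then observe that the cyclic position of those four points forces the cross ratio to exceed $1$.

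\textbf{Key steps.} (1) Use the Frenet/general-position hypothesis: the flags $F_1,\dots,F_4$ are in general position, so $F_1^{(n-k)}\cap F_2^{(n-k)}$ and $F_3^{(k)}\cap F_4^{(k)}$ are well understood. More usefully, consider the subspace $W:=F_1^{(n-k-1)}\cap F_2^{(n-k-1)}$; by general position this has dimension $n-k-1$ and is contained in both $F_1^{(n-k)}$ and $F_2^{(n-k)}$. One checks using general position that $W$ is transverse to $F_3^{(k)}$ and $F_4^{(k)}$, so $W$ plays the role of the subspace in Observation~\ref{obs: cross ratio projection} with $l=n-k-1$; passing to $\pi:V\to V/W$ we get $C_k(F_1^{(n-k)},F_2^{(n-k)},F_3^{(k)},F_4^{(k)})=C_1(\pi F_1^{(n-k)},\pi F_2^{(n-k)},\ldots)$ — but I need to be careful, since the $W_i$'s in that observation must be contained in both, which forces a slightly different reduction: instead quotient by a complementary piece on the $\Gr_k$ side, namely $W':=F_3^{(k-1)}\cap F_4^{(k-1)}$, of dimension $k-1$, contained in $F_3^{(k)}\cap F_4^{(k)}$ and transverse to $F_1^{(n-k)},F_2^{(n-k)}$. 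Applying $C_k=C_{n-k}(\cdot,\cdot;\cdot,\cdot)$ on the Grassmannian identity together with Observation~\ref{obs: cross ratio projection} collapses the problem to $C_1$ of four $1$-dimensional subspaces (equivalently four points) in a $2$-dimensional quotient $V/(W\oplus W')$. (2) Show that under the Frenet curve, these four points inherit a cyclic ordering: the composition of $\xi$ with the relevant projections, restricted to a neighborhood of $x_1,\dots,x_4$, gives four distinct points on $\mathbb P^1$ whose cyclic order matches that of $x_1,\dots,x_4$; this is exactly the content of the $n=2$ characterization of positivity (the remark after Definition~\ref{def: positive flags}) combined with Example~\ref{eq: Veronese}-type reasoning or, more directly, the fact that $(F_1,F_2,F_3,F_4)$ positive implies the projected quadruple is positive. (3) Compute: for four points $p_1,p_2,q_1,q_2$ on $\mathbb P^1$ with $p_2,q_1,q_2,p_1$ in cyclic order, the classical cross ratio $C_1(p_1,p_2,q_1,q_2)=\frac{(q_2-p_2)(p_1-q_1)}{(q_1-p_2)(p_1-q_2)}>1$ — choose an affine chart avoiding one point, check signs of the four linear factors, and conclude. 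This gives $B_k(F_1,F_2,F_3,F_4)>1$.

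\textbf{Deducing the ``in particular'' clause.} Given a positive quintuple $(F_1,\dots,F_5)$, note that $(F_1,F_2,F_4,F_5)$ is a positive quadruple (sub-tuples of positive tuples are positive, by Observation~\ref{obs: basic flag}), hence $B_k(F_1,F_2,F_4,F_5)>1$. Now use the cocycle identity for the cross ratio, $C_k(U_1,U_2,W_1,W_2)\cdot C_k(U_1,U_2,W_2,W_3)=C_k(U_1,U_2,W_1,W_3)$: with $U_1=F_1^{(n-k)}$, $U_2=F_2^{(n-k)}$, $W_1=F_3^{(k)}$, $W_2=F_4^{(k)}$, $W_3=F_5^{(k)}$, this reads $B_k(F_1,F_2,F_3,F_4)\cdot B_k(F_1,F_2,F_4,F_5)=B_k(F_1,F_2,F_3,F_5)$. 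Since the middle factor is $>1$, we get $B_k(F_1,F_2,F_3,F_4)<B_k(F_1,F_2,F_3,F_5)$. The other inequality $B_k(F_1,F_2,F_4,F_5)<B_k(F_1,F_2,F_3,F_5)$ follows the same way, factoring through $B_k(F_1,F_2,F_3,F_4)>1$ (using $(F_1,F_2,F_3,F_4)$ positive).

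\textbf{Main obstacle.} The delicate point is Step~(1): correctly identifying a subspace to quotient by that is simultaneously contained in the appropriate pair of subspaces (so that Observation~\ref{obs: cross ratio projection} applies) and transverse to the other pair (so the quotiented cross ratio is defined and nonzero), and verifying these containments/transversalities purely from general position of $F_1,\dots,F_4$. One likely needs to quotient in two stages — once on the $\Gr_{n-k}$ side and once on the $\Gr_k$ side, alternating with the duality $C_k=C_{n-k}$ — and track that the Frenet property guarantees the projected flags remain in general position and positively ordered; ensuring the cyclic order survives both quotients is where Theorem~\ref{thm: positive and Frenet} does the real work.
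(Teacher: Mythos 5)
Your deduction of the ``in particular'' clause from the first statement via the cocycle identity is exactly what the paper does, and invoking Theorem~\ref{thm: positive and Frenet} to place the flags on a Frenet curve is also the right starting point. However, Step~(1) of your argument has a genuine gap that the rest of the proof cannot recover from. You claim that $W:=F_1^{(n-k-1)}\cap F_2^{(n-k-1)}$ has dimension $n-k-1$ and that $W':=F_3^{(k-1)}\cap F_4^{(k-1)}$ has dimension $k-1$. General position forces exactly the opposite: for transverse flags, $\dim\bigl(F_i^{(a)}\cap F_j^{(b)}\bigr)=\max(0,a+b-n)$, so $\dim W=\max(0,n-2k-2)$ and $\dim W'=\max(0,2k-2-n)$ --- generically these are zero (and never what you need). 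Observation~\ref{obs: cross ratio projection} requires a subspace genuinely contained in \emph{both} $W_1$ and $W_2$; for a quadruple in general position no such nonzero subspace exists, so there is no two-stage quotient reducing $C_k$ of four general-position subspaces to a classical cross ratio of four points on $\mathbb P^1$. Indeed $C_k$ for $k>1$ carries strictly more information than any single $\mathbb P^1$ cross ratio, which is why the inequality $B_k>1$ is not a formal consequence of the $n=2$ picture. (The paper only ever applies Observation~\ref{obs: cross ratio projection} after first \emph{replacing} the flags by modified flags that share a common subspace by construction, as in Proposition~\ref{prop: smaller tent} and the proof of Theorem~\ref{thm: general k}; that replacement is not available here.)

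The paper's actual route, after setting up the Frenet curve, is algebraic rather than geometric: Lemma~\ref{lem: Martone-Zhang} factors $B_k(F_1,F_2,F_3,F_4)$ as a product of the finer invariants $A_{\mathbf j}(F_1,F_2,F_3,F_4)$ over $\mathbf j\in\mathcal C_k$, and Proposition~\ref{prop: Zhang thesis} (a nontrivial input from \cite{Zhang}) asserts that each factor $A_{\mathbf j}$ exceeds $1$ when the four flags lie in cyclic order on a Frenet curve. The product of factors each greater than $1$ gives $B_k>1$. If you want to salvage your reduction idea, you would need to first prove something like Proposition~\ref{prop: Zhang thesis} for the individual $A_{\mathbf j}$'s, which is where the real work lives; the passage from $A_{\mathbf j}>1$ to $B_k>1$ is then the easy multiplicative step.
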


To prove Proposition \ref{prop: positive flags and positive cross ratios}, it is useful to consider more general classes of projective invariants, which we now define.

\begin{definition}
Choose a linear identification $\displaystyle\Omega:\bigwedge^nV\to\R$. For any quadruple of flags $(F_1,F_2,F_3,F_4)$ in $\Fc(V)$, choose bases $(u_{i,1},\dots,u_{i,n})$ of $V$ such that $F_i^{(k)}=\Span_{\mathbb{R}}(u_{i,1},\dots,u_{i,k})$ for all $k=1$, $\dots$, $n$ and $i=1,\dots,4$. Then let $U_{i,k}$ be the $k$-tuple of vectors $(u_{i,1},\dots,u_{i,k})$.
\begin{enumerate}
\item Suppose that $\{F_1,F_2,F_3\}$ and $\{F_1,F_2,F_4\}$ are in general position. For any triple of non-negative integers $\mathbf k:=(k_1,k_2,k_3)$ such that $k_3>0$ and $k_1+k_2+k_3=n-1$, define
\[D_{\mathbf k}(F_1,F_2,F_3,F_4):=\frac{\Omega(U_{1,k_1+1},U_{2,k_2},U_{3,k_3})\Omega(U_{1,k_1},U_{2,k_2+1},U_{4,k_3})}{\Omega(U_{1,k_1+1},U_{2,k_2},U_{4,k_3})\Omega(U_{1,k_1},U_{2,k_2+1},U_{3,k_3})},\]
\item Suppose that $\{F_1,F_2,F_3,F_4\}$ are in general position. For any quadruple of non-negative integers $\mathbf j:=(j_1,j_2,j_3,j_4)$ such that $j_1+j_2+j_3+j_4=n-2$, define
\[A_{\mathbf j}(F_1,F_2,F_3,F_4):=\frac{\Omega(U_{1,j_1+1},U_{2,j_2},U_{3,j_3+1},U_{4,j_4})\Omega(U_{1,j_1},U_{2,j_2+1},U_{3,j_3},U_{4,j_4+1})}{\Omega(U_{1,j_1+1},U_{2,j_2},U_{3,j_3},U_{4,j_4+1})\Omega(U_{1,j_1},U_{2,j_2+1},U_{3,j_3+1},U_{4,j_4})}.\]
\end{enumerate}
\end{definition}

One can verify that the quantities $D_{\mathbf k}(F_1,F_2,F_3,F_4)$ and $A_{\mathbf j}(F_1,F_2,F_3,F_4)$ are well-defined and do not depend on any of the choices made.

The functions $A_{\mathbf j}$ were studied by the third author, who proved the following.

\begin{proposition}\cite[Proposition 2.12(1)]{Zhang}\label{prop: Zhang thesis}
Let $\xi:S^1\to\Fc(V)$ be a Frenet curve, and let $x_1<x_2<x_3<x_4<x_1$ lie in $S^1$ in this cyclic order. Then 
\[A_{\mathbf j}(\xi(x_1),\xi(x_2),\xi(x_3),\xi(x_4))>1\]
for any quadruple of non-negative integers $\mathbf j:=(j_1,j_2,j_3,j_4)$ that sum to $n-2$.
\end{proposition}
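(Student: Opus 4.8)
\emph{Proof proposal.} The plan is to induct on $n=\dim V$, with base case $n=2$. When $n=2$ the only admissible index is $\mathbf j=(0,0,0,0)$, and unwinding the definition gives
\[A_{(0,0,0,0)}(F_1,F_2,F_3,F_4)=C_1\big(F_2^{(1)},F_1^{(1)},F_4^{(1)},F_3^{(1)}\big).\]
A Frenet curve into $\Fc(\R^2)=\Pb(\R^2)$ is a homeomorphism of circles (injectivity is the general-position axiom, continuity of $\xi^{(1)}$ is the $k=1$ osculation axiom), so $\xi(x_1),\dots,\xi(x_4)$ lie in cyclic order on $\Pb(\R^2)$ up to reversal; using the symmetries of $C_1$ (which make it invariant under reversing all four arguments) together with the classical formula \eqref{eqn: cross ratio classical}, one gets $C_1>1$. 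This settles the base case.

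For the inductive step assume $n\geq 3$, so that $\sum_i j_i=n-2\geq 1$ and hence $j_m\geq 1$ for some $m$. Write $L:=\xi^{(1)}(x_m)$ and let $\pi\colon V\to V':=V/L$ be the quotient, so $n':=\dim V'=n-1$. The first ingredient is a multilinear-algebra identity: when $j_m\geq 1$, in each of the four determinants occurring in $A_{\mathbf j}(F_1,\dots,F_4)$ the index attached to the $m$-th flag is $j_m$ or $j_m+1$, hence $\geq 1$, so each determinant contains a fixed vector $u_{m,1}$ spanning $L$; pulling $u_{m,1}$ out of all four determinants (the signs picked up cancel in the ratio) and passing to the induced volume form on $V'$ yields
\[A_{\mathbf j}\big(\xi(x_1),\dots,\xi(x_4)\big)=A_{\mathbf j-e_m}\big(F_1',\dots,F_4'\big),\]
where $F_i':=\pi(\xi(x_i))$ for $i\neq m$ and $F_m'$ is the flag with $(F_m')^{(k)}:=\pi\big(\xi^{(k+1)}(x_m)\big)$; general position of the $\xi(x_i)$ ensures these are genuine complete flags in $V'$ and that all determinants remain nonzero, and $\mathbf j-e_m$ is a valid index for $V'$ since its entries are non-negative and sum to $n'-2$.

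The second ingredient — and the step I expect to be the main obstacle — is that $\xi':=\pi\circ\xi$, reinterpreted at $x_m$ by $\xi'(x_m):=F_m'$, is again a Frenet curve, now into $\Fc(V')$, with $x_1<x_2<x_3<x_4$ still cyclically ordered. General position of the $\xi'$-images and the osculation limits away from $x_m$ transfer immediately from $\xi$, since $\pi$ is a fixed linear map injective on every subspace transverse to $L$. The delicate point is continuity and osculation \emph{at} $x_m$: although $\pi(\xi^{(1)}(x))$ degenerates as $x\to x_m$, the Frenet osculation hypothesis for $\xi$ forces $\pi(\xi^{(1)}(y_1))+\pi(\xi^{(1)}(y_2))\to\pi(\xi^{(2)}(x_m))=(F_m')^{(1)}$ as $y_1,y_2\to x_m$, and since $\pi(\xi^{(2)}(x_m))$ is one-dimensional this is possible only if $\pi(\xi^{(1)}(y))\to(F_m')^{(1)}$; the higher osculation identities at $x_m$ come out the same way by colliding more points. (This is the standard ``projection/derivative curve'' construction for Frenet curves. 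If one prefers to avoid it, one can instead run the induction on positive quadruples of flags, invoking the correspondence between positive tuples and Frenet curves of Theorem \ref{thm: positive and Frenet}, and verify directly via the Fock--Goncharov invariants of Theorem \ref{thm: Fock-Goncharov} that the above projection carries positive quadruples in $\Fc(V)$ to positive quadruples in $\Fc(V')$.)

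Granting the projection lemma, applying the inductive hypothesis to $\xi'$ in dimension $n-1$ with index $\mathbf j-e_m$ gives $A_{\mathbf j-e_m}(F_1',\dots,F_4')>1$, and the displayed identity then yields $A_{\mathbf j}(\xi(x_1),\dots,\xi(x_4))>1$, completing the induction. Thus the only substantive points are the one-dimensional cross-ratio positivity (base case) and the projection-is-Frenet lemma; the reduction identity itself is routine bookkeeping with multilinear forms.
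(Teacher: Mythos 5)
The paper itself contains no argument for this proposition---it is quoted from \cite[Proposition 2.12(1)]{Zhang}---so there is no in-paper proof to compare against, and I am assessing your argument on its own terms. Your strategy is sound and would give a self-contained proof. The base case is right: for $n=2$ the Frenet axioms say exactly that $\xi$ is a continuous injection, hence a homeomorphism onto $\Pb(\R^2)$, and the identification $A_{(0,0,0,0)}=C_1\big(F_2^{(1)},F_1^{(1)},F_4^{(1)},F_3^{(1)}\big)$ together with \eqref{eqn: cross ratio classical} gives a value $>1$ for cyclically ordered quadruples, in either orientation. The reduction identity $A_{\mathbf j}=A_{\mathbf j-e_m}$ after projecting from $L:=\xi^{(1)}(x_m)$ (when $j_m\geq 1$) is also correct: in an adapted basis $u_{m,1}$ spans $L$ and occurs exactly once in each of the four determinants, the signs from moving it to the front cancel in the ratio, and the induced form $\Omega'(\pi v_1,\dots,\pi v_{n-1}):=\Omega(u_{m,1},v_1,\dots,v_{n-1})$ converts each determinant into the corresponding one for $(F_1',\dots,F_4')$; general position of the quadruple, which is part of the Frenet hypothesis, keeps all four determinants nonzero.

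The one step that does not work as written is your justification of the projection-is-Frenet lemma at $x_m$. You assert that $\pi(\xi^{(1)}(y_1))+\pi(\xi^{(1)}(y_2))\to\pi(\xi^{(2)}(x_m))$ as $y_1,y_2\to x_m$; this cannot hold, since for distinct $y_1,y_2\neq x_m$ the left-hand side is $2$-dimensional (general position of $\{\xi(y_1),\xi(y_2),\xi(x_m)\}$ makes $\xi^{(1)}(y_1)+\xi^{(1)}(y_2)$ transverse to $L$) while the right-hand side is a line, and the map $W\mapsto\pi(W)$ is not continuous at subspaces containing $L$, so the osculation limit for $\xi$ cannot be pushed through $\pi$ in this way. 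The repair is short and stays inside the paper's definition: put $x_m$ itself into the colliding tuple, which is allowed since the points of each tuple need only be pairwise distinct and converge to the limit point. Applying the osculation axiom to $(y,x_m)$ with exponents $(1,1)$ gives $\xi^{(1)}(y)+L\to\xi^{(2)}(x_m)$ in $\Gr_2(V)$, and since $W\mapsto W/L$ is continuous on the subspaces containing $L$, this yields $\pi(\xi^{(1)}(y))\to(F_m')^{(1)}$. In general, for a collision at $x_m$ one appends $x_m$ with exponent $1$ (or raises its exponent by $1$ if it already appears) and then quotients; for collisions at $x\neq x_m$ one instead uses that $W\mapsto\pi(W)$ is continuous on subspaces transverse to $L$. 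With the lemma established this way (or via your alternative route through positive quadruples, using Theorem \ref{thm: positive and Frenet} and a quotient statement in the spirit of Proposition \ref{prop: positivity projection}), the induction closes and your proof is complete.
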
 

The following lemma was previously observed in Martone-Zhang \cite[Lemma 3.6]{MZ}; its proof is a straightforward computation that we omit.

\begin{lemma}\label{lem: Martone-Zhang}
Let $(F_1,F_2,F_3,F_4)$ be a generic quadruple of flags in $\Fc(V)$. 
\begin{enumerate}
\item For all $k=1,\dots,n-1$, 
\[B_k(F_1,F_2,F_3,F_4)=\prod_{\mathbf k\in\mathcal{A}_k}D_{\mathbf k}(F_1,F_2,F_3,F_4),\]
where $\mathcal{A}_k:=\{(k_1,k_2,k_3):k_3=k,\,k_1\geq 0,\,k_2\geq 0,\,\text{and }\,k_1+k_2+k_3=n-1\}$.
\item For all $\mathbf k:=(k_1,k_2,k_3)$ such that $k_3>0$ and $k_1+k_2+k_3=n-1$, 
\[D_{\mathbf k}(F_1,F_2,F_3,F_4)=\prod_{\mathbf j\in\mathcal{B}_{\mathbf k}}A_{\mathbf j}(F_1,F_2,F_3,F_4),\]
where $\mathcal{B}_{\mathbf k}:=\{(j_1,j_2,j_3,j_4):j_1=k_1,\,j_2=k_2,\,j_3\geq 0,\,j_4\geq 0,\,\text{and }\,j_3+j_4=k_3-1\}$.
\end{enumerate}
In particular, for all $k=1,\dots,n-1$, we have
\[B_k(F_1,F_2,F_3,F_4)=\prod_{\mathbf j\in\mathcal{C}_k}A_{\mathbf j}(F_1,F_2,F_3,F_4),\]
where $\mathcal{C}_k:=\{(j_1,j_2,j_3,j_4):j_1+j_2=n-k-1\,\text{ and }\,j_3+j_4=k-1\}$.
\end{lemma}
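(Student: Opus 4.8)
The plan is to deduce all three identities from elementary telescoping manipulations of the volume form $\Omega$. Since $B_k$, $D_{\mathbf k}$, and $A_{\mathbf j}$ are each independent of the bases used to represent the flags, I would fix once and for all bases $(u_{i,1},\dots,u_{i,n})$ of $V$ adapted to each $F_i$ (so that $F_i^{(m)}=\Span_{\R}(u_{i,1},\dots,u_{i,m})$ for all $m$), together with one identification $\Omega\colon\bigwedge^nV\to\R$, and compute everything with respect to these. In particular, using $(u_{1,1},\dots,u_{1,n-k})$, $(u_{2,1},\dots,u_{2,n-k})$, $(u_{3,1},\dots,u_{3,k})$, $(u_{4,1},\dots,u_{4,k})$ as the bases in the definition of $C_k$, one obtains
\[
B_k(F_1,F_2,F_3,F_4)=\frac{\Omega(U_{2,n-k},U_{4,k})\,\Omega(U_{1,n-k},U_{3,k})}{\Omega(U_{2,n-k},U_{3,k})\,\Omega(U_{1,n-k},U_{4,k})}.
\]

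For part (1), I would introduce the interpolating minors $P_m:=\Omega(U_{1,m},U_{2,n-k-m},U_{3,k})$ and $P_m':=\Omega(U_{1,m},U_{2,n-k-m},U_{4,k})$ for $m=0,\dots,n-k$. Each is a wedge of exactly $n$ vectors drawn from subspaces of $V$ whose dimensions $m$, $n-k-m$, $k$ sum to $n$, so the general position of $\{F_1,F_2,F_3,F_4\}$ forces all of them to be nonzero. Since $P_{n-k}=\Omega(U_{1,n-k},U_{3,k})$, $P_0=\Omega(U_{2,n-k},U_{3,k})$, and analogously for $P_m'$, the display above telescopes:
\[
B_k=\frac{P_{n-k}}{P_0}\cdot\frac{P_0'}{P_{n-k}'}=\prod_{m=0}^{n-k-1}\frac{P_{m+1}\,P_m'}{P_m\,P_{m+1}'},
\]
and a direct comparison of the $m$-th factor with the definition shows it equals $D_{(m,\,n-k-m-1,\,k)}(F_1,F_2,F_3,F_4)$. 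As $m$ runs over $0,\dots,n-k-1$, the triple $(m,n-k-m-1,k)$ runs bijectively over $\mathcal{A}_k$, which proves (1).

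Part (2) is the same argument carried out for the pair $\{F_3,F_4\}$. Fixing $\mathbf k=(k_1,k_2,k_3)\in\mathcal{A}_k$, one regroups $D_{\mathbf k}$ so that $F_3$ and $F_4$ each occupy $k_3$ columns, and interpolates between them via $R_l:=\Omega(U_{1,k_1+1},U_{2,k_2},U_{3,l},U_{4,k_3-l})$ and $R_l':=\Omega(U_{1,k_1},U_{2,k_2+1},U_{3,l},U_{4,k_3-l})$ for $l=0,\dots,k_3$; again each is a wedge of $n$ vectors whose dimension counts sum to $n$, hence nonzero by general position. Telescoping in $l$ gives $D_{\mathbf k}=\prod_{l=0}^{k_3-1}\frac{R_{l+1}R_l'}{R_lR_{l+1}'}$, and the $l$-th factor is $A_{(k_1,k_2,l,k_3-l-1)}$; as $l$ runs over $0,\dots,k_3-1$ the quadruple $(k_1,k_2,l,k_3-l-1)$ runs over $\mathcal{B}_{\mathbf k}$, which proves (2).

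Finally, chaining the two, $B_k=\prod_{\mathbf k\in\mathcal{A}_k}D_{\mathbf k}=\prod_{\mathbf k\in\mathcal{A}_k}\prod_{\mathbf j\in\mathcal{B}_{\mathbf k}}A_{\mathbf j}$; since the sets $\mathcal{B}_{\mathbf k}$ for $\mathbf k\in\mathcal{A}_k$ are pairwise disjoint (distinguished by their first two entries $(k_1,k_2)$) and their union over $\mathbf k\in\mathcal{A}_k$ is exactly $\mathcal{C}_k$, the double product collapses to $\prod_{\mathbf j\in\mathcal{C}_k}A_{\mathbf j}$, giving the last identity. I expect no conceptual obstacle here: the content is choosing the right telescoping direction ($F_1$ at the expense of $F_2$ in (1), $F_3$ at the expense of $F_4$ in (2)) and then matching up the index sets. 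The one place I would slow down is the verification that every interpolating minor is nonzero, which is precisely where the genericity hypothesis enters and which rests on the bookkeeping that each such minor is built from $n$ vectors spanning subspaces of complementary dimension.
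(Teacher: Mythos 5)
Your proof is correct. The paper itself omits the argument (citing Martone--Zhang and calling it ``a straightforward computation''), and your telescoping of the minors $P_m$, $P_m'$ and $R_l$, $R_l'$ --- together with the observation that general position makes every interpolating minor a nonzero wedge of $n$ vectors from complementary-dimensional subspaces --- is exactly that computation, with the index bookkeeping for $\mathcal{A}_k$, $\mathcal{B}_{\mathbf k}$, and $\mathcal{C}_k$ carried out correctly.
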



\begin{proof}[Proof of Proposition \ref{prop: positive flags and positive cross ratios}]
The first statement is an immediate consequence of Theorem \ref{thm: positive and Frenet}, Proposition \ref{prop: Zhang thesis}, and Lemma \ref{lem: Martone-Zhang}. The second follows from the first and the identity
\[B_k(F_1,F_2,F_3,F_4)\cdot B_k(F_1,F_2,F_4,F_5)=B_k(F_1,F_2,F_3,F_5).\qedhere\]
\end{proof}

\begin{proposition}\label{prop: smaller tent}
Suppose that $(F_1,F_2,G,H_2,H_1)$ is a positive quintuple of flags in $\Fc(V)$. Fix $h=1,\dots,n-1$, and let $G_F$ and $G_H$ be the flags in $\Fc(V)$ defined by
\[G_F^{(l)}=\left\{\begin{array}{ll}
G^{(l)}&\text{if }l\leq h;\\
G^{(h)}+F_2^{(l-h)}&\text{if }l>h,
\end{array}\right.\,\,\,\,\,\text{ and }\,\,\,\,\,G_H^{(l)}=\left\{\begin{array}{ll}
G^{(l)}&\text{if }l\leq h;\\
G^{(h)}+H_2^{(l-h)}&\text{if }l>h.
\end{array}\right.\]
Then $B_k(H_1,F_1,G_F,G_H)\leq B_k(H_1,F_1,F_2,H_2)$ for all $k=1,\dots,n-1$.
\end{proposition}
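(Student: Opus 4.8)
The plan is to reduce to the case $k>h$ and then factor the ratio $B_k(H_1,F_1,F_2,H_2)/B_k(H_1,F_1,G_F,G_H)$ into two cross ratios, each of which is shown to be at least $1$ by passing to a suitable quotient of $V$. First, if $k\le h$ then $G_F^{(k)}=G^{(k)}=G_H^{(k)}$, so $B_k(H_1,F_1,G_F,G_H)=C_k(H_1^{(n-k)},F_1^{(n-k)},G^{(k)},G^{(k)})=1$, while $(H_1,F_1,F_2,H_2)$ is a positive quadruple (a subtuple of the given positive quintuple, via Observation~\ref{obs: basic flag}), so $B_k(H_1,F_1,F_2,H_2)>1$ by Proposition~\ref{prop: positive flags and positive cross ratios}, and the inequality holds. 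So from now on assume $k>h$. Since every positive tuple is in general position, $\{F_1,F_2,G,H_2,H_1\}$ is in general position, which makes all of the subspaces and cross ratios below well defined, even though $G_F$ and $G_H$ are in general \emph{not} transverse to $G$, $F_2$, $H_2$.

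Next, set $U_1:=H_1^{(n-k)}$ and $U_2:=F_1^{(n-k)}$. Applying the multiplicativity identity $C_k(U_1,U_2,W_1,W_2)\,C_k(U_1,U_2,W_2,W_3)=C_k(U_1,U_2,W_1,W_3)$ twice, I would write
\[
B_k(H_1,F_1,F_2,H_2)=C_k(U_1,U_2,F_2^{(k)},G_F^{(k)})\cdot B_k(H_1,F_1,G_F,G_H)\cdot C_k(U_1,U_2,G_H^{(k)},H_2^{(k)}).
\]
Since $B_k(H_1,F_1,F_2,H_2)>1$ (positivity of $(H_1,F_1,F_2,H_2)$ and Proposition~\ref{prop: positive flags and positive cross ratios}), it therefore suffices to prove that each of $C_k(U_1,U_2,F_2^{(k)},G_F^{(k)})$ and $C_k(U_1,U_2,G_H^{(k)},H_2^{(k)})$ is $\ge 1$.

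These two inequalities are proven the same way; I describe the first. As $k>h$ we have $F_2^{(k-h)}\subset F_2^{(k)}\cap G_F^{(k)}$ and $\dim F_2^{(k-h)}=k-h<k$, so Observation~\ref{obs: cross ratio projection} applied to $\pi\colon V\to\overline V:=V/F_2^{(k-h)}$ gives $C_k(U_1,U_2,F_2^{(k)},G_F^{(k)})=C_h(\pi U_1,\pi U_2,\pi F_2^{(k)},\pi G_F^{(k)})$. General position of $\{F_1,F_2,G,H_2,H_1\}$ guarantees that the images $\overline{H_1},\overline{F_1},\overline{F_2},\overline G$ of $H_1,F_1,F_2,G$ are honest flags of $\overline V$ in general position, with $\pi U_1=\overline{H_1}^{(n-k)}$, $\pi U_2=\overline{F_1}^{(n-k)}$, $\pi F_2^{(k)}=\overline{F_2}^{(h)}$, $\pi G_F^{(k)}=\pi G^{(h)}=\overline G^{(h)}$ and $\dim\overline V-h=n-k$; hence the quantity equals $B_h(\overline{H_1},\overline{F_1},\overline{F_2},\overline G)$. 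By Lemma~\ref{lem: Martone-Zhang} this is $\prod_{\mathbf{j}}A_{\mathbf{j}}(\overline{H_1},\overline{F_1},\overline{F_2},\overline G)$ over the $\mathbf{j}=(j_1,j_2,j_3,j_4)$ with $j_1+j_2=n-k-1$ and $j_3+j_4=h-1$. A direct determinant computation — fixing a basis of $F_2^{(k-h)}$ to relate the volume forms on $\overline V$ and $V$, lifting the chosen bases of the quotient flags, and observing that the permutation sign needed to move that basis into the ``$F_2$-block'' of each generalized determinant is the same in all four factors of an $A$-invariant (because the sum of the first two flag indices there is the constant $j_1+j_2+1$) — identifies $A_{\mathbf{j}}(\overline{H_1},\overline{F_1},\overline{F_2},\overline G)$ with $A_{(j_1,j_2,\,j_3+k-h,\,j_4)}(H_1,F_1,F_2,G)$. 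Since $(H_1,F_1,F_2,G)$ is a positive quadruple, Theorem~\ref{thm: positive and Frenet} and Proposition~\ref{prop: Zhang thesis} give $A_{(j_1,j_2,j_3+k-h,j_4)}(H_1,F_1,F_2,G)>1$, so $C_k(U_1,U_2,F_2^{(k)},G_F^{(k)})>1$. The second factor is handled identically, quotienting by $H_2^{(k-h)}\subset G_H^{(k)}\cap H_2^{(k)}$ and using the positive quadruple $(H_1,F_1,G,H_2)$, giving $C_k(U_1,U_2,G_H^{(k)},H_2^{(k)})>1$; together with the factorization this proves the proposition.

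The main obstacle is the determinant identity $A_{\mathbf{j}}(\overline{H_1},\overline{F_1},\overline{F_2},\overline G)=A_{(j_1,j_2,j_3+k-h,j_4)}(H_1,F_1,F_2,G)$: one must verify that the permutation sign incurred when absorbing the basis of $F_2^{(k-h)}$ back into the four generalized determinants of an $A$-invariant is independent of which term one is in, so that it cancels in the ratio. Everything else — the general-position bookkeeping making the quotient flags and cross ratios well defined, and the elementary identities for $C_k$ — is routine.
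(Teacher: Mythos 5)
Your argument is correct, but it takes a genuinely different route from the one in the paper. The paper's proof stays inside the Frenet-curve framework: it invokes Theorem \ref{thm: positive and Frenet} to place the quintuple on a Frenet curve $\xi$, approximates $G_F$ and $G_H$ by flags $A_i$, $D_i$ built from pairs of nearby points on the curve, uses Lemma \ref{lem: Frenet trick} to see that the relevant tuples are positive, applies Proposition \ref{prop: positive flags and positive cross ratios} to each approximant, and then passes to the limit via the Frenet continuity property (which is why the paper only obtains the non-strict inequality). You instead argue algebraically: the three-term factorization of $B_k(H_1,F_1,F_2,H_2)$ via the multiplicative cocycle identity for $C_k$, followed by the quotient trick (Observation \ref{obs: cross ratio projection}) and the determinant identity matching $A$-invariants in $V/F_2^{(k-h)}$ (resp.\ $V/H_2^{(k-h)}$) with shifted $A$-invariants of the positive quadruple $(H_1,F_1,F_2,G)$ (resp.\ $(H_1,F_1,G,H_2)$) in $V$, so that Proposition \ref{prop: Zhang thesis} and Lemma \ref{lem: Martone-Zhang} finish the job. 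Your route avoids any limiting argument, yields the strict inequality $B_k(H_1,F_1,G_F,G_H)<B_k(H_1,F_1,F_2,H_2)$, and isolates cleanly where positivity enters (through the two sub-quadruples); the cost is the sign bookkeeping in the determinant identity, which the paper's approach sidesteps entirely. One small correction to that bookkeeping: for the second factor the absorbed block sits in the \emph{fourth} slot, so the number of vectors it must be moved past is $j_1+j_2+j_3+1$ or $j_1+j_2+j_3+2$ and is \emph{not} constant across the four terms; however, each of the numerator and the denominator of $A_{\mathbf j}$ contains exactly one term of each parity, so the signs still cancel in the ratio and the identity survives. With that noted, the proof is complete.
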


To prove Proposition \ref{prop: smaller tent}, we use the following lemma.

\begin{lemma}\label{lem: Frenet trick}
Let $(F_1,F_2,F_3,F_4,F_5)$ be a positive quintuple of flags in $\Fc(V)$, and fix $h=1,\dots,n-1$. If $G\in\Fc(V)$ is the flag defined by
\[G^{(l)}=\left\{\begin{array}{ll}
F_3^{(l)}&\text{if }l\leq h;\\
F_3^{(h)}+F_2^{(l-h)}&\text{if }l>h,
\end{array}\right.\]
then $(F_1,G,F_4,F_5)$ is a positive quadruple of flags.
\end{lemma}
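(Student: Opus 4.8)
The plan is to join the flags $F_3$ and $G$ by a continuous path of flags that lies entirely inside the set of flags in general position with $F_1$ and $F_4$, and then to invoke Proposition~\ref{prop: easy positive}, which identifies $\mathfrak U(F_1,F_4,F_5)$ with a connected component of that set. By Theorem~\ref{thm: positive and Frenet} there are a Frenet curve $\xi:S^1\to\Fc(V)$ and points $x_1<x_2<x_3<x_4<x_5<x_1$ in this cyclic order with $\xi(x_i)=F_i$. Let $I\subset S^1$ be the closed arc from $x_2$ to $x_3$ not containing $x_1,x_4,x_5$. For $y\in I\setminus\{x_3\}$, define a flag $G_y$ by
\[G_y^{(l)}:=\begin{cases}\xi^{(l)}(x_3)&\text{if }l\leq h,\\ \xi^{(h)}(x_3)+\xi^{(l-h)}(y)&\text{if }l>h,\end{cases}\]
and set $G_{x_3}:=F_3$.

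First I would record the basic properties of this path. Since $\xi$ is Frenet, $\{\xi(x_3),\xi(y)\}$ is in general position for $y\neq x_3$, so $\xi^{(h)}(x_3)+\xi^{(l-h)}(y)$ has dimension $h+(l-h)=l$, the subspaces $G_y^{(l)}$ are nested, and hence $G_y$ is a genuine flag; moreover $y\mapsto G_y$ is continuous on $I\setminus\{x_3\}$ by continuity of $\xi$. The osculating property of $\xi$ gives $\xi^{(h)}(x_3)+\xi^{(l-h)}(y)\to\xi^{(l)}(x_3)$ as $y\to x_3$, so $G_y\to F_3=G_{x_3}$ and the path is continuous on all of $I$. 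Finally $G_{x_2}=G$ and $G_{x_3}=F_3$ are immediate from the definitions, using $\xi(x_2)=F_2$ and $\xi(x_3)=F_3$.

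The crux is to check that $\{F_1,G_y,F_4\}$ is in general position for \emph{every} $y\in I$, endpoints included. Fix $a,b,c\ge 0$ with $a+b+c\le n$. If $b\le h$, or if $y=x_3$, then $G_y^{(b)}=\xi^{(b)}(x_3)$, so
\[F_1^{(a)}+G_y^{(b)}+F_4^{(c)}=\xi^{(a)}(x_1)+\xi^{(b)}(x_3)+\xi^{(c)}(x_4)\]
is a direct sum by the general position property of $\xi$ at the distinct points $x_1,x_3,x_4$. If $b>h$ and $y\neq x_3$, then $x_1,x_3,y,x_4$ are four pairwise distinct points and
\[F_1^{(a)}+G_y^{(b)}+F_4^{(c)}=\xi^{(a)}(x_1)+\xi^{(h)}(x_3)+\xi^{(b-h)}(y)+\xi^{(c)}(x_4)\]
is a direct sum by the general position property of $\xi$, since $a+h+(b-h)+c=a+b+c\le n$. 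In particular $\{F_1,G,F_4\}$ is in general position.

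To conclude: the sub-tuples $(F_1,F_3,F_4,F_5)$ and $(F_1,F_4,F_5)$ of the given positive quintuple are positive (Observation~\ref{obs: basic flag}), so $F_3\in\mathfrak U(F_1,F_4,F_5)\neq\emptyset$, and by Proposition~\ref{prop: easy positive} the set $\mathfrak U(F_1,F_4,F_5)$ is a connected component of $\{G'\in\Fc(V):\{F_1,G',F_4\}\text{ is in general position}\}$. The image of $y\mapsto G_y$ is a connected subset of this last set that meets the component $\mathfrak U(F_1,F_4,F_5)$ at $F_3=G_{x_3}$, hence is contained in it; in particular $G=G_{x_2}\in\mathfrak U(F_1,F_4,F_5)$, i.e. $(F_1,G,F_4,F_5)$ is positive. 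I expect the main obstacle to be the general position verification across the whole closed arc $I$: one must resist arguing by continuity (general position is an open condition) and instead rewrite each $G_y^{(l)}$ as a sum of osculating subspaces of $\xi$, so that the Frenet general position property applies uniformly in $y$, in particular at $y=x_2$ where $G_y=G$. The other pieces---well-definedness of $G_y$, continuity at $x_3$ via osculation, and the connected-component bookkeeping---are routine once the Frenet curve is in hand.
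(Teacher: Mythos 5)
Your proof is correct and follows essentially the same route as the paper's: both construct the path $y\mapsto G_y$ along the Frenet curve from $F_3$ to $G$, verify it stays in the set of flags in general position with $F_1$ and $F_4$, and conclude via Proposition~\ref{prop: easy positive}. The only difference is that you spell out the direct-sum verification that the paper leaves implicit as "the Frenet property of $\xi$ implies".
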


\begin{proof}
By Theorem \ref{thm: positive and Frenet}, there are points $x_1<x_2<x_3<x_4<x_5<x_1$ along $S^1$ in this cyclic order, and a Frenet curve $\xi:S^1\to\Fc(V)$, such that $\xi(x_i)=F_i$ for $i=1,\dots,5$. For any $t\in S^1$ such that $x_2\leq t<x_3$, let $G(t)\in\Fc(V)$ be the flag defined by
\[G(t)^{(l)}=\left\{\begin{array}{ll}
\xi^{(l)}(x_3)&\text{if }l\leq h;\\
\xi^{(h)}(x_3)+\xi^{(l-h)}(t)&\text{if }l>h,
\end{array}\right.\]
and let $G(x_3):=\xi(x_3)$. The Frenet property of $\xi$ implies that $t\mapsto G(t)$ is continuous, and that $\{\xi(x_1),G(t),\xi(x_4),\xi(x_5)\}$ is in general position for all $x_2\leq t\leq x_3$. Since $(F_1,F_3,F_4,F_5)=(\xi(x_1),G(x_3),\xi(x_4),\xi(x_5))$ is positive, Proposition \ref{prop: easy positive}(1) implies that $(F_1,G,F_4,F_5)=(\xi(x_1),G(x_2),\xi(x_4),\xi(x_5))$ is positive.
\end{proof}

\begin{proof}[Proof of Proposition \ref{prop: smaller tent}]
By Theorem \ref{thm: positive and Frenet}, there are points $x_1<x_2<z<y_2<y_1<x_1$ along $S^1$ in this cyclic order, and a Frenet curve $\xi:S^1\to\Fc(V)$, such that $\xi(z)=G$, and $\xi(x_j)=F_j$ and $\xi(y_j)=H_j$ for $j=1,2$. Let $(a_i)_{i=1}^\infty$, $(b_i)_{i=1}^\infty$, $(c_i)_{i=1}^\infty$, and $(d_i)_{i=1}^\infty$ be sequences of points in $S^1$ such that $\displaystyle\lim_{i\to\infty}a_i=x_2$, $\displaystyle\lim_{i\to\infty}b_i=z=\lim_{i\to\infty}c_i$, $\displaystyle\lim_{i\to\infty}d_i=y_2$, and 
\[x_1<a_1<\dots<a_i<x_2<b_1<\dots<b_i<z<c_i<\dots<c_1<y_2<d_i<\dots<d_1<y_1<x_1\]
for all integers $i>0$. Then let $A_i$ and $D_i$ be the flags defined by
\[A_i^{(l)}=\left\{\begin{array}{ll}
\xi^{(l)}(b_i)&\text{if }l\leq h;\\
\xi^{(h)}(b_i)+\xi^{(l-h)}(a_i)&\text{if }l>h,
\end{array}\right.\,\,\,\,\,\text{ and }\,\,\,\,\,D_i^{(l)}=\left\{\begin{array}{ll}
\xi^{(l)}(c_i)&\text{if }l\leq h;\\
\xi^{(h)}(c_i)+\xi^{(l-h)}(d_i)&\text{if }l>h.
\end{array}\right.\]

By Lemma \ref{lem: Frenet trick}, the tuple $(\xi(x_1),\xi(a_{i-1}),A_i,G,D_i,\xi(d_{i-1}),\xi(y_1))$ is positive for all integers $i\geq 2$. Then Proposition \ref{prop: positive flags and positive cross ratios} implies that 
\[B_k(\xi(y_1),\xi(x_1),A_i,D_i)< B_k(\xi(y_1),\xi(x_1),\xi(a_{i-1}),\xi(d_{i-1}))\]
for all $k=1,\dots,n-1$. Since $\xi$ is Frenet, $(A_i)_{i=1}^\infty$ and $(D_i)_{i=1}^\infty$ converge to $G_F$ and $G_H$ respectively, and $(\xi(a_i))_{i=1}^\infty$ and $(\xi(a_i))_{i=1}^\infty$ converge to $F_2$ and $H_2$ respectively. Thus,
\begin{align*}
B_k(H_1,F_1,G_F,G_H)&=\lim_{i\to\infty}B_k(\xi(y_1),\xi(x_1),A_i,D_i)\\
&\leq\lim_{i\to\infty}B_k(\xi(y_1),\xi(x_1),\xi(a_{i-1}),\xi(d_{i-1}))\\
&= B_k(H_1,F_1,F_2,H_2).\qedhere
\end{align*}
\end{proof}

\subsection{Quotients of positive tuples of flags} \label{sec: projection}

Another ingredient needed in the proof of Theorem \ref{thm: general k} is understanding how positivity behaves under taking certain quotients. The two results in this direction that we need are stated as Proposition \ref{prop: positivity projection} and Proposition \ref{prop: converge} below.

Let $W\subset V$ be a $k$-dimensional subspace for any $k=1,\dots,n-1$, and let $\pi_W:V\to V/W$ be the obvious quotient map. We abuse notation by also denoting by $\pi_W$ the induced map 
\[\pi_W:\Fc(V)\to\Fc(V/W)\]
that sends the flag $F$ in $\Fc(V)$ to the flag $F'$ in $\Fc(V/W)$ defined as follows. For any $j=1,\dots,n-k-1$, let $l_j$ be the integer such that $F^{(l_j)}\cap W$ has dimension $l_j-j$. In other words, if we write $F^{(l_j)}$ as the direct sum 
\[F^{(l_j)}=\left(F^{(l_j)}\cap W\right)+U\] 
for some $U\subset F^{(l_j)}$, then $\dim(U)=j$. Then $F'$ is the flag defined by $F'^{(j)}=\pi(F^{(l_j)})$ for all $j=1,\dots,n-k-1$. 

Observe that if $F$ has the property that $F^{(j)}\cap W=\{0\}$ for all $j=1,\dots,n-k-1$, then $\pi(F)$ is the flag defined by $\pi(F)^{(j)}=\pi(F^{(j)})=(F^{(j)}+W)/W$ for all $j=1,\dots,n-k-1$. On the other hand, if $W=F^{(k)}$, then $\pi(F)$ is the flag defined by $\pi(F)^{(j)}=\pi(F^{(j+k)})=F^{(j+k)}/W$ for all $j=1,\dots,n-k-1$.

\begin{proposition}\label{prop: positivity projection}
Let $(F_1,F_2,\dots,F_l,G,H_l,\dots,H_2,H_1)$ be a positive tuple of flags in $\Fc(V)$, let $k=1,\dots,n-1$, and let $W:=G^{(k)}$. If $\pi_W=\pi:V\to V/W$ denotes the quotient map, then 
\[(\pi(F_1),\pi(F_2),\dots,\pi(F_l),\pi(G),\pi(H_l),\dots,\pi(H_2),\pi(H_1))\] 
is a positive tuple of flags in $\Fc(V/W)$ for all $l\geq 2$.
\end{proposition}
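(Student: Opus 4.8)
The plan is to use the symmetries of positivity from Observation \ref{obs: basic flag} to move $G$ into the last slot of the tuple, where the data defining positivity becomes compatible with $W=G^{(k)}$, and then to push this data down to $V/W$. Concretely: by reversing and cyclically rotating (Observation \ref{obs: basic flag}(3),(4)), the tuple $(F_l,F_{l-1},\dots,F_1,H_1,H_2,\dots,H_l,G)$ is positive; and since $(\pi(F_1),\dots,\pi(F_l),\pi(G),\pi(H_l),\dots,\pi(H_1))$ is obtained from $(\pi(F_l),\dots,\pi(F_1),\pi(H_1),\dots,\pi(H_l),\pi(G))$ by that same rotation and reversal, another application of Observation \ref{obs: basic flag} reduces the proposition to proving that $(\pi(F_l),\dots,\pi(F_1),\pi(H_1),\dots,\pi(H_l),\pi(G))$ is positive.

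Next I would unpack Definition \ref{def: positive flags} for the positive $(2l+1)$-tuple $(F_l,\dots,F_1,H_1,\dots,H_l,G)$: there is a basis $\mathcal B^\dagger=(e^\dagger_1,\dots,e^\dagger_n)$ of $V$ with $e^\dagger_m\in F_l^{(n-m+1)}\cap G^{(m)}$ for all $m$ — so $G^{(j)}=\Span(e^\dagger_1,\dots,e^\dagger_j)$ and $F_l^{(j)}=\Span(e^\dagger_{n-j+1},\dots,e^\dagger_n)$ — together with unipotents $u^\dagger_1,\dots,u^\dagger_{2l-1}\in U_{>0}(\mathcal B^\dagger)$ realizing the tuple as $(F_l,\,u^\dagger_1\cdot F_l,\,u^\dagger_1u^\dagger_2\cdot F_l,\,\dots,\,u^\dagger_1\cdots u^\dagger_{2l-1}\cdot F_l,\,G)$. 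The crucial point is that $W=G^{(k)}=\Span(e^\dagger_1,\dots,e^\dagger_k)$, so each $u^\dagger_i$, being upper triangular with respect to $\mathcal B^\dagger$, preserves $W$ and descends to $\bar u^\dagger_i\in\GL(V/W)$. Taking $\mathcal B':=(\bar e^\dagger_{k+1},\dots,\bar e^\dagger_n)$ as a basis of $V/W$, the matrix of $\bar u^\dagger_i$ in $\mathcal B'$ is exactly the bottom-right $(n-k)\times(n-k)$ corner block of the totally positive upper triangular matrix of $u^\dagger_i$ in $\mathcal B^\dagger$; since every minor of this corner block equals the corresponding minor of $u^\dagger_i$, and is forced to vanish by upper-triangularity of the block precisely when it is forced to vanish for $u^\dagger_i$, one concludes $\bar u^\dagger_i\in U_{>0}(\mathcal B')$.

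Finally I would identify the projected flags. Because $(F_l,\dots,G)$ is positive it is in general position (as recalled after Proposition \ref{prop: easy positive}), so each of $F_l,F_{l-1},\dots,F_1,H_1,\dots,H_l$ is transverse to $W=G^{(k)}$; hence for any product $g$ of the $u^\dagger_i$ and any such flag $F$ one has $\pi_W(g\cdot F)=\bar g\cdot\pi_W(F)$, as $g$ preserves $W$. A short computation with $\mathcal B'$ gives $\pi_W(G)^{(j)}=G^{(j+k)}/W=\Span(\bar e^\dagger_{k+1},\dots,\bar e^\dagger_{k+j})$ and $\pi_W(F_l)^{(j)}=(F_l^{(j)}+W)/W=\Span(\bar e^\dagger_{n-j+1},\dots,\bar e^\dagger_n)$, i.e. the first $j$ and the last $j$ vectors of $\mathcal B'$ respectively (in particular $\pi_W(F_l)$ and $\pi_W(G)$ are transverse). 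Feeding $\mathcal B'$, the unipotents $\bar u^\dagger_1,\dots,\bar u^\dagger_{2l-1}$, and these identifications into Definition \ref{def: positive flags} — with $\pi_W(F_l)$ as first flag and $\pi_W(G)$ as last flag — exhibits $(\pi(F_l),\dots,\pi(F_1),\pi(H_1),\dots,\pi(H_l),\pi(G))$ as positive, and the reduction in the first paragraph then yields the proposition.

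I expect the only genuinely non-formal step to be the claim that the corner block of a totally positive upper triangular matrix, read in the quotient basis of $V/W$, is again totally positive upper triangular — essentially all the positivity content of the statement is concentrated there. Everything else — general position of the flags against $W$, the equivariance $\pi_W(g\cdot F)=\bar g\cdot\pi_W(F)$ for $g$ fixing $W$, and the linear-algebra bookkeeping identifying $\pi_W(F_l)$ and $\pi_W(G)$ in the basis $\mathcal B'$ — is routine.
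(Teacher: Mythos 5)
Your proof is correct, and it takes a genuinely different route from the paper's. The paper introduces an auxiliary flag $K$ with $(K,F_1,\dots,F_l,G,H_l,\dots,H_1)$ positive, realizes the $F$-chain and the $H$-chain separately as chains of totally positive unipotents anchored at the transverse pair $(K,G)$ (with two a priori different adapted bases), pushes each chain to $V/W$ via the corner-block lemma (its Lemma \ref{lem: total positivity projection}), and then must \emph{reassemble} the two projected halves: this is done through Observation \ref{obs: basic flag}(5) together with a computation of the edge invariants $S_j$ of $(\pi(K),\pi(F_1),\pi(G),\pi(H_1))$ using Observation \ref{obs: cross ratio projection} and the Fock--Goncharov criterion (Theorem \ref{thm: Fock-Goncharov}). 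You instead rotate and reverse the tuple so that $G$ sits in the last slot, which lets you realize the \emph{entire} tuple as a single chain of unipotents in $U_{>0}(\mathcal B^\dagger)$ for one basis adapted to $(F_l,G)$; since $W=G^{(k)}$ is then a coordinate subspace, every unipotent in the chain descends, and positivity of the projected tuple is read off directly from Definition \ref{def: positive flags}. This eliminates both the auxiliary flag $K$ and the gluing step, so you never need Theorem \ref{thm: Fock-Goncharov} or the cross-ratio compatibility with quotients here. The one piece of genuine content is shared by both arguments: the fact that the lower-right corner block of a totally positive unipotent upper triangular matrix is again totally positive unipotent upper triangular, which you rederive and which is exactly the paper's Lemma \ref{lem: total positivity projection}. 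Your handling of the routine points (general position of the tuple giving transversality of each flag to $W$, the equivariance $\pi_W(g\cdot F)=\bar g\cdot\pi_W(F)$, and the identification of $\pi_W(F_l)$ and $\pi_W(G)$ in the quotient basis) is also sound.
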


To prove Proposition \ref{prop: positivity projection}, we use the following lemma.

\begin{lemma}\label{lem: total positivity projection}
Let $\Bc:=(e_1,\dots,e_n)$ be a basis of $V$. Fix $k=1,\dots,n-1$, and set $\Dc:=(e_1,\dots,e_k)$ and $\Cc:=(e_{k+1},\dots,e_n)$. Then let $W':=\Span_{\mathbb{R}}(\Cc)$, $W:=\Span_{\mathbb{R}}(\Dc)$, and let $P=P_W:V\to W'$ be the projection with kernel $W$. If $u\in U_{>0}(\Bc)$, then $u':=P\circ u\in\PGL(W')$ satisfies $u'\in U_{>0}(\Cc)$.
\end{lemma}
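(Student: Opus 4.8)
The plan is to compute $u'$ concretely in terms of the canonical matrix of $u$ and to recognize it as a principal submatrix. Since $u\in U_{>0}(\Bc)$, it has a unique representative $M_u$ that is upper triangular with ones on the diagonal in the basis $\Bc$; write $(M_u)_{ij}$ for its entries, so $(M_u)_{ij}=0$ for $i>j$ and $(M_u)_{ii}=1$. First I would observe that for $j>k$ we have $u\cdot e_j=e_j+\sum_{i<j}(M_u)_{ij}e_i$, and since $P$ annihilates $e_1,\dots,e_k$ and fixes $e_{k+1},\dots,e_n$, this gives $(P\circ u)(e_j)=e_j+\sum_{k<i<j}(M_u)_{ij}e_i$. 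Hence the restriction $u'=P\circ u|_{W'}$ is represented, in the basis $\Cc=(e_{k+1},\dots,e_n)$, by the matrix $M_{u'}$ with $(M_{u'})_{ab}=(M_u)_{k+a,\,k+b}$ for $a,b=1,\dots,n-k$; that is, $M_{u'}$ is the bottom-right $(n-k)\times(n-k)$ block of $M_u$. In particular $M_{u'}$ is upper triangular with ones on the diagonal, so $u'$ is unipotent and $u'\in\PGL(W')$ is well-defined.

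It then remains to check positivity of the minors of $M_{u'}$. Fix $m\le n-k$ and index sets $1\le a_1<\dots<a_m\le n-k$ and $1\le b_1<\dots<b_m\le n-k$, and suppose the corresponding $(a_1,\dots,a_m),(b_1,\dots,b_m)$-minor map is non-trivial for $u'$, i.e. $a_s\le b_s$ for all $s$ with strict inequality for at least one $s$. The minor $\varepsilon^{a_1,\dots,a_m}_{b_1,\dots,b_m}(u')$ of $M_{u'}$ is, by the block description above, literally the minor $\varepsilon^{k+a_1,\dots,k+a_m}_{k+b_1,\dots,k+b_m}(u)$ of $M_u$ cut out by rows $k+a_1<\dots<k+a_m$ and columns $k+b_1<\dots<k+b_m$. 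Since $k+a_s\le k+b_s$ for all $s$ with at least one strict inequality, the $(k+a_1,\dots,k+a_m),(k+b_1,\dots,k+b_m)$-minor map is non-trivial for $u$, so the hypothesis $u\in U_{>0}(\Bc)$ forces it to be positive. Therefore $\varepsilon^{a_1,\dots,a_m}_{b_1,\dots,b_m}(u')>0$, and since this holds for every non-trivial minor map, $u'\in U_{>0}(\Cc)$, as claimed.

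This is purely a bookkeeping argument with submatrices, and I do not anticipate any genuine obstacle; the only points needing a moment's care are the interpretation of $u'$ (a priori $P\circ u$ is only a map $V\to W'$, so one must restrict to $W'$, and the computation above is what shows this restriction is unipotent) and the index shift $i\mapsto k+i$, which is exactly what transports the notion of ``non-trivial minor map'' between $M_{u'}$ and $M_u$.
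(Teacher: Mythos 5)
Your proof is correct and follows the same route as the paper: identify $M_{u'}$ as the bottom-right $(n-k)\times(n-k)$ principal submatrix of $M_u$, so that every non-trivial minor of $M_{u'}$ is a non-trivial minor of $M_u$ after the index shift $i\mapsto k+i$, hence positive. The paper's proof is just a terser version of this same bookkeeping.
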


\begin{proof}
Let $M_u$ be the matrix representing $u$ in the basis $\Bc$, then $M_u$ is an $n\times n$, totally positive, unipotent, upper triangular matrix. If $M_{u'}$ is the matrix representing $u'$ in the basis $\Cc$, then $M_{u'}$ is the submatrix of $M_u$ corresponding to the last $k+1$ rows and the last $k+1$ columns. It follows that $M_{u'}$ is also a totally positive, unipotent upper triangular matrix.
\end{proof}

\begin{proof}[Proof of Proposition \ref{prop: positivity projection}]
Let $K$ be a flag in $\Fc(V)$ such that 
\[(K,F_1,F_2,\dots,F_l,G,H_l,\dots,H_2,H_1)\] 
is a positive tuple of flags. Then there is some basis $\Bc:=(e_1,\dots,e_n)$ of $V$ such that $e_i\in G^{(i)}\cap K^{(n-i+1)}$ for all $i=1,\dots,n$, and some $u_1,u_2,\dots,u_l\in U_{>0}(\Bc)$ such that $F_i=u_1\dots u_i\cdot K$ for all $i=1,\dots,l$. Similarly, there is some basis $\Bc':=(e_1',\dots,e_n')$ of $V$ such that $e_i'\in G^{(i)}\cap K^{(n-i+1)}$ for all $i=1,\dots,n$, and some $v_1,v_2,\dots,v_l\in U_{>0}(\Bc')$ such that $H_i=v_1\dots v_i\cdot K$ for all $i=1,\dots,l$. 

Let $\Cc:=(e_{k+1},\dots,e_n)$ and $\Cc':=(e_{k+1}',\dots,e_n')$, and let $W':=\Span_\mathbb{R}(\Cc)=\Span_\mathbb{R}(\Cc')$. Since $V=W+W'$, we may identify $V/W$ with $W'$. Via this identification, the quotient map $\pi:V\to V/W$ can viewed as a projection $V\to W'$ whose kernel is $W$. Then 
\[\pi(G)^{(j)}=\Span_\mathbb{R}(e_{k+1},\dots,e_{k+j})\,\text{ and }\,\pi(K)^{(j)}=\Span_\mathbb{R}(e_{n-j+1},\dots,e_n)\] 
for all $j=1,\dots,n-k-1$. For all $i=1,\dots,l$, let $u_i',v_i'\in\PGL(V/W)$ be defined by $u_i'=\pi\circ u_i$ and $v_i'=\pi\circ v_i$ respectively. By Lemma \ref{lem: total positivity projection}, $u_i'\in U_{>0}(\Cc)$ and $v_i'\in U_{>0}(\Cc')$. Furthermore, as elements in $\PGL(W')$, $\pi\circ u_1\circ\dots\circ u_i=u_1'\circ\dots\circ u_i'$ for all $i=1,\dots,l$. This implies that for all $j=1,\dots,n-k$, we have
\[\pi(F_i)^{(j)}=\pi(F_i^{(j)}+W)=\pi(u_1\dots u_i\cdot(K^{(j)}+W))=u_1'\dots u_i'\cdot \pi(K)^{(j)},\] 
so $\pi(F_i)=u_1'\dots u_i'\cdot \pi(K)$. As such, $(\pi(K),\pi(F_1),\dots,\pi(F_l),\pi(G))$ is a positive tuple of flags. Similarly, the tuple $(\pi(K),\pi(H_1),\dots,\pi(H_l),\pi(G))$ is also positive.

To prove that $(\pi(K),\pi(F_1),\dots,\pi(F_l),\pi(G),\pi(H_l),\dots,\pi(H_1))$ is positive, it is sufficient to prove that $(\pi(K),\pi(F_1),\pi(G),\pi(H_1))$ is positive and use Observation~\ref{obs: basic flag}(5). Since $(\pi(K),\pi(F_1),\pi(G))$ and $(\pi(K),\pi(H_1),\pi(G))$ are positive, by Theorem~\ref{thm: Fock-Goncharov}, it suffices to show that for all $j=1,\dots,n-k-1$, $S_j(\pi(K),\pi(F_1),\pi(G),\pi(H_1))<0$. By Observation \ref{obs: cross ratio projection},
\begin{eqnarray*}
&&S_{j+k}(G,F_1,K,H_1)\\
&=&C_1\left(G^{(j+k-1)}+K^{(n-j-k)},G^{(j+k)}+K^{(n-j-k-1)},F_1^{(1)},H_1^{(1)}\right)\\
&=&C_1\left(\pi(G)^{(j-1)}+\pi(K)^{(n-j-k)},\pi(G)^{(j)}+\pi(K)^{(n-j-k-1)},\pi(F_1)^{(1)},\pi(H_1)^{(1)}\right)\\
&=&S_j(\pi(G),\pi(F_1),\pi(K),\pi(H_1))
\end{eqnarray*}
Since $(G,F_1,K,H_1)$ is positive, $S_{j+k}(G,F_1,K,H_1)<0$.
\end{proof}

\begin{proposition}\label{prop: converge}
Let $(F_i)_{i=1}^\infty$ be a sequence of flags in $\Fc(V)$ such that $(F_1,\dots,F_i,F_\infty)$ is a positive tuple of flags for all integers $i\geq 2$. Fix $k=1,\dots,n-1$, set $W:=F_\infty^{(k)}$, and let $\pi=\pi_W:V\to V/W$ to be the quotient map. If $(F_i)_{i=1}^\infty$ converges to $F_\infty$, then $(\pi(F_i))_{i=1}^\infty$ converges to $\pi(F_\infty)$.
\end{proposition}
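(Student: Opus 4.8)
The plan is to reduce to the case of a single Frenet curve and then read the conclusion off the osculating property of such curves.

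\emph{Step 1 (pass to one Frenet curve).} I would first upgrade the hypothesis to the following: there is a Frenet curve $\xi\colon S^1\to\Fc(V)$ and points $x_1<x_2<\cdots<x_\infty<x_1$ in $S^1$ (in this cyclic order) with $x_l\to x_\infty$, such that $\xi(x_l)=F_l$ for all $l$ and $\xi(x_\infty)=F_\infty$. Using Observation \ref{obs: basic flag} (passing to subtuples, and reinserting a flag via item (5)) together with Theorem \ref{thm: positive and Frenet}, one first produces a single auxiliary flag $K$ with $(F_1,\dots,F_i,F_\infty,K)$ positive for all $i\geq 2$. Theorem \ref{thm: positive and Frenet} realizes each of these finite positive tuples by a Frenet curve; normalizing so that $F_1$ and $K$ sit at two fixed points of $S^1$, a limiting argument for Frenet curves produces a single Frenet curve $\xi$ realizing all the $F_l$, $F_\infty$, and $K$ in the prescribed cyclic order. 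Since $\xi$ is continuous and injective it is a topological embedding of $S^1$, and because $F_l\to F_\infty$ this forces $x_l\to x_\infty$.

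\emph{Step 2 (compute the limit of $\pi_W$ along $\xi$).} Put $W=F_\infty^{(k)}=\xi(x_\infty)^{(k)}$. I claim $\lim_{t\to x_\infty}\pi_W(\xi(t))=\pi_W(F_\infty)$; combined with Step 1 this gives $\pi_W(F_l)=\pi_W(\xi(x_l))\to\pi_W(F_\infty)$, which is the proposition. To prove the claim, fix $j\in\{1,\dots,n-k-1\}$ and any sequence $t_m\to x_\infty$ with $t_m\neq x_\infty$. Since $\xi(t_m)$ and $\xi(x_\infty)$ are transverse, $\xi(t_m)^{(j)}\cap W=\{0\}$, whence $\pi_W(\xi(t_m))^{(j)}=\bigl(\xi(t_m)^{(j)}+W\bigr)/W$. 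Applying the osculating (second) property of the Frenet curve $\xi$ to the sequence of pairs of points $(t_m,x_\infty)$ and the pair of integers $(j,k)$ — valid since $j+k\leq n$ — gives $\xi^{(j)}(t_m)+\xi^{(k)}(x_\infty)\to\xi^{(j+k)}(x_\infty)$ in $\Gr_{j+k}(V)$, i.e. $\xi(t_m)^{(j)}+W\to F_\infty^{(j+k)}$. The natural map from $\{U\in\Gr_{j+k}(V):W\subseteq U\}$ to $\Gr_j(V/W)$ is a homeomorphism, and $\pi_W(F_\infty)^{(j)}=F_\infty^{(k+j)}/W$ because $F_\infty^{(k)}=W$; therefore $\pi_W(\xi(t_m))^{(j)}\to\pi_W(F_\infty)^{(j)}$. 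Letting $j$ range over $1,\dots,n-k-1$ proves the claim.

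\emph{Main obstacle and an alternative.} The only delicate point is Step 1: Theorem \ref{thm: positive and Frenet} supplies a Frenet curve for each finite subtuple, and one must argue that these may be taken on a single Frenet curve that also passes through $F_\infty$ — this is where a compactness/limiting input for Frenet curves is needed. An alternative that sidesteps Frenet curves uses Definition \ref{def: positive flags} directly: there is a basis $\mathcal B$ adapted to $(F_1,F_\infty)$ and a $\prec$-increasing sequence $(p_l)$ in $U_{>0}(\mathcal B)$ with $F_{l+1}=p_l\cdot F_1$, and Lemma \ref{lem: total positivity projection} (as in the proof of Proposition \ref{prop: positivity projection}) gives $\pi_W(F_{l+1})=p_l'\cdot\pi_W(F_1)$ for the induced $\prec$-increasing sequence $(p_l')$ in the totally positive semigroup of the quotient, while $\pi_W(F_\infty)$ is the standard flag there. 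One is then reduced to showing that $F_i\to F_\infty$ forces the relevant minors of $p_l$ — equivalently of $p_l'$ — to dominate, which is the same limiting phenomenon phrased for totally positive matrices.
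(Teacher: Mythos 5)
Your Step 2 is fine as conditional reasoning, but Step 1 --- the only place where anything substantive would be proved --- is not established, and it cannot be obtained from the tools you cite. Theorem \ref{thm: positive and Frenet} produces a Frenet curve through a \emph{finite} positive tuple only; to run your argument you need a single Frenet curve through the entire infinite family $\{F_l\}_{l\geq 1}\cup\{F_\infty\}$ whose osculation property holds \emph{at the accumulation point} $x_\infty$ along the sequence $(x_l)$. But that instance of osculation, namely $\xi^{(j)}(x_l)+\xi^{(k)}(x_\infty)\to\xi^{(j+k)}(x_\infty)$, is (after passing to $V/W$) exactly the statement of the proposition. So the ``limiting argument for Frenet curves'' you appeal to would have to prove the proposition itself: the curves furnished by Theorem \ref{thm: positive and Frenet} for the finite tuples $(F_1,\dots,F_i,F_\infty,K)$ vary with $i$, a limit of Frenet curves is not a priori Frenet, and verifying osculation of any limit curve at $x_\infty$ is precisely the missing content. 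The coordinate ``alternative'' at the end has the same status: it correctly reduces the claim to a statement about minors of the totally positive unipotents $p_l$ and their submatrices $p_l'$, but that statement is the proposition restated in coordinates, not proved.

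For comparison, the paper sidesteps any need for an infinite Frenet curve. It introduces auxiliary flags $F_{i,\infty}$ (equal to $F_\infty^{(l)}$ for $l\leq k$ and to $W+F_i^{(l-k)}$ for $l>k$), observes that $\pi(F_{i,\infty})=\pi(F_i)$, shows via Lemma \ref{lem: Frenet trick} that $F_{i,\infty}\in\mathfrak U(F_{i-1},H_{i-1},K)$ and hence lies in the nested sets $\overline{\mathfrak U(F_{j},H_{j},K)}$ for $j<i$, and invokes Proposition \ref{prop: shrink} to conclude $F_{i,\infty}\to F_\infty$; since all of these flags have $k$-th subspace equal to $W$, continuity of $\pi$ on that locus finishes the proof. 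To salvage your approach you would need to supply this $F_{i,\infty}$ device or some other independent substitute for osculation at $x_\infty$.
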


\begin{proof}
For all integers $i>0$, let $F_{i,\infty}$ be the flag in $\Fc(V)$ defined by 
\[F_{i,\infty}^{(l)}=\left\{\begin{array}{ll}
F_\infty^{(l)}&\text{if }l\leq k;\\
F_\infty^{(k)}+F_i^{(l-k)}&\text{if }l> k.\\
\end{array}\right.\]
Observe that $\pi(F_i)=\pi(F_{i,\infty})$. Also, let $K\in\Fc(V)$ and let $(H_i)_{i=1}^\infty$ be a sequence in $\Fc(V)$ such that $(F_1,\dots,F_i,F_\infty,H_i,\dots,H_1,K)$ is positive, and $\displaystyle\lim_{i\to\infty}H_i=F_\infty$. 

By Proposition \ref{prop: shrink}, 
\[\displaystyle \{F_\infty\}=\lim_{i\to\infty}\overline{\mathfrak{U}(F_i,H_i,K)}=\bigcap_{i=1}^\infty \overline{\mathfrak{U}(F_i,H_i,K)}.\] 
Lemma \ref{lem: Frenet trick} implies that $F_{i,\infty}\in\mathfrak U(F_{i-1},H_{i-1},K)$ for all integers $i\geq 2$, so 
\[\displaystyle F_{i,\infty}\in\bigcap_{j=1}^{i-1} \overline{\mathfrak U(F_{j},H_{j},K)}.\] 
Thus, $\displaystyle\lim_{i\to\infty}F_{i,\infty}=F_\infty$, which implies that 
\[\displaystyle\lim_{i\to\infty}\pi(F_i)=\lim_{i\to\infty}\pi(F_{i,\infty})=\pi(F_\infty).\qedhere\]
\end{proof}

\subsection{Proof of Theorem \ref{thm: general k}}\label{sec: collapse}

Recall that by Observation \ref{obs: positive converge} part (2) that if $(F_i)_{i=1}^\infty$ and $(H_i)_{i=1}^\infty$ are sequences of flags in $\Fc(V)$ such that $(F_1,\dots,F_l,H_l,\dots,H_1)$ is positive for all integers $l\geq 2$, then $\displaystyle\lim_{i\to\infty}F_i$ and $\displaystyle\lim_{i\to\infty}H_i$ exist. First, we prove a weaker version of Theorem \ref{thm: general k}.

\begin{proposition}\label{prop: k=1'}
Let $(F_i)_{i=1}^\infty$ and $(H_i)_{i=1}^\infty$ be sequences of flags in $\Fc(V)$ such that for all integers $l\geq 2$, $(F_1,\dots,F_l,H_l,\dots,H_1)$ is a positive tuple of flags. Let $\displaystyle F_\infty:=\lim_{i\to\infty}F_i$ and $\displaystyle H_\infty:=\lim_{i\to\infty}H_i$. If there is some $D>1$ such that $B_1(H_i,F_i,F_{i+1},H_{i+1})\leq D$ for all integers $i>0$, then $F_\infty^{(1)}=H_\infty^{(1)}$. 
\end{proposition}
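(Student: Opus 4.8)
The plan is to reduce the general-dimensional statement to a genuine $\RP^1$ cross-ratio estimate by pushing everything forward through Frenet curves. First I would invoke Theorem \ref{thm: positive and Frenet}: the hypothesis that $(F_1,\dots,F_l,H_l,\dots,H_1)$ is positive for every $l$ means, via a limiting argument, that there is a single Frenet curve $\xi\colon S^1\to\Fc(V)$ and points
\[
x_1<x_2<\dots<x_\infty<\dots<y_2<y_1
\]
along $S^1$ (in this cyclic order) with $\xi(x_i)=F_i$, $\xi(y_i)=H_i$, $\xi(x_\infty)=F_\infty$, $\xi(y_\infty)=H_\infty$, where $x_\infty=\lim x_i$ and $y_\infty=\lim y_i$ exist by monotonicity and $\xi$ is continuous. (One has to be slightly careful that $x_\infty \le y_\infty$; I would note that if $x_\infty<y_\infty$ then the $F_i^{(1)}$ and $H_i^{(1)}$ converge to \emph{a priori} distinct lines, so I must rule this out — this is exactly the content of the conclusion, so the estimate must do the work.) Then $F_\infty^{(1)}=\xi^{(1)}(x_\infty)$ and $H_\infty^{(1)}=\xi^{(1)}(y_\infty)$, and I want to show $x_\infty=y_\infty$, or at least that $\xi^{(1)}$ agrees at the two limit points.

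The key reduction is to the case $n=2$ via a projection. Since $B_1(H_i,F_i,F_{i+1},H_{i+1})=C_1(H_i^{(n-1)},F_i^{(n-1)},F_{i+1}^{(1)},H_{i+1}^{(1)})$, I would quotient by a suitable codimension-$2$ subspace to turn this into a cross ratio of four points on a projective line. Concretely, consider the osculating flag structure: the hyperplanes $F_i^{(n-1)}$ and $H_i^{(n-1)}$ all contain, in the limit, the hyperplane $F_\infty^{(n-1)}\cap H_\infty^{(n-1)}$ if $F_\infty^{(1)}=H_\infty^{(1)}$... but since that is what we want to prove, instead I would fix $i$ and project by $W_i:=F_i^{(n-1)}\cap H_i^{(n-1)}$, an $(n-2)$-dimensional space, getting $\pi_{W_i}\colon V\to V/W_i\cong\R^2$. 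By Observation \ref{obs: cross ratio projection}, $C_1$ is preserved: the image cross ratio of the four points $\pi_{W_i}(H_i^{(n-1)})$, $\pi_{W_i}(F_i^{(n-1)})$, $\pi_{W_i}(F_{i+1}^{(1)})$, $\pi_{W_i}(H_{i+1}^{(1)})$ in $\mathbb P(\R^2)$ equals $B_1(H_i,F_i,F_{i+1},H_{i+1})\le D$. Because $\xi$ is Frenet and the relevant tuples of flags are in general position, these four image points are distinct and appear in the cyclic order making the quadruple ``positive" in $\mathbb P(\R^2)$; then Observation \ref{obs: RP1} applies (with $a$ a fifth point coming from $\pi_{W_i}(K)$ or $\pi_{W_i}(\xi(x_1))$) to give
\[
\left|\frac{\pi_{W_i}(F_{i+1}^{(1)})-\pi_{W_i}(H_{i+1}^{(1)})}{\pi_{W_i}(F_i^{(1)}\!\!\text{-direction})-\pi_{W_i}(H_i^{(1)}\!\!\text{-direction})}\right|\le\frac{\sqrt D}{1+\sqrt D}<1
\]
in an appropriate affine chart, i.e. a definite contraction of the ``gap between the $F$ and $H$ sides'' at each step.

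The final step is to iterate this contraction to conclude that the gap shrinks geometrically to $0$. The subtlety is that the affine charts and the subspaces $W_i$ vary with $i$, so I cannot literally multiply the inequalities; instead I would phrase the contraction intrinsically. The cleanest way: work in a fixed affine chart of $\mathbb P(\R^2)$ after projecting by the \emph{limiting} subspace $W_\infty := F_\infty^{(n-1)}\cap H_\infty^{(n-1)}$ — wait, this is again circular if $F_\infty^{(1)}=H_\infty^{(1)}$ forces this intersection to jump. So instead I would argue by contradiction: suppose $F_\infty^{(1)}\neq H_\infty^{(1)}$. Then $x_\infty<y_\infty$ strictly, the $(n-2)$-planes $W_i$ converge to $W_\infty:=F_\infty^{(n-1)}\cap H_\infty^{(n-1)}$ (using the Frenet continuity), and all the projections $\pi_{W_i}$ converge to $\pi_{W_\infty}$, which sends the interval $[x_\infty,y_\infty]$ of $\xi$ to a nondegenerate arc in $\mathbb P(\R^2)$. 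In the limit, $\pi_{W_\infty}(F_i^{(1)})\to\pi_{W_\infty}(F_\infty^{(1)})$ and $\pi_{W_\infty}(H_i^{(1)})\to\pi_{W_\infty}(H_\infty^{(1)})$, two \emph{distinct} points, so the ``gap'' at stage $i$ converges to a positive number $g_\infty>0$. But the per-step contraction by a factor $\le\frac{\sqrt D}{1+\sqrt D}<1$ (which passes to the limit since $B_1\le D$ uniformly) forces $g_\infty\le\frac{\sqrt D}{1+\sqrt D}\,g_\infty$, hence $g_\infty=0$, a contradiction. I expect the main obstacle to be the bookkeeping that makes ``the gap contracts by a definite factor at each step'' into a statement robust under the varying projections — i.e. choosing the right normalization (e.g. measuring the gap as a cross ratio against two fixed flags $F_1,H_1$, or using $B_1(H_1,F_1,F_i,H_i)$ which is multiplicative in $i$ by the cocycle identity for $C_1$) so that Observation \ref{obs: RP1} can be chained cleanly; I would most likely route the argument through $B_1(H_1,F_1,F_i,H_i)=\prod_{j=1}^{i-1}B_1(H_j,F_j,F_{j+1},H_{j+1})$-type identities together with Proposition \ref{prop: positive flags and positive cross ratios} rather than tracking affine coordinates directly.
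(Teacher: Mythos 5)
Your proposal correctly identifies the engine of the proof — Observation \ref{obs: RP1} turning the bound $B_1\leq D$ into a definite contraction factor $\frac{\sqrt D}{1+\sqrt D}<1$ — and correctly flags the real difficulty, namely that the one-step contractions live in charts/projections that vary with $i$ and so cannot be naively multiplied. But the resolution you propose does not close. Your contradiction argument needs three things that are not available: that $W_i:=F_i^{(n-1)}\cap H_i^{(n-1)}$ converges to $W_\infty:=F_\infty^{(n-1)}\cap H_\infty^{(n-1)}$, that $\pi_{W_\infty}$ is defined and injective on $\{F_\infty^{(1)},H_\infty^{(1)}\}$, and hence that the limiting gap $g_\infty$ is positive. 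All three can fail because $F_\infty$ and $H_\infty$ need not be transverse — indeed the theorem being proved ultimately forces them to coincide, so no transversality of the limits may be assumed at this stage. Concretely: if $F_\infty^{(n-1)}=H_\infty^{(n-1)}=:U$ while $F_\infty^{(1)}\neq H_\infty^{(1)}$ are two distinct lines inside $U$, then any subsequential limit $W'$ of $(W_i)$ is an $(n-2)$-plane inside $U$, and $\pi_{W'}$ sends both $F_\infty^{(1)}$ and $H_\infty^{(1)}$ to the single point $\pi_{W'}(U)\in\mathbb{P}(V/W')$, so $g_\infty=0$ and no contradiction is reached. (A secondary issue: the closing suggestion to use a product formula $B_1(H_1,F_1,F_i,H_i)=\prod_j B_1(H_j,F_j,F_{j+1},H_{j+1})$ is not an identity — the cocycle relation for $C_1$ requires the first two arguments to be fixed, whereas here they vary with $j$.)

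The paper's proof avoids the limit entirely by replacing your quotient $V\to V/W_i$ with a \emph{slice}: for each fixed $j$ it works inside the $2$-plane $P_j:=F_j^{(1)}+H_j^{(1)}$ and considers the $2j$ points $p_{i,j}:=F_i^{(n-1)}\cap P_j$ and $q_{i,j}:=H_i^{(n-1)}\cap P_j$ for $i\leq j$, which all lie on the single projective line $P_j$ in the nested cyclic order $p_{1,j}<\dots<p_{j,j}<q_{j,j}<\dots<q_{1,j}$. A comparison lemma (Lemma \ref{lem: arrgghh'}(3), proved from Proposition \ref{prop: positive flags and positive cross ratios}) bounds the cross ratio of consecutive quadruples on $P_j$ by $B_1(H_i,F_i,F_{i+1},H_{i+1})\leq D$, so Observation \ref{obs: RP1} can be chained $j-1$ times \emph{on that one line}, in a fixed affine chart of $\mathbb{P}(V)$ containing $\overline{\tau_1}$; compactness of $\overline{\tau_1}$ gives a uniform bound $A$ on the initial gap $d(p_{1,j},q_{1,j})$, whence $d(F_j^{(1)},H_j^{(1)})=d(p_{j,j},q_{j,j})\leq\bigl(\tfrac{\sqrt D}{1+\sqrt D}\bigr)^{j-1}A\to 0$ directly, with no passage to a limiting configuration. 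If you want to salvage a quotient-based argument, you would have to first reduce to the case where the limits are suitably transverse, which is essentially what the induction in the proof of Theorem \ref{thm: general k} does — but for the base case $k=1$ the slicing argument is what makes the iteration legitimate.
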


The proof of Proposition \ref{prop: k=1'} requires the following lemma.

\begin{lemma}\label{lem: arrgghh'}
Let $(F_1,F_2,H_2,H_1)$ be a positive quadruple of flags in $\Fc(V)$. 
\begin{enumerate}
\item  Let $K\in\Fc(V)$ be a flag such that $(F_1,F_2,H_2,H_1,K)$ is positive. For $i,j=1,2$,
\[\tau_{i,j}:=\{G^{(1)}\in\mathbb{P}(V):G\in\mathfrak U(F_i,H_j,K)\}\]
is a simplex associated to $\{F_i,H_j\}$.
\item For $i=1,2$, let $\tau_{F_i}$ and $\tau_{H_i}$ be the closed faces of the simplex $\tau_{i,i}$ that lie in $F_i^{(n-1)}$ and $H_i^{(n-1)}$ respectively. If $P\in\Gr_2(V)$ denotes the subspace containing $F_2^{(1)}$ and $H_2^{(1)}$, then $P\cap F_1^{(n-1)}\in\tau_{F_1}$ and $P\cap H_1^{(n-1)}\in\tau_{H_1}$.
\item $C_1\left(H_1^{(n-1)},F_1^{(n-1)},x_1,x_2\right)\leq B_1\left(H_1,F_1,F_2,H_2\right)$ for all $x_1$ and $x_2$ in $\overline{\tau_{2,2}}$.
\end{enumerate}
\end{lemma}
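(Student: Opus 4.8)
\subsubsection*{Proof proposal for Lemma~\textup{\ref{lem: arrgghh'}(3)}}

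The plan is to reduce the statement, via the multiplicativity of the cross ratio, to a claim about inserting a single flag between $F_2$ and $H_2$. First note that, by part~(1) together with Proposition~\ref{prop: positivity closure} (applied with $K$), one has $\overline{\tau_{2,2}}\subset\tau_{1,1}$ (push the inclusion $\overline{\mathfrak U(F_2,H_2,K)}\subset\mathfrak U(F_1,H_1,K)$ forward under the continuous, proper map $\Theta_1$); since $\tau_{1,1}$ is a simplex associated to $\{F_1,H_1\}$ it is disjoint from the hyperplanes $\mathbb P(F_1^{(n-1)})$ and $\mathbb P(H_1^{(n-1)})$, so every $x\in\overline{\tau_{2,2}}$ is transverse to $F_1^{(n-1)}$ and $H_1^{(n-1)}$. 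Because $B_1(F_1',F_2',F_3',F_4')=C_1((F_1')^{(n-1)},(F_2')^{(n-1)},(F_3')^{(1)},(F_4')^{(1)})$ depends only on the listed subspaces, we get $C_1(H_1^{(n-1)},F_1^{(n-1)},x_1,x_2)=B_1(H_1,F_1,\tilde G_1,\tilde G_2)$ for any flags $\tilde G_i$ with $\tilde G_i^{(1)}=x_i$, and the cross ratio identity $C_1(U_1,U_2,W_1,W_2)\,C_1(U_1,U_2,W_2,W_3)=C_1(U_1,U_2,W_1,W_3)$ lets us rewrite
\[
C_1(H_1^{(n-1)},F_1^{(n-1)},x_1,x_2)=\frac{B_1(H_1,F_1,\tilde G_1,H_2)}{B_1(H_1,F_1,\tilde G_2,H_2)}.
\]
Thus it suffices to prove, for every $x\in\overline{\tau_{2,2}}$ and every flag $\tilde G$ with $\tilde G^{(1)}=x$, that $B_1(H_1,F_1,\tilde G,H_2)\ge 1$ and $B_1(H_1,F_1,\tilde G,H_2)\le B_1(H_1,F_1,F_2,H_2)$: one then applies the first inequality with $x=x_2$ and the second with $x=x_1$ (here $B_1(H_1,F_1,F_2,H_2)>1$ since $(H_1,F_1,F_2,H_2)$ is positive by Observation~\ref{obs: basic flag} and Proposition~\ref{prop: positive flags and positive cross ratios}).

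Both inequalities follow from the claim $(\star)$: for each $x$ in the open simplex $\tau_{2,2}$ there is a flag $G_x$ with $G_x^{(1)}=x$ for which $(H_1,F_1,F_2,G_x,H_2)$ is a positive quintuple. Granting $(\star)$, the quadruple $(H_1,F_1,G_x,H_2)$ is positive, so $B_1(H_1,F_1,G_x,H_2)>1$ by Proposition~\ref{prop: positive flags and positive cross ratios}, and Proposition~\ref{prop: positive flags and positive cross ratios} applied to the positive quintuple $(H_1,F_1,F_2,G_x,H_2)$ gives $B_1(H_1,F_1,G_x,H_2)<B_1(H_1,F_1,F_2,H_2)$. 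Since $B_1(H_1,F_1,\tilde G,H_2)=B_1(H_1,F_1,G_x,H_2)$ whenever $\tilde G^{(1)}=x$, this establishes both inequalities on $\tau_{2,2}$, and they extend to $\overline{\tau_{2,2}}$ by continuity of $B_1$, which stays well defined there because $\overline{\tau_{2,2}}\subset\tau_{1,1}$.

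It remains to prove $(\star)$. By part~(1), $x=G^{(1)}$ for some $G\in\mathfrak U(F_2,H_2,K)$, i.e.\ $(F_2,G,H_2,K)$ is positive; and $(F_1,F_2,H_2,H_1,K)$ is positive by hypothesis. The plan is to glue these two positive tuples along the common edge $\{F_2,H_2\}$ to conclude that $(F_1,F_2,G,H_2,H_1,K)$ is positive; deleting $K$ and cyclically permuting (Observation~\ref{obs: basic flag}) then yields the positive quintuple $(H_1,F_1,F_2,G,H_2)$, so $G_x:=G$ works. The gluing is the heart of the argument and is the step I expect to be the main obstacle. One way to carry it out: triangulate the hexagon with cyclically ordered vertices $F_1,F_2,G,H_2,H_1,K$ by cutting the triangle of a triangulation of the pentagon $F_1,F_2,H_2,H_1,K$ that lies on the edge $F_2H_2$ with the triangle $F_2GH_2$, and then verify the triangle-invariant and edge-invariant conditions of Theorem~\ref{thm: Fock-Goncharov} one by one, each reducing to positivity of a sub-triple or sub-quadruple of the two given positive tuples; alternatively, one argues directly with the Frenet curves furnished by Theorem~\ref{thm: positive and Frenet}. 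The remaining bookkeeping — transversality and legitimacy of the various $C_1$'s and of the multiplicativity identity used above — is routine once $\overline{\tau_{2,2}}\subset\tau_{1,1}$ is known.
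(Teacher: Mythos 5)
Your argument is correct and, at its core, is the same as the paper's: both rest on the multiplicativity of $C_1$ together with Proposition \ref{prop: positive flags and positive cross ratios} applied to positive tuples obtained by inserting flags of $\overline{\mathfrak U(F_2,H_2,K)}$ between $F_1$ and $H_1$. The paper factors $B_1(H_1,F_1,F_2,H_2)$ as $B_1(H_1,F_1,F_2,G_1)\cdot B_1(H_1,F_1,G_1,G_2)\cdot B_1(H_1,F_1,G_2,H_2)$ and bounds the outer factors below by $1$, whereas you write $C_1(H_1^{(n-1)},F_1^{(n-1)},x_1,x_2)$ as the ratio $B_1(H_1,F_1,\tilde G_1,H_2)/B_1(H_1,F_1,\tilde G_2,H_2)$; these are equivalent manipulations. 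What you add is an explicit proof of the claim $(\star)$ that $(H_1,F_1,F_2,G,H_2)$ is positive for $G\in\mathfrak U(F_2,H_2,K)$; the paper uses this too (it invokes Proposition \ref{prop: positive flags and positive cross ratios} for the quadruples $(H_1,F_1,F_2,G_i)$ and $(H_1,F_1,G_i,H_2)$ without comment, the positivity being implicit in the unipotent factorization used in the proof of Proposition \ref{prop: positivity closure}), so making it explicit is a genuine improvement. One detail in your gluing step: the verification via Theorem \ref{thm: Fock-Goncharov} reduces entirely to sub-tuples of the two given positive tuples only if you triangulate the hexagon so that the triangle adjacent to $\{F_2,G,H_2\}$ across the edge $F_2H_2$ is $\{F_2,H_2,K\}$; then the edge invariant at $F_2H_2$ is $S_k(F_2,G,H_2,K)<0$, which is exactly the edge condition of the positive quadruple $(F_2,G,H_2,K)$. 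If instead the pentagon triangle on the edge $F_2H_2$ has third vertex $F_1$ or $H_1$, the required edge condition is $S_k(F_2,G,H_2,F_1)<0$ or $S_k(F_2,G,H_2,H_1)<0$, which is not a sub-tuple condition of either hypothesis, so you should fix this choice of triangulation explicitly.
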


\begin{proof}
Proof of (1). This is immediate from Observation \ref{obs: basic flag}(6). 

Proof of (2). Since positive tuples are in general position, $P\cap F_1^{(n-1)}$ and $P\cap F_2^{(n-1)}$ are points in $\Pb(V)$. By Observation \ref{obs: basic flag}(5) and part (1), $\tau_{2,2}\subset \tau_{1,2}\subset\tau_{1,1}$. Since $H_2^{(1)}$ is a vertex of $\tau_{1,2}$ and $F_2^{(1)}$ lies in $\tau_{1,2}$, this implies that $P\cap F_1^{(n-1)}$ lies in $\tau_{F_1}$, see Figure~\ref{fig: face}. The same argument, switching the roles of $F_1$ and $H_1$ with $F_2$ and $H_2$ respectively, proves that $P\cap H_1^{(n-1)}$ lies in $\tau_{H_1}$. 

\begin{figure}[h]
    \centering
    \includegraphics[width=0.7\textwidth]{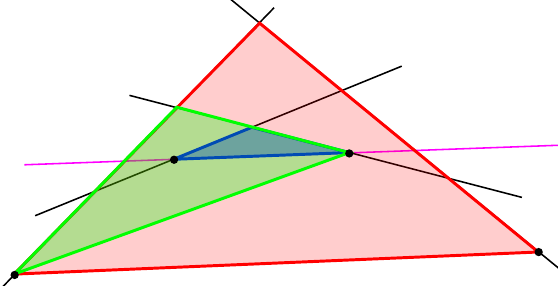}
    \small
 \put (-30,73){$P$}   
  \put (-100,30){$\tau_1$}   
  \put (-190,50){$\sigma$}   
  \put (-160,71){$\tau_2$}   
\tiny
\put (-290,8){$F_1$}
\put (-7,18){$H_1$}
\put (-203,69){$F_2$}
\put (-107,72){$H_2$}
    \caption{The simplices $\tau_2\subset\sigma\subset\tau_1$.}
    \label{fig: face}
\end{figure}

Proof of (3). By Proposition \ref{prop: easy positive}(3), $\overline{\mathfrak U(F_2,H_2,K)}\subset\mathfrak U(F_1,H_1,K)$. Thus, by part (1), for $i=1,2$, there is a flag $G_i\in\overline{\mathfrak U(F_2,H_2,K)}\subset\mathfrak U(F_1,H_1,K)$ such that $G_i^{(1)}=x_i$. By Proposition \ref{prop: positive flags and positive cross ratios}, $B_1(H_1,F_1,F_2,G_i),B_1(H_1,F_1,G_i,H_2)\ge1$. Also, since $G_1^{(1)}$ and $G_2^{(1)}$ lie in the same connected component of $\mathbb{P}(V)\setminus(F_1^{(n-1)}\cup H_1^{(n-1)})$, one verifies that $B_1(H_1,F_1,G_1,G_2)>0$. Thus,
\begin{align*}
B_1\left(H_1,F_1,F_2,H_2\right)&=B_1(H_1,F_1,F_2,G_1)\cdot B_1(H_1,F_1,G_1,G_2)\cdot B_1(H_1,F_1,G_2,H_2)\\
&\ge B_1(H_1,F_1,G_1,G_2)=C_1\left(H_1^{(n-1)},F_1^{(n-1)},x_1,x_2\right).\qedhere
\end{align*}
\end{proof}

\begin{proof}[Proof of Proposition \ref{prop: k=1'}]
For any integer $j>0$, let $P_j:=H_j^{(1)}+F_j^{(1)}\in\Gr_2(V)$. Since the quadruple $(F_i,F_j,H_j,H_i)$ is positive for all integers $i,j>0$ such that $i< j$, we see that $P_j$ does not lie in $F_i^{(n-1)}$ or $H_i^{(n-1)}$. Thus, we may define the points $p_{i,j}:=F_i^{(n-1)}\cap P_j$ and $q_{i,j}:=H_i^{(n-1)}\cap P_j$ in $\mathbb{P}(V)$. Let $K\in\Fc(V)$ be a flag such that $(F_1,F_2,H_2,H_1,K)$ is positive, and let
\[\tau_i:=\{G^{(1)}\in\mathbb{P}(V):G\in\mathfrak U(F_i,H_i,K)\}.\] 
By Lemma \ref{lem: arrgghh'}(1), $\tau_i$ is a simplex associated to $\{F_i,H_i\}$. Let $\tau_{F_i}$ and $\tau_{H_i}$ be the closed faces of $\tau_i$ that lie in $F_i^{(n-1)}$ and $H_i^{(n-1)}$ respectively. By Proposition \ref{prop: easy positive}(3), $\overline{\tau_{i+1}}\subset\tau_i$ for all integers $i>0$. Since Lemma \ref{lem: arrgghh'}(2) implies that $p_{i,j}\in \tau_{F_i}$ and $q_{i,j}\in\tau_{H_i}$ for all integers $0<i<j$, it follows that 
\begin{align}
\label{eqn: order of points}
p_{1,j}<p_{2,j}<\dots<p_{j,j}<q_{j,j}<\dots<q_{2,j}<q_{1,j}<p_{1,j},
\end{align}
see Figure \ref{fig: nested}.

\begin{figure}[h]
    \centering
    \includegraphics[width=0.9\textwidth]{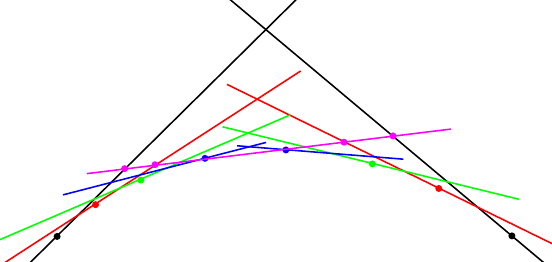}
\tiny
\put (-330,10){$F_1$}
\put (-33,10){$H_1$}
\put (-303,33){$F_2$}
\put (-87,45){$H_2$}
\put (-235,62){$F_j$}
\put (-180,67){$H_j$}
\put (-300,66){$p_{1,j}$}
\put (-273,70){$p_{2,j}$}
\put (-108,89){$q_{1,j}$}
\put (-141,85){$q_{2,j}$}
    \caption{Triangulation $\mathcal T$ in Theorem \ref{thm: Fock-Goncharov}.}
    \label{fig: nested}
\end{figure}

First, we prove that for all integers $i,j>0$ satisfying $i<j$, we have
\begin{equation}\label{eqn: 1D'}1<C_1(q_{i,j},p_{i,j},p_{i+1,j},q_{i+1,j})\leq D.\end{equation}
Here, $C_1$ is the cross ratio on $\mathfrak{Q}_1(P_j)$. It is straightforward to verify that 
\[C_1(q_{i,j},p_{i,j},p_{i+1,j},q_{i+1,j})=C_1\left(H_i^{(n-1)},F_i^{(n-1)},p_{i+1,j},q_{i+1,j}\right),\]
where $C_1$ on the right is a cross ratio on $\mathfrak{Q}_1(V)$. Also, Lemma \ref{lem: arrgghh'}(2) implies that $p_{i+1,j}\in\tau_{F_{i+1}}$ and $q_{i+1,j}\in\tau_{H_{i+1}}$ for all $j>i$, so we may apply Lemma \ref{lem: arrgghh'}(3) to deduce that
\[C_1\left(H_i^{(n-1)},F_i^{(n-1)},p_{i+1,j},q_{i+1,j}\right)\leq B_1\left(H_i,F_i,F_{i+1},H_{i+1}\right).\]
Since $B_1\left(H_i,F_i,F_{i+1},H_{i+1}\right)\leq D$ by hypothesis, this proves the required upper bound. By (\ref{eqn: cross ratio classical}), to prove the required lower bound, it is sufficient to show that $q_{i,j}<p_{i,j}<p_{i+1,j}<q_{i+1,j}<q_{i,j}$ lies in $P_j$ in this cyclic order. This follows from (\ref{eqn: order of points}).

Next, choose an affine chart $\mathbb{A}$ of $\mathbb{P}(V)$ that contains $\overline{\tau_1}$, and equip $\mathbb{A}$ with an Euclidean metric $d$, i.e. $d$ is invariant under translations in $\mathbb{A}$. Let $\mathbb{A}_i:=P_i\cap\mathbb{A}$ be the induced affine chart on the projective line $P_i$, and choose an affine isometry $\mathbb{A}_i\simeq\mathbb{R}$. By Lemma \ref{lem: arrgghh'}(2), $p_{1,i}$ and $q_{1,i}$ lie in $\overline{\tau_1}$ for all integers $i>0$. Since $\overline{\tau_1}$ is compact, there is a constant $A>0$ such that $d(p_{1,i},q_{1,i})\leq A$ for all integers $i>0$. Then (\ref{eqn: order of points}), (\ref{eqn: 1D'}) and Observation \ref{obs: RP1} together give
\[d(p_{i,i},q_{i,i})\leq\left(\frac{\sqrt{D}}{1+\sqrt{D}}\right)d(p_{i-1,i},q_{i-1,i})\leq\dots\leq\left(\frac{\sqrt{D}}{1+\sqrt{D}}\right)^{i-1}d(p_{1,i},q_{1,i})\leq\left(\frac{\sqrt{D}}{1+\sqrt{D}}\right)^{i-1}A.\]
Since $p_{i,i}=F_i^{(1)}$ and $q_{i,i}=H_i^{(1)}$, the sequences $(p_{i,i})_{i=1}^\infty$ and $(q_{i,i})_{i=1}^\infty$ converge to $F_\infty^{(1)}$ and $H_{\infty}^{(1)}$ respectively. Thus,
\[d(F_{\infty}^{(1)},H_{\infty}^{(1)})=\lim_{i\to\infty}d(p_{i,i},q_{i,i})\leq\lim_{i\to\infty}\left(\frac{\sqrt{D}}{1+\sqrt{D}}\right)^{i-1}A=0,\]
which means that $F_\infty^{(1)}=H_\infty^{(1)}$.
\end{proof}

Using Proposition \ref{prop: k=1'}, we now prove Theorem \ref{thm: general k}.

\begin{proof}[Proof of Theorem \ref{thm: general k}]
Suppose for contradiction that $F_\infty\neq H_\infty$. Let $k$ be the smallest positive integer such that $F_\infty^{(k)}\neq H_\infty^{(k)}$, and let $W:=F_\infty^{(k-1)}=H_\infty^{(k-1)}=F_\infty^{(k)}\cap H_\infty^{(k)}$ ($W=\{0\}$ if $k=1$). Let $\pi:V\to V/W$ be the quotient map. As before, we abuse notation and denote by $\pi:\Fc(V)\to\Fc(V/W)$ the induced map defined in Section \ref{sec: projection}. 

For all integers $i>0$, let $F_{i,\infty}$ and $H_{i,\infty}$ be the flags in $\Fc(V)$ defined by
\[F_{i,\infty}^{(j)}=\left\{\begin{array}{ll}
F_\infty^{(j)}&\text{if }j\leq k-1;\\
W+F_i^{(j-k)}&\text{if }j>k-1,
\end{array}\right.\,\,\,\,\,\text{ and }\,\,\,\,\,H_{i,\infty}^{(j)}=\left\{\begin{array}{ll}
H_\infty^{(j)}&\text{if }j\leq k-1;\\
W+H_i^{(j-k)}&\text{if }j>k-1.
\end{array}\right.\]
By Proposition \ref{prop: smaller tent}, 
\begin{equation}\label{eqn: ineq 1'}
B_k(H_i,F_i,F_{i+1,\infty},H_{i+1,\infty})\leq B_k(H_i,F_i,F_{i+1},H_{i+1})
\end{equation}
for all $k=1,\dots,n-1$. Also, since $F_{i,\infty}^{(k)}=W+F_i^{(1)}$ and $H_{i,\infty}^{(k)}=W+H_i^{(1)}$, observe that $\pi(F_{i,\infty}^{(k)})=\pi(F_i^{(1)})$ and $\pi(H_{i,\infty}^{(k)})=\pi(H_i^{(1)})$. This implies that
\begin{align}\label{eqn: ineq 2'}
B_1(\pi(H_i),\pi(F_i),\pi(F_{i+1}),\pi(H_{i+1}))&= B_1(\pi(H_i),\pi(F_i),\pi(F_{i+1,\infty}),\pi(H_{i+1,\infty}))\nonumber\\
&= B_k(H_i,F_i,F_{i+1,\infty},H_{i+1,\infty}),
\end{align}
where the second inequality follows from Observation \ref{obs: cross ratio projection}. Together, (\ref{eqn: ineq 1'}) and (\ref{eqn: ineq 2'}) imply that for all integers $i>0$,
\begin{equation}\label{eqn: ineq 5'}
B_1(\pi(H_i),\pi(F_i),\pi(F_{i+1}),\pi(H_{i+1}))\leq D.
\end{equation}

By Proposition \ref{prop: positivity projection}, the tuple
\[(\pi(F_1),\pi(F_2),\dots,\pi(F_i),\pi(H_i),\dots,\pi(H_2),\pi(H_1))\]
is positive for any positive integer $i$. Also, Proposition \ref{prop: converge} implies that $(\pi(F_i))_{i=1}^\infty$ and $(\pi(H_i))_{i=1}^\infty$ converge to $\pi(F_\infty)$ and $\pi(H_\infty)$ respectively. Since (\ref{eqn: ineq 5'}) holds for all integers $i>0$, we may then apply Proposition \ref{prop: k=1'} to deduce that $\pi(F_\infty)^{(1)}=\pi(H_\infty)^{(1)}$. This implies that $F_\infty^{(k)}=H_\infty^{(k)}$, which is a contradiction. 
\end{proof}

\section{Weakly positive representations} \label{sec:admissible}
In this section, we prove our main result Theorem \ref{thm: main intro}, which we restate here. 

\begin{theorem}\label{thm: weakly positive is directed Anosov}
Suppose that $\Lambda$ is path-symmetric. If $\rho:\Gamma\to\PGL(V)$ is $(R,\Lambda)$-weakly positive, then $\rho$ is $(R,\Lambda)$-directed Anosov.
\end{theorem} 

Since $(R,\Lambda)$-weakly positive representations are easy to construct, this theorem provides a process to construct many $(R,\Lambda)$-directed Anosov representations. Several such examples are given in Section \ref{sec: 6}.

\subsection{Relation to purely hyperbolic Schottky representations}\label{sec: adm}

As a special case of Theorem \ref{thm: weakly positive is directed Anosov}, we recover a theorem of Burelle-Treib \cite{BT} about purely hyperbolic Schottky representations from a free group $F_d$ (of any rank $d$) to $\PGL(V)$. In our language, such representations can be defined as follows. Choose a continuous embedding of the Gromov boundary $\partial F_d$ of $F_d$ into $\mathbb S^1$ (for instance by identifying $F_d$ with the fundamental group of a convex cocompact hyperbolic surface).

\begin{definition}\label{def: purely hyperbolic Schottky}
Let $\Lambda$ be the finite directed graph with $A=\{a_1,\dots,a_d,b_1,\dots,b_d\}$ as its set of vertices and $B=A^2-\{(a_1,b_1),\dots,(a_d,b_d),(b_1,a_1),\dots,(b_d,a_d)\}$ as its set of edges. Let $R:A\to F_d$ be a map such that $R(\{a_1,\dots,a_d\})$ is a generating set of $F_d$, and $R(b_i)=R(a_i)^{-1}$ for all $i=1,\dots,m$. A representation $\rho:F_d\to \PGL(V)$ is a \emph{purely hyperbolic Schottky representation} if there is a compatible system of forward domains $\{\mathfrak{U}_a=\mathfrak{U}(F_a,H_a,K_a):a\in A\}$ for $(\rho\circ R,\Lambda)$ such that the following holds: If we enumerate $R(A)=\{c_1,\dots,c_{2m}\}$ so that $R(c_1)_+<R(c_2)_+<\dots<R(c_{2m})_+$ according to the cyclic order on $\mathbb{S}^1$, then 
\[(G_{c_1},G_{c_2},\dots,G_{c_{2m}})\]
is positive for all $G_{c_i}\in\overline{\mathfrak U_{c_i}}$.
 \end{definition}

\begin{corollary}\cite[Theorem 1.3]{BT} If $\rho:F_d\to\PGL(V)$ is a purely hyperbolic Schottky representation, then $\rho$ is $\Delta$-Anosov.
\end{corollary}

\begin{proof}
Let $\Lambda$ and $R$ be as specified in Definition \ref{def: purely hyperbolic Schottky}. Since $\Lambda$ is path-symmetric, Theorem \ref{thm: weakly positive is directed Anosov} implies that $\rho$ is $(R,\Lambda)$-directed Anosov. The corollary now follows from the observation that all geodesic rays in $F_d$ (with generating set $R(A)$) are $(R,\Lambda)$-directed geodesic rays.
\end{proof}

In fact, our proof of Theorem \ref{thm: weakly positive is directed Anosov}, when specialized to the case when $\rho$ is a purely hyperbolic Schottky representation, becomes a ping-pong argument that is very similar to the argument used by Burelle and Treib. A key difference between the two arguments however, is that we use the Labourie cross ratio to show that the intersection of certain nested sequence of open sets is a point (see Theorem \ref{thm: general k}), while Burelle and Treib used what they call the interval distance in place of the Labourie cross ratio (see \cite[Proposition 3.25]{BT}). The fact that the Labourie cross ratio is invariant under the $\PGL(V)$-action, while the interval distance is not, allows us to prove the same result under the weaker hypothesis of being $(R,\Lambda)$-weakly positive instead of being purely hyperbolic Schottky.

\subsection{Admissible maps and collapsing domains in $\Fc(V)$}
As an intermediate step to prove Theorem \ref{thm: weakly positive is directed Anosov}, we prove the following proposition.

\begin{proposition}\label{prop: main 1}
Let $f:A\to\PGL(V)$ be a $\Lambda$-admissible map, and let 
\[\{\mathfrak{U}_a=\mathfrak{U}(F_a,H_a,K_a):a\in A\}\] 
be a compatible system of forward domains for $f$ (see Section \ref{sec: intro main}). Fix a directed ray $(p_i)_{i=1}^\infty$ in $\Lambda$, and let $v_i:=f(p_1)\dots f(p_i)$ for all $i\geq 0$ ($v_0:=\id$).  Then the following hold:
\begin{enumerate}
\item For all integers $i\geq 1$, $v_i\cdot \mathfrak U_{p_{i+1}}=\mathfrak{U}(v_i\cdot F_{p_{i+1}},v_i\cdot H_{p_{i+1}},K_{p_1})$ and the tuple
\[(v_0\cdot F_{p_1},v_1\cdot F_{p_2},\dots,v_i\cdot F_{p_{i+1}},v_i\cdot H_{p_{i+1}},\dots,v_1\cdot H_{p_2},v_0\cdot H_{p_1},K_{p_1})\]
is positive up to switching $F_{p_{j+1}}$ and $H_{p_{j+1}}$ for some of the $j\in\{1,\dots,i\}$. 
\item The intersection $\displaystyle\bigcap_{i=0}^\infty \overline{v_i\cdot \mathfrak U_{p_{i+1}}}$ is a point.
\end{enumerate}
\end{proposition}

\begin{proof}
Proof of (1). We prove this by induction on $i$. First, observe that the $\Lambda$-admissibility of $f$ and Proposition~\ref{prop: easy positive}(2) imply that for all positive integers $j$, 
\begin{align}\label{eqn: nested step 1}
(F_{p_j},f(p_j)\cdot F_{p_{j+1}},f(p_j)\cdot H_{p_{j+1}},H_{p_j},K_{p_j})
\end{align}
is positive up to switching $F_{p_{j+1}}$ and $H_{p_{j+1}}$, and 
\begin{align}\label{eqn: nested step 2}
f(p_j)\cdot\mathfrak U_{p_{j+1}}=\mathfrak U(f(p_j)\cdot F_{p_{j+1}},f(p_j)\cdot H_{p_{j+1}},K_{p_j}).
\end{align} 
The base case $i=1$ is then simply the specialization of this observation to the case when $j=1$.

Next, we prove the inductive step. Suppose that $i>1$. Since $f$ is $\Lambda$-admissible,
\[v_i\cdot\mathfrak U_{p_{i+1}}\subset v_{i-1}\cdot\mathfrak U_{p_i}=\mathfrak{U}(v_{i-1}\cdot F_{p_i},v_{i-1}\cdot H_{p_i},K_{p_1}),\]
where the equality holds by the inductive hypothesis, and \eqref{eqn: nested step 1} implies that
\[(v_{i-1}\cdot F_{p_i},v_i\cdot F_{p_{i+1}},v_i\cdot H_{p_{i+1}},v_{i-1}\cdot H_{p_i})\] 
is positive up to switching $F_{p_{i+1}}$ and $H_{p_{i+1}}$. Then Proposition~\ref{prop: easy positive}(2) implies that 
\[v_i\cdot\mathfrak U_{p_{i+1}}=\mathfrak{U}(v_i\cdot F_{p_{i+1}},v_i\cdot H_{p_{i+1}},K_{p_1})\]
and 
\[(v_{i-1}\cdot F_{p_i},v_i\cdot F_{p_{i+1}},v_i\cdot H_{p_{i+1}},v_{i-1}\cdot H_{p_i},K_{p_1})\]
is positive. The inductive hypothesis also implies that
\[(v_0\cdot F_{p_1},v_1\cdot F_{p_2},\dots,v_{i-1}\cdot F_{p_i},v_{i-1}\cdot H_{p_i},\dots,v_1\cdot H_{p_2},v_0\cdot H_{p_1},K_{p_1})\] 
is positive up to switching $F_{p_{j+1}}$ and $H_{p_{j+1}}$ for some of the $j$ in $\{1,\dots,i-1\}$, so we may apply Proposition \ref{obs: basic flag}(5) to finish the proof.

Proof of (2). For each $i>1$, set
\[(F_i,H_i):=\left\{\begin{array}{ll}
(F_{p_i},H_{p_i})&\text{if }(F_{p_1},v_{i-1}\cdot F_{p_i},v_{i-1}\cdot H_{p_i},H_{p_1})\text{ is positive};\\
(H_{p_i},F_{p_i})&\text{if }(F_{p_1},v_{i-1}\cdot H_{p_i},v_{i-1}\cdot F_{p_i},H_{p_1})\text{ is positive},
\end{array}
\right.\]
and set $(F_1,H_1):=(F_{p_1},H_{p_1})$. By (1), 
\[(F_1,v_1\cdot F_2,\dots,v_i\cdot F_{i+1},v_i\cdot H_{i+1},\dots,v_1\cdot H_2,H_1,K_{p_1})\]
is positive for all positive integers $i$. Thus, by Theorem \ref{thm: general k}, it is sufficient to show that there is some $D>1$ such that $B_k(v_{i-1}\cdot H_i,v_{i-1}\cdot F_i,v_i\cdot F_{i+1},v_i\cdot H_{i+1})\leq D$ for all $k=1,\dots,n-1$ and all integers $i>0$. 

Let $D:=\max\{D',D''\}$, where
\[D':=\max\{B_k(H_{a_1},F_{a_1},f(a_1)\cdot H_{a_2},f(a_1)\cdot F_{a_2}):(a_1,a_2)\in B,k=1,\dots,n-1\}\]
and
\[D'':=\max\{B_k(H_{a_1},F_{a_1},f(a_1)\cdot F_{a_2},f(a_1)\cdot H_{a_2}):(a_1,a_2)\in B,k=1,\dots,n-1\}.\]
Then for any integer $i>0$ and any $k=1,\dots,n-1$. 
\begin{align*}
B_k(v_{i-1}\cdot H_i,v_{i-1}\cdot F_i,v_i\cdot F_{i+1},v_i\cdot H_{i+1})&=B_k(H_i,F_i,f(p_i)\cdot F_{i+1},f(p_i)\cdot H_{i+1})\leq D.\qedhere
\end{align*}

\end{proof}

Proposition \ref{prop: main 1} has the following consequences. For any integer $m>0$, we say that a directed ray $(p_i)_{i=1}^\infty$ in $\Lambda$ is \emph{$m$-periodic} if $p_{i+m}=p_i$ for all integers $i>0$.

\begin{corollary}\label{cor: positive loxodromic}
Let $f:A\to\PGL(V)$ be a $\Lambda$-admissible map, and let 
\[\{\mathfrak{U}_a:=\mathfrak U(F_a,H_a,K_a):a\in A\}\] 
be a compatible system of forward domains for $(f,\Lambda)$. If $(p_i)_{i=1}^\infty$ is a $m$-periodic directed ray in $\Lambda$, then  $w:=f(p_1)\dots f(p_m)$ is loxodromic, and for all integers $i$, the attracting (respectively, repelling) fixed point $w_+$ (respectively, $w_-$) in $\mathcal F(V)$ of $w$ lies in $w^i\cdot \mathfrak U_{p_1}$ (respectively, $w^i\cdot \mathfrak U_{p_1}^{\rm opp}$).
\end{corollary}

\begin{proof}
By Proposition \ref{prop: main 1}(2), $(\overline{w^i\cdot \mathfrak U_{p_1}})_{i=0}^\infty$ is a nested sequence of compact sets and $\displaystyle\bigcap_{i=0}^\infty\overline{w^i\cdot \mathfrak{U}_{p_1}}$ is a point. Since $\mathfrak U_{p_1}$ is open, this point is necessarily the attracting fixed point $w_+\in\Fc(V)$ of $w$, and
\[\mathfrak{U}_{p_1}^{\rm opp}=\mathfrak{U}(F_{p_1},H_{p_1},w_+).\]

Also, Proposition \ref{prop: main 1}(1) implies that for all positive integers $i$,
\[(F_{p_1},w\cdot F_{p_1},\dots,w^i\cdot F_{p_1},w_+,w^i\cdot H_{p_1},\dots,w\cdot H_{p_1},H_{p_1})\]
is positive up to switching $w^j\cdot F_{p_1}$ and $w^j\cdot H_{p_1}$ for some of the $j\in\{1,\dots,i\}$, so the same is true for 
\[(F_{p_1},w^{-1}\cdot F_{p_1}\dots,w^{-i}\cdot F_{p_1},w^{-i}\cdot H_{p_1},\dots,w^{-1}\cdot H_{p_1},H_{p_1},w_+).\]
Since $B_k(w^{-j}\cdot H_{p_1},w^{-j}\cdot F_{p_1},w^{-j-1}\cdot F_{p_1},w^{-j-1}\cdot H_{p_1})=B_k(H_{p_1},F_{p_1},w^{-1}\cdot F_{p_1},w^{-1}\cdot H_{p_1})$ for all $k\in\{1,\dots,n-1\}$ and all integers $j\geq 0$, Theorem \ref{thm: general k} implies that $\displaystyle\bigcap_{i=0}^\infty\overline{w^{-i}\cdot \mathfrak{U}_{p_1}^{\rm opp}}$ is a point. Since $\mathfrak U_{p_1}^{\rm opp}$ is open, this point is necessarily the repelling fixed point $w_-\in\Fc(V)$ of $w$. Apply Observation \ref{obs: lox}.
\end{proof}

\begin{corollary}\label{cor: main 2}
Let $f:A\to\PGL(V)$ be a $\Lambda$-admissible map, and let 
\[\{\mathfrak{U}_a:=\mathfrak U(F_a,H_a,K_a):a\in A\}\] 
be a compatible system of forward domains for $(f,\Lambda)$. Let $(w_i)_{i=1}^\infty$ be a sequence in $\PGL(V)$ so that for each $i$, there is a positive integer $l_i$ such that $\lim_{i\to\infty}l_i=\infty$ and a directed ray $(p_{i,j})_{j=1}^\infty$ in $\Lambda$ such that
\[w_i=f(p_{i,1})f(p_{i,2})\dots f(p_{i,l_i}).\]
Then up to taking a subsequence, there is some $\bar{a}\in A$ such that $(\overline{w_i\cdot \mathfrak U_{\bar{a}}})_{i=1}^\infty$ is a nested sequence of compact sets whose intersection is a point.
\end{corollary}

\begin{proof}

By taking a subsequence, we may assume that $l_i\geq i$ for all $i>0$, $p_{i,l_i}=p_{j,l_j}=:\bar{p}$ for all $i,j>0$, and $p_{i,l}=p_{j,l}$ for all $l>0$ and all $i,j\geq l$. In other words, if we set $w_i'=f(p_{i,{i+1}})\dots f(p_{i,l_i})$ ($w_i'=\id$ if $l_i=i$), then one of the following cases must hold:
\begin{enumerate}
\item[(i)] There is a tuple $(m_1,\dots,m_{k-1})$ of positive integers and a tuple $(a_1,\dots,a_k)$ of points in $A$ such that $a_l\neq a_{l+1}$ for all $0<l<k$, and 
\[w_i=f(a_1)^{m_1}f(a_2)^{m_2}\dots f(a_{s_i-1})^{m_{s_i-1}} f(a_{s_i})^{i-M_i}w_i'\]
for all integers $i>0$, where $0<s_i\leq k$ is the integer such that $\displaystyle M_i:=\displaystyle\sum_{l=1}^{s_i-1}m_l\leq i$.
\item[(ii)] There is a sequence $(m_l)_{l=1}^\infty$ of positive integers and a sequence $(a_l)_{l=1}^\infty$ in $A$ such that $a_l\neq a_{l+1}$ for all $l>0$, and  
\[w_i=f(a_1)^{m_1}f(a_2)^{m_2}\dots f(a_{s_i-1})^{m_{s_i-1}} f(a_{s_i})^{i-M_i}w_i'\]
for all integers $i>0$, where $s_i>0$ is the integer such that $\displaystyle M_i:=\displaystyle\sum_{l=1}^{s_i-1}m_l\leq i$.
\end{enumerate}
Let $\bar{a}\in A$ be any point such that $(\bar{p},\bar{a})\in B$. We will prove that in either case, $\displaystyle\lim_{i\to\infty}w_i\cdot F_{\bar{a}}=\lim_{i\to\infty}w_i\cdot H_{\bar{a}}$ for some $\bar{a}\in A$.

First, suppose that (i) holds. By removing the first $M_k$ terms of the sequence, we may assume that $w_i=w f(a_k)^iw_i'$, where $w:=f(a_1)^{m_1}\dots f(a_{k-1})^{m_{k-1}}$ ($w=\id$ if $k=1$). By Proposition \ref{prop: main 1}(2), 
\[\displaystyle\lim_{i\to\infty} \overline{f(a_k)^i\cdot\mathfrak U_{p_{i,i+1}}}\subset\lim_{i\to\infty} \overline{f(a_k)^{i-1}\cdot\mathfrak U_{a_k}}=\bigcap_{i=1}^\infty\overline{f(a_k)^{i-1}\cdot\mathfrak U_{a_k}}=\{K\}\]
for some flag $K\in\Fc(V)$. Also, the $\Lambda$-admissibility of $f$ implies that $\overline{w_i'\cdot\mathfrak U_{\bar{a}}}\subset\mathfrak U_{p_{i,i+1}}$ for all integers $i>0$, so
\[\lim_{i\to\infty}w_i\cdot F_{\bar{a}}= w\lim_{i\to\infty}f(a_k)^iw_i'\cdot F_{\bar{a}}\in w\lim_{i\to\infty}\overline{f(a_k)^i\cdot\mathfrak U_{p_{i,i+1}}}=w\cdot K=\lim_{i\to\infty}w_i\cdot H_{\bar{a}}.\]

Now suppose that (ii) holds. By taking a further subsequence, we may assume that $w_i=v_i f(a_i) w_i'$ for all integers $i>0$, where $v_i:=f(a_1)^{m_1}\dots f(a_i)^{m_i-1}$. By Proposition~\ref{prop: main 1}(2), 
\[\displaystyle\lim_{i\to\infty} \overline{v_if(a_i)\cdot\mathfrak U_{p_{i,i+1}}}\subset\lim_{i\to\infty} \overline{v_i\cdot\mathfrak U_{a_i}}=\bigcap_{i=1}^\infty \overline{v_i\cdot\mathfrak U_{a_i}}=\{K\}\]
for some flag $K\in\Fc(V)$, and the $\Lambda$-admissibility of $f$ implies that $\overline{w_i'\cdot\mathfrak U_{\bar{a}}}\subset \mathfrak U_{p_{i,i+1}}$ for all integers $i>0$, so
\[\lim_{i\to\infty}w_i\cdot F_{\bar{a}}= \lim_{i\to\infty}v_if(a_i)w_i'\cdot F_{\bar{a}}\in \lim_{i\to\infty}v_if(a_i)\cdot\overline{\mathfrak U_{p_{i,i+1}}}=K=\lim_{i\to\infty}w_i\cdot H_{\bar{a}}.\]

Since $\displaystyle\lim_{i\to\infty}w_i\cdot F_{\bar{a}}=\lim_{i\to\infty}w_i\cdot H_{\bar{a}}$ in either case, Proposition \ref{prop: shrink} implies that $(\overline{w_i\cdot \mathfrak U_{\bar{a}}})_{i=1}^\infty$ is a nested sequence of compact sets whose intersection is a point.
\end{proof}

\subsection{Proof of Theorem \ref{thm: weakly positive is directed Anosov}}
Recall that $X$ denotes the $\PGL(V)$-Riemannian symmetric space, and $\Delta$ denotes the set of simple roots of $\PGL(V)$. Fix a base point $o\in X$ with which we define a Cartan projection $\mu:\GL(V)\to\R^n$. By Theorem \ref{thm: main 2}, to prove Theorem \ref{thm: weakly positive is directed Anosov}, it is sufficient to prove the following pair of statements.

\begin{proposition}\label{prop: ppmfb}
Suppose that $\Lambda$ is path-symmetric, and let $\rho:\Gamma\to\PGL(V)$ be an $(R,\Lambda)$-weakly positive representation. 
\begin{enumerate}
\item There is some $C>0$ such that for every $(R,\Lambda)$-directed sequence $(\eta_i)_{i=0}^\infty$ in $\Gamma$, the sequence $(\rho(\eta_i)\cdot o)_{i=0}^\infty$ in $X$ is $C$-bounded from a maximal flat in $X$.
\item $\displaystyle\lim_{i\to\infty}\alpha\circ\mu(\rho(\gamma_i))=\infty$ for every $\alpha\in\Delta$ if for each $i$, there is an integer $l_i$ such that $\lim_{i\to\infty}l_i=\infty$ and a 
$(R,\Lambda)$-directed sequence $(\eta_{i,l})_{l=0}^\infty$ such that $\gamma_i=\eta_{i,l_i}$.
\end{enumerate}
\end{proposition}

Given a transverse pair of flags $\{F_1,F_2\}$ in $\Fc(V)$, there is a unique maximal, diagonalizable, connected, abelian subgroup $A\subset\PGL(V)$ that stabilizes both $F_1$ and $F_2$. In this case, we denote $\mathbf F(F_1,F_2):=\mathbf F_A$ (where $\mathbf F_A$ is the maximal flat defined in Section \ref{sec: flats}), and we say that $\mathbf F(F_1,F_2)$ is the flat \emph{asymptotic} to the transverse pair of flags $\{F_1,F_2\}$. The proof of Proposition \ref{prop: ppmfb}(1) relies on the following lemma. 

\begin{lemma}\label{lem: C exists}
Let $(F_1,H_1,H_2,F_2)$ be a positive quadruple of flags in $\Fc(V)$, let 
\[\mathfrak U_1:=\mathfrak U(F_1,H_1,H_2)=\mathfrak U(F_1,H_1,F_2),\text{ and let }\mathfrak U_2:=\mathfrak U(F_2,H_2,H_1)=\mathfrak U(F_2,H_2,F_1).\] 
Then there is some $D=D(F_1,H_1,H_2,F_2)>0$ such that for all $G_i\in\overline{\mathfrak U_i}$,
\[d_X(o,\mathbf F(G_1,G_2))\leq D.\] 
\end{lemma}

\begin{proof}
Let $\mathfrak{T}$ denote the set of transverse pairs of flags in $\Fc(V)$, and let $\phi:\mathfrak{T}\to\mathbb{R}$ be the continuous function given by $\phi:(G_1,G_2)\mapsto d_X(o,\mathbf F(G_1,G_2))$. Observe that if $K_1,K_2\in\Fc(V)$ are flags such that $(F_2,K_1,F_1,H_1,K_2,H_2)$ is positive, then by Proposition \ref{prop: easy positive}(3) $(G_1,K_1,G_2,K_2)$ is positive for all $G_i\in\overline{\mathfrak U_i}$. Since positive tuples of flags are in general position, it follows that $\overline{\mathfrak U_1}\times \overline{\mathfrak U_2}\subset\mathfrak T$.  The compactness of $\overline{\mathfrak{U}_1}\times\overline{\mathfrak{U}_2}$ then implies that $\phi({\overline{\mathfrak{U}_1}\times\overline{\mathfrak{U}_2}})$ is bounded, and thus has an upper bound $D$.
\end{proof}

On the other hand, the proof of Proposition \ref{prop: ppmfb}(2) uses the following lemma, which is a standard consequence of the singular value decomposition.

\begin{lemma}\label{lem: singular values Cartan projection}
Let $(g_i)_{i=0}^\infty$ be a sequence in $\PGL(V)$. If there is an open set $\mathfrak U\subset\Fc(V)$ such that $g_i\cdot \mathfrak U$ converges to a point as $i$ goes to $\infty$, then
\[\lim_{i\to\infty}\alpha\circ\mu(g_i)=\infty\]
for all $\alpha\in\Delta$. 
\end{lemma}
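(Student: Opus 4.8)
The plan is to reduce the statement to a standard fact about the contraction properties of sequences in $\PGL(V)$ acting on the flag variety, expressed through the Cartan decomposition.

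\medskip

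\noindent\textbf{Proof plan.} Write each $g_i$ in a Cartan (KAK) decomposition $g_i = k_i a_i l_i$, where $k_i, l_i \in \mathrm{PO}(V)$ (the stabilizer of $o$) and $a_i$ is represented by a diagonal matrix with entries $e^{\mu_1(g_i)} \geq \dots \geq e^{\mu_n(g_i)}$ in an orthonormal basis for $o$; so $\mu(g_i) = (\mu_1(g_i), \dots, \mu_n(g_i))$ up to the projective normalization. Note that $\alpha_k \circ \mu(g_i) = \mu_k(g_i) - \mu_{k+1}(g_i)$, so the claim is exactly that each singular value gap of $g_i$ diverges. First I would argue by contradiction: suppose some $\alpha_k \circ \mu(g_i)$ does not tend to $\infty$. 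Then, after passing to a subsequence, $\alpha_k \circ \mu(g_i)$ stays bounded above by some constant $M$, and $k_i \to k$, $l_i \to l$ in the compact group $\mathrm{PO}(V)$. The key geometric input is that if a sequence $g_i = k_i a_i l_i$ has $\alpha_j \circ \mu(g_i) \to \infty$ for all $j$ in a set $\theta$, then $g_i$ restricted to a generic open set of flags converges (locally uniformly) to the constant map valued at the $\theta$-partial flag $k_\infty \cdot \xi^{\theta}_+$, where $\xi^\theta_+$ is the attracting flag of $\lim a_i$; conversely, if $\alpha_k \circ \mu(g_i)$ stays bounded, the images $g_i \cdot \mathfrak{U}$ cannot collapse to a point because the $k$-th and $(k{+}1)$-st pieces of the limiting flag remain "unseparated", so there is an open subset of $\mathfrak U$ on which $g_i \cdot \Theta_k$ has at least two accumulation points.

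\medskip

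\noindent Concretely, here is how I would make the contradiction precise. Fix a flag $F_0 \in \mathfrak U$ transverse to $l^{-1} \cdot F_-$, where $F_-$ is the standard descending flag adapted to the diagonal form (so $F_-^{(j)} = \Span(e_{n-j+1},\dots,e_n)$); such $F_0$ exists since transversality to a fixed flag is generic and $\mathfrak U$ is open, and it persists on a smaller open subset $\mathfrak U' \subset \mathfrak U$. For $F \in \mathfrak U'$, the action of $a_i l_i$ sends $F^{(m)}$ toward $F_+^{(m)} = \Span(e_1,\dots,e_m)$ at a rate governed by the gaps $\alpha_j \circ \mu(g_i)$; precisely, $a_i l_i \cdot F^{(m)}$ converges to $F_+^{(m)}$ whenever $\alpha_m \circ \mu(g_i) \to \infty$, but if $\alpha_k \circ \mu(g_i)$ stays bounded then the limit of $a_i l_i \cdot F^{(k)}$ still depends (projectively) on the "$e_{k+1}$-component" of $l_i \cdot F$, which varies over $\mathfrak U'$. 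Hence for two suitably chosen flags $F, F' \in \mathfrak U'$, the sequences $a_i l_i \cdot F^{(k)}$ and $a_i l_i \cdot F'^{(k)}$ have distinct limits along a subsequence; applying the convergent orthogonal part $k_i \to k_\infty$ preserves this, so $g_i \cdot F$ and $g_i \cdot F'$ do not converge to the same point. This contradicts the hypothesis that $g_i \cdot \mathfrak U$ converges to a point.

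\medskip

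\noindent\textbf{Main obstacle.} The delicate point is the rate estimate: showing that a bounded singular value gap $\alpha_k \circ \mu(g_i) \le M$ genuinely obstructs collapse of $g_i \cdot \Theta_k$ on an open set. I would handle this by an explicit coordinate computation in the basis adapted to $a_i$: write a flag in $\mathfrak U'$ via its $k$-th subspace as the column span of a matrix in "echelon" form relative to $F_+$, track how the pivot in row $k{+}1$ scales by $e^{\mu_{k+1}(g_i) - \mu_k(g_i)} = e^{-\alpha_k \circ \mu(g_i)} \ge e^{-M}$ relative to row $k$, and conclude that this coordinate does not vanish in the limit, so the limiting $k$-subspace depends on the initial flag. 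Everything else — the KAK decomposition, compactness of $\mathrm{PO}(V)$, passing to subsequences, and the genericity of transversality — is routine. An alternative, cleaner route, if one wants to avoid coordinates, is to invoke a known characterization (e.g. from \cite{GGKW17} or \cite{BPS}) that $\alpha_k \circ \mu(g_i) \to \infty$ for all $k$ is equivalent to $g_i$ being "$\theta_0$-divergent", and that $\theta_0$-divergence is in turn equivalent to the existence of an open set whose image collapses to a point in $\Fc(V)$; I would cite this if a self-contained argument is judged too long.
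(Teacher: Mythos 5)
The paper offers no proof of this lemma at all: it is introduced with the words ``which is a standard fact about the $\PGL(V)$-action on $\Fc(V)$'' and then simply used in the proof of Proposition \ref{prop: ppmfb}(2). So there is no argument in the paper to compare against; what you have written is precisely the standard proof of that standard fact, and it is correct. Your contradiction scheme (pass to a subsequence with $\alpha_k\circ\mu(g_i)\leq M$ and $k_i\to k_\infty$, $l_i\to l_\infty$ in the maximal compact, then exhibit two flags in $\mathfrak U$ whose images under $g_i$ stay apart in $\Gr_k(V)$) is the right one, and the ``main obstacle'' you isolate is exactly the point that needs an estimate: writing a $k$-plane transverse to $\Span(e_{k+1},\dots,e_n)$ as the graph of a map $E_+\to E_-$, the diagonal part $a_i$ scales every matrix entry of that graph by a factor $e^{\mu_{k+s}(g_i)-\mu_t(g_i)}\leq 1$, while the $(k{+}1,k)$ entry is scaled by exactly $e^{-\alpha_k\circ\mu(g_i)}\geq e^{-M}$; hence two flags in the open set $l_\infty\cdot\mathfrak U$ whose $(k{+}1,k)$ graph coordinates differ have $a_il_i$-images with $k$-planes that remain a definite distance apart, and applying the convergent $k_i$ preserves this. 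Two small points worth making explicit if you write this up: first, such a pair of flags exists because the $(k{+}1,k)$ graph coordinate is a nonconstant continuous function on the open dense affine chart of flags transverse to the descending flag, so it cannot be constant on the nonempty open set $l_\infty\cdot\mathfrak U$ intersected with that chart; second, the replacement of $l_i\cdot F$ by $l_\infty\cdot F$ is harmless because the scaling factors above are uniformly bounded by $1$, so the $o(1)$ perturbation does not close the gap. Your alternative of citing the equivalence between $\theta_0$-divergence of $\mu(g_i)$ and contraction on an open set of flags (as in \cite{BPS} or \cite{GGKW17}) is also legitimate and is presumably what the authors had in mind.
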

\begin{proof}[Proof of Proposition \ref{prop: ppmfb}]
Proof of (1). Let $f:=\rho\circ R$, let $\{\mathfrak{U}_a:=\mathfrak U(F_a,H_a,K_a):a\in A\}$ be a compatible system of forward domains for $(f,\Lambda)$, and let 
\[C:=\max\{D(F_{a_1},H_{a_1},f(a_1)\cdot H_{a_2},f(a_1)\cdot F_{a_2}):(a_1,a_2)\in B\}>0,\]
where $D(F_1,H_1,H_2,F_2)$ is the constant associated to a positive quadruple $(F_1,H_1,H_2,F_2)$ given in Lemma \ref{lem: C exists}. Let $(\eta_i)_{i=0}^\infty$ be an $(R,\Lambda)$-directed sequence in $\Gamma$, and let $(p_i)_{i=1}^\infty$ be the directed ray in $\Lambda$ such that $\eta_i=R(p_1)\dots R(p_i)$ for all integers $i> 0$.

First, we prove (1) in the special case when $(\eta_i)_{i=0}^\infty$ (equivalently, $(p_i)_{i=1}^\infty$) is $m$-periodic for some integer $m>0$. Then for all $i\geq 0$, observe that 
\[\eta_i^{-1}\eta_m\eta_i=R(p_{i-km+1})\dots R(p_m)R(p_1)\dots R(p_{i-km}),\]
where $k\geq 0$ is the largest integer such that $km\leq i$. Thus, by Corollary \ref{cor: positive loxodromic}, 
\[v_i:=\rho(\eta_i^{-1}\eta_m\eta_i)\] 
is loxodromic, $(v_i)_+\in v_i\cdot \mathfrak{U}_{p_{i-km+1}}\subset f(p_{i-km+1})\cdot \mathfrak{U}_{p_{i-km+2}}$, and $(v_i)_-\in \mathfrak{U}_{p_{i-km+1}}^{\rm opp}$. Furthermore, if we set $\mathbf F_i:=\mathbf F((v_i)_-,(v_i)_+)$, then $\rho(\eta_i^{-1})\cdot \mathbf F_0=\mathbf F_i$. Hence, we may apply Lemma \ref{lem: C exists} to deduce that 
\begin{align*}
d_X(\rho(\eta_i)\cdot o, \mathbf F_0)&=d_X(o,\rho(\eta_i^{-1})\cdot \mathbf F_0)=d_X(o,\mathbf F_i)\\
&\leq D(F_{p_{i-km+1}},H_{p_{i-km+1}},f(p_{i-km+1})\cdot H_{p_{i-km+2}},f(p_{i-km+1})\cdot F_{p_{i-km+2}})\\
&\leq C.
\end{align*}


Using this, we prove the general case where $(\eta_i)_{i=0}^\infty$ is an arbitrary $(R,\Lambda)$-directed sequence in $\Gamma$.  Since $\Lambda$ is path-symmetric, for each integer $i>0$, there is a $m_i$-periodic directed ray $(q_{i,j})_{j=0}^\infty$ such that $q_{i,j}=p_j$ for all $j=0,\dots,i$. Let $\gamma_{i,j}:=R(q_{i,1})\dots R(q_{i,j})$ and $w_{i,j}:=\rho(\gamma_{i,j})$ for all integers $i,j>0$. If we denote $\mathbf F_i:=\mathbf F((w_{i,m_i})_+,(w_{i,m_i})_-)$, then by the special case we proved above,
\[d_X(\rho(\eta_j)\cdot o,\mathbf F_i)=d_X(w_{i,j}\cdot o,\mathbf F_i)\leq D\]
for all integers $i>0$ and all $j=1,\dots,i$. Furthermore, by Corollary \ref{cor: positive loxodromic}, 
\[(w_{i,m_i})_+\in w_{i,m_i}\cdot \mathfrak{U}_{p_{1}}\subset f(p_{1})\cdot \mathfrak{U}_{p_{2}},\text{ and }(w_{i,m_i})_-\in \mathfrak{U}_{p_1}^{\rm opp}\] 
for all integers $i>1$. The compactness of $\overline{f(p_1)\cdot \mathfrak U_{p_2}}\times\overline{\mathfrak U_{p_1}^{\rm opp}}$ then ensures that up to taking a subsequence, the pair $((w_{i,m_i})_+,(w_{i,m_i})_-)$ converges to a transverse pair of flags $(F_1,F_2)$ in $\overline{f(p_1)\cdot \mathfrak U_{p_2}}\times\overline{\mathfrak U_{p_1}^{\rm opp}}$. Thus, for all integers $j\geq 0$,
\[d_X(\rho(\eta_j)\cdot o,\mathbf F(F_1,F_2))=\lim_{i\to\infty}d_X(\rho(\eta_j)\cdot o,\mathbf F_i)\leq D.\] 

Proof of (2). Corollary \ref{cor: main 2} implies that by taking subsequences, there is some $\bar{a}\in A$ such that $(\overline{\rho(\gamma_i)\cdot \mathfrak U_{\bar{a}}})_{i=1}^\infty$ is a nested sequence of compact sets whose intersection is a point. Apply Lemma \ref{lem: singular values Cartan projection}.
\end{proof}

\section{Applications to primitive stability}\label{sec: 6}

The goal of this section is to give an application of Theorem \ref{thm: weakly positive is directed Anosov} in the setting of primitive stable representations. 

\subsection{Primitive stable representations and weak positivity}
We first define the notion of a $\theta$-primitive stable representation. 

\begin{definition}\label{def: primitive stable}
Let $\theta\subset\Delta$ be a non-empty subset. Equip $F_d$ with a word metric. A representation $\rho :F_d \to\PGL(V)$ is \emph{$\theta$-primitive stable} if there exists constants $\kappa, \kappa'>0$ such that 
\[\alpha\circ\mu(\rho(\eta_i))\geq \kappa i-\kappa'\]
for all rooted, primitive geodesic rays $(\eta_i)_{i=0}^\infty$ in $F_d$, all integers $i\geq 0$ and all $\alpha\in\theta$.\end{definition}


Changing the word metric on $F_d$ might result in different constants $\kappa$ and $\kappa'$, but their existence does not depend on this choice. Since we will focus exclusively on $\Delta$-primitive stable representations, henceforth, we refer to $\Delta$-primitive stable representations simply as primitive stable representations. This recovers Definition \ref{def: primitive stable intro}.

For our application of Theorem \ref{thm: weakly positive is directed Anosov}, we focus on the case where $d=2$. The reason for this is the following theorem in Cohen-Metzler-Zimmermann \cite{CMZ81}.

\begin{theorem}[\cite{CMZ81}]\label{thm: CMZ}
Fix a pair of generators $\{\gamma_1,\gamma_2\}$ of $F_2$. Up to replacing $\gamma_1$ with $\gamma_1^{-1}$, replacing $\gamma_2$ with $\gamma_2^{-1}$, and switching the roles of $\gamma_1$ and $\gamma_2$, every primitive element in $F_2$ is conjugate to $\gamma_1$ or an element of the form 
\[\gamma_1\gamma_2^{m_1}\gamma_1\gamma_2^{m_2}\dots\gamma_1\gamma_2^{m_k}\]
where $m_i\in\{l,l+1\}$ for all $i=1,\dots,k$, and $k,l>0$ are integers.
\end{theorem}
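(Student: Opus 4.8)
The plan is to reduce the theorem to two classical facts about $F_2 = F(\gamma_1,\gamma_2)$: the Nielsen-type classification of primitive conjugacy classes by their abelianization, and the balance property of Christoffel (mechanical) words. The abelianization map $F_2\to\Z^2$ carries a primitive element to a primitive vector $(p,q)$, i.e.\ $\gcd(p,q)=1$. The three permitted operations — replacing $\gamma_1$ by $\gamma_1^{-1}$, replacing $\gamma_2$ by $\gamma_2^{-1}$, and swapping $\gamma_1\leftrightarrow\gamma_2$ — act on the abelianization by $(p,q)\mapsto(\pm p,\pm q)$ and $(p,q)\mapsto(q,p)$, so after applying them we may assume $0\le p\le q$. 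If $p=0$ then $q=1$ and (after a swap) the element is conjugate to $\gamma_1$, and similarly if $q=0$; these are the exceptional cases. From now on assume $1\le p\le q$ with $\gcd(p,q)=1$, so that $q>p$ whenever $p\ge 2$.

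The next step is the reduction \textbf{to a single representative}: it is a standard consequence of Nielsen's theorem $\Out(F_2)\cong\GL_2(\Z)$ — worked out explicitly by Osborne–Zieschang, and in the form used here in the Cohen–Metzler–Zimmermann paper itself — that two primitive elements of $F_2$ with the same image under abelianization are conjugate. Combined with the normalization above, this means it suffices to exhibit, for each coprime pair $1\le p\le q$, a single primitive element with abelianization $(p,q)$ that has the shape claimed in the statement.

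For the representative I would take the Christoffel word. Set $l:=\lfloor q/p\rfloor\ge 1$ and consider the cyclic word
\[
w_{p,q}=\gamma_1\gamma_2^{m_1}\,\gamma_1\gamma_2^{m_2}\cdots\gamma_1\gamma_2^{m_p},
\qquad m_i:=\Big\lceil\tfrac{iq}{p}\Big\rceil-\Big\lceil\tfrac{(i-1)q}{p}\Big\rceil,
\]
the cutting sequence of the segment from $(0,0)$ to $(p,q)$, i.e.\ the lower Christoffel word of slope $q/p$. By construction it has $k:=p$ occurrences of $\gamma_1$ and $\sum_i m_i=q$ occurrences of $\gamma_2$, hence abelianization $(p,q)$; and the classical balance property of mechanical/Sturmian words gives $m_i\in\{\lfloor q/p\rfloor,\lceil q/p\rceil\}=\{l,l+1\}$ for every $i$, so $w_{p,q}$ is exactly of the claimed form. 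It remains to see that $w_{p,q}$ is primitive, which follows from the standard recursive generation of Christoffel words by the Nielsen substitutions $\gamma_1\mapsto\gamma_1\gamma_2,\ \gamma_2\mapsto\gamma_2$ and its mirror: these are automorphisms of $F_2$, so $w_{p,q}$ is the image of a generator under an automorphism. Together with the previous step this proves the theorem.

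\textbf{The main obstacle} is supplying the two classical inputs cleanly — the conjugacy classification (which is the real content of the CMZ description of bases of $F_2$) and the balance property of Christoffel words. If one prefers a self-contained argument, these two steps can be fused into a single induction on $p+q$: the Euclidean reduction $q\rightsquigarrow q-p$ (for $q>p$) corresponds to the substitution $\gamma_1\mapsto\gamma_1\gamma_2$, which on a word of the block form $\gamma_1\gamma_2^{m_1}\cdots\gamma_1\gamma_2^{m_p}$ simply increments every $m_i$ by $1$, while its mirror reduces to the base case $w_{1,q}=\gamma_1\gamma_2^q$; one then checks directly that running the continued-fraction expansion of $q/p$ in reverse keeps the word in block form (up to swapping the roles of $\gamma_1$ and $\gamma_2$) with exponents confined to two consecutive integers. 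Keeping track of which generator gets inverted at each step, and matching those inversions against the three symmetries allowed in the statement, is the only genuinely delicate bookkeeping.
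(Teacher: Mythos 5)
Your proposal is correct in outline, but note that the paper does not prove this statement at all: it is imported wholesale from Cohen--Metzler--Zimmermann \cite{CMZ81}, so there is no in-paper argument to compare against. Measured against the actual CMZ proof, your route is genuinely different. CMZ work directly with Nielsen/Whitehead reduction on cyclic words and analyse by hand which letter patterns can survive in a shortest representative of a primitive conjugacy class; you instead (i) reduce to the classification of primitive conjugacy classes by their abelianization and (ii) exhibit the Christoffel word of slope $q/p$ as the normal form, with the balance property $m_i\in\{\lfloor q/p\rfloor,\lceil q/p\rceil\}$ doing the combinatorial work. Your approach buys a cleaner conceptual picture (the exponent pattern is literally a cutting sequence, and primitivity is free because Christoffel words are generated from $\gamma_1\gamma_2$ by the automorphisms $\gamma_1\mapsto\gamma_1\gamma_2$, $\gamma_2\mapsto\gamma_2$ and its mirror), at the cost of outsourcing the hard uniqueness statement; the CMZ argument is self-contained but more opaque.

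One caution on step (i): the claim that two primitive elements with the same abelianization are conjugate is essentially the entire depth of the theorem, and citing CMZ for it would be circular. It does follow from Nielsen's theorem that $\ker(\Aut(F_2)\to\GL_2(\Z))=\mathrm{Inn}(F_2)$, but you should spell out the extra step: if $\phi_i(\gamma_1)=w_i$ and the $w_i$ have equal abelianization, then $\mathrm{ab}(\phi_2)^{-1}\mathrm{ab}(\phi_1)$ stabilizes $e_1$ in $\GL_2(\Z)$, and every element of that stabilizer (generated by $\gamma_2\mapsto\gamma_2\gamma_1$ and $\gamma_2\mapsto\gamma_2^{-1}$) lifts to an automorphism fixing $\gamma_1$; only then does Nielsen's theorem force $\phi_1$ and the corrected $\phi_2$ to differ by an inner automorphism, giving $w_1\sim w_2$. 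With that supplied (or with your fallback induction along the continued-fraction expansion, which fuses the two inputs and avoids the issue), the proof is complete. The slip about ``similarly if $q=0$'' after you have already normalized $p\le q$ is harmless.
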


Recall that in the introduction, we defined, for all positive integers $i$, the graph 
$$\Lambda_i=(A_i,B_i):=\mathsf{K}(v_1,w_1)\cup \mathsf{K}(v_1', w_1')\cup \cdots \cup \mathsf{K}(v_i,w_i)\cup \mathsf{K}(v_i',w_i').$$ 
\begin{figure}[h]
    \centering
    \includegraphics[width=0.45\textwidth]{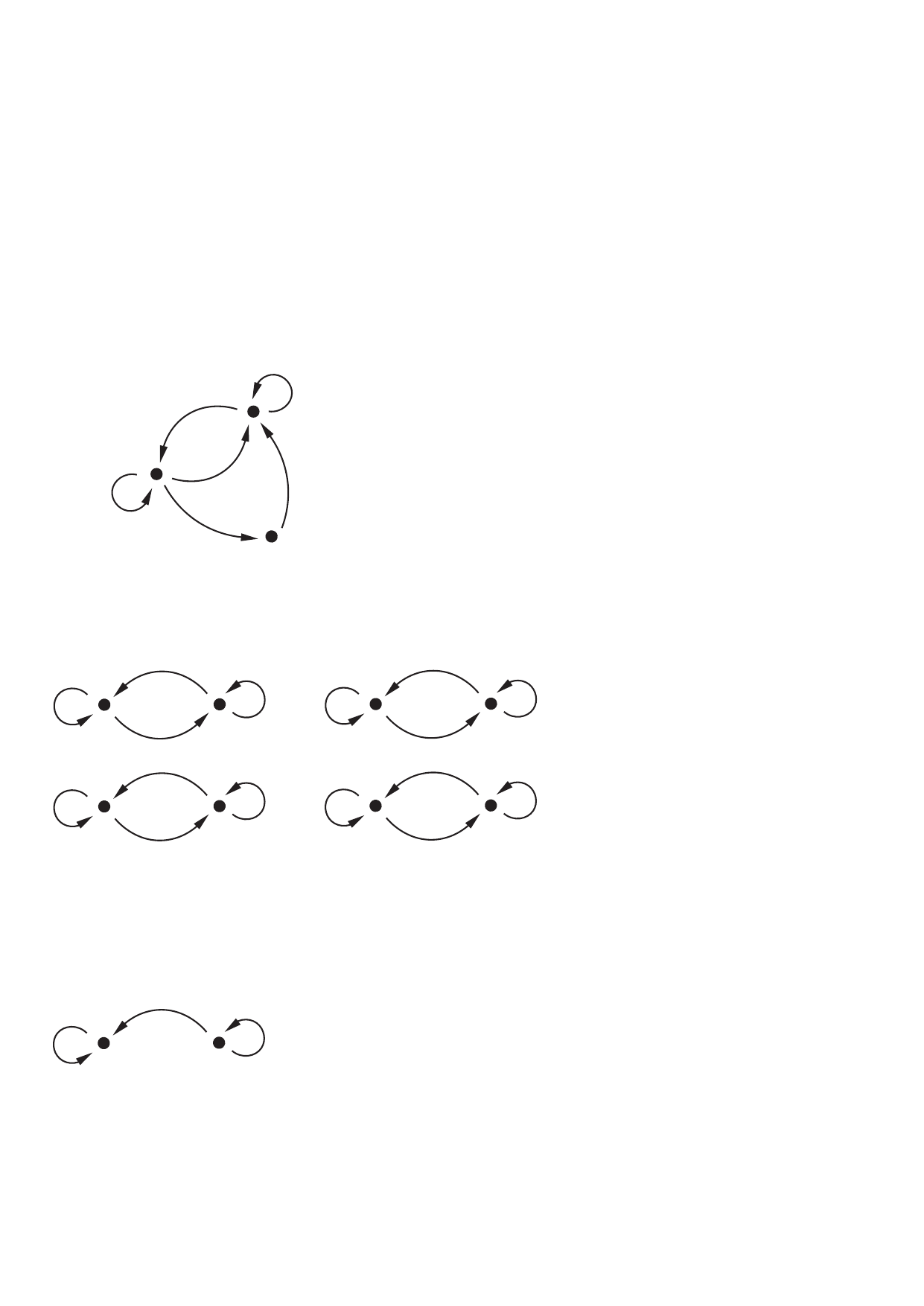}
    \tiny
\put (-123,45){$v_1$}
\put (-167,45){$w_1$}
\put (-22,45){$v_1'$}
\put (-66,45){$w_1'$}
\put (-123,6){$v_2$}
\put (-167,6){$w_2$}
\put (-22,6){$v_2'$}
\put (-66,6){$w_2'$}
    \caption{Picture of $\Lambda_4$.}
    \label{fig: complete directed graph}
\end{figure}
Also, we defined, for $i=2,3$, what it means for a map $R_i:A_i\to\Gamma$ to be defined by a pair of elements $(\gamma_1,\gamma_2)$ in $\Gamma$. 
In the case when $\{\gamma_1,\gamma_2\}$ is a pair of generators of $F_2$, the triple $\{\gamma_1,\gamma_2,\gamma_3\}\subset F_2$ is a \emph{superbasis} of $F_2$, i.e. $\{\gamma_1^{-1},\gamma_2\}$, $\{\gamma_2^{-1},\gamma_3\}$, and $\{\gamma_3^{-1},\gamma_1\}$ are generating sets of $F_2$, and $\gamma_1\gamma_2\gamma_3=\id$. 

Using Theorem \ref{thm: weakly positive is directed Anosov} and Theorem \ref{thm: CMZ}, we prove the following. 

\begin{proposition}\label{prop: forward primitive}
Let $(\gamma_1,\gamma_2)$ be a pair of generators of $F_2$, and for $i=2,3$, let $R_i:A_i\to F_2$ be defined by $(\gamma_1,\gamma_2)$. If $\rho:F_2\to\PGL(V)$ is $(R_2,\Lambda_2)$-directed Anosov or $(R_3,\Lambda_3)$-directed Anosov, then it is primitive stable. In particular, if $\rho:F_2\to\PGL(V)$ is $(R_2,\Lambda_2)$-weakly positive or $(R_3,\Lambda_3)$-weakly positive, then it is primitive stable.
\end{proposition}

\begin{proof}
Let $Y\subset F_2$ denote the set of elements that, up to replacing $\gamma_1$ with $\gamma_1^{-1}$, replacing $\gamma_2$ with $\gamma_2^{-1}$, and switching the roles of $\gamma_1$ and $\gamma_2$, are of the form $\gamma_1$ or $\gamma_1\gamma_2^{m_1}\gamma_1\gamma_2^{m_2}\dots\gamma_1\gamma_2^{m_k}$, where $m_i\in\{l,l+1\}$ for all $i=1,\dots,k$, and $k,l>0$ are integers. 

Suppose first that $\rho$ is $(R_2,\Lambda_2)$-directed Anosov. Let $(\eta_i)_{i=0}^\infty$ be a rooted primitive geodesic ray in $F_2$ in the word metric associated to $\{\gamma_1,\gamma_2,\gamma_1^{-1},\gamma_2^{-1}\}$, and let $\eta\in F_2$ be a primitive element whose axis contains $(\eta_i)_{i=0}^\infty$. Then $\eta$ is cyclically reduced, so by Theorem \ref{thm: CMZ}, we may assume that $\eta\in Y$, which implies that $(\eta_i)_{i=0}^\infty$ is $(R_2,\Lambda_2)$-directed. In particular, every rooted primitive geodesic ray in $\Gamma$ is $(R_2,\Lambda_2)$-directed, so  $(R_2,\Lambda_2)$-directed Anosov representations are primitive stable.

On the other hand, suppose that $\rho$ is $(R_3,\Lambda_3)$-directed Anosov.  Let $(\eta_i)_{i=0}^\infty$ be a rooted primitive geodesic ray in $F_2$ in the word metric associated to $\{\gamma_1,\gamma_1^{-1},\gamma_2,\gamma_2^{-1},\gamma_3,\gamma_3^{-1}\}$, and let $\eta\in F_2$ be a primitive element whose axis contains $(\eta_i)_{i=0}^\infty$. As before, by Theorem \ref{thm: CMZ}, we may assume that $\eta\in Y$. If $\eta$ is neither a product of elements in $\{\gamma_1^{-1},\gamma_2\}$, nor a product of elements in $\{\gamma_1,\gamma_2^{-1}\}$, then because $\eta$ lies in $Y$, there are four other possibilities:
\begin{enumerate}
\item If $\eta=\gamma_1^{-1}\gamma_2^{-m_1}\gamma_1^{-1}\gamma_2^{-m_2}\dots\gamma_1^{-1}\gamma_2^{-m_k}$, then $\gamma_2^{-1}\eta\gamma_2=\gamma_3\gamma_2^{-m_1+1}\gamma_3\gamma_2^{-m_2+1}\dots\gamma_3\gamma_2^{-m_k+1}$,
\item If $\eta=\gamma_1\gamma_2^{m_1}\gamma_1\gamma_2^{m_2}\dots\gamma_1\gamma_2^{m_k}$, then $\eta=\gamma_3^{-1}\gamma_2^{m_1-1}\gamma_3^{-1}\gamma_2^{m_2-1}\dots\gamma_3^{-1}\gamma_2^{m_k-1}$,
\item If $\eta=\gamma_2\gamma_1^{m_1}\gamma_2\gamma_1^{m_2}\dots\gamma_2\gamma_1^{m_k}$, then $\gamma_1\eta\gamma_1^{-1}=\gamma_3^{-1}\gamma_1^{m_1-1}\gamma_3^{-1}\gamma_1^{m_2-1}\dots\gamma_3^{-1}\gamma_1^{m_k-1}$,
\item If $\eta=\gamma_2^{-1}\gamma_1^{-m_1}\gamma_2^{-1}\gamma_1^{-m_2}\dots\gamma_2^{-1}\gamma_1^{-m_k}$, then $\eta=\gamma_3\gamma_1^{-m_1+1}\gamma_3\gamma_1^{-m_2+1}\dots\gamma_3\gamma_1^{-m_k+1}$.
\end{enumerate}
Thus, $\eta$ is conjugate to a product of elements in $R_3(\{v_i,w_i\})$ or $R_3(\{ v_i', w_i'\})$ for some $i=2,3$. This implies that $(\eta_i)_{i=0}^\infty$ is $(R_3,\Lambda_3)$-directed. We have thus shown that every primitive geodesic ray in $F_2$ is $(R_3,\Lambda_3)$-directed. The assumption that $\rho$ is $(R_3,\Lambda_3)$-directed Anosov implies that $\rho$ is primitive stable.

The second claim of the proposition follows immediately from Theorem \ref{thm: weakly positive is directed Anosov}. 
\end{proof}

\begin{remark}
When $n=2$, the converse to Proposition \ref{prop: forward primitive} holds, see Appendix \ref{sec: converse}.
\end{remark}

Observe that unlike primitive stability, being $(R_2,\Lambda_2)$ or $(R_3,\Lambda_3)$-weakly positive is a finite collection of conditions that can be explicitly verified. Thus, Proposition \ref{prop: forward primitive} gives us a way to construct new and interesting examples of primitive stable representations. In particular, we can prove the following theorem.

\begin{theorem}\label{thm: general n}
Let $b\in\PGL(V)$ be positive loxodromic, and let $a\in\PGL(V)$ be loxodromic.
Let $f$ be a map from the vertex set $A_2$ of $\Lambda_2$ to $\PGL(V)$ defined by 
$$(f(v_1),f(w_1))=(a,b), \,\,(f(v_2),f(w_2))=(a^{-1},b),$$
and $f(v_i')=f(v_i)^{-1}, f(w_i')=f(w_i)^{-1}$ for $i=1,2$. If $(b_-,a\cdot b_-,a_+,a\cdot b_+,b_+,a_-)$ is positive up to switching $a\cdot b_-$ and $a\cdot b_+$, then $f$ is $\Lambda_2$-admissible. In particular, if $\{\gamma_1,\gamma_2\}$ is a generating pair for $F_2$ and $\rho:F_2\to\PGL(V)$ is the representation defined by $\rho(\gamma_1)=a$ and $\rho(\gamma_2)=b$, then $\rho$ is $(R_2,\Lambda_2)$-weakly positive and thus primitive stable.
\end{theorem}




When $n=3$, we can further weaken the hypothesis of Theorem \ref{thm: general n} to obtain the following theorem.

\begin{theorem}\label{thm: positive quadruple}
Suppose that $n=3$. If $a,b\in\PGL(V)$ are positive loxodromic elements such that $(b_-,a_+,b_+,a_-)$ is a positive quadruple of flags, then the map $f$ defined in Theorem \ref{thm: general n} is $\Lambda_2$-admissible.
\end{theorem}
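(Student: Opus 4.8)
### Proof proposal for Theorem~\ref{thm: positive quadruple}

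The plan is to deduce Theorem~\ref{thm: positive quadruple} from Theorem~\ref{thm: general n} by verifying its hypothesis in the case $n=3$. Since $a$ is assumed positive loxodromic in the statement of Theorem~\ref{thm: positive quadruple} (a stronger hypothesis than the plain loxodromicity required by Theorem~\ref{thm: general n}), the only thing to check is that $(b_-,a\cdot b_-,a_+,a\cdot b_+,b_+,a_-)$ is a positive $6$-tuple of flags, up to switching $a\cdot b_-$ and $a\cdot b_+$. So the whole argument reduces to a positivity computation starting from the positive quadruple $(b_-,a_+,b_+,a_-)$.

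First I would record the geometry. Because $a$ is positive loxodromic, Observation~\ref{obs: lox} says $a$ leaves invariant some (equivalently every) simplex $\sigma$ associated to $\{a_+,a_-\}$; concretely, in an eigenbasis of $a$ all eigenvalues have the same sign, so $a$ acts on the relevant simplex with a source at $a_+$ and sink at $a_-$. Now $(b_-,a_+,b_+,a_-)$ being positive means (Theorem~\ref{thm: positive and Frenet}) that there is a Frenet curve $\xi\colon S^1\to\Fc(V)$ and points $x_1<x_2<x_3<x_4<x_1$ in the chosen cyclic order on $S^1$ with $\xi(x_1)=b_-$, $\xi(x_2)=a_+$, $\xi(x_3)=b_+$, $\xi(x_4)=a_-$. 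The key point is that $b_-$ and $b_+$ lie in the open arc of $\Fc(V)$ that, together with the positivity structure, is "between $a_+$ and $a_-$ on the $b$-side"; applying $a$ moves this arc, but since $a$ is positive loxodromic its action on the appropriate simplices/flags is order-preserving (it contracts toward $a_-$ and expands from $a_+$ along the cyclic order). Hence $a\cdot b_-$ and $a\cdot b_+$ are inserted into the cyclic order in positions controlled by where $b_-$, $b_+$ sit relative to $a_+$, $a_-$. I would then show, using that $a$ fixes $a_+$ and $a_-$ and preserves cyclic order on the open arcs it does not collapse, that
\[
b_-<a\cdot b_-<a_+<a\cdot b_+<b_+<a_-<b_-
\]
in one of the two cyclic orders on $S^1$ (pulling back along $\xi$), possibly after swapping $a\cdot b_-$ with $a\cdot b_+$, which is precisely the "up to switching" clause. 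Once this cyclic ordering is established, Theorem~\ref{thm: positive and Frenet} (or directly the definition of positivity plus Observation~\ref{obs: basic flag}) gives that $(b_-,a\cdot b_-,a_+,a\cdot b_+,b_+,a_-)$ is positive, so Theorem~\ref{thm: general n} applies and yields that both $\{a,b\}$ and $\{a,b^{-1}\}$ are admissible.

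For the detail that $a$ preserves the cyclic order appropriately, I would argue as follows: the positive quadruple $(b_-,a_+,b_+,a_-)$ exhibits $b_\pm$ as lying in a common arc, and one can choose the Frenet curve so that $\xi$ restricted to the arc $(x_4,x_2)$ (through $x_1$) has image containing $b_-$, and $\xi$ restricted to $(x_2,x_4)$ (through $x_3$) has image containing $b_+$. Since $a=\xi(x_2)_+$-attracting and $\xi(x_4)_-$-repelling along this curve and is positive loxodromic, $a$ maps the arc $(x_4,x_2)$ strictly into itself toward $x_2$, hence $a\cdot b_-$ lies strictly between $b_-$ and $a_+$; symmetrically $a$ maps $(x_2,x_4)$ strictly into itself toward $x_2$ (when read from the $x_3$ side, it moves points toward $a_-$ in the other direction — here is exactly where the possible switch of $a\cdot b_-$ and $a\cdot b_+$ enters, depending on the sign of the relevant eigenvalue ratios), placing $a\cdot b_+$ between $a_+$ and $b_+$. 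Combining these two insertions yields the displayed six-term cyclic order. The main obstacle I anticipate is making this "$a$ moves points monotonically along the Frenet arc" claim fully rigorous for general $V$: it is intuitively clear in $\RP^1$ and follows from the $n=2$ picture, but for $n=3$ one must invoke positivity of $a$ (Observation~\ref{obs: lox}) to control the dynamics on $\Fc(\R^3)$ and then use the Frenet property to transport this to a statement about the cyclic order. Everything after the ordering is established is routine given Theorem~\ref{thm: general n}.
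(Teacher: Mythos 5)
Your reduction does not work. You propose to verify the hypothesis of Theorem \ref{thm: general n} directly, i.e.\ to show that positivity of the quadruple $(b_-,a_+,b_+,a_-)$ together with $a$ being positive loxodromic already forces $(b_-,a\cdot b_-,a_+,a\cdot b_+,b_+,a_-)$ to be positive up to the switch. This is the step you flag as "the main obstacle," and it is a genuine gap, not a technicality. The Frenet curve $\xi$ produced by Theorem \ref{thm: positive and Frenet} for the quadruple is not $a$-invariant, so $a$ does not "move points monotonically along the Frenet arc": $a\cdot b_\pm$ need not lie on the image of $\xi$ at all, and there is no cyclic order in $\Fc(\R^3)$ in which to compare them with $b_\pm$ and $a_\pm$. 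More substantively, positivity of the six-tuple imposes conditions on the full flags $a\cdot b_\pm$ --- e.g.\ positivity of the triple ratio $T_{(1,1,1)}(b_-,a\cdot b_-,a_+)$ and negativity of various edge invariants --- which depend on the line $b_-^{(2)}$ and on the eigenvalue ratios of $a$, and are not consequences of the positivity of the quadruple. (If this implication held by a soft cyclic-order argument, Theorem \ref{thm: positive quadruple} would be an immediate corollary of Theorem \ref{thm: general n} and would presumably not be restricted to $n=3$.)

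The paper's route is different: it does not verify the hypothesis of Theorem \ref{thm: general n}, but instead strengthens the key Lemma \ref{lem: K existence}(1) in dimension $3$ (Lemma \ref{lem: separator lemma}), producing a separator flag $K$ --- in general not equal to $b_-$ --- that satisfies $(a_-,a^{-1}\cdot K,K,a\cdot K,a_+,b_+)$ and $(a_-,b_-,b^{-1}\cdot K,K,b\cdot K,b_+)$ positive, and then reruns the proof of Theorem \ref{thm: general n} with this lemma in place of Lemma \ref{lem: K existence}(1). The construction of $K$ uses $g$-invariant osculating flag curves $\xi_{g,p}$ and an explicit three-case analysis (Lemma \ref{lem: basic existence}) according to how the $b$-osculating curve through $a_+^{(1)}$ meets the line $a_+^{(2)}$; the point is precisely that $K$ must be chosen on an $a$-invariant osculating curve so that $(a_-,a^{-1}\cdot K,K,a\cdot K,a_+)$ is positive, a property that $b_-$ itself need not have. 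To repair your argument you would need either to prove the six-tuple positivity claim (which I do not believe is true in general) or to adopt a construction of this kind.
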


When $n=2$, the analog of Theorem \ref{thm: positive quadruple} is a consequence of a result of Goldman \cite[Section 3.2 and Lemma 3.4.5]{Go}. We do not know if Theorem \ref{thm: positive quadruple} holds for $n\geq 4$. 



Using Theorem \ref{thm: general n} and Theorem \ref{thm: positive quadruple}, we construct some explicit examples of primitive stable representations in Section \ref{sec: irred Fuch}. These examples include non-discrete representations and non-faithful representations when $n=3$, and non-positive representations for all $n$.



\subsection{Proof of Theorem \ref{thm: general n}}

As a preliminary step to prove Theorem \ref{thm: general n}, we use the following lemma.

\begin{lemma} \label{lem: minor facts}
Let $a\in\PGL(V)$ be positive loxodromic.
\begin{enumerate}
\item There is a flag $F\in\Fc(V)$ such that $(a_-,F,a\cdot F,a_+)$ is positive.
\item If $F, G, H\in\Fc(V)$ are flags such that $(a_-,F,a_+,H)$ and $(a_-,G,a_+,H)$ are positive, then there is some integer $N>0$ such that $(a_-,F,a^i\cdot G,a_+,H)$ is positive for all $i>N$.
\end{enumerate}
\end{lemma}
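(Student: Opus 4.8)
My plan is to pass to an eigenbasis of $a$ and convert both statements into facts about totally positive unipotent matrices and conjugation by a positive diagonal matrix. Since $a$ is positive loxodromic it preserves a simplex associated to $\{a_+,a_-\}$ (Observation~\ref{obs: lox}), so it admits a lift $\bar a\in\GL(V)$ with eigenvalues $\lambda_1>\dots>\lambda_n>0$; fix an eigenbasis $\mathcal{B}=(e_1,\dots,e_n)$ with $\bar a\,e_k=\lambda_k e_k$. Then $a_+$ and $a_-$ are the flags $\Span_\R(e_1,\dots,e_k)$ and $\Span_\R(e_n,\dots,e_{n-k+1})$, so $e_k\in a_-^{(n-k+1)}\cap a_+^{(k)}$ and $\mathcal{B}$ is a legitimate basis for testing positivity of any tuple of the form $(a_-,\dots,a_+)$ (Definition~\ref{def: positive flags}). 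Since the upper-unitriangular group with respect to $\mathcal{B}$ acts freely on $a_-$, a flag $F$ satisfies ``$(a_-,F,a_+)$ positive'' iff $F=u\cdot a_-$ for a unique $u\in U_{>0}(\mathcal{B})$, and the higher positivity conditions translate into membership statements in $U_{>0}(\mathcal{B})$. I will repeatedly use that, for index sets $I=(i_1<\dots<i_k)$ and $J=(j_1<\dots<j_k)$, the $(I,J)$-minor of $\bar a u\bar a^{-1}$ equals $\big(\prod_{p\in I}\lambda_p\big)\big(\prod_{q\in J}\lambda_q\big)^{-1}$ times the $(I,J)$-minor of $u$; as $\lambda$ is strictly decreasing this factor is $>1$ on every non-trivial minor, so $\bar a u\bar a^{-1}\in U_{>0}(\mathcal{B})$ whenever $u\in U_{>0}(\mathcal{B})$, while conjugation by $\bar a^{-1}$ multiplies each off-diagonal entry by a factor strictly less than $1$ and hence drives unipotents toward the identity.

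For part (1), Definition~\ref{def: positive flags} shows that $(a_-,F,aF,a_+)$ is positive iff $F=u\cdot a_-$ with $u\in U_{>0}(\mathcal{B})$ and, using $aF=(\bar a u\bar a^{-1})\cdot a_-$ together with the trivial stabilizer of $a_-$, the element $v:=u^{-1}(\bar a u\bar a^{-1})$ also lies in $U_{>0}(\mathcal{B})$. So it suffices to exhibit a single $u\in U_{>0}(\mathcal{B})$ with $u^{-1}(\bar a u\bar a^{-1})\in U_{>0}(\mathcal{B})$. I would build $u$ one super-diagonal at a time: with the entries $u_{p,q}$ for $q-p<m$ already chosen, the conditions ``$u\in U_{>0}(\mathcal{B})$'' and ``$v\in U_{>0}(\mathcal{B})$'' restricted to the minors indexed by entries with $q-p\le m$ form a finite system of strict inequalities in the unknowns $\{u_{p,q}:q-p=m\}$. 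The key point is that the minor-scaling factors $\prod_I\lambda/\prod_J\lambda>1$ make the $v$-inequalities a controlled relaxation of the $u$-inequalities, so the combined system is solvable; this is precisely the explicit computation one checks by hand for $n=2$ and $n=3$. Running this induction produces the required $u$, hence $F$.

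For part (2), I would first strip off $H$. Given $(a_-,F,a_+,H)$ positive, Observation~\ref{obs: basic flag}(5) (with $l=5$, and using that $(a_-,F,a_+,H)$ is already positive) gives that $(a_-,F,a^iG,a_+,H)$ is positive as soon as $(a_-,F,a^iG,a_+)$ is positive. Applying $a^{-i}$ (Observation~\ref{obs: basic flag}(2), and $a^{-i}a_\pm=a_\pm$), this is equivalent to $(a_-,a^{-i}F,G,a_+)$ being positive. Write $F=u_F\cdot a_-$ and $G=u_G\cdot a_-$ with $u_F,u_G\in U_{>0}(\mathcal{B})$ (valid since $(a_-,F,a_+)$ and $(a_-,G,a_+)$ are positive). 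Then $a^{-i}F=(\bar a^{-i}u_F\bar a^{i})\cdot a_-$, and by the same stabilizer argument $(a_-,a^{-i}F,G,a_+)$ is positive iff $v_i:=(\bar a^{-i}u_F\bar a^{i})^{-1}u_G=\bar a^{-i}u_F^{-1}\bar a^{i}\,u_G$ lies in $U_{>0}(\mathcal{B})$. Since conjugation by $\bar a^{-i}$ multiplies the $(p,q)$ entry ($p<q$) of $u_F^{-1}$ by $(\lambda_q/\lambda_p)^{i}\to 0$, we have $\bar a^{-i}u_F^{-1}\bar a^{i}\to\mathrm{id}$, so $v_i\to u_G\in U_{>0}(\mathcal{B})$. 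Because $U_{>0}(\mathcal{B})$ is open, $v_i\in U_{>0}(\mathcal{B})$ for all sufficiently large $i$, which is exactly the assertion.

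The main obstacle is part (1): unlike part (2) there is no limiting parameter, and one must genuinely produce a totally positive $u$ whose ``ratio'' $u^{-1}(\bar a u\bar a^{-1})$ with its $\bar a$-conjugate is again totally positive. Encoding which minor inequalities are in play and checking that the diagonal-conjugation estimate leaves room is the delicate step; the super-diagonal-by-super-diagonal construction is the cleanest route I see, though establishing non-emptiness of the resulting inequality system for every $n$ is where the real work lies. Part (2), by contrast, becomes routine once Observation~\ref{obs: basic flag}(5) is invoked to discard $H$.
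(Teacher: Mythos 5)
Your part (2) is correct. Your reduction via Observation \ref{obs: basic flag}(5) followed by the matrix computation $v_i=\bar a^{-i}u_F^{-1}\bar a^{i}u_G\to u_G$ is a sound, if heavier, alternative to the paper's argument, which simply notes that $a^i\cdot G\to a_+$, so $(a_-,F,a^i\cdot G,H)$ is eventually positive by openness of positivity, while $(a_-,a^i\cdot G,a_+,H)$ is positive for every $i$ because $a$ is positive loxodromic (Observation \ref{obs: basic flag}(6)); the two are then glued by Observation \ref{obs: basic flag}(5).

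Part (1), however, has a genuine gap, and you have located it yourself: you must exhibit $u\in U_{>0}(\mathcal{B})$ with $u^{-1}(\bar a u\bar a^{-1})\in U_{>0}(\mathcal{B})$, and your super-diagonal induction is only a plan. The assertion that the ``$v$-inequalities are a controlled relaxation of the $u$-inequalities'' is not justified: since $u^{-1}$ is never totally positive (its minors alternate in sign), the minors of $u^{-1}(\bar a u\bar a^{-1})$ are, by Cauchy--Binet, alternating-sign combinations of minors of $u$ and of $\bar a u\bar a^{-1}$, so requiring all of them to be positive is a genuinely coupled system whose solvability for every $n$ and every strictly decreasing eigenvalue tuple is precisely the content of the lemma. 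Note also that the obvious candidate --- taking $u$ in the image of the principal $\PGL_2(\R)$ adapted to the eigenbasis of $a$ --- only works when the eigenvalues of $a$ are in geometric progression, since otherwise $\bar a u\bar a^{-1}$ leaves that one-parameter subgroup. The paper sidesteps the computation entirely by citing the fact (an immediate consequence of \cite[Proposition 3.5]{Zhang}) that every positive loxodromic $a$ equals $\rho(\gamma)$ for some Hitchin representation $\rho$ of a closed orientable surface group; the positive $\rho$-equivariant boundary map $\xi$ then supplies $F:=\xi(p)$ with $(a_-,F,a\cdot F,a_+)$ positive because $\gamma_-<p<\gamma\cdot p<\gamma_+$ in the cyclic order on $\partial_\infty\widetilde{\Sigma}$. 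To make your route work you would need to either carry out the induction in full or import an external existence result of this kind.
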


\begin{proof}
Proof of (1). Pick a point $x\in\Pb(V)$ that does not lie in the hyperplane $a_-^{(i)}+a_+^{(n-i-1)}$ for all $i\in\{0,\dots,n-1\}$. Then one can verify that the curve $C$ in $\Pb(V)$ parameterized by 
\[\mathbb{R}\to \Pb(V),\,\,t\mapsto a^t\cdot x\] 
is a smooth curve that is convex, i.e. any projective hyperplane intersects $C$ at most $n-1$ times. Let $F$ be the osculating flag to $C$ at $x$, i.e. $F$ is defined by 
\[F^{(1)}=x\,\text{ and }\,F^{(i)}=\displaystyle\lim_{t\to 0}(F^{(i-1)}+a^t\cdot x)\,\text{ for all }i\in\{1,\dots,n-1\}.\]
Then $a\cdot F$ is the osculating flag to $C$ at $a\cdot x$. By \cite[Theorem 1.3]{Fock-Goncharov}, $(a^-,F,a\cdot F,a^+)$ is a positive quadruple of flags.

Proof of (2). Since positivity is an open condition on $\Fc(V)^4$ and $\displaystyle\lim_{i\to\infty}a^i\cdot G=a_+$, there is an integer $N>0$ such that $(a_-,F,a^i\cdot G,H)$ is positive for all integers $i\geq N$. At the same time, since $a$ is positive loxodromic, $(a_-,a^i\cdot G,a_+,H)$ is positive for all integers $i$. Thus, $(a_-,F,a^i\cdot G,a_+,H)$ is positive for all $i\geq N$.
\end{proof}

The following lemma is the main geometric input needed to prove Theorem \ref{thm: general n}.

\begin{lemma}\label{lem: K existence}
Let $b\in\PGL(V)$ be positive loxodromic and $a\in\PGL(V)$ be loxodromic. 
\begin{enumerate}
\item If $(a_-,a^{-1}\cdot b_-,b_-,a\cdot b_-,a_+,b_+)$ is positive, then for every neighborhood $\mathfrak U\subset\Fc(V)$ of $b_-$, there is a flag $K\in \mathfrak U$ such that the tuples $(a_-,a^{-1}\cdot K,K,a\cdot K,a_+,b_+)$ and $(a_-,b_-,b^{-1}\cdot K,K,b\cdot K,b_+)$ are positive.
\item If $(a_-,b_-,a_+,a\cdot b_-,b_+,a^{-1}\cdot b_-)$ is positive, then for every neighborhood $\mathfrak U\subset\Fc(V)$ of $b_-$, there is a flag $K\in\Fc(V)$ such that the tuples $(a_-,K,a_+,a\cdot K,b_+,a^{-1}\cdot K)$ and $(a_-,b_-,b^{-1}\cdot K,K,b\cdot K,b_+)$ are positive.
\end{enumerate}
\end{lemma}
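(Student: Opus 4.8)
The goal of Lemma~\ref{lem: K existence} is to produce a ``central'' flag $K$ near $b_-$ that serves simultaneously as a pivot for the $a$-dynamics and the $b$-dynamics, so that later one can verify admissibility of $\{a,b\}$. My plan is to treat the two parts in parallel, since they differ only in which positive configuration is assumed.

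\medskip

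\textbf{Step 1: Reformulate positivity as an open, $b$-invariant condition.} For part (1), the hypothesis is that $(a_-,a^{-1}\cdot b_-,b_-,a\cdot b_-,a_+,b_+)$ is positive. First I would observe that positivity of a tuple is an open condition on $\Fc(V)^6$ (Observation~\ref{obs: basic flag} together with Theorem~\ref{thm: Fock-Goncharov}, or just continuity of the triangle/edge invariants), so there is a neighborhood $\mathfrak V$ of $b_-$ such that $(a_-,a^{-1}\cdot K,K,a\cdot K,a_+,b_+)$ is positive for all $K\in\mathfrak V$ --- here I use continuity of the maps $K\mapsto a^{\pm1}\cdot K$. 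Shrinking, I may assume $\mathfrak V\subset\mathfrak U$. This already handles the first of the two required tuples; the real content is to simultaneously arrange $(a_-,b_-,b^{-1}\cdot K,K,b\cdot K,b_+)$ to be positive.

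\medskip

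\textbf{Step 2: Use the positive loxodromic dynamics of $b$ to find $K$ near $b_-$.} Since $b$ is positive loxodromic, Lemma~\ref{lem: minor facts}(1) gives a flag $F$ with $(b_-,F,b\cdot F,b_+)$ positive, and then for any flag $G$ transverse to $b_-$ and $b_+$ lying in the appropriate simplex, the backward iterates $b^{-m}\cdot G\to b_-$ while $(b_-,b^{-m}\cdot G,b^{-m+1}\cdot G,\dots,b_+)$ stay positive (this is exactly the argument behind Example~\ref{eq: Veronese} / Observation~\ref{obs: lox}, applied to $b$: a positive loxodromic element preserves a simplex associated to $\{b_+,b_-\}$ and acts there like a hyperbolic element of $\PGL_2(\R)$ on $\mathbb{P}^1$). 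Concretely I would set $K:=b^{-m}\cdot F$ for $m$ large. Then $(a_-,b_-,b^{-1}\cdot K,K,b\cdot K,b_+)=(a_-,b_-,b^{-m-1}\cdot F,b^{-m}\cdot F,b^{-m+1}\cdot F,b_+)$; positivity of the five-tuple $(b_-,b^{-m-1}\cdot F,b^{-m}\cdot F,b^{-m+1}\cdot F,b_+)$ follows from the positive loxodromic dynamics, and then I need $(a_-,b_-,\dots)$: since $b^{-m}\cdot F\to b_-$ and the hypothesis positivity $(a_-,b_-,a\cdot b_-,\dots)$ is open, for $m$ large the whole tuple $(a_-,b_-,b^{-1}\cdot K,K,b\cdot K,b_+)$ is positive. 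Crucially, for $m$ large $K=b^{-m}\cdot F$ lies in $\mathfrak V$, so Step~1 applies to this same $K$. That gives both tuples for part (1). For part (2) the argument is identical: Step~1 produces a neighborhood $\mathfrak V\subset\mathfrak U$ of $b_-$ on which $(a_-,K,a_+,a\cdot K,b_+,a^{-1}\cdot K)$ is positive, using openness and continuity of $a^{\pm1}$, and Step~2 is unchanged since it only uses the positive loxodromic dynamics of $b$ and openness of the $a_-,b_-$-positivity hypothesis.

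\medskip

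\textbf{Main obstacle.} The delicate point is making the ``for $m$ large'' choices in Steps~1 and~2 compatible: I need a \emph{single} $K$ that is both (a) close enough to $b_-$ to lie in the neighborhood $\mathfrak V$ on which the $a$-positivity holds, and (b) of the special form $b^{-m}\cdot F$ so that the $b$-positivity five-tuple is automatic, and (c) such that the ``mixed'' tuples (those involving both $a_-$ and $b_-$ and the $b$-iterates of $K$) are positive. Since $b^{-m}\cdot F\to b_-$, condition (a) holds for all large $m$; condition (b) holds for all $m$ by construction; condition (c) holds for all large $m$ by openness of the hypothesis configuration. So all three are satisfied simultaneously for $m$ sufficiently large, and the intersection of ``large enough'' thresholds is what finishes the proof. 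The one subtlety to double-check is that the flag $F$ from Lemma~\ref{lem: minor facts}(1) can be chosen with $F$, and hence all $b^{-m}\cdot F$, transverse to $a_-$, $a_+$, and $b_+$ --- but transversality to $a_\pm$ fails only on a proper Zariski-closed subset, and $b$-iterates of a generic $F$ avoid it (or one simply perturbs $F$ within the simplex, using that the $a$-hypothesis already forces $b_-$ itself to be transverse to $a_\pm,b_+$, and transversality is open); I would make this precise by noting $b_-$ is transverse to $a_-,a_+,b_+$ by the positivity hypothesis, $b^{-m}\cdot F\to b_-$, and transversality is an open condition, so for $m$ large $b^{-m}\cdot F$ is transverse to all of them.
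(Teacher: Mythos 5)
Your overall strategy coincides with the paper's: seed a flag via Lemma \ref{lem: minor facts}(1), push it toward $b_-$ by $b^{-m}$, obtain the $a$-tuple by openness (its limit is the non-degenerate hypothesis tuple), obtain the $b$-tuple from the positive loxodromic dynamics of $b$, and take $m$ beyond the maximum of the two thresholds. Your Step 1 and the treatment of the first tuple in each part are correct.

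The gap is in your justification that $(a_-,b_-,b^{-1}\cdot K,K,b\cdot K,b_+)$ is positive for $K=b^{-m}\cdot F$ with $m$ large. You appeal to openness of the hypothesis positivity, but as $m\to\infty$ the flags $b^{-m-1}\cdot F$, $b^{-m}\cdot F$, $b^{-m+1}\cdot F$ all collapse to $b_-$, so the limiting tuple $(a_-,b_-,b_-,b_-,b_-,b_+)$ is degenerate and openness gives nothing. The input actually needed is the position of the seed flag relative to $a_-$: your $F$ from Lemma \ref{lem: minor facts}(1) applied to $b$ only satisfies that $(b_-,F,b\cdot F,b_+)$ is positive, and transversality to $a_\pm$ (your proposed fix in the last paragraph) is strictly weaker than lying in the correct connected component. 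The paper instead chooses the seed $K'$ so that $(a_-,b_-,b^{-1}\cdot K',K',b_+)$ is already positive; then $(a_-,b_-,b^{-m}\cdot K',b_+)$ is positive for \emph{all} $m$ by Observation \ref{obs: basic flag}(6) (since $b^{-m}$ fixes $b_-$ and $b_+$ and has eigenvalues of one sign), and this quadruple is exactly what Observation \ref{obs: basic flag}(5) requires in order to append $a_-$ to the five-tuple $(b_-,b^{-m-1}\cdot K',b^{-m}\cdot K',b^{-m+1}\cdot K',b_+)$ that you correctly obtain from the $b$-dynamics; this decomposition into a ``group-action part'' and a ``non-degenerate openness part'' is the content of Lemma \ref{lem: minor facts}(2), which the paper invokes here. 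With a correctly positioned seed flag and that decomposition your argument closes; as written, it does not.
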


\begin{proof}
Let $K'$ be a flag such that $(a_-,b_-,b^{-1}\cdot K',K',b_+)$ is positive. This exists by Lemma \ref{lem: minor facts}(1). Fix a neighborhood $\mathfrak U$ of $b_-$. By Lemma \ref{lem: minor facts}(2), there is an integer $N>0$ such that $b^{-i-1}\cdot K',b^{-i}\cdot K',b^{-i+1}\cdot K'\in\mathfrak U$ and
\[(a_-,b_-,b^{-i-1}\cdot K',b^{-i}\cdot K',b^{-i+1}\cdot K',a_+,b_+)\] 
is positive for all $i\geq N$. 

Proof of (1). Since $(a_-,a^{-1}\cdot b_-,b_-,a\cdot b_-,a_+,b_+)$ is positive and $\displaystyle\lim_{i\to\infty}b^{-i}\cdot K'=b_-$, there is an integer $N'>0$ such that 
\[(a_-,a^{-1}b^{-i}\cdot K',b^{-i}\cdot K',ab^{-i}\cdot K',a_+,b_+)\] 
is positive for all $i\geq N'$. Let $k:=\max\{N,N'\}$, and set $K:=b^{-k}\cdot K'$.

Proof of (2). Since $(a_-,b_-,a_+,a\cdot b_-,b_+,a^{-1}\cdot b_-)$ is positive and $\displaystyle\lim_{i\to\infty}b^{-i}\cdot K'=b_-$, there is an integer $N'>0$ such that 
\[(a_-,b^{-i}\cdot K',a_+,ab^{-i}\cdot K',b_+,a^{-1}b^{-i}\cdot K')\] 
is positive for all $i\geq N'$. Let $k:=\max\{N,N'\}$, and set $K:=b^{-k}\cdot K'$.
\end{proof}

The next lemma deduces the conclusions of Theorem \ref{thm: general n} from the conclusions of Lemma \ref{lem: K existence}.

\begin{lemma}\label{lem: general n}
Let $b\in\PGL(V)$ be positive loxodromic and $a\in\PGL(V)$ be loxodromic. 
Let $f$ be a map from the vertex set $\{v, w, v', w'\}$ of $\mathsf{K}(v,w)\cup \mathsf{K}(v',w')$ into $\PGL(V)$ defined by 
$(f(v),f(w))=(a,b),  \ (f(v'),f(w'))=(a^{-1},b^{-1}).$
\begin{enumerate}
\item Suppose that there are flags $F,H\in\PGL(V)$ such that
\begin{itemize}
\item[(i)] $(a_-,a^{-1}\cdot F,F,a\cdot F,a_+,b_+)$ is positive,
\item[(ii)] $(a_-,b_-,b^{-1}\cdot F,F,b\cdot F,b_+)$ is positive,
\item[(iii)] $(a_+,a\cdot H,H,a^{-1}\cdot H,a_-,b_-)$ is positive, and
\item[(iv)] $(a_+,b_+,b\cdot H,H,b^{-1}\cdot H,b_-)$ is positive.
\end{itemize}
Then $f$ is $(\mathsf{K}(v,w)\cup \mathsf{K}(v',w'))$-admissible.
\item Suppose that there are flags $F,H\in\PGL(V)$ such that
\begin{itemize}
\item[(I)] $(a_-,a^{-1}\cdot H,F,a\cdot H,a_+,b_+)$ is positive,
\item[(II)] $(a_-,b_-,b^{-1}\cdot F,F,b\cdot F,b_+)$ is positive,
\item[(III)] $(a_+,a\cdot F,H,a^{-1}\cdot F,a_-,b_-)$ is positive, and
\item[(IV)] $(a_+,b_+,b\cdot H,H,b^{-1}\cdot H,b_-)$ is positive.
\end{itemize}
Then $f$ is $(\mathsf{K}(v,w)\cup \mathsf{K}(v',w'))$-admissible.
\end{enumerate}
\end{lemma}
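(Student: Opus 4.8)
The goal is to prove Lemma~\ref{lem: general n}, which takes the positivity data supplied by Lemma~\ref{lem: K existence} and certifies that $\{a,b\}$ is admissible. Recall from Definition~\ref{def: admissible} that $\{a,b\}$ being admissible means there is a positive quadruple $(F',F,H,H')$ such that for each $g \in \{a,b\}$, the sextuple $(F',F,g\cdot F, g\cdot H, H, H')$ is positive up to switching $g\cdot F$ and $g\cdot H$, and similarly $(g^{-1}\cdot F', F', F, H, H', g^{-1}\cdot H')$ is positive up to switching $g^{-1}\cdot F'$ and $g^{-1}\cdot H'$. So the plan is to cook up the separator $(F',F,H,H')$ directly from the flags $F'', H''$ and the fixed flags of $a$ and $b$, and then verify the four required positivity conditions (two for $g=a$, two for $g=b$) by invoking Observation~\ref{obs: basic flag} to delete, insert, or reorder entries of the given positive tuples.

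First I would set $F := F''$, $H := H''$, and take $F' := a_-$, $H' := a_+$ (or, where convenient, slightly perturb these so that everything is in honest general position — but since all the listed tuples are already positive and positivity is an open condition, I expect to be able to take $F' = a_-$, $H' = a_+$ after checking transversality). The point is that hypothesis (i) says $(a_-, a^{-1}\cdot F'', F'', a\cdot F'', a_+, b_+)$ is positive; dropping the entry $b_+$ and using Observation~\ref{obs: basic flag}(5) gives that $(a_-, a^{-1}\cdot F'', F'', a\cdot F'', a_+) = (F', a^{-1}\cdot F, F, a\cdot F, H')$ is positive. Reordering this cyclically (Observation~\ref{obs: basic flag}(4)) puts it in the shape needed for the ``$g=a$'' backward condition $(a^{-1}\cdot F', F', F, H, H', a^{-1}\cdot H')$ after I also bring in hypothesis (iii), $(a_+, a\cdot H'', H'', a^{-1}\cdot H'', a_-, b_-)$, which handles the $H$-side. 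The two conditions for $g=b$ come the same way from (ii) and (iv): hypothesis (ii) is $(a_-, b_-, b^{-1}\cdot F'', F'', b\cdot F'', b_+)$, and since $b$ is positive loxodromic one can interleave or reorder using Observation~\ref{obs: basic flag}(6) (conjugating by a fixed-flag-preserving element with equal-sign eigenvalues) to extract exactly the sextuple $(F', F, b\cdot F, b\cdot H, H, H')$ up to the allowed switch. Part (2) of the lemma is handled identically, except that hypotheses (I)--(IV) are the ``switched'' versions — $a^{-1}\cdot H''$ and $a\cdot H''$ appear where (i)--(iii) had $a^{-1}\cdot F''$ and $a\cdot F''$ — which is precisely why Definition~\ref{def: admissible} allows switching $g\cdot F$ with $g\cdot H$; so the same reordering arguments apply verbatim, landing in the ``up to switching'' clause.

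The bookkeeping I need to be careful with is matching the \emph{orientation}: the tuples in the hypotheses are written with $a_-$ (or $a_+$) first and a fixed flag of the \emph{other} generator ($b_+$ or $b_-$) last, whereas Definition~\ref{def: admissible} wants $F'$ first and $H'$ last with the $g$-translates sandwiched in the middle. This is pure combinatorics of cyclic reordering and deletion, all licensed by Observation~\ref{obs: basic flag}, parts (3)--(6); I would just carefully list, for each of the eight conditions (four in part (1), four in part (2)), which hypothesis it comes from and which items of Observation~\ref{obs: basic flag} are applied and in what order. One subtlety: to legitimately set $F' = a_-$ and $H' = a_+$ I need $(F', F, H, H') = (a_-, F'', H'', a_+)$ to itself be a positive quadruple — this follows by deleting entries from hypotheses (i) and (iii) (and checking the orientations agree), so it should be recorded first.

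The main obstacle I anticipate is \textbf{not} any deep geometry — everything reduces to Observation~\ref{obs: basic flag} — but rather getting the switching conventions exactly right so that the ``up to switching the roles of $g\cdot F$ and $g\cdot H$'' clause in Definition~\ref{def: admissible} is invoked consistently, and making sure the \emph{same} separator $(F',F,H,H')$ works simultaneously for $g=a$ and $g=b$ (one cannot choose a different separator for each generator). Concretely, the risk is that when I reorder the $b$-tuples from (ii) and (iv) I might be forced into a cyclic order incompatible with the one forced by the $a$-tuples from (i) and (iii); I would resolve this by fixing, once and for all at the start, a single Frenet curve $\xi : S^1 \to \Fc(V)$ (via Theorem~\ref{thm: positive and Frenet}) carrying all the flags $a_\pm, b_\pm, a^{\pm 1}\cdot F'', F'', b^{\pm 1}\cdot F'', \dots$ in the cyclic order dictated by hypotheses (i)--(iv) taken together, and then reading off every required sextuple as a sub-tuple of points on $S^1$ in cyclic order — at which point all eight conditions become automatic from Theorem~\ref{thm: positive and Frenet}. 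Verifying that (i)--(iv) are mutually compatible enough to place all these points on a single oriented circle is the one genuine check, and it follows because consecutive hypotheses share three or more flags (e.g.\ $a_-, a_+, b_+$ or $a_-, b_-$), which pins down the relative cyclic position.
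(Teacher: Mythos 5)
Your proposed separator $(F',F,H,H')=(a_-,F'',H'',a_+)$ cannot work, and the failure is not a matter of transversality bookkeeping. The backward admissibility condition for $g=a$ in Definition~\ref{def: admissible} requires $(a^{-1}\cdot F',F',F,H,H',a^{-1}\cdot H')$ to be positive (up to the allowed switch); with $F'=a_-$ and $H'=a_+$ this tuple is $(a_-,a_-,F'',H'',a_+,a_+)$, which has repeated entries and so cannot be positive. More structurally, the hypotheses (i)--(iv) only control $a^{\pm1}$ and $b^{\pm1}$ applied to the two flags $F''$ and $H''$, whereas the definition of admissibility needs control of $g\cdot F, g\cdot H$ for the \emph{inner} pair and of $g^{-1}\cdot F', g^{-1}\cdot H'$ for the \emph{outer} pair of a quadruple of four distinct flags. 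The essential step — which you mention only as an optional ``slight perturbation'' fallback — is therefore to split $F''$ into two distinct nearby flags $F'$ and $F$, and $H''$ into $H$ and $H'$, arranged so that $(b_-,F',F,a_+,b_+,H,H',a_-)$ is positive, and then use openness of positivity to transfer (i)--(iv) to these perturbed flags. This is exactly what the paper does: it builds sequences $F_i',F_i\to F''$ and $H_i,H_i'\to H''$ interleaved positively around $F''$ and $H''$, obtains perturbed versions (i')--(iv') for large $i$, and then chains them (via Observation~\ref{obs: basic flag}) into the two ten-term positive tuples that contain all four required sextuples. Without this splitting your argument has no candidate for $F'\neq F$ and $H\neq H'$.

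Your proposed rescue — placing all the flags on a single Frenet curve via Theorem~\ref{thm: positive and Frenet} — begs the question. That theorem produces a Frenet curve through \emph{one} positive tuple; to apply it here you would first have to show that the union of the four tuples in (i)--(iv) is itself a positive tuple, and that is essentially the content of the lemma. Sharing three flags between two positive tuples does not make their union positive (this is precisely what the quadruple condition in Observation~\ref{obs: basic flag}(5) is for), so the compatibility check you defer is the actual work. The correct route is the openness/perturbation argument above, not a reduction to a single positive circle.
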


\begin{proof}
We only give the proof of (1); the proof of (2) is very similar.

Proof of (1). Note that (i) and (ii) imply that $(a_-,b_-,F,a_+,b_+)$ is positive, and (iii) and (iv) imply that $(a_+,b_+,H,a_-,b_-)$ is positive. Thus, $(b_-,F,a_+,b_+,H,a_-)$ is positive, and in particular,
\begin{equation}\label{eqn: obvious1}
(F,a_+,H,a_-)
\end{equation}
and 
\begin{equation}\label{eqn: obvious2}
(b_-,F,b_+,H)
\end{equation}
are positive. Then (i), (iii), and \eqref{eqn: obvious1} imply that
\[(a^{-1}\cdot F,F,a\cdot F,a_+,a\cdot H,H,a^{-1}\cdot H,a_-)\]
is positive, and (ii), (iv), and \eqref{eqn: obvious2} imply that
\[(b_-,b^{-1}\cdot F,F,b\cdot F,b_+,b\cdot H,H,b^{-1}\cdot H)\]
is positive. From this, we see that the assignment $(F,H,a^-)$ to $v$, $(F,H,b^-)$ to $w$, $(F,H,a^+)$ to $v'$, and $(F,H,b^+)$ to $w'$ gives rise to a compatible system of forward domains for $(f,\mathsf{K}(v,w)\cup \mathsf{K}(v',w'))$ and thus $f$ is $(\mathsf{K}(v,w)\cup \mathsf{K}(v',w'))$-admissible.
\end{proof}

\begin{proof}[Proof of Theorem \ref{thm: general n}]
We prove the two cases of this theorem separately.

{\bf Case 1: $(b_-,a\cdot b_-,a_+,a\cdot b_+,b_+,a_-)$ is positive.} This implies that
\begin{align}\label{eqn: case 1}
(a^{-1}\cdot b_-,b_-,a\cdot b_-,a_+,a\cdot b_+,b_+,a^{-1}\cdot b_+,a_-),
\end{align}
and hence $(a^{-1}\cdot b_-,b_-,a\cdot b_-,a_+,b_+,a_-)$, is positive. Then Lemma \ref{lem: K existence}(1) implies that there is a flag $F\in\Fc(V)$ such that (i) and (ii) in the statement of Lemma \ref{lem: general n}(1) hold. Also, since $(b_-,a_+,a\cdot b_+,b_+,a^{-1}\cdot b_+,a_-)$ is positive, Lemma \ref{lem: K existence}(1) applied to $a^{-1}$ and $b^{-1}$ (in place of $a$ and $b$), ensures that there is a flag $H\in\Fc(V)$ such that (iii) and (iv) in the statement of Lemma \ref{lem: general n}(1) holds. By Lemma \ref{lem: general n}(1), the restriction map of $f$ to $\{v_1, w_1,v_1', w_1'\}$ is $(\mathsf{K}(v_1,w_1)\cup \mathsf{K}(v_1',w_1'))$-admissible. Finally, note that \eqref{eqn: case 1} implies $(a\cdot b_-,b_-,a^{-1}\cdot b_-,a_-,b_+,a_+)$ is also positive, so the same argument as above, with $a^{-1}$ in place of $a$, implies that the restriction map of $f$ to $\{v_2,w_2, v_2', w_2'\}$ is $(\mathsf{K}(v_2,w_2)\cup \mathsf{K}(v_2',w_2'))$-admissible. Therefore $f$ is $\Lambda_2$-admissible.


{\bf Case 2: $(b_-,a\cdot b_+,a_+,a\cdot b_-,b_+,a_-)$ is positive.} This implies that
\begin{align}\label{eqn: case 2}
(a^{-1}\cdot b_+,b_-,a\cdot b_+,a_+,a\cdot b_-,b_+,a^{-1}\cdot b_-,a_-),
\end{align}
and hence $(b_-,a_+,a\cdot b_-,b_+,a^{-1}\cdot b_-,a_-)$ is positive. Then Lemma \ref{lem: K existence}(2) implies that for any neighborhood $\mathfrak U\subset\Fc(V)$ of $b_-$, there is a flag $F\in\mathfrak U$ such that 
\begin{equation}\label{eqn: run1}
(a_-,F,a_+,a\cdot F,b_+,a^{-1}\cdot F)
\end{equation}
and
\begin{equation}\label{eqn: run2}
(a_-,b_-,b^{-1}\cdot F,F,b\cdot F,b_+)
\end{equation}
are positive. Also, note that \eqref{eqn: case 2} implies that $(a^{-1}\cdot b_+,b_-,a\cdot b_+,a_+,b_+,a_-)$ is positive, so Lemma \ref{lem: K existence}(2) applied to $a^{-1}$ and $b^{-1}$ (in place of $a$ and $b$) implies that for any neighborhood $\mathfrak V\subset\Fc(V)$ of $b_+$, there is a flag $H\in\mathfrak V$ such that 
\begin{equation}\label{eqn: run3}
(a_+,H,a_-,a^{-1}\cdot H,b_-,a\cdot H)
\end{equation} 
and 
\begin{equation}\label{eqn: run4}
(a_+,b_+,b\cdot H,H,b^{-1}\cdot H,b_-)
\end{equation}
are positive.

It follows from the positivity of (\ref{eqn: run1}) and (\ref{eqn: run2}) that $(a_-,b_-,F,a_+,b_+)$ is positive. Similarly, the positivity of (\ref{eqn: run3}) and (\ref{eqn: run4}) imply that $(a_+,b_+,H,a_-,b_-)$ is positive. Together, these imply that $(b_-,F,a_+,b_+,H,a_-)$ is positive. Hence, by choosing $\mathfrak U$ and $\mathfrak V$ to be sufficiently small, the positivity of (\ref{eqn: run1}) and (\ref{eqn: run3}) imply respectively that (III) and (I) in the statement of Lemma \ref{lem: general n}(2) holds. Note also that the positivity of (\ref{eqn: run2}) and (\ref{eqn: run4}) are exactly (II) and (IV) in the statement of Lemma \ref{lem: general n}(2). Thus, Lemma \ref{lem: general n}(2) implies that the restriction map of $f$ to $\{v_1,w_1,v_1', w_1'\}$ is $(\mathsf{K}(v_1,w_1)\cup \mathsf{K}(v_1', w_1'))$-admissible. Finally, \eqref{eqn: case 2} implies that $(b_-,a_-,a^{-1}\cdot b_-,b_+,a\cdot b_-,a_+)$ is also positive, so the same argument as above with $a^{-1}$ in place of $a$, proves that the restriction map of $f$ to $\{v_2, w_2,v_2', w_2'\}$ is $(\mathsf{K}(v_2,w_2)\cup \mathsf{K}(v_2',w_2'))$-admissible and hence $f$ is $\Lambda_2$-admissible.

Noting that $\rho\circ R_2$ is $\Lambda_2$-admissible, the second statement follows from the first and Proposition \ref{prop: forward primitive}.
\end{proof}

\subsection{Proof of Theorem \ref{thm: positive quadruple}}

Next, we prove Theorem \ref{thm: positive quadruple}. Observe that given the proof of Theorem \ref{thm: general n}, it is sufficient to prove the following lemma, which is a strengthening of Lemma \ref{lem: K existence}(1) in the case when $n=3$.

\begin{lemma}\label{lem: separator lemma}
Suppose that $n=3$. Let $a,b\in\PGL(V)$ be positive loxodromic elements such that $(b_-,a_+,b_+,a_-)$ is positive. Then there is a flag $K\in\Fc(V)$ such that the tuples $(a_-,a^{-1}\cdot K,K,a\cdot K,a_+,b_+)$ and $(a_-,b_-,b^{-1}\cdot K,K,b\cdot K,b_+)$ are positive.
\end{lemma}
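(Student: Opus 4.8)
The plan is to work in $V=\mathbb{R}^3$, where every flag is a pair consisting of a point in $\mathbb{P}(V)$ and a line in $\mathbb{P}(V)$ through that point, and positivity of a tuple of flags is a purely convex-geometric condition. First I would observe that since $a$ and $b$ are positive loxodromic with $(b_-,a_+,b_+,a_-)$ positive, the four flags $a_\pm, b_\pm$ are pairwise transverse and arrange themselves along a convex curve; in particular, by Theorem \ref{thm: positive and Frenet}, there is a Frenet curve $\xi:S^1\to\Fc(V)$ and points $t(b_-)<t(a_+)<t(b_+)<t(a_-)<t(b_-)$ with $\xi$ hitting the four flags at those parameters. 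The idea is to locate $K$ as $\xi$ evaluated at a suitable parameter very close to $t(b_-)$, more precisely at a point of the form $b^{-m}\cdot K'$ for a fixed auxiliary flag $K'$ and $m$ large.

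The key steps, in order, would be: (1) Use Lemma \ref{lem: minor facts}(1) to produce a flag $K'$ with $(a_-,b_-,b^{-1}\cdot K',K',b\cdot K',b_+)$ positive — this handles the $b$-side local picture. (2) Use Lemma \ref{lem: minor facts}(2) (applied with $a$, and appropriate endpoints) to get $N_1>0$ such that $(a_-,b_-,b^{-i-1}\cdot K',b^{-i}\cdot K',b^{-i+1}\cdot K',a_+,b_+)$ is positive for all $i\ge N_1$ and such that the three flags $b^{-i\pm\cdot}\cdot K'$ all lie in any prescribed neighborhood $\mathfrak U$ of $b_-$. This is exactly the preliminary step used at the start of the proof of Lemma \ref{lem: K existence}. (3) The genuinely new content for $n=3$: starting only from the hypothesis that $(b_-,a_+,b_+,a_-)$ is positive (which is \emph{weaker} than the hypothesis $(a_-,a^{-1}\cdot b_-,b_-,a\cdot b_-,a_+,b_+)$ positive required in Lemma \ref{lem: K existence}(1)), I must show that for $m$ large the tuple $(a_-,a^{-1}b^{-m}\cdot K',b^{-m}\cdot K',ab^{-m}\cdot K',a_+,b_+)$ is positive. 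Here I would exploit that in $\mathbb{R}^3$ a positive loxodromic element $a$ preserves the associated triangle $\sigma$ of $\{a_+,a_-\}$ (Observation \ref{obs: lox}), and that for a flag $F$ sufficiently close to $b_-$ — hence lying strictly inside the appropriate region cut out by $a_+$ and $a_-$ — the three flags $a^{-1}\cdot F, F, a\cdot F$ stay ordered along $\xi$ between $a_-$ and $a_+$ because $F\mapsto a\cdot F$ moves points monotonically toward $a_+$ along the convex curve $\xi$ and toward $a_-$ along $\xi$ under $a^{-1}$. Concretely, I would choose a Frenet curve through $a_+,a_-,b_-,b_+$, note that $a$ acts on $S^1$ as an orientation-preserving homeomorphism with attracting fixed point at $t(a_+)$ and repelling fixed point at $t(a_-)$, so for any $t$ strictly between $t(a_-)$ and $t(a_+)$ we have $t(a_-)<a^{-1}\cdot t<t<a\cdot t<t(a_+)$; taking $t$ close to $t(b_-)$ (which lies strictly between $t(a_-)$ and $t(a_+)$ on the correct arc) and using the Frenet continuity gives the desired positive tuple. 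Then set $K:=b^{-m}\cdot K'$ with $m\ge\max\{N_1,N_2\}$.

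The main obstacle I expect is step (3): checking that the hypothesis $(b_-,a_+,b_+,a_-)$ positive — with no control on $a\cdot b_\pm$ or $a^{-1}\cdot b_\pm$ relative to the curve — is actually enough to force $(a_-,a^{-1}\cdot F,F,a\cdot F,a_+,b_+)$ positive for $F$ near $b_-$. The point is that in $\mathbb{R}^3$ the cyclic order of $a_-,F,a\cdot F,a_+$ along a Frenet curve through $a_\pm$ is automatic from $a$ being positive loxodromic and $F$ lying on the correct arc (so $(a_-,F,a\cdot F,a_+)$ is positive for \emph{free}), and inserting $a^{-1}\cdot F$ just before $F$ and $b_+$ just after $a_+$ only requires $F$ to be close enough to $b_-$ so that $a^{-1}\cdot F$ has not yet crossed $a_-$ and so that the whole block sits on the arc from $a_-$ to $a_+$ not containing $b_+$. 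This monotonicity is where the restriction to $n=3$ is used: the curve $\xi$ into $\Fc(\mathbb{R}^3)$ is a genuine convex curve in a $3$-dimensional flag manifold and the dynamics of a positive loxodromic element on it is North-South on $S^1$, which makes all the ordering statements one-dimensional. Once step (3) is in place, the rest is assembling the pieces exactly as in the proof of Lemma \ref{lem: K existence}, and Theorem \ref{thm: positive quadruple} then follows by feeding this strengthened lemma into the Case 1 argument of the proof of Theorem \ref{thm: general n}.
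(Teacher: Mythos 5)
Your proposal has a genuine gap at step (3), and it is located exactly where you yourself flag the "main obstacle". The claim that for a flag $F$ near $b_-$ the tuple $(a_-,F,a\cdot F,a_+)$ is positive "for free" because $a$ acts with North--South dynamics is false for $n=3$: positivity in $\Fc(\R^3)$ is not a cyclic-order condition on a circle but involves the triangle invariants of Theorem \ref{thm: Fock-Goncharov}, which depend on the plane components $F^{(2)}$, $(a\cdot F)^{(2)}$. The Frenet curve $\xi$ through $a_\pm,b_\pm$ is not $a$-invariant, so "$a\cdot t$" for a parameter $t$ on $\xi$ is not defined, and the flags $a^{\pm1}\cdot F$ do not lie on $\xi$. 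For a fixed point $F^{(1)}$ in the triangle associated to $\{a_+,a_-\}$, the set of lines $F^{(2)}$ through it for which $(a_-,F,a\cdot F,a_+)$ is positive is a proper subset, and nothing in the hypothesis $(b_-,a_+,b_+,a_-)$ positive forces $b_-^{(2)}$ (or the flags $b^{-m}\cdot K'$ limiting to $b_-$ along the $b$-osculating curve) into that good set. Indeed, if your step (3) worked, then letting $m\to\infty$ would essentially recover the stronger hypothesis $(a_-,a^{-1}\cdot b_-,b_-,a\cdot b_-,a_+,b_+)$ positive of Lemma \ref{lem: K existence}(1) from the weaker quadruple hypothesis, and the entire point of Lemma \ref{lem: separator lemma} (and of Theorem \ref{thm: positive quadruple} versus Theorem \ref{thm: general n}) is that this implication does not hold.

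The paper's proof goes a different way and, in particular, does \emph{not} take $K$ near $b_-$. It introduces the $g$-invariant osculating flag maps $\xi_{g,p}$ and proves (Lemma \ref{lem: basic existence}) that any two flags on such a curve form a positive quadruple with $g_\pm$, and that these $a$-invariant curves can be made to limit onto prescribed flags supported at $a_+^{(1)}$ or on the line $a_+^{(2)}$. The argument then splits into three cases according to whether the $b$-osculating curve through the point $x=a_+^{(1)}$ is tangent to, or crosses at positive or negative time, the line $a_+^{(2)}$; in each case $K$ is chosen on an $a$-invariant osculating curve close to a flag on a $b$-invariant osculating curve (near $a_+$, or near a point of $a_+^{(2)}$), so that both required sextuples are positive by Lemma \ref{lem: basic existence}(1) plus openness of positivity. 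To repair your approach you would need to replace step (3) by an argument of this kind; the monotonicity-on-$S^1$ heuristic cannot be salvaged.
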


Indeed, if we prove Lemma \ref{lem: separator lemma}, then the same proof used to prove Theorem \ref{thm: general n}, but with Lemma \ref{lem: separator lemma} used in place of Lemma \ref{lem: K existence}(1), will also prove Theorem \ref{thm: positive quadruple}.

The following is the key geometric lemma needed to prove Lemma \ref{lem: separator lemma}. Suppose that $n:=\dim(V)=3$. Let $g \in\PGL(V)$ be positive loxodromic. For any point $p\in\mathbb{P}(V)$ such that $p\notin g_+^{(i)}+g_-^{(2-i)}$ for $i=0,1,2$, the \emph{$g$-invariant osculating flag map through $p$} is the map 
\[\xi_{g,p}:\R\to\Fc(V)\] 
defined by $\xi_{g,p}^{(1)}(t):=g^t\cdot p$ and $\xi_{g,p}^{(2)}(t):=\displaystyle\lim_{(s,s')\to(t,t)}\xi_{g,p}^{(1)}(s)+\xi_{g,p}^{(1)}(s')$, where the limit is taken over all distinct pairs of real numbers $(s,s')$. By an explicit computation, one can verify that $\xi_{g,p}^{(1)}$ is differentiable map, so $\xi_{g,p}^{(2)}$ is well-defined.

\begin{lemma}\label{lem: basic existence}
Suppose that $n=3$. Let $g\in\PGL(V)$ be positive loxodromic.
\begin{enumerate}
\item If $p\notin g_+^{(i)}+g_-^{(2-i)}$ for $i=0,1,2$, then the quadruple $(g_-,\xi_{g,p}(s),\xi_{g,p}(t),g_+)$ is positive for all $s<t$. 
\item Let $\tau$ be a simplex associated to $\{g_+,g_-\}$, and let $L$ be a projective line through $g_+^{(1)}$ such that $L\cap\tau$ is non-empty. Then there is a sequence $(p_i)_{i=1}^\infty$ in $\tau$ and a sequence $(t_i)_{i=1}^\infty\in\R$ such that the sequence $(\xi_{g,p_i}(t_i))_{i=1}^\infty$ in $\Fc(V)$ converges to the flag $F$ defined by $F^{(1)}=g_+^{(1)}$ and $F^{(2)}=L$.
\item Let $\tau$ be a simplex associated to $\{g_+,g_-\}$, and let $q\in\mathbb{P}(V)$ be a point in the interior of $g_+^{(2)}\cap\overline{\tau}$. Then there is a sequence $(p_i)_{i=1}^\infty$ in $\tau$ and a sequence $(t_i)_{i=1}^\infty\in\R$ such that the sequence $(\xi_{g,p_i}(t_i))_{i=1}^\infty$ in $\Fc(V)$ converges to the flag $G$ defined by $G^{(1)}=q$ and $G^{(2)}=g_+^{(2)}$.
\end{enumerate}
\end{lemma}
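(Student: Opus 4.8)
The plan is to fix an eigenbasis $(e_1,e_2,e_3)$ of $g$ and a representative acting by $\mathrm{diag}(\lambda_1,\lambda_2,\lambda_3)$ with $\lambda_1>\lambda_2>\lambda_3>0$, so that $g^t=\mathrm{diag}(\lambda_1^t,\lambda_2^t,\lambda_3^t)$, $g_+=\big([e_1],\langle e_1,e_2\rangle\big)$, $g_-=\big([e_3],\langle e_2,e_3\rangle\big)$, and the three lines $g_+^{(1)}+g_-^{(1)}$, $g_+^{(2)}$, $g_-^{(2)}$ are the coordinate lines $\{x_2=0\}$, $\{x_3=0\}$, $\{x_1=0\}$. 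The four simplices associated to $\{g_+,g_-\}$ are permuted simply transitively by the diagonal sign matrices, each of which commutes with every $g^t$ and fixes $g_\pm$, so up to conjugating by one of them we may assume $\tau=\{[x_1:x_2:x_3]:x_1,x_2,x_3>0\}$. For a lift $\tilde p$ of a point $p$ write $\tilde c(t):=g^t\tilde p$; then $\tilde c'(t)=g^t(\log g)\tilde p$, so $\xi_{g,p}^{(1)}(t)=[\tilde c(t)]$ and $\xi_{g,p}^{(2)}(t)=\big[(\bigwedge^2 g^t)(\tilde p\wedge(\log g)\tilde p)\big]$, where in the basis $e_1\wedge e_2,\,e_1\wedge e_3,\,e_2\wedge e_3$ of $\bigwedge^2 V$ we have $\bigwedge^2 g^t=\mathrm{diag}\big((\lambda_1\lambda_2)^t,(\lambda_1\lambda_3)^t,(\lambda_2\lambda_3)^t\big)$ with $\lambda_1\lambda_2>\lambda_1\lambda_3>\lambda_2\lambda_3$. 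A short computation gives $\xi_{g,p}(t)\to g_\pm$ as $t\to\pm\infty$, and $\det(\tilde c(t),\tilde c'(t),\tilde c''(t))=(\lambda_1\lambda_2\lambda_3)^t\prod_{i<j}\log(\lambda_i/\lambda_j)\neq 0$, so the osculating curve is everywhere nondegenerate.

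For part (1), conjugating by a diagonal element with positive entries (it commutes with $g$, fixes $g_\pm$, and preserves $\tau$) reduces to $p=[1:1:1]$, and applying $g^{-s}$ (which fixes $g_\pm$ and sends $\xi_{g,p}(r)$ to $\xi_{g,p}(r-s)$) reduces to $s=0$; so it suffices to show $\big(g_-,\xi_{g,p}(0),\xi_{g,p}(t),g_+\big)$ is positive for all $t>0$. In the affine chart $\{x_2\neq0\}$ the orbit curve becomes the strictly convex power curve $\{(v,v^{-\delta}):v>0\}$ with $\delta=\log(\lambda_2/\lambda_3)/\log(\lambda_1/\lambda_2)>0$, lying in the quadrant bounded by the axes $g_+^{(2)},g_-^{(2)}$ and limiting at infinity to $g_+^{(1)},g_-^{(1)}$ with tangent lines limiting to $g_+^{(2)},g_-^{(2)}$. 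The plan is to extend this nondegenerate convex arc, together with those limits, to a convex (hence Frenet) closed curve $\tilde\xi\colon S^1\to\Fc(V)$ passing through $g_-,\xi_{g,p}(0),\xi_{g,p}(t),g_+$ in this cyclic order, and then invoke Theorem \ref{thm: positive and Frenet} in the realization direction (as in the proofs of Lemma \ref{lem: Frenet trick} and Proposition \ref{prop: smaller tent}) to conclude positivity. The step I expect to be the main obstacle is this closing-up; if one wishes to avoid it, one can instead compute, for the quadruple $\big(g_-,\xi_{g,p}(0),\xi_{g,p}(t),g_+\big)$ and the diagonal triangulation, the triangle invariant $T_{(1,1,1)}$ and the edge invariants $S_1,S_2$ directly, deducing the required signs ($>0$ and $<0$ respectively) from total positivity of the exponential kernel $e^{yz}$, which forces the generalized Vandermonde determinants $\det\big((\lambda_i^{s_j})_{i,j}\big)$, and the osculating determinants obtained from them by differentiating one argument, to have definite signs. (A third route: the curve depends only on $\delta$, and at $\delta=1$ it is the Veronese conic with $g\in\iota(\PGL_2(\R))$, handled by Proposition \ref{prop: Veronese}, so one deforms $\delta>0$.)

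For parts (2) and (3) the point is that $\bigwedge^2 g^t$ contracts everything onto its top eigendirection $[e_1\wedge e_2]=g_+^{(2)}$ as $t\to+\infty$ unless the input $2$-vector is chosen with a vanishingly small $e_1\wedge e_2$-component, so the limit of $\xi_{g,p_i}^{(2)}(t_i)$ can be steered by letting $p_i$ escape within $\tau$ at a suitable rate while $t_i\to+\infty$. For part (2): a line $L$ through $g_+^{(1)}$ that meets $\tau$ has coordinates $[r:r':0]$ in the above basis of $\bigwedge^2V$ with $r,r'>0$ after scaling (the condition that $L$ meets $\tau$ forces the two nonzero coordinates to share a sign); taking $p_i=[1:1:c_i]$ with $c_i:=\tfrac{r'}{r}\cdot\tfrac{\log(\lambda_2/\lambda_1)}{\log(\lambda_3/\lambda_1)}\,(\lambda_2/\lambda_3)^{t_i}>0$, a direct check using $\lambda_2\lambda_3<\lambda_1\lambda_3<\lambda_1\lambda_2$ gives $\xi_{g,p_i}^{(1)}(t_i)\to g_+^{(1)}$ and $\xi_{g,p_i}^{(2)}(t_i)\to L$. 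For part (3): a point $q$ in the interior of $g_+^{(2)}\cap\overline{\tau}$ has coordinates $[q_1:q_2:0]$ with $q_1,q_2>0$; taking $p_i=[1:b_i:1]$ with $b_i:=\tfrac{q_2}{q_1}(\lambda_1/\lambda_2)^{t_i}>0$, a direct check gives $\xi_{g,p_i}^{(1)}(t_i)\to q$ while $\xi_{g,p_i}^{(2)}(t_i)\to g_+^{(2)}$ automatically. In both cases $p_i\in\tau$ for every $i$, the sequence $t_i\to+\infty$ may be taken arbitrary, and the hypotheses on $L$ and $q$ are exactly what makes the tuning constants positive.

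In summary, parts (2) and (3) are routine once the exterior-square description of $\xi_{g,p}^{(2)}$ from the first paragraph is in place, and the real work — and the main obstacle — is part (1): proving that the $g$-invariant osculating curve is positive along increasing tuples, which amounts either to closing the convex power-curve arc into a Frenet curve or to the sign bookkeeping in the direct Fock--Goncharov computation.
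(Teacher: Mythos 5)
Your parts (2) and (3) are correct and are essentially the paper's argument: the paper also fixes an eigenbasis with $e_i\in g_+^{(i)}\cap g_-^{(4-i)}$, writes out $\xi_{g,p}^{(1)}$ and $\xi_{g,p}^{(2)}$ explicitly, and steers the limit by letting $p_i$ escape in $\tau$ at a rate tuned to $t_i$ (the paper sends the middle, resp.\ second, coordinate of $p_i$ to $0$, resp.\ $\infty$, rather than your projectively equivalent choices, but the computation is the same and your tuning constants check out). For part (1), the paper does exactly your ``Option B'': after normalizing $p=[1:1:1]$ it verifies positivity of $(g_-,\xi_{g,p}(s),\xi_{g,p}(t),g_+)$ by computing the triangle and edge invariants and invoking Theorem \ref{thm: Fock-Goncharov}; your observation that all the relevant determinants are generalized Vandermonde determinants $\det(\lambda_i^{s_j})$ and their one-variable derivatives, whose signs are pinned down by total positivity of the kernel $(x,y)\mapsto x^y$, is the right way to organize that check. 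I would advise you to commit to that route and drop Option A as the primary one: besides the closing-up problem you already flag, Theorem \ref{thm: positive and Frenet} is stated in this paper only in the direction ``positive tuple $\Rightarrow$ Frenet curve through it,'' and the proofs of Lemma \ref{lem: Frenet trick} and Proposition \ref{prop: smaller tent} only ever apply it to a Frenet curve produced from an already-positive tuple; they never assert that cyclically ordered points on an arbitrary hand-built Frenet curve form a positive tuple. So ``invoking the theorem in the realization direction'' for your closed-up convex curve is not available without proving that converse separately, which would be more work than the direct invariant computation.
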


\begin{proof}
Proof of (1). Choose a basis $(e_1,e_2,e_3)$ of $V$ such that $e_i\in g_+^{(i)}\cap g_-^{(4-i)}$ for $i=1,2,3$, and let $p=\begin{bmatrix}1\\1\\1\end{bmatrix}\in\mathbb{P}(V)$ (when written as a column vector in this basis). One can compute that 
\[\xi_{g,p}^{(1)}(t)=\begin{bmatrix}\frac{\lambda_1}{\lambda_3}(g^t)\\
\frac{\lambda_2}{\lambda_3}(g^t)\\
1
\end{bmatrix}\,\text{ and }\xi_{g,p}^{(2)}(t)=\left[1:-\frac{\log\frac{\lambda_1}{\lambda_3}(g)}{\log\frac{\lambda_2}{\lambda_3}(g)}\frac{\lambda_1}{\lambda_2}(g^t):\frac{\log\frac{\lambda_1}{\lambda_2}(g)}{\log\frac{\lambda_2}{\lambda_3}(g)}\frac{\lambda_1}{\lambda_3}(g^t)\right].\]
With this, the first statement follows from an easy computation (using Theorem \ref{thm: Fock-Goncharov}). 

Proof of (2). The assumptions on $L$ implies that $L$ is neither of the projective lines $\xi^{(2)}(g_+)$ and $\xi^{(1)}(g_+)+\xi^{(1)}(g_-)$. Thus, we may choose a basis $(e_1,e_2,e_3)$ of $V$ such that $e_i\in g_+^{(i)}\cap g_-^{(4-i)}$ for $i=1,2,3$, and $L=[0:-1:1]\in\mathbb{P}(V^*)$. By replacing $e_1$ with $-e_1$ if necessary, we may assume that
\[\tau=\left\{\begin{bmatrix}x\\y\\1\end{bmatrix}:x,y>0\right\}.\] 
Let $p_i:=\begin{bmatrix}1\\\frac{1}{i}\\1\end{bmatrix}\in\mathbb{P}(V)$ for any integer $i>0$, and observe that 
\[\xi_{g,p_i}^{(1)}(t)=\begin{bmatrix}\frac{\lambda_1}{\lambda_3}(g^t)\\
\frac{1}{i}\frac{\lambda_2}{\lambda_3}(g^t)\\
1
\end{bmatrix}\,\text{ and }\xi_{g,p_i}^{(2)}(t)=\left[1:-i\frac{\log\frac{\lambda_1}{\lambda_3}(g)}{\log\frac{\lambda_2}{\lambda_3}(g)}\frac{\lambda_1}{\lambda_2}(g^t):\frac{\log\frac{\lambda_1}{\lambda_2}(g)}{\log\frac{\lambda_2}{\lambda_3}(g)}\frac{\lambda_1}{\lambda_3}(g^t)\right].\]
Thus, if we let $t_i:=\frac{1}{\log\frac{\lambda_2}{\lambda_3}(g)}\log\left(i\frac{\log\frac{\lambda_1}{\lambda_3}(g)}{\log\frac{\lambda_1}{\lambda_2}(g)}\right)$ for all integers $i>0$, then 
\[\lim_{i\to\infty}\xi_{g,p_i}^{(1)}(t_i)=\lim_{i\to\infty}\begin{bmatrix}\frac{\lambda_1}{\lambda_2}(g^{t_i})\\
\frac{1}{i}\\
\frac{\lambda_3}{\lambda_2}(g^{t_i})
\end{bmatrix}=\begin{bmatrix}1\\
0\\
0
\end{bmatrix}.\]
because $\displaystyle\lim_{i\to\infty}t_i=\infty$. Also, by a straightforward computation, 
\[\frac{-i\frac{\log\frac{\lambda_1}{\lambda_3}(g)}{\log\frac{\lambda_2}{\lambda_3}(g)}\frac{\lambda_1}{\lambda_2}(g^{t_i})}{\frac{\log\frac{\lambda_1}{\lambda_2}(g)}{\log\frac{\lambda_2}{\lambda_3}(g)}\frac{\lambda_1}{\lambda_3}(g^{t_i})}=-1,\]
and
\[\lim_{i\to\infty}\frac{\log\frac{\lambda_1}{\lambda_2}(g)}{\log\frac{\lambda_2}{\lambda_3}(g)}\frac{\lambda_1}{\lambda_3}(g^{t_i})=\infty\]
because $\displaystyle\lim_{i\to\infty}t_i=\infty$. Thus, $\displaystyle\lim_{i\to\infty}\xi_{g,p_i}^{(2)}(t_i)=L$.

Proof of (3). The assumptions on $q$ implies that $q$ is neither of the points $\xi^{(1)}(g_+)$ and $\xi^{(2)}(g_+)\cap\xi^{(2)}(g_-)$. Thus, we may choose a basis $(e_1,e_2,e_3)$ of $V$ such that $e_i\in g_+^{(i)}\cap g_-^{(4-i)}$ for $i=1,2,3$, and $q=\begin{bmatrix}1\\1\\0\end{bmatrix}\in\mathbb{P}(V)$. By replacing $e_3$ with $-e_3$ if necessary, we may assume that
\[\tau=\left\{\begin{bmatrix}x\\y\\1\end{bmatrix}:x,y>0\right\}.\] 
Let $p_i:=\begin{bmatrix}1\\i\\1\end{bmatrix}\in\mathbb{P}(V)$ for any integer $i>0$. Then observe that 
\[\xi_{g,p_i}^{(1)}(t)=\begin{bmatrix}\frac{\lambda_1}{\lambda_3}(g^t)\\
i\frac{\lambda_2}{\lambda_3}(g^t)\\
1
\end{bmatrix}\,\text{ and }\xi_{g,p_i}^{(2)}(t)=\left[1:-\frac{1}{i}\frac{\log\frac{\lambda_1}{\lambda_3}(g)}{\log\frac{\lambda_2}{\lambda_3}(g)}\frac{\lambda_1}{\lambda_2}(g^t):\frac{\log\frac{\lambda_1}{\lambda_2}(g)}{\log\frac{\lambda_2}{\lambda_3}(g)}\frac{\lambda_1}{\lambda_3}(g^t)\right].\]
Thus, if we let $t_i:=\frac{\log i}{\log\frac{\lambda_1}{\lambda_2}(g)}$ for all integers $i>0$, then one computes that
\[\frac{\frac{\lambda_1}{\lambda_3}(g^{t_i})}{i\frac{\lambda_2}{\lambda_3}(g^{t_i})}=1.\]
Since $\displaystyle\lim_{i\to\infty}\log\frac{\lambda_1}{\lambda_3}(g^{t_i})=\infty$, this implies that $\displaystyle\lim_{i\to\infty}\xi^{(1)}_{g,p_i}(t_i)=q$.
At the same time,
\[\lim_{i\to\infty}\frac{-\frac{1}{i}\frac{\log\frac{\lambda_1}{\lambda_3}(g)}{\log\frac{\lambda_2}{\lambda_3}(g)}\frac{\lambda_1}{\lambda_2}(g^{t_i})}{\frac{\log\frac{\lambda_1}{\lambda_2}(g)}{\log\frac{\lambda_2}{\lambda_3}(g)}\frac{\lambda_1}{\lambda_3}(g^{t_i})}=-\frac{\log\frac{\lambda_1}{\lambda_3}(g)}{\log\frac{\lambda_1}{\lambda_2}(g)}\lim_{i\to\infty}\frac{1}{i}\frac{\lambda_3}{\lambda_2}(g^{t_i})=0\]
and
\[\lim_{i\to\infty}\frac{\log\frac{\lambda_1}{\lambda_2}(g)}{\log\frac{\lambda_2}{\lambda_3}(g)}\frac{\lambda_1}{\lambda_3}(g^{t_i})=\infty,\]
so $\displaystyle\lim_{i\to\infty}\xi_{g,p_i}^{(2)}(t_i)=[0:0:1]=g_+^{(2)}$.
\end{proof}

\begin{proof}[Proof of Lemma \ref{lem: separator lemma}]
To simplify notation, we will denote the point $a_+^{(1)}\in\mathbb{P}(V)$ simply by $x$. The proof proceeds in three different cases. 

{\bf Case 0: The image of $\xi_{b,x}^{(1)}$ is tangent to $a_+^{(2)}$.} In this case, $\xi_{b,x}(0)=a_+$, and there is some $t>0$ such that $\xi_{b,x}(-t)=b^{-1}\cdot a_+$ and $\xi_{b,x}(t)=b\cdot a_+$, see Figure \ref{fig: n=3}(i). Thus, Lemma \ref{lem: basic existence}(1) implies that $(b_-,b^{-1}\cdot a_+,a_+,b\cdot a_+,b_+,a_-)$ is positive. Also, Lemma \ref{lem: minor facts}(1) implies that there is a flag $K'\in\Fc(V)$ such that $(a_-,a^{-1}\cdot K',K',a\cdot K',a_+,b_+)$ is positive. Thus, 
\[(a_-,a^{-1}\cdot(a^i\cdot K'), (a^i\cdot K'),a\cdot (a^i\cdot K'),a_+,b_+)\] 
is positive for all integers $i$. Since positivity of a tuple of flags is an open condition and $\displaystyle\lim_{i\to\infty}a^i\cdot K'=a_+$,
\[(b_-,b^{-1}\cdot (a^i\cdot K'),a^i\cdot K',b\cdot (a^i\cdot K'),b_+,a_-)\] 
is also positive for sufficiently large integers $i$. Set $K:=a^i\cdot K'$.

\begin{figure}[h]
    \centering
    \includegraphics[width=0.9\textwidth]{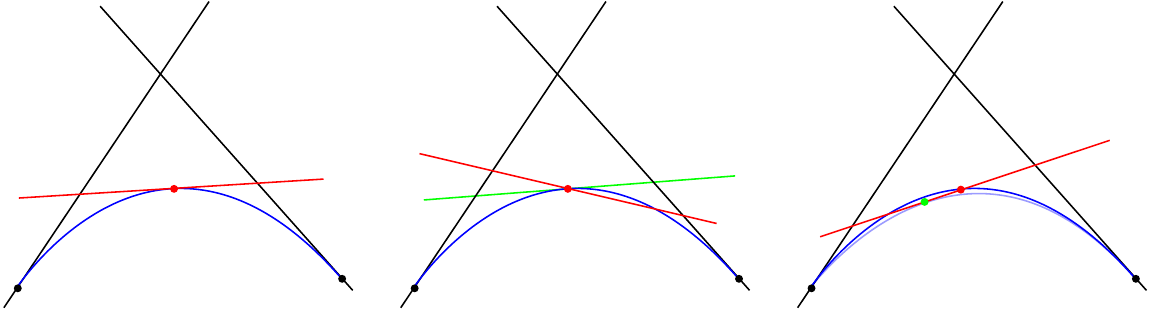}
    \tiny
\put (-372, 7){$b_-$}
\put (-267, 8){$b_+$}
\put (-316, 42){$a_+$}
\put (-244, 7){$b_-$}
\put (-139, 8){$b_+$}
\put (-188, 42){$a_+$}
\put (-150, 47){$\xi_{b,x}^{(2)}(0)$}
\put (-117, 7){$b_-$}
\put (-12, 8){$b_+$}
\put (-64, 42){$a_+$}
\put (-73, 30){$q$}
\small
\put (-320, -15){(i)}
\put (-193, -15){(ii)}
\put (-66, -15){(iii)}
    \caption{Proof of Lemma \ref{lem: separator lemma}.}
    \label{fig: n=3}
\end{figure}

{\bf Case 1: The image of $\xi_{b,x}^{(1)}$ intersects $a_+^{(2)}$ at some $t>0$.} Let $\tau$ be the simplex associated to $\{a_+,a_-\}$ that contains $b_-^{(1)}$. Since $\xi_{b,x}^{(1)}$ intersects $a_+^{(2)}$ at some $t>0$, observe that the line $\xi_{b,x}^{(2)}(0)$ passes through $\tau$, see Figure \ref{fig: n=3}(ii). Thus, Lemma \ref{lem: basic existence}(2) implies that there is a sequence $(p_i)_{i=1}^\infty$ in $\tau$ and a sequence $(t_i)_{i=1}^\infty\in\R$ such that 
\[\lim_{i\to\infty}\xi_{a,p_i}(t_i)=\xi_{b,x}(0).\]
By Lemma \ref{lem: basic existence}(1), $(b_-,b^{-1}\cdot \xi_{b,x}(0),\xi_{b,x}(0),b\cdot \xi_{b,x}(0),b_+,a_-)$ is positive, so 
\[(b_-,b^{-1}\cdot \xi_{a,p_i}(t_i),\xi_{a,p_i}(t_i),b\cdot \xi_{a,p_i}(t_i),b_+,a_-)\]
is positive for sufficiently large integers $i$. Also, Lemma \ref{lem: basic existence}(1) implies that
\[(a_-,a^{-1}\cdot \xi_{a,p_i}(t_i),\xi_{a,p_i}(t_i),a\cdot \xi_{a,p_i}(t_i),a_+,b_-)\]
is positive. Set $K:=\xi_{a,p_i}(t_i)$.

{\bf Case 2: The image of $\xi_{b,x}^{(1)}$ intersects $a_+^{(2)}$ at some $t<0$.} Let $\tau$ be the simplex associated to $\{a_+,a_-\}$ that contains $b_-$. Since the image of $\xi_{b,x}^{(1)}$ intersects $a_+^{(2)}$ at some $t<0$, observe that there is some point $q$ in the interior of $\overline{\tau}\cap a_+^{(2)}$ such that the image of $\xi^{(1)}_{b,q}(0)=q$ and $\xi^{(2)}_{b,q}(0)=a_+^{(2)}$ at $q$, see Figure \ref{fig: n=3}(iii). Thus, Lemma \ref{lem: basic existence}(3) implies that there is a sequence $(p_i)_{i=1}^\infty$ in $\tau$ and a sequence $(t_i)_{i=1}^\infty\in\R$ such that 
\[\lim_{i\to\infty}\xi_{a,p_i}(t_i)=\xi_{b,q}(0).\]
By Lemma \ref{lem: basic existence}(1), $(b_-,b^{-1}\cdot \xi_{b,q}(0),\xi_{b,q}(0),b\cdot \xi_{b,q}(0),b_+,a_-)$ is positive, so 
\[(b_-,b^{-1}\cdot \xi_{a,p_i}(t_i),\xi_{a,p_i}(t_i),b\cdot \xi_{a,p_i}(t_i),b_+,a_-)\]
is positive for sufficiently large integers $i$. Also, Lemma \ref{lem: basic existence}(1) implies that
\[(a_-,a^{-1}\cdot \xi_{a,p_i}(t_i),\xi_{a,p_i}(t_i),a\cdot \xi_{a,p_i}(t_i),a_+,b_-)\]
is positive. Set $K:=\xi_{a,p_i}(t_i)$.
\end{proof}

\subsection{Explicit examples of primitive stable representations}\label{sec: irred Fuch}

In this section, we use weak positivity to construct explicit examples of primitive stable representations. 

\subsubsection{Non-positive examples in all dimensions.} \label{sec: example1}
As our first example, we construct a non-positive, primitive stable representation from $F_2$ to $\PGL(V)$ whose image does not lie in $\iota(\PGL_2(\R))$ for any irreducible representation $\iota:\PGL_2(\R)\to\PGL(V)$. 

Let $U_n$ be the $n\times n$ upper triangular matrix whose entries are given by
\begin{equation}\label{eqn: Un}
(U_n)_{i,j}:=\left\{\begin{array}{ll}
0&\text{if }i>j;\\
{j-1 \choose i-1}&\text{if }i\leq j,\\
\end{array}\right.
\end{equation}
and let $W_n$ be the $n\times n$ upper triangular matrix whose entries are given by
\begin{equation}\label{eqn: Wn}(W_n)_{i,j}:=\left\{\begin{array}{ll}
0&\text{if }i>j;\\
(-1)^{j+i}{j-1 \choose i-1}&\text{if }i\leq j.\\
\end{array}\right.
\end{equation}
The matrix $U_n$ is usually called the \emph{$n$-th upper triangular Pascal matrix}.

Choose a basis $(e_1,\dots,e_n)$ of $V$. For all $t>1$, let $a_t\in\PGL(V)$ be represented by a diagonal matrix whose diagonal entries are $2^{\frac{n-1}{2}},2^{\frac{n-3}{2}},\dots,2^{\frac{3-n}{2}},\frac{2^{\frac{3-n}{2}}}{t}$. Then let $b\in\PGL(V)$ be the positive loxodromic element given by the following conditions:
\begin{itemize}
\item The eigenvalues of $b$ (up to scaling by a non-zero number) are $2^{\frac{n-1}{2}}$, $2^{\frac{n-3}{2}}$, $\dots$, $2^{\frac{3-n}{2}}$, $2^{\frac{1-n}{2}}$.
\item For all $i=1,\dots,n-1$, $b_+^{(i)}$ is spanned by the last $i$ columns of $U_n$. 
\item For all $i=1,\dots,n-1$, $b_-^{(i)}$ is spanned by the last $i$ columns of $W_n$. 
\end{itemize}
Here, the columns of $U_n$ and $W_n$ are viewed as vectors in $V$ via the chosen basis.

Let $\{\gamma_1,\gamma_2\}$ be a generating pair for $F_2$, and let $\rho_t:F_2\to\PGL(V)$ be the representation defined by $\rho_t(\gamma_1):=a_t$ and $\rho_t(\gamma_2):=b$. 

\begin{proposition}\label{prop: example 1}
\begin{enumerate}
\item For all $t>1$, $\rho_t$ is primitive stable.
\item For all $1<t<\frac{3}{2}$, the triple $(b_+, a_t\cdot b_+,(a_t)_+)$ is not positive. In particular, for any identification $\pi_1(\Sigma)\simeq F_2$, $\rho_t:\pi_1(\Sigma)\to\PGL(V)$ is not a positive representation.
\item If $t\neq 2$, then $\rho_t(F_2)$ does not lie in $\iota(\PGL_2(\R))$ for any irreducible representation $\iota:\PGL_2(\R)\to\PGL(V)$.
\end{enumerate}
\end{proposition}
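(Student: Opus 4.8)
The plan is to verify the three parts of Proposition \ref{prop: example 1} by reducing them to the machinery already established. For part (1), I would apply Theorem \ref{thm: general n}: it suffices to check that $b$ is positive loxodromic (immediate from the eigenvalue and flag conditions, since the columns of $U_n$ and $W_n$ span an invariant simplex — this is essentially the content of the Pascal matrix being totally positive, cf.\ Proposition \ref{prop: Veronese}), that $a_t$ is loxodromic (clear, as its eigenvalues $2^{(n-1)/2},\dots,2^{(3-n)/2},2^{(3-n)/2}/t$ are distinct for $t>1$, $t\neq 1$), and that the sextuple $(b_-,a_t\cdot b_-,(a_t)_+,a_t\cdot b_+,b_+,(a_t)_-)$ is positive up to switching $a_t\cdot b_-$ and $a_t\cdot b_+$. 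The key observation is that $a_t$ is a small perturbation of the diagonal element $a$ with eigenvalues $2^{(n-1)/2},\dots,2^{(1-n)/2}$, which lies in $\iota(\PGL_2(\R))$ and acts as a Veronese-type element. For that diagonal model, $(a)_\pm = F_\pm$ (in the notation of Example \ref{eq: Veronese}), and the required sextuple positivity for $a$ and $b$ together is a positivity statement about flags coming from the Veronese curve plus the specific $b$; I would verify this directly using Theorem \ref{thm: Fock-Goncharov}, computing the relevant triangle and edge invariants from the explicit matrices $U_n$, $W_n$, and the diagonal matrices. Then, since positivity of a tuple of flags is an open condition and $(a_t)_\pm \to (a)_\pm$ as $t\to 2$ with $(a_t)_\pm$ depending continuously on $t$, the sextuple remains positive for $t$ in a neighborhood; but in fact one wants it for \emph{all} $t>1$, so I would instead do the computation with $t$ as a free parameter and check the sign conditions hold throughout the range $t>1$.

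For part (2), I would compute $b_+^{(n-1)}$, $a_t\cdot b_+^{(n-1)}$, and $(a_t)_+^{(n-1)}$ explicitly as hyperplanes (or their first members as points in $\mathbb P(V)$), using that $b_+^{(1)}$ is spanned by the last column of $U_n$ — which is $(1,1,\dots,1)^T$ up to the binomial pattern, in fact the last column of the Pascal matrix $U_n$ is $\binom{n-1}{0},\binom{n-1}{1},\dots,\binom{n-1}{n-1}$ — and that $(a_t)_+$ is the flag $F_+$ from the standard basis since $a_t$ is diagonal with decreasing diagonal entries. One then evaluates an appropriate triple ratio or, more simply in the $n=2$ reduction, a cross ratio / ordering on a projective line, and shows that for $1<t<3/2$ the sign is wrong, i.e.\ $(b_+,a_t\cdot b_+,(a_t)_+)$ fails to be positive. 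Since any positive representation $\rho:\pi_1(\Sigma)\to\PGL(V)$ has a positive equivariant boundary map $\xi$, and $(a_t)_+$, $b_+$, $a_t\cdot b_+$ would then be images under $\xi$ of cyclically ordered boundary points (because $\gamma_2$ and $\gamma_1\gamma_2\gamma_1^{-1}$-type fixed points are cyclically ordered on the boundary), these three flags would have to form a positive triple — contradiction. I would phrase this last step carefully by choosing the three boundary points whose images are $b_+$, $a_t b_+ = a_t\cdot (\rho_t(\gamma_2))_+\cdot a_t^{-1}$-adjacent flag, and $(a_t)_+$, and noting their forced cyclic order.

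For part (3), the standard strategy is an eigenvalue obstruction: if $\rho_t(F_2)\subset\iota(\PGL_2(\R))$ for an irreducible $\iota:\PGL_2(\R)\to\PGL(V)$, then $\iota$ is (up to an automorphism of $\PGL(V)$) the $\mathrm{Sym}^{n-1}$ representation, so every element of $\rho_t(F_2)$ has eigenvalues of the form $\lambda^{n-1},\lambda^{n-3},\dots,\lambda^{1-n}$ (a geometric progression with ratio $\lambda^{-2}$) up to scaling. The element $a_t$ has eigenvalues $2^{(n-1)/2},2^{(n-3)/2},\dots,2^{(3-n)/2},2^{(3-n)/2}/t$; for this to be a geometric progression one needs the last ratio $2^{(3-n)/2}/t \div 2^{(5-n)/2} = 1/(2t)$ to equal the common ratio $2^{-1}$, forcing $t=1$, which is excluded — wait, more carefully the common ratio of $2^{(n-1)/2},2^{(n-3)/2},\dots$ is $2^{-1}$, and the last entry being $2^{(3-n)/2}/t$ instead of $2^{(1-n)/2}=2^{(3-n)/2}\cdot 2^{-1}$ forces $1/t = 2^{-1}$, i.e.\ $t=2$. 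So for $t\neq 2$ the spectrum of $a_t=\rho_t(\gamma_1)$ is not that of any element in an irreducibly-embedded $\PGL_2(\R)$, hence $\rho_t(F_2)\not\subset\iota(\PGL_2(\R))$. I would need to also handle post-composition with automorphisms of $\PGL(V)$: the only outer automorphism is $g\mapsto (g^T)^{-1}$, which inverts the spectrum, still a geometric progression, so the same numerical obstruction applies.

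The main obstacle I expect is part (1): verifying the sextuple positivity condition of Theorem \ref{thm: general n} for all $t>1$ from the explicit Pascal-matrix data. This is a genuine computation — one must identify the flags $b_\pm$, $a_t\cdot b_\pm$, $(a_t)_\pm$ concretely, decide which of $a_t\cdot b_-$, $a_t\cdot b_+$ plays which role, triangulate the hexagon, and check every triangle invariant is positive and every edge invariant is negative (equivalently, exhibit a single basis $\mathcal B$ and totally positive unipotents realizing the tuple). The cleanest route is probably to exploit that at $t=2$ everything degenerates to a positive configuration inside the Veronese picture (where positivity of cyclically-ordered Veronese flags is automatic by Proposition \ref{prop: Veronese} and Example \ref{eq: Veronese}), establish strict positivity there, and then argue that the relevant invariants, viewed as functions of $t$, do not change sign on $(1,\infty)$ — ideally by showing they are monotone or by a direct factorization into manifestly-signed terms. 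Parts (2) and (3) are comparatively short once the explicit coordinates from part (1) are in hand.
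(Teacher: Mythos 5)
Parts (2) and (3) of your plan match the paper: for (2) the paper simply computes the single triangle invariant $T_{(1,1,n-2)}(b_+,a_t\cdot b_+,(a_t)_+)=\tfrac{1}{2t-3}$ and invokes Theorem \ref{thm: Fock-Goncharov}, and your "positive boundary map forces the triple to be positive" step is exactly the paper's one-line justification; for (3) your geometric-progression obstruction on the eigenvalues of $a_t$ is the paper's argument (and your extra remark about post-composing with the outer automorphism is a harmless refinement).

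Part (1), however, has a genuine gap, and it stems from which way you feed the pair into Theorem \ref{thm: general n}. You take $a:=a_t$ and $b:=b$, so the sextuple you must check is $(b_-,a_t\cdot b_-,(a_t)_+,a_t\cdot b_+,b_+,(a_t)_-)$, whose entries $a_t\cdot b_\pm$ depend on $t$ and, for $t\neq 2$, do not lie on any Veronese curve (precisely because of part (3)). Your proposed completion — verify positivity at $t=2$ inside the Veronese picture and then argue the invariants "do not change sign on $(1,\infty)$" — is not a proof: you would need to show that none of the triangle and edge invariants of the triangulated hexagon vanishes anywhere on $(1,\infty)$, which is exactly the hard computation you were trying to avoid, and nothing in your argument rules out a sign change. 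The paper's key observation is that both $a_t$ and $b$ are positive loxodromic, so Theorem \ref{thm: general n} can be applied with the roles swapped, taking the \emph{positive loxodromic} element to be $a_t$ and the loxodromic element to be $b$. The required sextuple then becomes $(b_-,(a_t)_+,b\cdot(a_t)_+,b_+,b\cdot(a_t)_-,(a_t)_-)$, and since $(a_t)_\pm=F_\pm$ are independent of $t$ (a point your write-up misses: you speak of $(a_t)_\pm\to(a)_\pm$ as $t\to 2$, but they are constant in $t$) and $b=\iota(b')$ with $b_\pm$, $(a_t)_\pm$, $b\cdot(a_t)_\pm$ all images under the positive $\iota$-equivariant map $\nu$ of six cyclically ordered points of $\mathbb P(\R^2)$, positivity is immediate from Example \ref{eq: Veronese} with no $t$-dependent computation at all. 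With that reassignment your argument closes; without it, part (1) remains unproved.
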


\begin{proof}
Proof of (1). Recall that in Section \ref{sec: total positivity}, we defined, using the basis $\Bc:=(e_1,\dots,e_n)$, a linear representation $i=i_{\Bc}:\GL_2(\R)\to\GL(V)$. This projectivizes to an irreducible representation $\iota:\PGL_2(\R)\to\PGL(V)$. Let $\nu:\mathbb P(\R^2)\to\mathcal F(V)$ be the map given by 
\begin{align*}
\nu:\begin{bmatrix}1\\0\end{bmatrix}\mapsto&\,\,(a_t)_+,\\
\nu:\begin{bmatrix}x\\1\end{bmatrix}\mapsto&\,\,\iota\left(\begin{bmatrix}1&x\\0&1\end{bmatrix}\right)\cdot (a_t)_-.
\end{align*}
As observed in Example \ref{eq: Veronese}, $\nu$ is $\iota$-equivariant and positive. 

Let $b'\in\PGL(\R^2)$ be the element whose eigenvalues are $\sqrt{2}$ and $\frac{1}{\sqrt{2}}$, and whose attracting and repelling fixed points in $\mathbb P(\R^2)$ are $\begin{bmatrix}1\\1\end{bmatrix}$ and $\begin{bmatrix}-1\\1\end{bmatrix}$ respectively. Then $b'$ preserves both cyclic orderings on $\mathbb P(\R^2)$, so 
\[\left(\begin{bmatrix}-1\\1\end{bmatrix},\begin{bmatrix}1\\0\end{bmatrix},b'\cdot \begin{bmatrix}1\\0\end{bmatrix},\begin{bmatrix}1\\1\end{bmatrix},b'\cdot \begin{bmatrix}0\\1\end{bmatrix},\begin{bmatrix}0\\1\end{bmatrix}\right)\]
is positive. It is straightforward to check that 
\[\nu\left(\begin{bmatrix}-1\\1\end{bmatrix}\right)=b_-,\,\,\nu\left(\begin{bmatrix}1\\1\end{bmatrix}\right)=b_+,\,\,\nu\left(\begin{bmatrix}1\\0\end{bmatrix}\right)=(a_t)_+,\,\text{ and }\,\nu\left(\begin{bmatrix}0\\1\end{bmatrix}\right)=(a_t)_-.\]
Furthermore, $\iota(b')=b$. Thus, the fact that $\nu$ is positive and $\iota$-equivariant then implies that $(b_-,(a_t)_+,b\cdot (a_t)_+,b_+,b\cdot (a_t)_-,(a_t)_-)$ is positive. The fact that $\rho_t$ is primitive stable now follows from Theorem \ref{thm: general n}.

Proof of (2). A straightforward computation (in the basis $(e_1,\dots,e_n)$) yields
\[T_{(1,1,n-2)}(b_+,a_t\cdot b_+,(a_t)_+)=\frac{1}{2t-3}.\]
(The triple ratio $T_{\bf j}$ was defined in Section \ref{sec: triple ratio}.) Thus, $T_{(1,1,n-2)}(b_+,a_t\cdot b_+,(a_t)_+)<0$ for all $1<t<\frac{3}{2}$, so Theorem \ref{thm: Fock-Goncharov} implies that $(b_+,a_t\cdot b_+,(a_t)_+)$ is not a positive triple of flags. This implies that $\rho_t$ is not a positive representation for any identification $\pi_1(S)\simeq F_2$.

Proof of (3). From the definition of $\iota$ (see Section \ref{sec: total positivity}), one sees that if $g\in\iota(\PGL_2(\R))$, then $\frac{\lambda_k}{\lambda_{k+1}}(g)=\frac{\lambda_j}{\lambda_{j+1}}(g)$ for all $k,j=1,\dots,n-1$. But the eigenvalues of $a_t$ do not satisfy this condition unless $t=2$.
\end{proof}

\subsubsection{Examples that converge to the trivial representation.}\label{sec: example2}
Next, we construct a family $\rho_t:F_2\to\PGL(V)$ of non-positive, primitive stable representations that converges to the trivial representation, and whose images do not lie in $\iota(\PGL_2(\R))$ for any irreducible representation $\iota:\PGL_2(\R)\to\PGL(V)$. 

Choose a basis $(e_1,\dots,e_n)$ of $V$. For all $t>0$, let $a_t\in\PGL(V)$ be represented by a diagonal matrix whose diagonal entries are $2^{\frac{t(n-1)}{2}},2^{\frac{t(n-3)}{2}},\dots,2^{\frac{t(3-n)}{2}},2^{t(1-n)}$ down the diagonal. Then let $b_t\in\PGL(V)$ be the positive loxodromic element given by the following conditions:
\begin{itemize}
\item The eigenvalues of $b_t$ (up to scaling by a non-zero number) are $2^{\frac{t(n-1)}{2}}$, $2^{\frac{t(n-3)}{2}}$, $\dots$, $2^{\frac{t(3-n)}{2}}$, $2^{\frac{t(1-n)}{2}}$.
\item For all $i=1,\dots,n-1$, $(b_t)_+^{(i)}$ is spanned by the last $i$ columns of $U_n$. 
\item For all $i=1,\dots,n-1$, $(b_t)_-^{(i)}$ is spanned by the last $i$ columns of $W_n$. 
\end{itemize}
Here, $U_n$ and $W_n$ are the $n\times n$ matrices given by (\ref{eqn: Un}) and (\ref{eqn: Wn}) respectively. Let $\{\gamma_1,\gamma_2\}$ be a generating pair for $F_2$, and let $\rho_t:F_2\to\PGL(V)$ be the representation defined by $\rho_t(\gamma_1):=a_t$ and $\rho_t(\gamma_2):=b_t$. 

\begin{proposition} 
\begin{enumerate}
\item For all $t>0$, $\rho_t$ is primitive stable. 
\item The family $\rho_t$ converges to the trivial representation as $t\to 0$.
\item For all $t>0$, $\rho_t$ does not lie in $\iota(\PGL_2(\R))$ for any irreducible representation $\iota:\PGL_2(\R)\to\PGL(V)$.
\end{enumerate}
\end{proposition}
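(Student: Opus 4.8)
The plan is to mirror the proof of Proposition~\ref{prop: example 1}, replacing the fixed element $b$ there by the $t$-dependent element $b_t$.

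For part (1), first observe that $a_t$ is positive loxodromic with attracting flag $(a_t)_+=F_+$ and repelling flag $(a_t)_-=F_-$, where $F_\pm$ are the two coordinate flags of the basis $(e_1,\dots,e_n)$, since the diagonal entries of $a_t$ are strictly decreasing for every $t>0$. Next, since $U_n$ and $W_n$ are the images under $i=i_{\mathcal B}$ of the unipotent matrices $\left[\begin{smallmatrix}1&1\\0&1\end{smallmatrix}\right]$ and $\left[\begin{smallmatrix}1&-1\\0&1\end{smallmatrix}\right]$, the eigenvalues and fixed flags prescribing $b_t$ coincide with those of $\iota(b_t')$, where $b_t'\in\PGL_2(\R)$ is positive loxodromic with eigenvalues $2^{t/2},2^{-t/2}$, attracting fixed point $[1:1]$, and repelling fixed point $[-1:1]$; hence $b_t=\iota(b_t')$. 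Let $\nu:\mathbb P(\R^2)\to\mathbb P(V)$ be the $\iota$-equivariant positive map of Example~\ref{eq: Veronese}, so that $\nu([1:0])=(a_t)_+$, $\nu([0:1])=(a_t)_-$, $\nu([1:1])=(b_t)_+$, and $\nu([-1:1])=(b_t)_-$. As $b_t'$ is positive loxodromic it preserves both cyclic orders of $\mathbb P(\R^2)$, so $\big([-1:1],[1:0],b_t'\cdot[1:0],[1:1],b_t'\cdot[0:1],[0:1]\big)$ is cyclically ordered, and therefore its $\nu$-image
\[\big((b_t)_-,(a_t)_+,b_t\cdot(a_t)_+,(b_t)_+,b_t\cdot(a_t)_-,(a_t)_-\big)\]
is a positive tuple of flags. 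Reversing this tuple (Observation~\ref{obs: basic flag}) gives $\big((a_t)_-,b_t\cdot(a_t)_-,(b_t)_+,b_t\cdot(a_t)_+,(a_t)_+,(b_t)_-\big)$, which is exactly the positivity hypothesis of Theorem~\ref{thm: general n} with its ``$b$'' equal to $a_t$ (positive loxodromic) and its ``$a$'' equal to $b_t$ (loxodromic). Thus the representation sending $\gamma_1\mapsto b_t$, $\gamma_2\mapsto a_t$ is primitive stable; precomposing with the automorphism of $F_2$ that swaps the two generators (which permutes primitive elements and is a quasi-isometry of $F_2$) shows $\rho_t$ itself is primitive stable.

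Part (2) is a direct limit computation: the diagonal matrix representing $a_t$ converges to the identity in $\PGL(V)$ as $t\to 0$, and since the fixed flags $(b_t)_\pm$ are prescribed independently of $t$ while the eigenvalues of $b_t$ are pairwise distinct, the eigenlines $(b_t)_+^{(k)}\cap(b_t)_-^{(n-k+1)}$ of $b_t$ do not depend on $t$; hence $b_t=g_0D_tg_0^{-1}$ for a fixed $g_0\in\GL(V)$ and a diagonal $D_t$ whose entries all tend to $1$, so $b_t\to\id$. Since convergence in $\Hom(F_2,\PGL(V))$ is determined by the images of $\gamma_1$ and $\gamma_2$, the family $\rho_t$ converges to the trivial representation.

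For part (3), I would argue as in Proposition~\ref{prop: example 1}(3): if $g\in\iota(\PGL_2(\R))$ for an irreducible $\iota:\PGL_2(\R)\to\PGL(V)$, then $\tfrac{\lambda_k}{\lambda_{k+1}}(g)$ is independent of $k$. The consecutive eigenvalue ratios of $a_t=\rho_t(\gamma_1)$ equal $2^{t}$ for $k=1,\dots,n-2$ and $2^{t(n+1)/2}$ for $k=n-1$, which are not all equal when $n\ge 3$ and $t>0$; hence $\rho_t(\gamma_1)\notin\iota(\PGL_2(\R))$, so $\rho_t(F_2)$ does not lie in $\iota(\PGL_2(\R))$. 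The only step requiring genuine care is the bookkeeping in part (1)---matching the prescribed fixed flags of $b_t$ with the values of $\nu$ and assigning the generators to the roles ``$a$'' and ``$b$'' of Theorem~\ref{thm: general n} correctly---while parts (2) and (3) are routine.
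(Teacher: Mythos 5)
Your proof is correct and takes essentially the same route as the paper, whose entire argument is that the proofs of parts (1) and (3) of Proposition \ref{prop: example 1} carry over with $b$ replaced by $b_t$ (identify $b_t=\iota(b_t')$, push a cyclically ordered $6$-tuple through the positive $\iota$-equivariant map $\nu$ of Example \ref{eq: Veronese}, and apply Theorem \ref{thm: general n}), while (2) is declared obvious. The additional bookkeeping you supply---reversing the tuple and swapping the roles of the two generators before invoking Theorem \ref{thm: general n}, and the eigenvalue-ratio computation $2^{t}$ versus $2^{t(n+1)/2}$---only makes explicit steps the paper leaves implicit.
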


\begin{proof}
The same arguments used to prove (1) and (3) of Proposition \ref{prop: example 1} also prove (1) and (3) respectively. (2) is obvious from the definition of $\rho_t$.
\end{proof}

\subsubsection{Non-discrete and non-faithful examples when $n=3$} \label{sec: example3}
Choose a basis $(e_1,e_2,e_3)$ of $V$ to identify $V\simeq \R^3$. For any real number $t$, let $a,b_t\in\PGL_3(\mathbb{R})$ be projective transformations given by

\[ a:=\begin{bmatrix} 1/2 & 0& 0 \\ 0&1&0\\0&0&2 \end{bmatrix}, \ b_t:=\begin{bmatrix} 2t+5 & -4t+2& 2t-3 \\ -2t+1&4t+2&-2t+1\\2t-3&-4t+2&2t+5 \end{bmatrix}.
\]
Let $\{\gamma_1,\gamma_2\}$ be a pair of generators of $F_2$ 
and $\rho_t:F_2\to\PGL(\mathbb{R}^3)$ be the representation defined by $\rho_t(\gamma_1)=a$ and $\rho_t(\gamma_2)=b_t$.

\begin{proposition}\label{prop:example1}
\begin{enumerate}
\item If $t>1$, then $\rho_t$ is primitive stable.
\item If $t<\frac{35}{2}$ and $\cos^{-1}\left(\frac{-35+306t-32t^2}{256t}\right)\in \mathbb Q\cdot \pi$, then $\rho_t$ is non-faithful,
\item If $t<\frac{35}{2}$ and $\cos^{-1}\left(\frac{-35+306t-32t^2}{256t}\right) \notin \mathbb Q\cdot \pi$, then $\rho_t$ is non-discrete,
\item If $t\neq 2$, then $\rho_t(F_2)$ does not lie in a conjugate of $\mathrm{PO}(2,1)$.
\end{enumerate}
\end{proposition}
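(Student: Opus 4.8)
The four statements are essentially independent: I would prove (1) using the machinery of the paper and (2)--(4) by direct computation with the explicit $3\times 3$ matrices, and the one genuinely delicate point is the geometric set-up for (1).

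\emph{Part (1).} The key observation is that all of $a_\pm$ and $(b_t)_\pm$ are osculating flags of a single conic. First, $a=\mathrm{diag}(1/2,1,2)$ has eigenvalues $1/2<1<2$, so it is positive loxodromic, and $b_t$ is diagonalized by the $t$-independent matrix with columns $(1,1,1),(1,0,-1),(1,-1,1)$, with eigenvalues $4,8,8t$, so for $t>1$ it is positive loxodromic with $4<8<8t$. Identifying $V=\R^3$ with $\mathrm{Sym}^2(\R^2)$ so that the Veronese map $\nu:\Pb(\R^2)\to\Pb(V)$ has image the conic $\{xz=y^2\}$, I would check directly that $a$ preserves this conic (acting there by $s\mapsto 2s$) and that $a_+,a_-,(b_t)_+,(b_t)_-$ are the osculating ($\iota$-equivariant) flags of $\nu$ at $s=\infty,0,-1,1$ respectively; for $(b_t)_\pm$ this uses that the fixed points $(1,1,1),(1,-1,1)$ of $b_t$ lie on the conic and that together with the middle eigenvector $(1,0,-1)$ they span the corresponding tangent lines. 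Since $(1,\infty,-1,0)$ is in cyclic order on $\Pb(\R^2)$, Proposition \ref{prop: Veronese} (cf.\ Example \ref{eq: Veronese}) gives that $((b_t)_-,a_+,(b_t)_+,a_-)$ is a positive quadruple of flags. Then Theorem \ref{thm: positive quadruple} shows $\{a,b_t\}$ and $\{a,b_t^{-1}\}$ are admissible; since admissibility of $\{a,b_t^{-1}\}$ is equivalent to that of its inverse $\{a^{-1},b_t\}=\rho_t(R')$, the representation $\rho_t$ is $(R,R')$-weakly positive, hence primitive stable by Proposition \ref{prop: forward primitive}. The only real work here is the bookkeeping identifying the four fixed flags with osculating flags of one conic; after that the paper's theorems do everything.

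\emph{Parts (2) and (3).} I would take $w=[\gamma_1,\gamma_2]$, so $C_t:=\rho_t(w)=ab_ta^{-1}b_t^{-1}\in\SL_3(\R)$. Since $\mathrm{tr}(XYX^{-1}Y^{-1})=\mathrm{tr}\big((XYX^{-1}Y^{-1})^{-1}\big)$ for all $X,Y$, the characteristic polynomial of $C_t$ is $(\lambda-1)(\lambda^2-(\tau_t-1)\lambda+1)$ with $\tau_t=\mathrm{tr}(C_t)$. A direct computation gives $\tau_t=\frac{-32t^2+434t-35}{128t}$, so the non-unit eigenvalues of $C_t$ are $e^{\pm i\theta_t}$ with $\cos\theta_t=\frac{\tau_t-1}{2}=\frac{-35+306t-32t^2}{256t}$, which lies strictly in $(-1,1)$ exactly when $\frac1{16}<t<\frac{35}{2}$; thus for such $t$ (in particular for all $1<t<\frac{35}{2}$) the element $C_t$ is genuinely elliptic with rotation angle $\theta_t\in(0,\pi)$. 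If $\theta_t\in\Q\cdot\pi$, then $C_t$ has finite order in $\PGL_3(\R)$, so some nontrivial power of the commutator $w$ lies in $\ker\rho_t$ and $\rho_t$ is non-faithful. If $\theta_t\notin\Q\cdot\pi$, then $\overline{\langle C_t\rangle}$ is a circle subgroup of $\PGL_3(\R)$ by Kronecker's theorem, so $\rho_t(F_2)$ is not discrete. The one subtlety is precisely the commutator trace identity, which is what guarantees $C_t$ is elliptic rather than loxodromic-with-rotation.

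\emph{Part (4).} Suppose $\rho_t(F_2)$ lies in a conjugate of $\PO(2,1)$; then $a$ and $b_t$ projectively preserve a common nondegenerate quadratic form $Q$ with $\PO(Q)\cong\PO(2,1)$. Because $a=\mathrm{diag}(d_1,d_2,d_3)$ has $d_1d_3=d_2^2$ as the only coincidence among the products $d_id_j$, any form preserved by $a$ up to scaling is supported on the $(1,3),(3,1)$ and $(2,2)$ entries; up to scaling it is $Q_\lambda:\ x_1x_3-\lambda x_2^2$ for some $\lambda\ne 0$, and $\PO(Q_\lambda)\cong\PO(2,1)$. Imposing $b_t^{\top}Q_\lambda b_t\in\R^{\times}Q_\lambda$ and reading off the entries of $b_t^{\top}Q_\lambda b_t$ that are forced to vanish reduces (for $t\notin\{\pm\frac12\}$) to $\lambda=1$ together with $(2t+5)(2t-3)=(2t-1)^2$, i.e.\ $t=2$; the cases $t=\pm\frac12$ are ruled out directly since there $b_t$ has a repeated eigenvalue and an eigenvalue pattern not realized in $\PO(2,1)$. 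Hence for $t\ne 2$ no such $Q$ exists, so $\rho_t(F_2)$ is not contained in any conjugate of $\PO(2,1)$.
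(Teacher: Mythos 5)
Your proposal is correct and follows the same overall strategy as the paper: positivity of the quadruple $\big((b_t)_-,a_+,(b_t)_+,a_-\big)$ plus Theorem \ref{thm: positive quadruple} and Proposition \ref{prop: forward primitive} for (1), the commutator's characteristic polynomial $(1-x)(x^2+\tfrac{35-306t+32t^2}{128t}x+1)$ and ellipticity for $1<t<\tfrac{35}{2}$ for (2)--(3), and an eigenvalue obstruction for (4). The only divergences are cosmetic: for (1) the paper checks positivity of the quadruple by directly computing the Fock--Goncharov invariants via Theorem \ref{thm: Fock-Goncharov}, whereas you observe that all four fixed flags are osculating flags of a single conic and invoke Example \ref{eq: Veronese} (which is in fact the method the paper uses for its other example in Section \ref{sec: example1}); and for (4) the paper argues more briefly that a loxodromic element of $\mathrm{PO}(2,1)$ satisfies $\lambda_1=\lambda_3^{-1}$, a condition $b_t$ fails for $t\neq 2$, while your invariant-quadratic-form computation reaches the same conclusion.
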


\begin{proof}
Proof of (1). Let $\lambda_1(a)\geq\lambda_2(a)\geq\lambda_3(a)$ (resp. $\lambda_1(b_t)\geq\lambda_2(b_t)\geq\lambda_3(b_t)$) denote the eigenvalues of $a$ (resp. $b_t$). It is easy to calculate that $\lambda_1(a)=2$, $\lambda_2(a)=1$, $\lambda_3(a)=\frac{1}{2}$, and $\lambda_1(b_t)=t$, $\lambda_2(b_t)=1$, $\lambda_3(b_t)=\frac{1}{2}$. It follows that $a$ is loxodromic, and $b_t$ is positive loxodromic when $t>1$. Also, one can calculate that
\begin{itemize}
\item $a_+^{(1)}=\begin{bmatrix}0\\0\\1\end{bmatrix}$, $a_+^{(2)}=\begin{bmatrix}1:0:0\end{bmatrix}$,
\item $a_-^{(1)}=\begin{bmatrix}1\\0\\0\end{bmatrix}$, $a_-^{(2)}=\begin{bmatrix}0:0:1\end{bmatrix}$,
\end{itemize}
and that when $t>1$, 
\begin{itemize}
\item $(b_t)_+^{(1)}=\begin{bmatrix}1\\-1\\1\end{bmatrix}$, $(b_t)_+^{(2)}=\begin{bmatrix}1:2:1\end{bmatrix}$,
\item $(b_t)_-^{(1)}=\begin{bmatrix}1\\1\\1\end{bmatrix}$, $(b_t)_-^{(2)}=\begin{bmatrix}1:-2:1\end{bmatrix}$,
\end{itemize}

From this, it is a straightforward calculation (using Theorem \ref{thm: Fock-Goncharov}) to verify that when $t>1$, 
$\big((b_t)_-,a_+,(b_t)_+,a_-\big)$ is positive. Thus, Theorem \ref{thm: general n} implies that $
\rho_t$ is primitive stable.

Proof of (2) and (3). Consider the commutator $[a,b_t]:=ab_t a^{-1}b_t^{-1}$ of $a$ and $b_t$. An explicit computation gives that the characteristic polynomial $P_t(x)$ of $[a,b_t]$ is 
\[ P_t(x)=(1-x)\left(x^2+\frac{35-306t+32t^2}{128t}x+1\right)=:(1-x)Q_t(x)\]
The discriminant of the polynomial $Q_t(x)$ is 
\[\frac{1225 - 21420 t + 30340 t^2 - 19584 t^3 + 1024 t^4}{16384 t^2},\]
which is negative if and only if $\frac{1}{16}<t<\frac{35}{2}$. Thus, when $1<t<\frac{35}{2}$, the commutator $[a,b_t]$ is conjugate to the projective matrix
\[ \begin{bmatrix} 1&0&0\\0& \cos\theta &-\sin\theta\\0&\sin\theta &\cos \theta\end{bmatrix}, \text{ where }\ \theta:=\cos^{-1}\left(\frac{-35+306t-32t^2}{256t}\right).\]
Obviously, if $\theta$ is rational, then $\rho_t$ is a non-faithful representation, and if $\theta$ is irrational, then $\rho_t$ is a non-discrete representation. 

Proof of (3). Every loxodromic element $g\in\PO(2,1)$ has the property that $\lambda_1(g)=\frac{1}{\lambda_3(g)}$. Since $b_t$ does not have this property when $t\neq 2$, (3) follows.
\end{proof}




\appendix

\section{Converse to Proposition \ref{prop: forward primitive}}\label{sec: converse}
The goal of this appendix is to prove a converse to Proposition \ref{prop: forward primitive}. More precisely:

\begin{theorem}\label{thm: n=2}
Suppose that $n=2$. If $\rho:F_2\to\PGL(V)$ is primitive stable, then there is a map $R_3$ from the vertex set $A_3$ of $\Lambda_3$ into $\PGL(V)$ such that $\rho$ is $(R_3,\Lambda_3)$-weakly positive.
\end{theorem}

Recall that if $\pi_1(\Sigma)\simeq F_2$, then $\Sigma$ is either the one-holed torus $\Sigma_{1,1}$, the one-holed Klein bottle $C_{1,1}$, the one-holed M\"obius band $C_{0,2}$, or the three-holed sphere $\Sigma_{0,3}$. For any such $\Sigma$, the Gromov boundary $\partial_\infty\pi_1(\Sigma)$ of $\pi_1(\Sigma)$ admits two natural cyclic orders which are reverses of each other. The following observation lists some well-known properties of the fundamental group of these surfaces. 

\begin{observation}\label{obs: surface}
Recall that for any non-identity element $\gamma\in F_2$, the attracting and repelling fixed point of $\gamma$ in $\partial_\infty F_2$ are denoted by $\gamma_+$ and $\gamma_-$ respectively.
\begin{enumerate}
\item For any pair of generators $\{\gamma_1,\gamma_2\}$ of $\pi_1(\Sigma_{1,1})$, 
\[(\gamma_1)_-<(\gamma_2)_+<(\gamma_1)_+<(\gamma_2)_-<(\gamma_1)_-\]
in one of the two cyclic orders on $\partial_\infty\pi_1(\Sigma_{1,1})$, see Figure \ref{fig: order}(i).
\item There is a pair of generators $\{\gamma_1,\gamma_2\}$ for $\pi_1(C_{1,1})$ such that if we denote $\gamma_3:=\gamma_2^{-1}\gamma_1^{-1}$ and $\gamma_3':=\gamma_1^{-1}\gamma_2^{-1}$, then
\begin{itemize}
\item each of the conjugacy classes $[\gamma_1]$, $[\gamma_2]$, and $[\gamma_3]=[\gamma_3']$ corresponds to the free homotopy class of a simple, oriented closed curve in $C_{1,1}$, 
\item $\gamma_3$ and $\gamma_3'$ preserve both cyclic orders on $\partial_\infty\pi_1(C_{1,1})$, while $\gamma_1$ and $\gamma_2$ switch them,
\item $(\gamma_1)_-<(\gamma_3')_+<(\gamma_3)_-<(\gamma_1)_+<(\gamma_2)_-<(\gamma_3)_+<(\gamma_3')_-<(\gamma_2)_+<(\gamma_1)_-$ in one of the two cyclic orders on $\partial_\infty\pi_1(C_{1,1})$, see Figure \ref{fig: order}(ii).
\end{itemize} 
\item There is a pair of generators $\{\gamma_1,\gamma_2\}$ for $\pi_1(C_{0,2})$ such that if we denote $\gamma_1':=\gamma_2^{-1}\gamma_1\gamma_2$, $\gamma_1'':=\gamma_2\gamma_1\gamma_2^{-1}$, $\gamma_2':=\gamma_1^{-1}\gamma_2\gamma_1$, $\gamma_2'':=\gamma_1\gamma_2\gamma_1^{-1}$, $\gamma_3:=\gamma_2^{-1}\gamma_1^{-1}$, $\gamma_3':=\gamma_1^{-1}\gamma_2^{-1}$, $\gamma_4:=\gamma_2\gamma_1^{-1}$, and $\gamma_4':=\gamma_1^{-1}\gamma_2$, then
\begin{itemize}
\item each of the conjugacy classes $[\gamma_1]=[\gamma_1']=[\gamma_1'']$, $[\gamma_2]=[\gamma_2']=[\gamma_2'']$, $[\gamma_3]=[\gamma_3']$, and $[\gamma_4]=[\gamma_4']$ corresponds to the free homotopy class of a simple, oriented closed curves in $C_{0,2}$, 
\item $\gamma_3$, $\gamma_3'$, $\gamma_4$, and $\gamma_4'$ preserve both cyclic orders on $\partial\pi_1(C_{0,2})$, while $\gamma_1$, $\gamma_1'$, $\gamma_1''$, $\gamma_2$, $\gamma_2'$, and $\gamma_2''$ switch them,
\item $(\gamma_2')_+<(\gamma_1)_-<(\gamma_2')_-<(\gamma_3')_+<(\gamma_3')_-<(\gamma_1'')_+<(\gamma_2)_+<(\gamma_1'')_-<(\gamma_4)_+<(\gamma_4)_-<(\gamma_2'')_-<(\gamma_1)_+<(\gamma_2'')_+<(\gamma_3)_-<(\gamma_3)_+<(\gamma_1')_-<(\gamma_2)_-<(\gamma_1')_+<(\gamma_4')_-<(\gamma_4')_+<(\gamma_2')_+$ in one of the two cyclic orders on $\partial\pi_1(C_{0,2})$, see Figure \ref{fig: order}(iii).
\end{itemize} 
\item There is a superbasis $\{\gamma_1,\gamma_2,\gamma_3\}$ for $\pi_1(\Sigma_{0,3})$ such that
\begin{itemize}
\item each of the conjugacy classes $[\gamma_1]$, $[\gamma_2]$, and $[\gamma_3]$ corresponds to the free homotopy class of a simple, oriented closed curve in $\Sigma_{0,3}$,
\item $(\gamma_1)_-<(\gamma_1)_+<(\gamma_3)_-<(\gamma_3)_+<(\gamma_2)_-<(\gamma_2)_+<(\gamma_1)_-$
 in one of the two cyclic orders on $\partial\pi_1(\Sigma_{0,3})$, see Figure \ref{fig: order}(iv).
\end{itemize}
\end{enumerate}
\end{observation}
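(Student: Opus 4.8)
The plan is to prove all four statements uniformly by realizing each surface as a hyperbolic surface with geodesic boundary and reading the cyclic orders directly off an explicit fundamental polygon. Fix $\Sigma$ to be one of $\Sigma_{1,1}$, $C_{1,1}$, $C_{0,2}$, $\Sigma_{0,3}$ and choose a discrete, faithful, convex-cocompact representation $j\colon\pi_1(\Sigma)\to\PGL_2(\R)$ coming from a complete hyperbolic structure with geodesic boundary; for the non-orientable surfaces $C_{1,1}$ and $C_{0,2}$ it is essential to work in the full group $\PGL_2(\R)$, so that one-sided simple closed curves act on $\mathbb{H}^2$ by orientation-reversing isometries. Since $\pi_1(\Sigma)\cong F_2$ is free and $j$ is convex cocompact, $j$ induces a $j$-equivariant homeomorphism from $\partial_\infty\pi_1(\Sigma)$ onto the limit set $\Lambda_j\subset\partial_\infty\mathbb{H}^2\cong\Pb(\R^2)$, under which the two natural cyclic orders on $\partial_\infty\pi_1(\Sigma)$ become the two cyclic orders on $\partial_\infty\mathbb{H}^2$; this identification is independent of the chosen metric because the relevant Teichm\"uller space is connected. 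For $\gamma\neq\id$ the points $\gamma_\pm$ then correspond to the attracting and repelling fixed points of the loxodromic isometry $j(\gamma)$; the element $\gamma$ preserves both cyclic orders if and only if $j(\gamma)\in\PSL_2(\R)$, equivalently the free homotopy class of $\gamma$ is orientation-preserving; and $[\gamma]$ is represented by a simple closed geodesic if and only if the axes of the $\pi_1(\Sigma)$-translates of $\gamma$ are pairwise equal or disjoint, i.e.\ no two translates of the pair $\{\gamma_-,\gamma_+\}$ link in the cyclic order on $\Lambda_j$.

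Next, for each surface I would fix a concrete fundamental domain for the $j$-action on the convex hull of $\Lambda_j$: a geodesic polygon whose side-pairing transformations are (conjugates of) a standard generating set --- the usual square with boundary word $\gamma_1\gamma_2\gamma_1^{-1}\gamma_2^{-1}$ for $\Sigma_{1,1}$, a right-angled hexagon realizing $\gamma_1\gamma_2\gamma_3=\id$ for $\Sigma_{0,3}$, and the analogous polygons for the Klein bottle minus a disk and the M\"obius band minus a disk for $C_{1,1}$ and $C_{0,2}$, in which the one-sided generators pair their sides by orientation-reversing isometries. The vertices of the polygon together with the pattern of side identifications pin down the cyclic positions along $\partial_\infty\mathbb{H}^2$ of the fixed points of each side-pairing generator and of each short boundary word, and the asserted orderings are then forced by the planarity of the polygon; this is precisely what Figure~\ref{fig: order} records. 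The ``simple closed curve'' assertions hold because the listed elements are exactly the side-pairings and peripheral words of such a polygon, whose axes descend to embedded closed geodesics; and the ``preserves/switches the cyclic orders'' assertions hold because each distinguished product ($\gamma_3$, $\gamma_3'$, $\gamma_4$, $\gamma_4'$ in the non-orientable cases) is an even word in the one-sided generators and hence has holonomy in $\PSL_2(\R)$, whereas $\gamma_1$, $\gamma_2$ and their conjugates $\gamma_1'$, $\gamma_1''$, $\gamma_2'$, $\gamma_2''$ are one-sided with holonomy outside $\PSL_2(\R)$.

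Statement (1) is stated for \emph{every} generating pair of $\pi_1(\Sigma_{1,1})$, so after verifying it for one pair I would upgrade using $\mathrm{Out}(F_2)\cong\mathrm{MCG}^{\pm}(\Sigma_{1,1})$: any automorphism of $F_2$ taking one generating pair to another is, up to an inner automorphism (which moves all boundary fixed points by a single cyclic-order-preserving homeomorphism of $\Lambda_j$), induced by a homeomorphism of $\Sigma_{1,1}$, which in turn induces a cyclic-order preserving or reversing homeomorphism of $\Lambda_j$ intertwining the fixed points of the two pairs; hence the cyclic position of $\{(\gamma_1)_\pm,(\gamma_2)_\pm\}$ is independent of the pair up to reversal. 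For $C_{1,1}$, $C_{0,2}$, and $\Sigma_{0,3}$ only the existence of a suitable generating set (or superbasis) is asserted, so this last step is not needed.

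The conceptual input is minimal; the main obstacle is purely the bookkeeping in the two non-orientable cases, and above all in $C_{0,2}$, where one must confirm a cyclic order of twenty-two distinct boundary points and correctly separate the dozen named elements into those with orientation-preserving holonomy and those without. I would handle this by traversing the boundary of the fundamental polygon for the M\"obius band minus a disk once around, recording each side-pairing generator and each peripheral word together with its attracting and repelling fixed points in the order encountered, so that both the cyclic ordering and the parity --- hence the orientation behavior --- of each word become transparent.
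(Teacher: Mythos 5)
The paper records Observation \ref{obs: surface} without proof, as a list of well-known facts encoded in Figure \ref{fig: order}, and your argument --- uniformizing each of the four surfaces by a convex cocompact subgroup of $\PGL_2(\R)$ (with the one-sided classes acting by orientation-reversing isometries), identifying $\partial_\infty\pi_1(\Sigma)$ equivariantly with the limit set, and reading the cyclic orders, the preserves/switches dichotomy, and simplicity of the listed classes off an explicit fundamental polygon --- is precisely the standard justification the authors leave implicit. Your additional step for part (1), checking the linking of $(\gamma_1)_\pm$ and $(\gamma_2)_\pm$ for one basis and transporting it to an arbitrary basis via $\mathrm{Out}(F_2)\cong\mathrm{MCG}^{\pm}(\Sigma_{1,1})$ together with the cyclic-order-preserving action of inner automorphisms (noting that the two alternating patterns are exchanged by reversing the cyclic order), is correct and is exactly what is needed to pass from one generating pair to all of them.
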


\begin{figure}[h]
    \centering
    \includegraphics[width=0.8\textwidth]{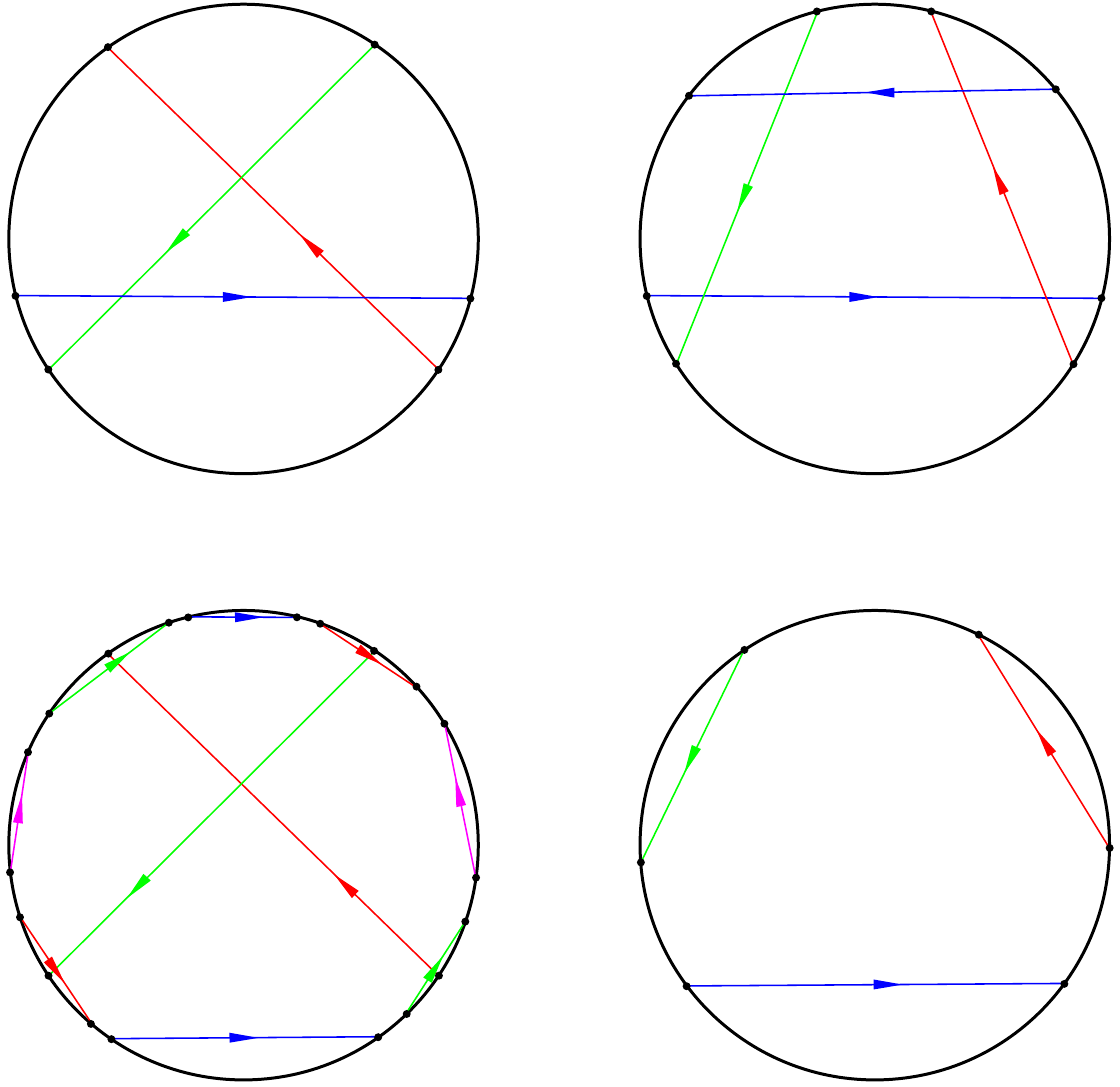}
    \tiny
\put (-218, 307){$(\gamma_1)_-$}
\put (-315, 307){$(\gamma_2)_+$}
\put (-330, 206){$(\gamma_1)_+$}
\put (-200, 206){$(\gamma_2)_-$}
\put (-104, 318){$(\gamma_1)_-$}
\put (-144, 293){$(\gamma_3')_+$}
\put (-157, 230){$(\gamma_3)_-$}
\put (-147, 209){$(\gamma_1)_+$}
\put (-12, 208){$(\gamma_2)_-$}
\put (-4, 230){$(\gamma_3)_+$}
\put (-17, 293){$(\gamma_3')_-$}
\put (-55, 318){$(\gamma_2)_+$}
\put (-218, 130){$(\gamma_1)_-$}
\put (-251, 142){$(\gamma_3')_+$}
\put (-278, 142){$(\gamma_3')_-$}
\put (-314, 130){$(\gamma_2)_+$}
\put (-339, 100){$(\gamma_4)_+$}
\put (-344, 60){$(\gamma_4)_-$}
\put (-331, 29){$(\gamma_1)_+$}
\put (-310, 10){$(\gamma_3)_-$}
\put (-217, 10){$(\gamma_3)_+$}
\put (-198, 30){$(\gamma_2)_-$}
\put (-187, 60){$(\gamma_4')_-$}
\put (-196, 106){$(\gamma_4')_+$}
\put (-126, 131){$(\gamma_1)_-$}
\put (-159, 63){$(\gamma_1)_+$}
\put (-144, 27){$(\gamma_3)_-$}
\put (-14, 28){$(\gamma_3)_+$}
\put (-2, 66){$(\gamma_4)_-$}
\put (-41, 135){$(\gamma_4)_+$}
\small
\put (-265, 165){$(i)$}
\put (-80, 165){$(ii)$}
\put (-267, -13){$(iii)$}
\put (-78, -13){$(iv)$}
    \caption{Cyclic orders of points along $\partial_\infty\pi_1(\Sigma)$ when (i) $\Sigma=\Sigma_{1,1}$, (ii) $\Sigma=C_{1,1}$, (iii) $\Sigma=C_{0,2}$, (iv) $\Sigma=\Sigma_{0,3}$.}
    \label{fig: order}
\end{figure}

When $n=2$, choose a basis of $V$ to identify $V\simeq\R^2$. The $\PGL_2(\R)$-Riemannian symmetric space is the hyperbolic plane $\H^2:=\{z\in\C:\mathrm{Im}(z)>0\}$, and the standard inner product on $\R^2$ corresponds to the base point $i\in\H^2$. Also, $\Fc(\R^2)=\mathbb{P}(\R^2)$, which one can identify as the visual boundary of $\H^2$. Furthermore, there is a unique simple root $\alpha$ of $\PGL_2(\R)$, and for any $g\in\PGL_2(\R)$, $\alpha(g)$ is simply the distance in $\H^2$ between $i$ and $g\cdot i$. 

Observe that there is a well-defined trace map
\[\tr:\{[g,h]:=ghg^{-1}h^{-1}:g,h\in\PGL_2(\R)\}\to\R\] 
given by $\tr([g,h]):=\tr(\bar{g}\bar{h}\bar{g}^{-1}\bar{h}^{-1})$ for any (equiv. some) representatives $\bar{g},\bar{h}\in\GL_2(\R)$ of $g$ and $h$ respectively. This defines a function 
\[\mathbf k:\Hom(F_2,\PGL_2(\R))\to\R\] 
by $\mathbf k:\rho\mapsto\tr([\rho(\gamma_1),\rho(\gamma_2)])$, where $\{\gamma_1,\gamma_2\}$ is any pair of generators of $F_2$. It is a consequence of a result by Nielsen \cite{Nielsen} that ${\bf k}$ is well-defined (see \cite[Theorem 3.9]{MaKaSo} or \cite[Proposition 5.1]{LS} for a proof). One can verify by direct calculation that if $\mathbf k(\rho)=2$, then $\rho$ is reducible, and hence not primitive stable (see \cite[Theorem 1.4]{TanWongZhang} for a more general result). 

The following theorem is a summary of results due to Goldman \cite{Go} and Goldman-McShane-Stantchev-Tan \cite{GMST}. Informally, it says that primitive stable representations from $F_2$ to $\PGL_2(\mathbb R)$ arise as holonomies of (possibly singular) hyperbolic structures on the four surfaces whose fundamental group is $F_2$, namely the one-holed torus $\Sigma_{1,1}$, the three-holed sphere $\Sigma_{0,3}$, the one-holed Klein bottle $C_{1,1}$, and the one-holed M\"obius band $C_{0,2}$.


\begin{theorem}\label{thm: Goldman}
Let $\rho:F_2\to\PGL_2(\R)$ be a primitive stable representation.
\begin{enumerate}
\item \,\cite[Section 3]{Go} Suppose that $\rho(F_2)\subset\PSL_2(\R)$ and $\mathbf k(\rho)<2$. Then there is an identification of $F_2\simeq\pi_1(\Sigma_{1,1})$ such that $\rho$ is the holonomy of a hyperbolic structure on $\Sigma_{1,1}$ (possibly with a cone point at the hole). Furthermore, for any pair of generators $\{\gamma_1,\gamma_2\}$ of $\pi_1(\Sigma_{1,1})$, $g_i:=\rho(\gamma_i)$ is loxodromic for $i=1,2$, and
\begin{equation}\label{eqn: Case 1}
\big((g_1)_-,(g_2)_+,(g_1)_+,(g_2)_-\big)
\end{equation}
is a positive tuple in $\mathbb{P}(\R^2)$.
\item \,\cite[Proposition 5.2, Section 8]{GMST} Suppose that $\rho(F_2)\not\subset\PSL_2(\R)$ and $\mathbf k(\rho)>2$. Then there is an identification $F_2\simeq\pi_1(C_{1,1})$ such that $\rho$ is the holonomy of a hyperbolic structure on $C_{1,1}$ (possibly with a cone point at the hole). Furthermore, if $\gamma_1,\gamma_2,\gamma_3,\gamma_3'\in\pi_1(C_{1,1})$ are as given by Observation \ref{obs: surface}(2), set $g_i:=\rho(\gamma_i)$ for $i=1,2,3$, and set $g_3':=\rho(\gamma_3')$. Then $g_3,g_3'\in\PSL_2(\R)$ and $g_1,g_2\notin\PSL_2(\R)$ are loxodromic, and
\begin{equation}\label{eqn: Case 2}
\big((g_1)_-,(g_3')_+,(g_3)_-,(g_1)_+,(g_2)_- ,(g_3)_+,(g_3')_-,(g_2)_+\big)
\end{equation}
is a positive tuple in $\mathbb{P}(\R^2)$.
\item \,\cite[Proposition 5.1, Section 9]{GMST} Suppose that $\rho(F_2)\not\subset\PSL_2(\R)$ and $\mathbf k(\rho)<2$. Then there is an identification $F_2\simeq\pi_1(C_{0,2})$ such that $\rho$ is the holonomy of a convex cocompact hyperbolic structure on $C_{0,2}$. Furthermore, if 
\[\gamma_1,\gamma_2,\gamma_3,\gamma_4,\gamma_1',\gamma_2',\gamma_3',\gamma_4',\gamma_1'',\gamma_2''\in\pi_1(C_{0,2})\] 
are as given by Observation \ref{obs: surface}(3), set $g_i:=\rho(\gamma_i)$ for $i=1,\dots,4$, set $g_i':=\rho(\gamma_i')$ for $i=1,\dots,4$, and set $g_i'':=\rho(\gamma_i'')$ for $i=1,2$. Then $g_3,g_3',g_4,g_4'\in\PSL_2(\R)$ and $g_1,g_1',g_1'',g_2,g_2',g_2''\notin\PSL_2(\R)$ are loxodromic, and
\begin{align}\label{eqn: Case 3}
&\big((g_2')_+,(g_1)_-,(g_2')_-,(g_3')_+,(g_3')_-,(g_1'')_+,(g_2)_+,(g_1'')_-,(g_4)_+,(g_4)_-,\\
&\hspace{2cm}(g_2'')_-,(g_1)_+,(g_2'')_+,(g_3)_-,(g_3)_+,(g_1')_-,(g_2)_-,(g_1')_+,(g_4')_-,(g_4')_+\big)\nonumber
\end{align}
is a positive tuple in $\mathbb{P}(\R^2)$.
\item \,\cite[Theorem 5.2.1]{Go} Suppose that $\rho(F_2)\subset\PSL_2(\R)$ and $\mathbf k(\rho)>2$. Then there is an identification $F_2\simeq\pi_1(\Sigma_{0,3})$ such that $\rho$ is the holonomy of a convex cocompact hyperbolic structure on $\Sigma_{0,3}$. Furthermore, if $\{\gamma_1,\gamma_2,\gamma_3\}$ is the superbasis for $\pi_1(\Sigma_{0,3})$ given by Observation \ref{obs: surface}(4), then $g_i:=\rho(\gamma_i)$ is loxodromic for $i=1,2,3$, and
\begin{equation}\label{eqn: Case 4}
\big((g_1)_-,(g_1)_+,(g_3)_-,(g_3)_+,(g_2)_-,(g_2)_+\big)
\end{equation}
is a positive tuple in $\mathbb{P}(\R^2)$.
\end{enumerate}
\end{theorem}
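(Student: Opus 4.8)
The plan is to prove Theorem \ref{thm: Goldman} by assembling the four cited classification results, organized by a case analysis on $\rho$. The first step is to check that the hypotheses in items (1)--(4) are exhaustive for a primitive stable $\rho:F_2\to\PGL_2(\R)$. The function $\mathbf k$ is conjugation-invariant on $\Hom(F_2,\PGL_2(\R))$ --- it is defined by the trace of the commutator, which is insensitive to scalar rescaling of representatives and, by the Fricke identities, independent of the chosen generating pair --- and, as recalled just above the statement, $\mathbf k(\rho)=2$ forces $\rho$ to be reducible and hence not primitive stable. Thus every primitive stable $\rho$ satisfies $\mathbf k(\rho)\neq 2$, and combining this with the dichotomy $\rho(F_2)\subseteq\PSL_2(\R)$ versus $\rho(F_2)\not\subseteq\PSL_2(\R)$ produces exactly the four mutually exclusive possibilities of items (1)--(4). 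These match the four topological types of surface $\Sigma$ with $\pi_1(\Sigma)\simeq F_2$: the orientable cases $\Sigma_{1,1}$ (when $\rho(F_2)\subseteq\PSL_2(\R)$, $\mathbf k(\rho)<2$) and $\Sigma_{0,3}$ (when $\rho(F_2)\subseteq\PSL_2(\R)$, $\mathbf k(\rho)>2$), and the non-orientable cases $C_{1,1}$ and $C_{0,2}$ in the remaining two cases.

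Next, in each of the four cases I would invoke the corresponding result of Goldman \cite{Go} (items (1) and (4)) and Goldman--McShane--Stantchev--Tan \cite{GMST} (items (2) and (3)) to realize $\rho$ geometrically: after an appropriate identification $F_2\simeq\pi_1(\Sigma)$, the representation $\rho$ is the holonomy of a hyperbolic structure on $\Sigma$ --- one allowing a cone point at the boundary when $\Sigma=\Sigma_{1,1}$ or $C_{1,1}$, and convex cocompact when $\Sigma=\Sigma_{0,3}$ or $C_{0,2}$. Only the implication ``primitive stable $\Rightarrow$ geometric of the stated type'' is needed here; to quote the cited results one should confirm that the open domains they describe (phrased in \cite{Go, GMST} in terms of proper discontinuity of the $\Out(F_2)$-action, equivalently the Bowditch set) contain every primitive stable representation, which follows from primitive stability together with Theorem \ref{thm: CMZ} identifying primitive geodesic rays with $\bar R$-directed rays for the relevant generating sets.

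The last step is to deduce positivity of the tuples \eqref{eqn: Case 1}--\eqref{eqn: Case 4}. In dimension $2$, a tuple of points in $\Fc(\R^2)=\Pb(\R^2)$ is positive precisely when the points occur in one of the two cyclic orders on $\Pb(\R^2)$ (the remark following Definition \ref{def: positive flags}), so it suffices to locate the fixed points of the $\rho(\gamma_i)$ cyclically. In the convex cocompact cases $\Sigma=\Sigma_{0,3},C_{0,2}$ the holonomy $\rho$ is discrete and faithful, so the orbit map extends to a $\rho$-equivariant homeomorphism of $\partial_\infty\pi_1(\Sigma)$ onto the limit set in $\partial\H^2=\Pb(\R^2)$ which, after fixing orientations, preserves cyclic order; it carries the attracting and repelling endpoints of each $\gamma\in\pi_1(\Sigma)$ to those of $\rho(\gamma)$, so the cyclic orders tabulated in Observation \ref{obs: surface}(3)--(4) transfer verbatim and give \eqref{eqn: Case 3}--\eqref{eqn: Case 4}.

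The hard part is the two cone-pointed cases $\Sigma_{1,1}$ and $C_{1,1}$, where $\rho$ need not be discrete and so no equivariant boundary map into $\Pb(\R^2)$ is available; I expect this to be the main obstacle. The plan there is to argue by deformation: the space of (geodesic-boundary, cusped, or cone-pointed) hyperbolic structures on $\Sigma$ of fixed topological type is connected and contains the complete finite-area locus, where $\rho$ is Fuchsian and positivity of \eqref{eqn: Case 1}, resp. \eqref{eqn: Case 2}, reduces to the classical ideal-polygon picture; along any such deformation no two of the endpoints appearing in the tuple can collide and no $\rho(\gamma_i)$ can cease to be loxodromic, since the curves carrying these generators pairwise cross in $\Sigma$ (positive geometric intersection number), so the cyclic order of the endpoints is locally constant and hence constant. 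Equivalently, and more economically, the cone-pointed cases are settled by the explicit trace and cross-ratio computations already carried out in \cite[Section 3]{Go} and \cite[Section 8]{GMST}, which one may simply cite; I expect the latter route is the one actually taken.
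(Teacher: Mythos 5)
Your proposal is correct and matches the paper's treatment: the paper offers no proof of this theorem, presenting it explicitly as ``a summary of results due to Goldman and Goldman--McShane--Stantchev--Tan,'' with a remark noting that the cited results apply because primitive stable representations satisfy the Bowditch Q-conditions under which \cite{Go} and \cite{GMST} are stated. Your extra discussion of exhaustiveness of the four cases (via $\mathbf k(\rho)\neq 2$) and of how the cyclic-order data transfers is consistent with, and slightly more detailed than, what the paper records.
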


\begin{remark}
The results of Goldman \cite{Go} and Goldman-McShane-Stantchev-Tan \cite{GMST} were stated for representations that satisfy the Bowditch Q-conditions. However, it is easily seen from the definitions that primitive stable representations satisfy Bowditch Q-conditions. Our proof of Theorem \ref{thm: n=2}, together with Proposition~\ref{prop: forward primitive}, in fact imply that representations from $F_2$ to $\PGL_2(\R)$ that satisfy the Bowditch Q-conditions are primitive stable. This was previously proven by Lupi \cite{Lu}, and was later generalized to representations from $F_2$ to $\PSL_2(\mathbb C)$ independently by Lee-Xu \cite{LX} and Series \cite{Series}.
\end{remark}

\begin{proof}[Proof of Theorem \ref{thm: n=2}]
By Theorem \ref{thm: Goldman}, $\rho$ arises as the holonomy representation of a hyperbolic structure on one of the four surfaces whose fundamental group is isomorphic to $F_2$. We consider the four cases separately. For simplicity, let $\Lambda_3^i=\mathsf{K}(v_i,w_i)\cup \mathsf{K}(v_i',w_i')$ and $R_3^i$ be the restriction map of $R_3$ to ${\{v_i,w_i,v_i',w_i'\}}$ for $i=1,2,3$. Note that $\Lambda_3$ is the disjoint union of $\Lambda_3^1$, $\Lambda_3^2$ and $\Lambda_3^3$, that is, $\Lambda_3=\Lambda_3^1\cup \Lambda_3^2\cup \Lambda_3^3$, and moreover $R_3=R_3^1\cup R_3^2\cup R_3^3$. Hence $\rho$ is $(R_3,\Lambda_3)$-weakly positive if and only if $\rho$ is $(R_3^i,\Lambda_3^i)$-weakly positive for all $i=1,2,3$.

{\bf Case 1: $\rho(F_2)\subset\PSL_2(\R)$ and $\mathbf k(\rho)<2$.} Choose any superbasis $\{\gamma_1,\gamma_2,\gamma_3\}$ of $F_2$. It is sufficient to show that $\rho$ is $(R_3^i, \Lambda_3^i)$-weakly positive for $i=1,2,3$. Let $g_i:=\rho(\gamma_i)$ for $i=1,2$. By the positivity of (\ref{eqn: Case 1}), there exists points $F,H\in\mathbb{P}(\R^2)$ such that
\[\big((g_1)_-,(g_2)_+,F,(g_1)_+,(g_2)_-,H\big)\]
is a positive tuple of flags in $\mathbb{P}(\R^2)$, see Figure \ref{fig: fourcase1}. It is straightforward to verify that $\rho\circ R_3^1$ is $\Lambda_3^1$-admissible by assigning $(F,H,(g_2)_-)$ to $v_1$ and $w_1$, and $(F,H,(g_1)_-)$ to $v_1'$ and $w_1'$, so $\rho$ is $(R_3^1,\Lambda_3^1)$-weakly positive.
Replacing $(\gamma_1,\gamma_2)$ with $(\gamma_2,\gamma_3)$ (resp. $(\gamma_3,\gamma_1)$) proves that $\rho$ is $(R_3^2,\Lambda_3^2)$-weakly positive (resp. $(R_3^3,\Lambda_3^3)$-weakly positive).

\begin{figure}[h]
    \centering
    \includegraphics[width=0.4\textwidth]{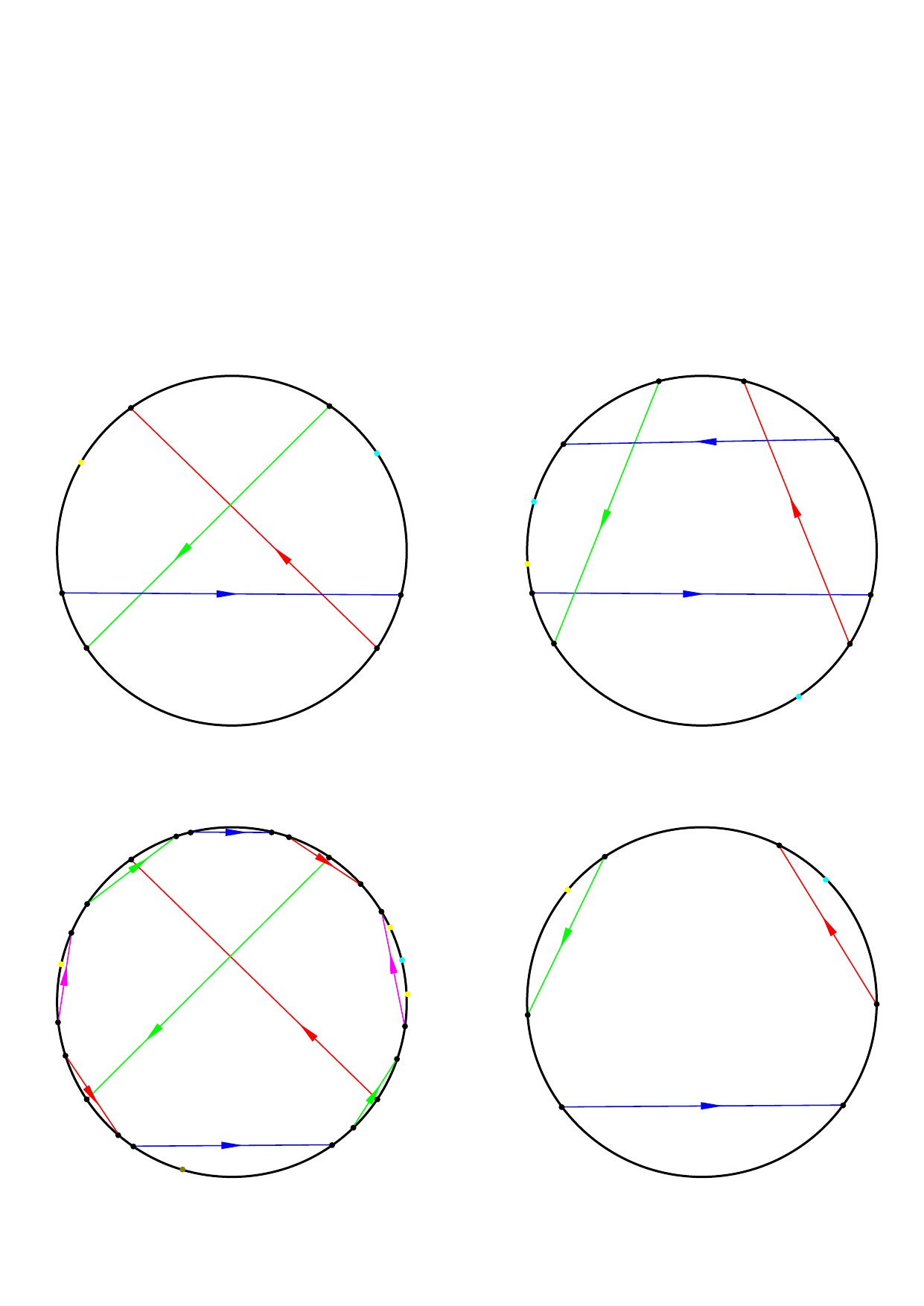}
    \tiny
\put (-40, 152){$(g_1)_-$}
\put (-145, 151){$(g_2)_+$}
\put (-157, 123){$F$}
\put (-164, 33){$(g_1)_+$}
\put (-15, 33){$(g_2)_-$}
\put (-13, 125){$H$}
    \caption{Case 1.}
    \label{fig: fourcase1}
\end{figure}

{\bf Case 2: $\rho(F_2)\not\subset\PSL_2(\R)$ and $\mathbf k(\rho)>2$.} Let $\{\gamma_1,\gamma_2\}$ be the pair of generators given by Observation \ref{obs: surface}(2). Let $g_i:=\rho(\gamma_i)$ for $i=1,2,3$, and let $g_3':=\rho(\gamma_3')$. By the positivity of (\ref{eqn: Case 2}), it is straightforward to verify that $\rho\circ R_3^1$ is $\Lambda_3^1$-admissible by assigning $((g_3')_+,(g_3')_-,(g_3)_+)$ to $v_1$ and $w_1$, and $((g_3)_+,(g_3)_-,(g_3')_+)$ to $v_1'$ and $w_1'$, so $\rho$ is $(R_3^1,\Lambda_3^1)$-weakly positive. It now suffices to show that $\rho$ is $(R_3^3,\Lambda_3^3)$-weakly positive; replacing $(g_1,g_3)$ with $(g_2^{-1},g_3^{-1})$ proves that $\rho$ is $(R_3^2,\Lambda_3^2)$-weakly positive. By the positivity of (\ref{eqn: Case 2}), there are points $K,F\in\mathbb{P}(\R^2)$ such that
\[\big((g_1)_-,(g_3')_+,K,F,(g_3)_-\big).\]
is a positive tuple in $\mathbb{P}(\R^2)$, see Figure \ref{fig: fourcase2}. Set $H:=g_1\cdot K$. It is straightforward to verify that $\rho \circ R_3^3$ is $\Lambda_3^3$-admissible by assigning $(F,H,(g_2)_-)$ to $v_3$ and $w_3$, and $(F,H,(g_1)_+)$ to $v_3'$ and $w_3'$, so $\rho$ is $(R_3^3,\Lambda_3^3)$-weakly positive.

\begin{figure}[h]
    \centering
    \includegraphics[width=0.4\textwidth]{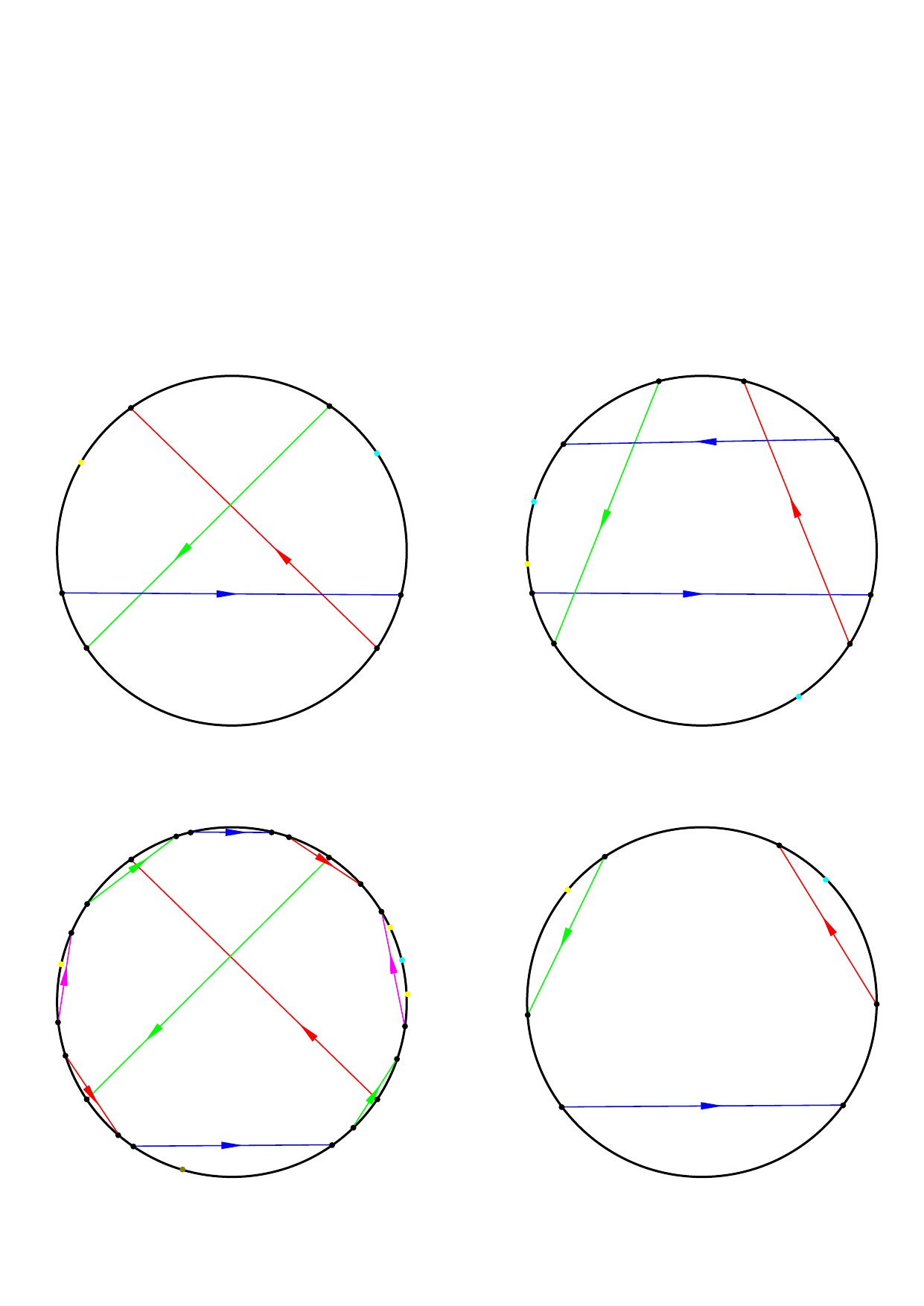}
    \tiny
\put (-63, 163){$(g_2)_+$}
\put (-116, 163){$(g_1)_-$}
\put (-163, 133){$(g_3')_+$}
\put (-167, 103){$K$}
\put (-169, 75){$F$}
\put (-177, 60){$(g_3)_-$}
\put (-166, 35){$(g_1)_+$}
\put (-37, 10){$H$}
\put (-13, 35){$(g_2)_-$}
\put (-3, 60){$(g_3)_+$}
\put (-18, 133){$(g_3')_-$}
    \caption{Case 2.}
    \label{fig: fourcase2}
\end{figure}

{\bf Case 3: $\rho(F_2)\not\subset\PSL_2(\R)$ and $\mathbf k(\rho)<2$.} Let $\{\gamma_1,\gamma_2\}$ be the pair of generators given by Observation \ref{obs: surface}(3). Let $g_i:=\rho(\gamma_i)$ and $g_i':=\rho(\gamma_i')$ for $i=1,\dots,4$, and let $g_i'':=\rho(\gamma_i'')$ for $i=1,2$. The positivity of (\ref{eqn: Case 3}) implies that $\big((g_4)_+,(g_4)_-,(g_4')_-,(g_4')_+\big)$ is a positive tuple of flags, and it is easy to verify that both $g_1^{-1}\cdot (g_4)_\pm=(g_4')_\pm=g_2^{-1}\cdot (g_4)_\pm$. Choose $K\in\Fc(V)$ such that $\big((g_4)_+,K,(g_4)_-,(g_4')_-,(g_4')_+\big)$ is positive. The fact that $g_1$ and $g_2$ both switch the orientation on $\mathbb P(\R^2)$ then implies that the tuples
\[\big((g_4)_+,(g_4)_-,(g_4')_-,g_1^{-1}\cdot K,(g_4')_+\big)\,\text{ and }\,\big((g_4)_+,(g_4)_-,(g_4')_-,g_2^{-1}\cdot K,(g_4')_+\big)\]
are both positive. Since $g_4'=g_1^{-1}g_2$ and $g_4'$ preserves the orientation on $\mathbb P(\R^2)$, it follows that the tuple
\[\big((g_4)_+,K,(g_4)_-,(g'_4)_-,g_2^{-1}\cdot K,g_1^{-1}\cdot K,(g_4')_+\big)\]
is also positive. Thus, there are points $F,H\in\mathbb{P}(\R^2)$ such that
\[\big((g_4)_+,F,(g_4)_-,(g_4')_-,g_2^{-1}\cdot F,H,g_1^{-1}\cdot F, (g_4')_+\big)\]
is positive, see Figure \ref{fig: fourcase3}. From this, one deduces that $\rho\circ R_3^1$ is $\Lambda_3^1$-admissible by assigning $(F,H,(g_4')_-)$ to $v_1$ and $w_1$, and $(F,H,(g_4')_+)$ to $v_1'$ and $w_1'$, so $\rho$ is $(R_3^1,\Lambda_3^1)$-weakly positive. 

\begin{figure}[h]
    \centering
    \includegraphics[width=0.4\textwidth]{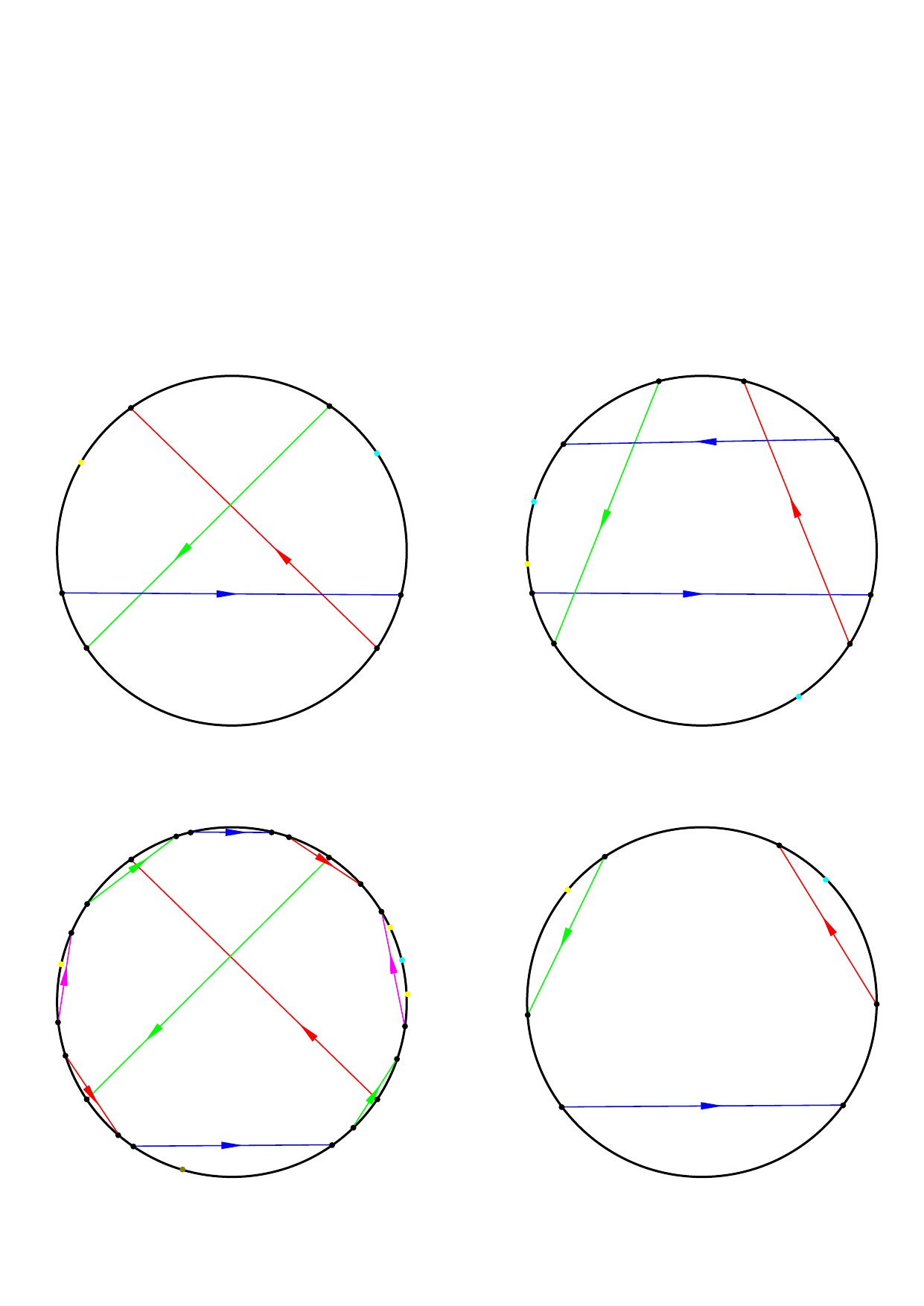}
    \tiny
\put (-145, 150){$(g_2)_+$}
\put (-172, 116){$(g_4)_+$}
\put (-167, 98){$F$}
\put (-179, 70){$(g_4)_-$}
\put (-165, 34){$(g_1)_+$}
\put (-141, 11){$(g_3)_-$}
\put (-110, 0){$G$}
\put (-38, 10){$(g_3)_+$}
\put (-14, 36){$(g_2)_-$}
\put (-3, 66){$(g_4')_-$}
\put (-3, 99){$H$}
\put (-1, 84){$g_2^{-1}\cdot F$}
\put (-8, 115){$g_1^{-1}\cdot F$}
\put (-15, 126){$(g_4')_+$}
\put (-40, 151){$(g_1)_-$}
    \caption{Case 3.}
    \label{fig: fourcase3}
\end{figure}

It now suffices to show that $\rho$ is $(R_3^3,\Lambda_3^3)$-weakly positive; replacing $(g_1,g_3)$ with $(g_2^{-1},g_3^{-1})$ proves that $\rho$ is $(R_3^2,\Lambda_3^2)$-weakly positive. By the positivity of (\ref{eqn: Case 3}), there are points $G\in\mathbb{P}(\R^2)$ such that
\[\big((g_1)_-,(g_1)_+,(g_3)_-,G,(g_3)_+\big)\]
is positive. One easily verifies that $\rho\circ R_3^3$ is $\Lambda_3^3$-admissible by assigning $(G,(g_4)_-,(g_4)_+)$ to $v_3$ and $w_3$, and $(G,(g_4)_+,(g_4)_-)$ to $v_3'$ and $w_3'$, so $\rho$ is $(R_3^3,\Lambda_3^3)$, so $\rho$ is $(R_3^3,\Lambda_3^3)$-weakly positive.

\begin{figure}[h]
    \centering
    \includegraphics[width=0.4\textwidth]{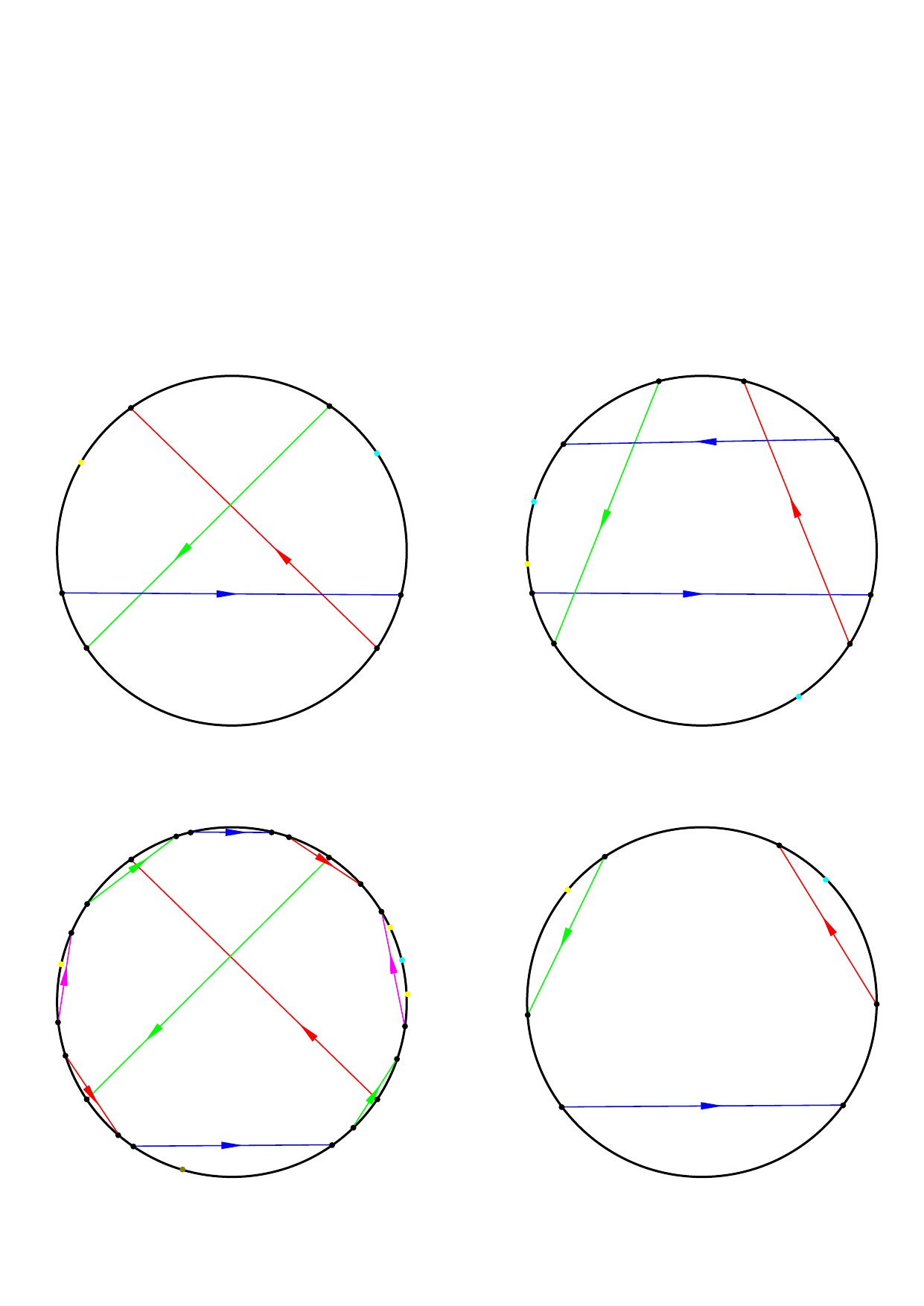}
    \tiny
\put (-142, 150){$(g_1)_-$}
\put (-150, 132){$F$}
\put (-180, 72){$(g_1)_+$}
\put (-1, 80){$(g_2)_-$}
\put (-25, 138){$H$}
\put (-46, 155){$(g_2)_+$}
    \caption{Case 4.}
    \label{fig: fourcase4}
\end{figure}

{\bf Case 4: $\rho(F_2)\subset\PSL_2(\R)$ and $\mathbf k(\rho)>2$.} Let $\{\gamma_1,\gamma_2,\gamma_3\}$ be the superbasis given by Observation \ref{obs: surface}(4). 
It is sufficient to show that $\rho$ is $(R_3^1,\Lambda_3^1)$-weakly positive; replacing $(\gamma_1,\gamma_2)$ with $(\gamma_2,\gamma_3)$ (resp. $(\gamma_3,\gamma_1)$) proves that $\rho$ is $(R_3^2,\Lambda_3^2)$-weakly positive (resp. $(R_3^3,\Lambda_3^3)$-weakly positive). Let $g_i:=\rho(\gamma_i)$ for $i=1,2,3$. By the positivity of (\ref{eqn: Case 4}), there are points $F,H\in\mathbb{P}(\R^2)$ such that
\[\big((g_1)_-,F,(g_1)_+,(g_2)_-,H,(g_2)_+\big)\]
is positive, see Figure \ref{fig: fourcase4}. It is easy to verify that $\rho\circ R_3^1$ is $\Lambda_3^1$-admissible by assigning $(F,H,(g_2)_-)$ to $v_1$ and $w_1$, and $(F,H,(g_2)_+)$ to $v_1'$ and $w_1'$, so $\rho$ is $(R_3^1,\Lambda_3^1)$-weakly positive.
\end{proof}

Using arguments similar to the proof of Theorem \ref{thm: n=2}, one can verify that: 
\begin{itemize}
\item In Case 1, $\rho$ is $(R_2,\Lambda_2)$-weakly positive for any pair of generators $\{\gamma_1,\gamma_2\}$ of $F_2$.
\item In Case 2, $\rho$ is $(R_2,\Lambda_2)$-weakly positive for the pairs of generators $\{\gamma_1,\gamma_3\}$ and $\{\gamma_2,\gamma_3\}$, where $\gamma_1,\gamma_2,\gamma_3$ are given by Observation \ref{obs: surface}(2).
\item In Case 3, $\rho$ is $(R_2,\Lambda_2)$-weakly positive for the pair of generators $\{\gamma_1,\gamma_2\}$, where $\gamma_1,\gamma_2$ are given by Observation \ref{obs: surface}(3).
\end{itemize}
However, in Case 4, $\rho$ is not $(R_2,\Lambda_2)$-weakly positive for any pair of generators of $F_2$.

\begin{bibdiv}
\begin{biblist}[\normalsize]


\bib{BD}{article}{
   author={Bonahon, F.},
      author={Dreyer, G.},
   title={Parameterizing Hitchin components},
   journal={Duke Math. J.},
   volume={163},
   date={2014},
   number={15},
   pages={2935--2975},
}

\bib{BPS}{article}{
   author={Bochi, J.},
      author={Potrie, R.},
      author={Sambarino, A.},
   title={Anosov representations and dominated splittings},
   journal={J. Eur. Math. Soc.},
   volume={21},
   date={2019},
   number={11},
   pages={3343--3414},
}

\bib{BT}{article}{
author={Burelle, JP.},
author={Treib, N.},
title={Schottky presentations of positive representations},
journal={Math. Ann.},
date={2022}
doi={https://doi.org/10.1007/s00208-021-02326-z}
}



\bib{CMZ81}{article}{
   author={Cohen, M.},
      author={Metzler, W.},
         author={Zimmermann, A.},
   title={What does a basis of $F(a,b)$ look like?},
   journal={Math. Ann.},
   volume={257},
   date={1981},
   issue={4},
   pages={435--445},
}


\bib{Eberlein}{book}{
author={Eberlein, P.}
title={Geometry of nonpositively curved manifolds}
series={Chicago Lectures in Math.}
publisher={Univ. of Chicago Press}
date={1996}
}

\bib{Fock-Goncharov}{article}{
   author={Fock, V.},
   author={Goncharov, A.}
   title={Moduli spaces of local systems and higher {T}eichm\"uller theory},
   journal={Publ. Math. Inst. Hautes \'{E}tudes Sci.},
   volume={103},
   date={2006},
   issue={1},
   pages={1618--1913},
}

\bib{Go}{article}{
   author={Goldman, W. M.},
   title={The modular group action on real $SL(2)$-characters of a one-holed torus},
   journal={Geom. Topol.},
   volume={7},
   date={2003},
   number={1},
   pages={443--486},
}

\bib{GMST}{article}{
   author={Goldman, W. M.},
      author={McShane, G},
         author={Stantchev, G.},
            author={Tan, S.P.},
   title={Automorphisms of Two-Generator Free Groups and Spaces of Isometric Actions on the Hyperbolic Plane},
   journal={Mem. Amer. Math. Soc.},
   volume={259},
   date={2015},
   number={1249}
}

\bib{GGKW17}{article}{
   author={Gu\'{e}ritaud, F.},
   author={Guichard, O.},
      author={Kassel, F.},
         author={Wienhard, A.},
   title={Anosov representations and proper actions},
   journal={Geom. Topol.},
   volume={21},
   date={2017},
   number={1},
   pages={485--584},
}

\bib{Guichard}{article}{
   author={Guichard, O.},
   title={Composantes de {H}itchin et repr\'esentations hyperconvexes de groupes de surface},
   journal={J. Differential Geom.},
   volume={80},
   date={2008},
   number={3},
   pages={391--431},
}


\bib{Helgason}{book}{
author={Helgason, S.}
title={Differential Geometry, Lie groups and Symmetric Spaces}
series={Graduate Studies in Mathematics}
volume={34}
publisher={American Mathematical Society}
date={1978}
}

\bib{Hitchin}{article}{
   author={Hitchin, N.},
   title={Lie Groups and Teichm\"uller spaces},
   journal={Topology},
   volume={31},
   date={1992},
   number={3},
   pages={449--473},
}

\bib{KP}{article}{
   author={Kassel, F.},
      author={Potrie, R.},
   title={Eigenvalue gaps for hyperbolic groups and semigroups},
   journal={arXiv:2002.07015},
   date={2020},
}

\bib{KLM}{article}{
   author={Kapovich, M.},
      author={Leeb, B.},
         author={Millson, J.J.},
   title={Convex functions on symmetric spaces, side lengths of polygons and the stability inequalities for weighted configurations at infinity},
   journal={J. Differential Geom.},
   volume={81},
   date={2009},
   pages={297--354},
}


\bib{KLP14}{article}{
   author={Kapovich, M.},
      author={Leeb, B.},
         author={Porti, J.},
   title={Morse actions of discrete groups on symmetric spaces},
   journal={arXiv 1403.7671},
}


\bib{KK}{article}{
   author={Kim, S.},
      author={Kim, I.},
   title={Primitive stable representations in higher rank semisimple Lie groups},
   journal={arXiv 1504.08056},
}

\bib{Labourie}{article}{
   author={Labourie, F.},
   title={Anosov flows, surface groups and curves in projective space},
   journal={Invent. Math.},
   volume={165},
   date={2006},
   number={1},
   pages={51--114},
}

\bib{Labourie08}{article}{
   author={Labourie, F.},
   title={Cross ratios, Anosov representations and the energy functional on Teichmüller space},
   journal={Ann. Sci. Ec. Norm. Super},
   volume={41},
   date={2008},
   number={3},
   pages={439--471},
}


\bib{LS}{book}{
author={Lyndon, R.},
author={Schupp, R.}, 
title={Combinatorial Group Theory}, 
publisher={Springer–Verlag, Berlin, Heidelberg, New York} 
date={1977}
}

\bib{LX}{article}{
   author={Lee, J.},
      author={Xu, B.},
journal = {Trans. Amer. Math. Soc.},
pages = {1265--1305},
title = {Bowditch's Q-conditions and Minsky's primitive stability},
volume = {373},
year = {2020}
}

\bib{Lu}{thesis}{
   author={Lupi, D.},
   title={Primitive stability and Bowditch conditions for rank 2 free group},
   school={Thesis (Ph.D.)--University of Warwick (United Kingdom)},
   year={2015}
}

\bib{Lusztig}{article}{
   author={Lusztig, G.},
   title={Total Positivity in Reductive Groups},
   journal={ Lie Theory and Geometry: In Honor of Bertram Kostant},
   editor={Brylinski, Jean-Luc}
      editor={Brylinski, Ranee}
         editor={Guillemin, Victor}
            editor={Kac, Victor}
   date={1994},
   volume={123},
   pages={531--568},
}

\bib{MaKaSo}{book}{
   author={Magnus, W.},
   author={Karrass, A.},
   author={Solitar, D.},
   title={Combinatorial Group Theory: Presentations of groups in terms of generators and relations,},
publisher={Dover Publications, New York}
date={1970}
}


\bib{MZ}{article}{
   author={Martone, G.},
      author={Zhang, T.},
   title={Positively ratioed representations},
   journal={Comment. Math. Helv.},
   volume={94},
   date={2016},
}

\bib{Minsky}{article}{
   author={Minsky, Y.N.},
   title={On dynamics of $Out(F_n)$ on $\PSL_2(\mathbb{C})$ characters},
   journal={Isr. J. Math.},
   volume={193},
   date={2013},
   pages={47--70}
}

\bib{Nielsen}{article}{
   author={Nielsen, J.},
   title={Die Isomorphismen der allgemeinen unendlichen Gruppe mit zwei Erzeugenden},
   journal={Math. Ann.},
   volume={71},
   date={1918},
   pages={385--397}
}

\bib{Series}{article}{
   author={Series, C.},
   title={Primitive stability and Bowditch's BQ-condition are equivalent},
   date={2019},
   journal={arXiv:1901.01396 }
}

\bib{SunWienhardZhang}{article}{
   author={Sun, Z.},
   author={Wienhard, A.},
   author={Zhang, T.},
   title={Flows on the $\PGL(V)$-Hitchin component},
   journal={to appear in Geom. Funct. Anal.},
}

\bib{TanWongZhang}{article}{
author = {Tan, S.P.}
author = {Wong, Y.L.}
author = {Zhang, Y.}
year = {2008},
month = {04},
pages = {},
title = {End Invariants for SL(2,C) Characters of the One-Holed Torus},
volume = {130},
journal = {American Journal of Mathematics},
doi = {10.1353/ajm.2008.0010}
}


\bib{Zhang}{article}{
   author={Zhang, T.},
   title={Degeneration of {H}itchin representations along internal sequences},
   journal={Geom. Funct. Anal.},
   volume={25},
   date={2015},
   pages={1588--1645},
}


\end{biblist}
\end{bibdiv}

\vspace{1\baselineskip}\noindent
Department of Mathematics,
Jeju National University,
Jeju 63243,
Republic of Korea\\
\url{sungwoon@jejunu.ac.kr}\\
Department of Mathematics,
National University of Singapore,
10 Lower Kent Ridge Rd, Singapore 119076\\
\url{mattansp@nus.edu.sg}, \url{matzt@nus.edu.sg}

\end{document}